\theoremstyle{plain}
\newtheorem{thm}{Theorem}
\newtheorem{prop}{Proposition}[section]
\newtheorem{lem}[prop]{Lemma}
\newtheorem{cor}[prop]{Corollary}
\newtheorem{notation}[prop]{Notation}
\newtheorem*{claim*}{Claim}
\newtheorem{assum}[prop]{Assumption}
\newtheorem{defi}[prop]{Definition}
\newtheorem{rmk}[prop]{Remark}
\newcommand {\R} {\mathbb{R}} 
 \newcommand {\N} {\mathbb{N}}
\newcommand {\p} {\partial}
\newcommand {\D} {\Delta}
\newcommand {\diam} {\text{diam}}
\DeclareMathOperator {\dist} {dist}
\DeclareMathOperator {\Ree} {Re}
\DeclareMathOperator {\Imm} {Im}
\DeclareMathOperator {\supp} {supp}
\DeclareMathOperator {\spa} {span}
\DeclareMathOperator {\inte} {int}
\newcommand{\secaux}{2.1 in \cite{KRS14}\xspace}
\newcommand{\secCarl}{3 in \cite{KRS14}\xspace}
\newcommand{\secreg}{4 in \cite{KRS14}\xspace}
\newcommand{\defvanishing}{4.1 in \cite{KRS14}\xspace}
\newcommand{\lemnewCarl}{3.2 in \cite{KRS14}\xspace}
\newcommand{\lemalmosthom}{4.2 in \cite{KRS14}\xspace}
\newcommand{\lemgrowthimp}{4.1 in \cite{KRS14}\xspace}
\newcommand{\lemfreebgrowth}{4.3 in \cite{KRS14}\xspace}
\newcommand{\propchange}{2.1 in \cite{KRS14}\xspace}
\newcommand{\propindep}{4.1 in \cite{KRS14}\xspace}
\newcommand{\propsemicont}{4.2 in \cite{KRS14}\xspace}
\newcommand{\propdoubling}{4.3 in \cite{KRS14}\xspace}
\newcommand{\propblowup}{4.4 in \cite{KRS14}\xspace}
\newcommand{\lemhomo}{4.5 in \cite{KRS14}\xspace}
\newcommand{\prophomotwo}{4.6 in \cite{KRS14}\xspace}
\newcommand{\propalmost}{4.8 in \cite{KRS14}\xspace}
\newcommand{\corconsequenceCarl}{3.1 in \cite{KRS14}\xspace}
\newcommand{\corlowunif}{4.1 in \cite{KRS14}\xspace}
\newcommand{\eqvCarl}{(7) in \cite{KRS14}\xspace}
\newcommand{\rmkten}{10 in \cite{KRS14}\xspace}
\newcommand{\rmktwoD}{17 in \cite{KRS14}\xspace}
\title[The Variable Coefficient Thin Obstacle Problem]{The Variable Coefficient Thin Obstacle Problem: Optimal Regularity and Regularity of the Regular Free Boundary} 
\author{Herbert Koch}
\author{Angkana R\"uland }
\author{Wenhui Shi}
\address{
Mathematisches Institut, Universit\"at Bonn, Endenicher Allee 60, 53115 Bonn, Germany }
\email{koch@math.uni-bonn.de}
\address{
Mathematical Institute of the University of Oxford, Andrew Wiles Building, Radcliffe Observatory Quarter, Woodstock Road, OX2 6GG Oxford, United Kingdom }
\email{ruland@maths.ox.ac.uk}
\address{
Mathematisches Institut, Universit\"at Bonn, Endenicher Allee 64, 53115 Bonn, Germany  }
\email{wenhui.shi@hcm.uni-bonn.de}
\begin{document}

\begin{abstract}
This article deals with the variable coefficient thin obstacle problem in $n+1$ dimensions. We address the regular free boundary regularity, the behavior of the solution close to the free boundary and the optimal regularity of the solution in a low regularity set-up. \\
We first discuss the case of zero obstacle and $W^{1,p}$ metrics with $p\in(n+1,\infty]$. In this framework, we prove the $C^{1,\alpha}$ regularity of the regular free boundary and derive the leading order asymptotic expansion of solutions at regular free boundary points. We further show the optimal $C^{1,\min\{1-\frac{n+1}{p}, \frac{1}{2}\}}$ regularity of solutions. 
New ingredients include the use of the Reifenberg flatness of the regular free boundary, the construction of an (almost) optimal barrier function and the introduction of an appropriate splitting of the solution. Important insights depend on the consideration of various intrinsic geometric structures.\\ 
Based on variations of the arguments in \cite{KRS14} and the present article, we then also discuss the case of non-zero and interior thin obstacles. We obtain the optimal regularity of the solutions and the regularity of the regular free boundary for $W^{1,p}$ metrics and $W^{2,p}$ obstacles with $p\in (2(n+1),\infty]$. 
\end{abstract}

\subjclass[2010]{Primary 35R35}

\keywords{Variable coefficient Signorini problem, variable coefficient thin obstacle problem, thin free boundary}

\thanks{
H.K. acknowledges support by the DFG through SFB 1060.
A.R. acknowledges that the research leading to these results has received funding from the European Research Council under the European Union's Seventh Framework Programme (FP7/2007-2013) / ERC grant agreement no 291053 and a Junior Research Fellowship at Christ Church.
W.S. is supported by the Hausdorff Center of Mathematics.}
\maketitle

\tableofcontents

\section{Introduction}
In this article we continue our discussion of the variable coefficient \emph{thin obstacle} or \emph{Signorini problem} in a low regularity framework. Here our main objectives are an improved understanding of the (regular) free boundary, the determination of the asymptotic behavior of solutions close to the (regular) free boundary and the derivation of optimal regularity estimates for solutions. To achieve this in our low regularity set-up, we in particular introduce two key new arguments: The identification of the regular free boundary as a \emph{Reifenberg flat} set, which enables us to construct (almost) optimally scaling barrier functions (c.f. Section \ref{sec:Reifenberg}) and a ``\emph{splitting technique}'' (c.f. Proposition \ref{prop:v1}), which allows us to deal with divergence form right hand sides and to identify the leading order contributions in the respective equations.\\

Let us explain the precise set-up of our problem: We consider local minimizers of the constrained Dirichlet energy:
\begin{align}
\label{eq:energy}
J(w) = \int\limits_{B_1^+} a^{ij} (\p_i w) (\p_j w) dx,
\end{align}
where we use the Einstein summation convention and assume that
\begin{align*}
 w\in \mathcal{K}:=\{v\in H^1(B_1^+)| \ v \geq 0 \mbox{ on } B_1':= B_1^+\cap \{x_{n+1}=0\} \}.
\end{align*}
Here, the metric $a^{ij}: B_1^+ \rightarrow \R^{(n+1)\times (n+1)}_{sym}$ is a symmetric, uniformly elliptic tensor field which is $W^{1,p}$, $p\in (n+1,\infty]$, regular and $B_1^+:=\{x\in B_1\subset \R^{n+1}| \ x_{n+1}\geq 0\}$ denotes the upper half-ball.
On the upper half-ball it is possible to consider arbitrary local variations of local minimizers. Hence, local minimizers solve an elliptic divergence form equation in the upper half-ball, on this set they are ``free''. However, on the codimension one surface $B_1'$ they obey the convex constraint $w\geq 0$ which leads to \emph{complementary} or \emph{Signorini} boundary conditions. In this sense the obstacle is ``thin''.\\

In the sequel we are in particular interested in obtaining an improved understanding of the free boundary 
$$\Gamma_w := \partial_{B_1'} \{x\in B_1'| \ w(x)>0 \}.$$
This set (which for Lipschitz metrics $a^{ij}$ is of Hausdorff dimension $n-1$, c.f. Remark \ref{rmk:Hausdorff}) separates the \emph{contact set}, $\Lambda_w:=\{x\in B_1'| \ w(x)=0\},$
in which the solution coincides with the obstacle, from the \emph{positivity set}, 
$ \Omega_w:=\{x\in B_1'| \ w(x)>0\},$
in which the solution is ``free''.  Moreover, we seek to understand the structure of the solution close to the free boundary.\\

Considering variations of local minimizers in the energy functional (\ref{eq:energy}) leads to an equivalent formulation of the local minimization problem (\ref{eq:energy}) in the form of a variational inequality posed in the energy space $\mathcal{K}$ \cite{U87}: For a solution, $w\in \mathcal{K}$, of (\ref{eq:energy}) we have
\begin{align*}
\int_{B_1^+}a^{ij}(\p_iw)\p_j(v-w) dx \geq 0 \mbox{ for all }  v\in \mathcal{K}.
\end{align*}
If in addition $w\in H^2(B_1^+)$, this corresponds to an elliptic equation with \emph{complementary} or \emph{Signorini} boundary conditions:
\begin{equation}
\begin{split}
\label{eq:varcoef'}
\p_i  a^{ij} \p_j  w & = 0 \mbox{ in } B^+_{1}, \\
 w\geq 0,  -a^{n+1,j}\p_{j}w\geq 0, \ w (a^{n+1,j}\p_{j} w)&= 0 \mbox{ on } B_{1}'.
\end{split}
\end{equation}
Here the Signorini condition is derived from the pointwise inequality $-a^{n+1,j}\p_j w(v-w)\geq 0$ on $B'_1$ which holds for any $v\in \mathcal{K}$.\\

In the sequel we investigate the thin obstacle problem by studying solutions of (\ref{eq:varcoef'}). Moreover, we address variants of it which involve inhomogeneities, \emph{non-flat} obstacles and boundaries and \emph{non-flat interior} obstacles.

\subsection{Main results}

In this article we first derive the $C^{1,\alpha}$ regularity of the so-called regular set of the free boundary in the presence of $W^{1,p}$ metrics, $a^{ij}$, with $p>n+1$. Moreover, in this framework we deduce a leading order aymptotic expansion of solutions to (\ref{eq:varcoef'}) with error estimates. Combining the regularity of the regular free boundary with the Carleman estimate from \cite{KRS14}, we then show the optimal $C^{1,\min\{1-\frac{n+1}{p},\frac{1}{2}\}}$ regularity of solutions with $W^{1,p}$ metrics, $a^{ij}$, for $p>n+1$.
In addition to this, we also treat perturbations of the thin obstacle problem including non-flat free boundaries and obstacles, as well as inhomogeneities in the equations and the interior thin obstacle problem.\\
In order to deduce these results, we rely on two main new ingredients: A ``splitting argument'' and the construction of (almost) optimally scaling barrier functions. The latter builds on the identification of the regular free boundary as a Reifenberg flat set. \\
In the following subsections we elaborate on these results, put them into the context of the literature on the thin obstacle problem, and explain the main difficulties and the new arguments which are used to overcome these.\\

We first recall the main results from \cite{KRS14} on free boundary points: All free boundary points $x\in \Gamma_w$ are classified by their associated vanishing order $\kappa_x$ (c.f. Section \secreg):
$$\Gamma_w = \Gamma_{3/2}(w)\cup \bigcup\limits_{\kappa \geq 2}\Gamma_{\kappa}(w).$$  
Here $\Gamma_{3/2}(w)$ is the  so-called \emph{regular} free boundary which is given by all free boundary points with vanishing order $3/2$. The remaining set $\bigcup\limits_{\kappa \geq 2}\Gamma_{\kappa}(w)$ consists of all free boundary points with a higher order of vanishing (c.f. Definition \defvanishing or Section~\ref{sec:not} for the precise definition of $\kappa_x$).
Moreover, the map $\Gamma_w \ni x \mapsto \kappa_x$ is upper semi-continuous. In \cite{KRS14} we also proved that for each $x\in \Gamma_w$ with $\kappa_x<\infty$, there exists an $L^2$-normalized blow-up sequence $w_{x,r_j}$ such that the limit $w_{x,0}$ is a homogeneous global solution with homogeneity $\kappa_x$. Furthermore, if $\kappa_x=\frac{3}{2}$, the blow-up limit $w_{x,0}$ is two-dimensional and (up to a rotation of coordinates) is equal to $c_n\Ree(x_n+ix_{n+1})^{3/2}$. \\
 
In the first part of the present paper, we study the regular free boundary $\Gamma_{3/2}(w)$, which is relatively open by the upper semi-continuity of $\kappa_x$. 
Relying on comparison principles (c.f. Proposition \ref{prop:nondeg}) combined with a splitting technique for equations with divergence right hand sides (c.f. Proposition \ref{prop:v1}), we obtain the $C^{1,\alpha}$ regularity of the regular free boundary $\Gamma_{3/2}(w)$:

\begin{thm}
\label{thm:C1a}
Let $a^{ij}: B_1^+ \rightarrow \R^{(n+1)\times (n+1)}_{sym}$ be a uniformly elliptic, symmetric $W^{1,p}$, $p\in(n+1,\infty]$, tensor field. Assume that $w$ is a solution of the variable coefficient thin obstacle problem (\ref{eq:varcoef'}). For each $x_0\in \Gamma_{3/2}(w)$, there exist a parameter $\alpha \in (0,1]$, a radius $\rho=\rho(x_0,w)$ and a $C^{1,\alpha}$ function $g: B_{\rho}''(x_0) \rightarrow \R$ such that (possibly after a rotation)
\begin{align*}
\Gamma_w \cap B_{\rho}' (x_0) = \Gamma_{3/2}(w)\cap B_{\rho}' (x_0)= \{x| \ x_n = g(x'')\}\cap B_{\rho}'(x_0).
\end{align*}
\end{thm}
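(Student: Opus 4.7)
The plan is to combine the blow-up classification from \cite{KRS14} with a quantitative flatness analysis, barrier-based non-degeneracy estimates, and a boundary Harnack type argument, all suitably adapted to the $W^{1,p}$ coefficient framework.

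First, at a regular free boundary point $x_0\in \Gamma_{3/2}(w)$, the blow-up result of \cite{KRS14} yields a sequence $w_{x_0,r_j}$ converging to a rotated half-space profile $c_n \Ree(x_n+ix_{n+1})^{3/2}$. Using upper semi-continuity of the vanishing order $\kappa_x$ together with the doubling estimate (Proposition \propdoubling), I would transfer this information to nearby free boundary points, obtaining that every $y\in \Gamma_w\cap B_\rho'(x_0)$ is also regular and, at every sufficiently small scale $r$, $w_{y,r}$ is close to \emph{some} rotated half-space profile. A Hausdorff-distance comparison between the contact sets of the rescalings and those of the model profiles then promotes this to Reifenberg flatness of $\Gamma_w\cap B_\rho'(x_0)$ with vanishing constant as $\rho\to 0$; this is the content of Section~\ref{sec:Reifenberg}.

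Second, I would exploit this Reifenberg flatness to construct supersolution and subsolution barriers of homogeneity approximately $3/2$. Via the comparison principle, these almost optimally scaling barriers sandwich $w$ between multiples of $\dist(\cdot, \Gamma_w)^{3/2}$ and yield the non-degeneracy estimate encoded in Proposition \ref{prop:nondeg}. Third, to upgrade flatness to Lipschitz regularity, I would apply the splitting $w=v_1+v_2$ of Proposition \ref{prop:v1}: $v_1$ solves an equation that is well approximated by a constant-coefficient thin obstacle problem, whereas $v_2$ absorbs the divergence form error arising from the $W^{1,p}$ coefficients and is quantitatively of lower order. Following the Athanasopoulos--Caffarelli--Salsa scheme applied to $v_1$, the barriers from the previous step allow me to show that $\p_e w\geq 0$ for every $e$ in an $e_n$-centered cone of tangential directions, so that $\Gamma_w\cap B_\rho'(x_0)$ is a Lipschitz graph $x_n=g(x'')$. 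Finally, a boundary Harnack inequality, applied in the Lipschitz positivity set to the ratios $\p_i w/\p_n w$ of positive solutions of the equation governing $v_1$ and treating the $v_2$ contribution perturbatively, upgrades the normal direction along $\Gamma_w$ to H\"older continuity, hence $g\in C^{1,\alpha}$.

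The principal obstacle is the weak regularity of the coefficients. Since $a^{ij}\in W^{1,p}$ only, the partial derivatives $\p_e w$ do not satisfy a uniformly elliptic equation with bounded coefficients, so neither the classical Athanasopoulos--Caffarelli--Salsa directional monotonicity argument nor the standard boundary Harnack principle can be invoked directly. The splitting argument is tailored to isolate the ``good'' leading part $v_1$ on which these classical tools do apply; the divergence-form remainder $v_2$ must then be controlled using the (almost) optimal barriers, whose very construction relies on the Reifenberg flatness of $\Gamma_{3/2}(w)$ obtained in the first step.
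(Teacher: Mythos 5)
Your high-level roadmap tracks the paper closely: blow-up classification at a regular point, propagation to nearby points via upper semi-continuity and doubling, Reifenberg flatness, barrier construction using the Whitney decomposition of the slit domain, non-degeneracy in a cone via a comparison argument, Lipschitz graph from directional monotonicity, and finally a boundary Harnack estimate for the quotient $\p_i w/\p_n w$ to obtain $C^{1,\alpha}$. That is exactly the structure of Sections~\ref{sec:Reifenberg}--\ref{subsec:C1a}.

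However, your description of the two central technical tools is materially off. First, the splitting of Proposition~\ref{prop:v1} is not applied to $w$: it is applied to the tangential derivative $v:=\p_e w$, which after even reflection solves a linear divergence-form equation with a divergence-form right-hand side $\p_i F^i$, $F^i=-(\p_e a^{ij})\p_j w$, in the slit domain $B_1\setminus\Lambda_w$. Neither piece of the decomposition solves (even approximately) a thin obstacle problem; both solve linear Dirichlet problems in the slit domain, one of them with the additional killing potential $-K\dist(\cdot,\Gamma_w)^{-2}$. Trying to split $w$ itself into a constant-coefficient thin obstacle solution plus an error, as you propose, is not well-posed because the complementary boundary condition is nonlinear and does not decompose linearly. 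Second, the barrier $h^-_s$ constructed in Proposition~\ref{prop:barrier} is a subsolution barrier of homogeneity approximately $1/2$ (namely $h^-_s\gtrsim\dist(\cdot,\Gamma)^{1/2+s/2}$ in the good cone), tailored to the growth rate of $\p_e w$, not a pair of sub/super barriers of homogeneity $3/2$ sandwiching $w$. The resulting non-degeneracy estimate of Proposition~\ref{prop:nondeg} is a lower bound on $\p_e w$, not a two-sided bound on $w$; the matching upper growth bound for $w$ itself enters separately through the Carleman machinery of \cite{KRS14}. These are not cosmetic relabelings: the fact that the entire comparison/splitting/boundary-Harnack apparatus lives at the level of the \emph{differentiated} equation, in the slit domain, with the $1/2$ homogeneity, is what makes the scheme compatible with $W^{1,p}$ coefficients.
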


We remark that we turn the usual order of the arguments around: Instead of \emph{first} proving optimal regularity of the solution, and \emph{then} regularity of the (regular) free boundary (c.f. \cite{AC06}, \cite{ACS08}, \cite{PSU}), we \emph{first} prove regularity of the (regular) free boundary, and \emph{then} deduce optimal regularity of the solution. The regular free boundary is $C^{1,\alpha}$ under conditions which do \emph{not} imply $C^{1,\frac{1}{2}}$ regularity of the solution.
\\

With the $C^{1,\alpha}$ regularity of the (regular) free boundary at hand, it then becomes possible to study the local behavior of solutions around regular free boundary points. This is based on identifying the leading order in the asymptotics of solutions of (\ref{eq:varcoef'}) at regular free boundary points in the presence of $W^{1,p}$ metrics with $p\in(n+1,\infty]$ (c.f. Proposition \ref{prop:wasympt}). As the asymptotics are complemented by a higher order error estimate, this allows us to obtain local growth bounds. Then combining this with our Carleman estimate, we are able to obtain the $C^{1,1/2}$ optimal regularity of solutions associated with $W^{1,p}$ metrics for $p\in(2(n+1),\infty]$:

\begin{thm}[Optimal regularity]
\label{prop:full_opti}
Let $a^{ij}: B_1^+ \rightarrow \R^{(n+1)\times (n+1)}_{sym}$ be a uniformly elliptic, symmetric $W^{1,p}$, $p\in[2(n+1),\infty]$, tensor field.
Assume that $w$ is a solution of the variable coefficient thin obstacle problem (\ref{eq:varcoef'}). Then, there exists a constant $C>0$ depending only on $\|a^{ij}\|_{W^{1,p}(B_{1}^+)}, n, p$ such that
\begin{align*}
\left\| w \right\|_{C^{1,1/2}(B_{1/2}^+)} \leq C \left\| w\right\|_{L^2(B_1^+)}.
\end{align*}
\end{thm}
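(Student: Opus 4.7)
The plan is to reduce the global $C^{1,1/2}$ estimate to a pointwise growth bound at every free boundary point and then to assemble this with standard divergence-form elliptic regularity on $B_1^+ \setminus \Gamma_w$ via a covering argument. On the open set $\Omega_w$ the function $w$ satisfies a Neumann problem for $\p_i a^{ij} \p_j$, and in the interior of $\Lambda_w$ it satisfies a zero Dirichlet problem; since $a^{ij} \in W^{1,p}$ with $p \geq 2(n+1)$, classical theory yields $C^{1,1-(n+1)/p}$ estimates with $1-(n+1)/p \geq 1/2$. The only obstruction to a uniform $C^{1,1/2}$ bound therefore comes from the behavior of $w$ in a neighborhood of $\Gamma_w$.

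Near a regular free boundary point $x_0 \in \Gamma_{3/2}(w)$, I would use Theorem \ref{thm:C1a} together with Proposition \ref{prop:wasympt} to write, after a rotation bringing the tangent plane of $\Gamma_{3/2}(w)$ at $x_0$ into the hyperplane $\{x_n=0\}$, an expansion of the form
\begin{equation*}
w(x) = c_n \Ree(x_n + i x_{n+1})^{3/2} + E(x), \qquad |E(x)| \leq C|x-x_0|^{3/2+\beta},
\end{equation*}
for some $\beta > 0$. The leading profile is itself globally $C^{1,1/2}$ with controlled norm, and since the remainder is of strictly higher order, differentiating yields the pointwise $C^{1,1/2}$ control at $x_0$, with constants that can be made uniform over $\Gamma_{3/2}(w) \cap B_{1/2}'$. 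For a free boundary point $x_0 \in \Gamma_\kappa(w)$ with $\kappa \geq 2$, I would instead combine the doubling property of \propdoubling with the blow-up classification of \propblowup (both consequences of the Carleman estimate of \cite{KRS14}) to conclude that $|w(x)|$ decays strictly faster than $|x-x_0|^{3/2}$ near $x_0$, which is more than compatible with $C^{1,1/2}$.

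With these two pointwise estimates available at every point of $\Gamma_w$, the conclusion follows by a standard rescaling and compactness scheme. Assume that the desired inequality fails along a sequence $w_k$ normalized by $\|w_k\|_{L^2(B_1^+)} \leq 1$, and select pairs $x_k, y_k \in B_{1/2}^+$ that witness blow-up of the $C^{1,1/2}$ seminorm. Rescaling at scale $|x_k - y_k|$ around a suitable base point on $\Gamma_w$ or in $B_1^+ \setminus \Gamma_w$ and passing to a limit, the upper semi-continuity of $\kappa_x$ and the dichotomy above force the limit to be either a solution to a constant-coefficient problem on a half-space (handled by the elliptic estimate in the first paragraph) or a homogeneous global solution of degree $3/2$ (handled explicitly), contradicting the assumed failure.

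The principal obstacle I anticipate is the quantitative, uniform nature of the remainder $E$ in the expansion along $\Gamma_{3/2}(w)$. The restriction $p \geq 2(n+1)$ is sharp here: it guarantees that the intrinsic Hölder exponent $1 - (n+1)/p$ of the metric is at least $1/2$, so that the error term in the asymptotic expansion can be shown to decay with a rate $3/2 + \beta$ that does not degenerate as we approach the free boundary. Without this threshold, the roughness of $a^{ij}$ saturates the error at a scale insufficient to separate the Signorini profile from its perturbation, and only the strictly weaker $C^{1,\min\{1-(n+1)/p,1/2\}}$ bound stated in the abstract for general $p \in (n+1,\infty]$ survives.
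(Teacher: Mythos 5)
There is a genuine gap. The decisive difficulty in this theorem is not local control near $\Gamma_{3/2}(w)$, which you obtain correctly from Proposition \ref{prop:wasympt} and Corollary \ref{cor:optgrowth}, but making that control \emph{uniform}. Theorem \ref{thm:C1a} produces a radius $\rho=\rho(x_0,w)$ depending on the solution and on the free boundary point, and the hypotheses of Proposition \ref{prop:wasympt} (the $C^1$-closeness $\|w-w_{3/2}\|_{C^1(B_1^+)}\le\epsilon_0$) are only verified after rescaling to a scale which is itself non-uniform. Your sentence asserting that the constants ``can be made uniform over $\Gamma_{3/2}(w)\cap B'_{1/2}$'' is precisely the assertion to be proved, and you offer no mechanism to establish it.

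The compactness scheme you sketch in the third paragraph does not supply that mechanism, because the only \emph{global} growth information available going in is the almost-optimal bound from \cite{KRS14}, namely $\sup_{B_r(x_0)}|w| \le C r^{3/2}|\ln r|^2$ (Lemma \lemgrowthimp). If you select $x_k, y_k$ with $\rho_k := |x_k-y_k| \to 0$ witnessing blow-up of the $C^{1,1/2}$-seminorm by an amount $M_k \to \infty$, and $L^2$-normalize the rescaled functions $w_k(z_k + \rho_k\,\cdot)$ about a nearby free boundary point $z_k$ in order to get compactness, then the normalization factor $\rho_k^{-(n+1)/2}\|w_k\|_{L^2(B^+_{\rho_k}(z_k))}$ is only controlled up to $\rho_k^{3/2}|\ln\rho_k|^2$. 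The rescaled seminorm you obtain is therefore $\gtrsim M_k/|\ln\rho_k|^2$, which need not blow up: the contradicting sequence may have $M_k$ growing slowly while $\rho_k\to 0$. So the compactness argument does not close — removing the logarithm is exactly what is at stake, and the blow-up limit can be a nontrivial $3/2$-homogeneous solution without contradiction.

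The paper's actual proof (Theorem \ref{thm:optimal_reg}) resolves this via a dichotomy that is entirely absent from your proposal and makes essential use of the Carleman machinery. Fixing an $\epsilon>0$ and choosing the \emph{largest} scale $r_1$ at which the $L^2$-normalized rescaling is $\epsilon$-close to $3/2$-homogeneity (this is where the local results, including your Corollary \ref{cor:optgrowth}, apply), the paper then distinguishes two cases according to whether $\|w\|_{L^2(B^+_{r_1})}$ is small or large compared to $r_1^{3/2+(n+1)/2+\epsilon/2}$. In the first case the improved local bound, fed into the Carleman estimate (Corollary \corconsequenceCarl), propagates the optimal growth up to scale one with no logarithmic loss. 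In the second case, Lemma \lemalmosthom either bounds $r_1$ from below by a constant depending only on $n,p,\|\nabla a^{ij}\|_{L^p}$, or contradicts the maximality of $r_1$. This Carleman-based bridge from the non-uniform local estimate to the uniform global estimate is the crux of the proof and is what your outline lacks. (Your treatment of the higher-order points $\kappa_x \ge 2$ via the doubling property is correct but unproblematic: Lemma \lemgrowthimp already gives decay of order $r^2|\ln r|^2 \lesssim r^{3/2}$ there, and no sharpening is needed.)
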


Finally, in the last part of the paper we prove that these results are not restricted to the flat thin obstacle problem, i.e. the setting in which the obstacle is the zero function. We show that, on the contrary, it is possible to deal with \emph{inhomogeneities}, \emph{non-constant} obstacles, \emph{non-flat} boundaries and even \emph{non-flat interior} obstacles (c.f. Section \ref{sec:pert}). For instance, we prove the following result for non-flat obstacles:

\begin{thm}
Let $a^{ij}: B_1^+ \rightarrow \R^{(n+1)\times (n+1)}_{sym}$ be a uniformly elliptic, symmetric $W^{1,p}$, $p\in(2(n+1),\infty]$, tensor field. Suppose that $\varphi \in W^{2,p}(B'_{1})$. Let $w:B_1^+ \rightarrow \R$ be a solution of the thin obstacle problem
\begin{equation}
\label{eq:varcoef_f}
\begin{split}
\p_i a^{ij} \p_j w & = 0 \mbox{ in } B_{1}^+,\\
w  \geq \varphi, \ - a^{n+1,j} \p_j w \geq 0, \ (w-\varphi) ( a^{n+1,j} \p_j w) & = 0 \mbox{ on } B_{1}'.
\end{split}
\end{equation}
Then, the following statements hold:
\begin{itemize}
\item[(i)] The function $w$ has the optimal Hölder regularity: 
$$ w \in C^{1,1/2}(B_{1/2}^+).$$
\item[(ii)] Assuming that $0\in \Gamma_{3/2}(w)$, there exist a radius $\rho>0$, a parameter $\alpha \in (0,1]$ and a $C^{1,\alpha}$ function $g$ such that (potentially after a rotation) 
$$\Gamma_w\cap B_{\rho}'=\Gamma_{3/2}(w) \cap B_{\rho}' = \{x| \ x_n = g(x'')\}\cap B_{\rho}'.$$
\end{itemize}
\end{thm}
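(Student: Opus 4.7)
The plan is to reduce the non-flat obstacle problem to a zero-obstacle thin obstacle problem with a divergence-form inhomogeneity and an inhomogeneous Signorini datum, and then to invoke the perturbative variants of Theorem~\ref{thm:C1a} and Theorem~\ref{prop:full_opti} developed in Section~\ref{sec:pert}. Concretely, I would extend $\varphi\in W^{2,p}(B_1')$ to some $\Phi\in W^{2,p}(B_1^+)$ by a Sobolev extension, chosen so that $a^{n+1,j}\partial_j\Phi$ vanishes at the regular free boundary point of interest (a one-parameter adjustment of $\partial_{n+1}\Phi|_{B_1'}$ permitted by the uniform ellipticity of $a^{ij}$), and set $v:=w-\Phi$. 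Since $p>2(n+1)$, the embedding $W^{2,p}(B_1^+)\hookrightarrow C^{1,\alpha}(B_1^+)$ holds with $\alpha=1-\tfrac{n+1}{p}>\tfrac12$, so $\Phi$ is strictly smoother than the expected $C^{1,1/2}$ profile of $w$. The function $v$ then satisfies $v\geq 0$ on $B_1'$, the divergence form equation
\[\partial_i(a^{ij}\partial_j v)=-\partial_i(a^{ij}\partial_j\Phi)=:f\in L^p(B_1^+)\]
in $B_1^+$, together with the inhomogeneous Signorini condition $-a^{n+1,j}\partial_j v\geq g:=a^{n+1,j}\partial_j\Phi$ and the complementarity $v\cdot(-a^{n+1,j}\partial_j v-g)=0$ on $B_1'$, where $g\in W^{1,p}(B_1')\hookrightarrow C^{0,\beta}(B_1')$ with $\beta>\tfrac12$.

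The main mechanism is that $f$ and $g$ are strictly subcritical relative to the $\tfrac32$-homogeneous leading profile $c_n\Ree(x_n+ix_{n+1})^{3/2}$: on $B_r$ one has $\|f\|_{L^p(B_r)}\lesssim r^{(n+1)/p}$ with $(n+1)/p<\tfrac12$, and since $g(0)=0$ with $g\in C^{0,\beta}$ for some $\beta>\tfrac12$, one has $\|g\|_{L^\infty(B_r')}\lesssim r^\beta$. Both contributions are therefore of strictly lower order than the leading $\tfrac32$-homogeneous term in the blow-up. Consequently, the blow-up classification, the splitting technique of Proposition~\ref{prop:v1}, the Reifenberg-flatness identification of $\Gamma_{3/2}$, and the almost-optimal barrier construction of Section~\ref{sec:Reifenberg} all carry through for $v$, with the contributions from $f$ and $g$ appearing only as lower-order error terms. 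This yields (ii), the $C^{1,\alpha}$ graph representation of $\Gamma_{3/2}(v)=\Gamma_{3/2}(w)$ near $0$; combining the resulting leading-order expansion with the Carleman estimate of \cite{KRS14} as in the proof of Theorem~\ref{prop:full_opti} then gives the optimal $C^{1,1/2}$ regularity of $v$ at regular free boundary points, and adding back $\Phi\in C^{1,\alpha}$ with $\alpha>\tfrac12$ transfers this bound to $w=v+\Phi$, proving (i). The regularity away from the free boundary follows from standard elliptic boundary regularity applied to $v$.

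I expect the principal obstacle to be verifying the perturbative versions of the key technical results---notably the almost-optimal barrier (which must dominate $v$ up to errors of order $r^{3/2+\varepsilon}$) and the splitting argument (which must correctly isolate the leading $\tfrac32$-homogeneous term in the presence of both the inhomogeneity $f$ and the inhomogeneous Signorini datum $g$). Both issues reduce to a careful bookkeeping of scaling exponents, systematically exploiting $p>2(n+1)$ to ensure that all error terms are of strictly higher order than $r^{3/2}$. Once this is in place, no new conceptual ingredients beyond those of Section~\ref{sec:Reifenberg} and Proposition~\ref{prop:v1} should be required.
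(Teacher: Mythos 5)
Your reduction $v:=w-\Phi$ is the right starting point, but the way you cancel the Neumann datum leaves a gap that the paper's framework cannot absorb. By tuning $\partial_{n+1}\Phi$ only so that $a^{n+1,j}\partial_j\Phi$ vanishes \emph{at the single point} $x_0$, you end up with an inhomogeneous Signorini condition $-a^{n+1,j}\partial_j v\geq g$, $v\left(-a^{n+1,j}\partial_j v - g\right)=0$ with $g\not\equiv 0$ on $B_1'$. None of the machinery you intend to invoke---the Carleman estimate from \cite{KRS14}, the blow-up classification, the barrier construction of Proposition~\ref{prop:barrier}, the boundary Harnack estimate of Lemma~\ref{lem:boundHarn}---is set up for a Signorini condition with nonzero Neumann datum. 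The Carleman inequality in particular relies on the \emph{vanishing} of boundary terms that is a direct consequence of the homogeneous complementarity condition; with $g\neq 0$ these terms persist. Moreover, $g(x_0)=0$ holds only at the chosen basepoint: the estimates need to run at \emph{every} free boundary point in a neighborhood, and there $g\neq 0$. Your ``subcritical scaling'' remark addresses the size of $g$ but not the structural fact that the entire Signorini framework changes; you would in effect have to redo the Carleman estimate, the non-degeneracy comparison, and the boundary Harnack argument for an oblique/inhomogeneous boundary condition, and your proposal does not supply those arguments.

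What the paper does instead is simpler: it \emph{first} performs Uraltseva's change of coordinates (Proposition~\propchange), which yields the normalization (A1), i.e.\ $a^{i,n+1}(x',0)=0$ for $i\leq n$ on $B_1'$, and \emph{then} extends the obstacle trivially in $x_{n+1}$, $\Phi(x',x_{n+1})=\varphi(x')$, so $\partial_{n+1}\Phi\equiv 0$. With (A1) and this extension, $a^{n+1,j}\partial_j\Phi\equiv 0$ on $B_1'$ identically (not just at one point), so the Signorini condition for $v=w-\Phi$ is exactly $v\geq 0$, $-\partial_{n+1}v\geq 0$, $v\,\partial_{n+1}v=0$ on $B_1'$, i.e.\ the homogeneous flat-obstacle condition. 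The only new term is the interior inhomogeneity $f=-\partial_i a^{ij}\partial_j\Phi\in L^p(B_1^+)$, which is handled by the already-developed Proposition on inhomogeneities (Section~\ref{sec:inhomo}). Your proposal omits Uraltseva's step entirely, which is precisely why a nontrivial $g$ remains. If you insist on avoiding the coordinate change, you must at minimum choose $\partial_{n+1}\Phi|_{B_1'}$ so that $a^{n+1,j}\partial_j\Phi\equiv 0$ on the whole of $B_1'$ (a trace/extension argument keeps $\Phi\in W^{2,p}$), \emph{and} then still perform Uraltseva so that the flat-obstacle inhomogeneous results of Section~\ref{sec:inhomo}, which are stated under (A1), apply. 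As written, the proposal has a genuine missing step.
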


\subsection{Literature and context}
In the last years the thin obstacle problem and its variants have been a very active field of research. After the complete characterization of the two-dimensional, constant coefficient thin obstacle problem by Lewy \cite{Le72} and Richardson \cite{Ri78} as well as impressive partial results on the general problem \cite{Fr77}, \cite{Ki81}, \cite{U85}, a major new idea emerged only relatively recently: In \cite{AC06} and \cite{ACS08} Athanasopoulos, Caffarelli and Salsa introduced Almgren's frequency function as a powerful tool of obtaining optimal regularity estimates and the $C^{1,\alpha}$ regularity of the free boundary for the constant coefficient problem. Here $\alpha$ is some constant in $(0,1]$. Later this was extended by \cite{Si} and \cite{CSS} to the related obstacle problem for the fractional Laplacian. Moreover, in a recent article \cite{KPS} Koch, Petrosyan and Shi prove the analyticity of the regular free boundary for the constant coefficient operator by introducing a connection between the thin obstacle problem and the Grushin Laplacian. Simultaneously, Savin and De Silva \cite{DSS14} obtained the $C^{\infty}$ regularity of the regular free boundary by exploiting higher order boundary Harnack inequalities. This discussion of the regularity of the regular free boundary is complemented by an article of Garofalo and Petrosyan \cite{GP09} in which a monotonicity formula is used to characterize the structure of the singular set of the thin obstacle problem.\\

While these results illustrate that there has been great progress in the \emph{constant} coefficient thin obstacle problem, less is known in the \emph{variable} coefficient framework. Here the work of Uraltseva \cite{U85} has shown that it is possible to obtain $C^{1,\alpha}$ Hölder regularity of solutions of the thin obstacle problem in the presence of $W^{1,p}$, $p\in (n+1,\infty]$, metrics. In her result the Hölder exponent is some value $\alpha \in (0,1/2]$ which depends on the ellipticity constants of the coefficients and the value of $p$.\\
Only very recently, the variable coefficient problem has been further investigated: In \cite{Gu} Guillen deals with the problem in the context of Hölder regular $C^{1,\alpha}$, for some $\alpha\in (0,1)$, metrics. This is improved in an article by Garofalo and Smit Vega Garcia \cite{GSVG14} in which the authors derive the optimal regularity of solutions of the thin obstacle problem in the presence of $C^{0,1}$ metrics and $C^{1,1}$ obstacles. This argument is based on an extension of Almgren's monotonicity formula to the setting of low regularity metrics and obstacles. In a recent preprint \cite{GPSVG15} Garofalo, Petrosyan and Smit Vega Garcia further build on this and deduce the $C^{1,\alpha}$, for some $\alpha\in (0,1)$, regularity of the regular free boundary in the presence of $C^{0,1}$ metrics and $C^{1,1}$ obstacles by proving an epiperimetric inequality.\\
Complementing these results by relying on Carleman estimates instead of frequency formulae and working in the setting of Sobolev metrics $a^{ij}\in W^{1,p}$, $p\in (n+1,\infty]$, \cite{KRS14} provides an alternative proof of the almost optimal regularity of solutions to (\ref{eq:varcoef'}). In the present article we extend these ideas, and, building on our previous work, prove optimal regularity in the framework of $W^{1,p}$, $p\in (n+1,\infty]$, metrics as well as the $C^{1,\alpha}$ regularity of the regular free boundary.

\subsection{Difficulties and strategy}
As already in \cite{KRS14} the central difficulty with which we deal throughout the article is the low regularity of the metric.\\

Building on the results from \cite{KRS14}, we analyze the (regular) free boundary (c.f. Section \ref{sec:boundary}). Traditionally, it is studied by relying on comparison principles (c.f. \cite{PSU}): Differentiating the equation (\ref{eq:varcoef'}), an analysis of the equation for the tangential derivatives allows to transfer positivity properties of derivatives of the blow-up solutions to the problem at hand. This then permits to conclude the $C^{1,\alpha}$ regularity of the regular free boundary.\\
In a low regularity set-up this becomes more difficult. In particular we have to include divergence form right hand sides in our discussion. Hence, as a key new ingredient we exploit a ``splitting technique'' in which we divide our solution into a ``controlled error'' which handles the low regularity contributions originating from the metric, and a ``main part'' which captures the behavior of our solutions (c.f. Proposition \ref{prop:v1}). This allows us to argue along similar lines as in the literature. \\
Yet, we have to overcome a second difficulty: In order to provide a framework which also deals with non-flat obstacles of low regularity, we have to work as close as possible to the scaling critical setting. Thus, instead of proving \emph{linear} non-degeneracy of the tangential derivatives, we prove a (nearly) \emph{square root} non-degeneracy in appropriate cones (c.f. Proposition \ref{prop:nondeg}). In order to achieve this, we introduce a second main new ingredient and construct a new barrier function which exploits the Reifenberg flatness of the free boundary (c.f. Proposition \ref{prop:barrier}).
\\

In the second part of our argument, we return to the investigation of solutions of the thin obstacle problem (c.f. Section \ref{sec:optimal}). Here we rely on the free boundary regularity which allows to improve the almost optimal growth estimates which were previously obtained from the Carleman estimate (c.f. Lemma \lemgrowthimp and Corollary \ref{cor:optgrowth}).\\
We argue in two steps in which we combine \emph{local} with \emph{global} information: First we prove a \emph{local} growth estimate around regular free boundary points in which we obtain the optimal growth estimates. As these bounds however rely on comparison arguments which depend on the free boundary itself, they are \emph{not} uniform in the free boundary points.
By virtue of the $C^{1,\alpha}$ regularity of the (regular) free boundary, these growth estimates can be obtained by an asymptotic expansion of solutions around regular free boundary points. For the identification of the leading order contribution of the expansion and of the corresponding higher order error estimates, we exploit our boundary Harnack estimate in combination with the boundary regularity for $W^{1,p}$ metrics with $p\in(n+1,\infty]$. Here we exploit our main new results on the free boundary regularity.\\
In the second step in our optimal regularity argument for $W^{1,p}$ metrics with $p>2(n+1)$, we then combine the optimal, but non-uniform local with optimal \emph{global} information. For this we rely on Lemma \lemalmosthom which allows us to transfer the local into global information. This is the only point at which we directly return to the arguments from \cite{KRS14}.\\

Finally, in Section \ref{sec:pert} we comment on the stability of the described methods by applying them to variants of the thin obstacle problem. Using the scaling of the Carleman inequality, we first show that it is possible to deal with inhomogeneities (c.f. Section \ref{sec:inhomo}). This then immediately entails that all the previous results remain true for sufficiently regular, non-constant obstacles, although we only require that they are $W^{2,p}$ regular for some $p>2(n+1)$ (c.f. Section \ref{sec:nonfl}).\\
Concluding the section on variants of the thin obstacle problem, we discuss the setting of the \emph{interior} thin obstacle problem (c.f. Section \ref{sec:int_obst}). Here we are confronted with additional difficulties, which arise due to a slightly modified boundary condition (instead of a sign condition on the Neumann derivative, there is a sign condition on the fluxes across the interior boundary). This leads to slight modifications in the derivation of the Carleman inequality from \cite{KRS14} and yields leading order \emph{linear} contributions in the asymptotics of the Neumann derivative (c.f. Proposition \ref{prop:intobst}).

\subsection{Organization of the paper}
After briefly recalling auxiliary results, conventions and the notation from \cite{KRS14} in Section \ref{sec:not}, we begin with the analysis of the regular free boundary in Section \ref{sec:boundary}. Here Proposition \ref{prop:v1} is a crucial technical tool to overcome the difficulties with the low regularity set up. In Section \ref{subsec:Lip} we first prove the Lipschitz regularity of the regular free boundary (c.f. Proposition \ref{prop:Lip}). This is based on the observation that the free boundary is Reifenberg flat, which we exploit in Section \ref{sec:Reifenbergbarrier}, in order to construct an appropriate, sufficiently well scaling barrier function (c.f. Proposition \ref{prop:barrier}). In Section \ref{subsec:C1a} we improve the Lipschitz regularity to gain $C^{1,\alpha}$ regularity (c.f. Proposition \ref{prop:C1a}).\\
In the second part of the paper we return to the study of solutions of (\ref{eq:varcoef}): In Section \ref{sec:optimal} we identify the leading order asymptotics of solutions of the thin obstacle problem (Proposition \ref{prop:asympt}). This allows to derive growth bounds (Corollary \ref{cor:lowerbound} and Corollary \ref{cor:optgrowth}) as well as the optimal regularity of solutions of (\ref{eq:varcoef'}) (c.f. Theorem \ref{thm:optimal_reg}). Finally, in the last part of the article, in Section \ref{sec:pert}, we illustrate how the previous results can be transferred to variations of the thin obstacle problem.

\section{Preliminaries}
\label{sec:prelim}

In this section, we briefly explain our normalizations and notational conventions.

\subsection{Auxiliary results}
We start by recalling that, due to the discussion in Section \secaux, we may, without loss of generality, consider solutions $w$ of (\ref{eq:varcoef'}) with
\begin{itemize}
\item[(A0)] $\left\| w \right\|_{L^2(B_1^+(0))}=1$,
\item[(A1)] $a^{i, n+1}(x',0)=0 \mbox{ on } \R^{n} \times \{0\}$ for $i=1,\ldots, n$,
\item[(A2)] $a^{ij}$ is symmetric and uniformly elliptic with eigenvalues in the interval $[1/2, 2]$.
\end{itemize}

In addition, we in the sequel also make the following hypotheses (where we either assume (A3) or (A3')):
\begin{itemize}
\item[(A3)]  $a^{ij}\in W^{1,p}(B_1^+(0))$ for some $p\in (n+1,\infty]$, 
\item[(A3')] $a^{ij}\in W^{1,p}(B_1^+(0))$ for some $p\in (2(n+1),\infty]$, 
\item[(A4)] $a^{ij}(0)= \delta^{ij}$.
\end{itemize}

These assumptions allow us to reduce (\ref{eq:varcoef'}) to 
\begin{equation}
\begin{split}
\label{eq:varcoef}
\p_i  a^{ij} \p_j  w & = 0 \mbox{ in } B^+_{1}, \\
 w\geq 0,  -\p_{n+1}w\geq 0, \ w (\p_{n+1} w)&= 0 \mbox{ on } B_{1}'.
\end{split}
\end{equation}
Due to the $H^2$ estimates and the almost optimal regularity result of \cite{KRS14}, it is possible to interpret (\ref{eq:varcoef}) not only as a variational inequality in the energy space $H^1(B_1^+)$, but also to understand the equation and its boundary values in a classical pointwise sense.\\

Let us comment on the conditions (A0)-(A4): We recall that it is always possible to achieve (A0) by a suitable normalization. Condition (A1) is a consequence of an appropriate change of coordinates, c.f. Uraltseva \cite{U85}. Also, assumption (A2) and (A4) can always be achieved by an additional affine change of coordinates (and a rescaling) and thus do not pose additional restrictions on our set-up. \\
Conditions (A3) and (A3') are regularity assumptions which allow us to apply the results of \cite{KRS14}. Here condition (A3') is slightly more restrictive. While this assumption is \emph{not} needed in the case of flat obstacles, it becomes necessary in our treatment of \emph{non-flat} obstacles, as a consequence of our strategy of proof: We work on the level of the differentiated equation. Moreover, we stress that the integrability assumption $p\geq 2(n+1)$ yields the embedding $W^{1,p}\hookrightarrow C^{0,1/2}$ which, by interior regularity, is needed (and might also suffice) to derive the (optimal) $C^{1,1/2}$ regularity of solutions to the variable coefficient thin obstacle problem.
Both conditions (A3) and (A3') imply that
\begin{align*}
|a^{ij}(x)-\delta^{ij}| \leq C_n \left\| \nabla a^{ij} \right\|_{L^p(B_1')}|x|^{1-\frac{n+1}{p}} \mbox{ for all } x\in B_{1}^+
\end{align*}
by Morrey's inequality.

\subsection{Notation}
\label{sec:not}
We use the same notation as in \cite{KRS14}, which we briefly recall in the sequel:
\begin{itemize}
\item $\R^{n+1}_+ := \{x\in \R^{n+1}| \ x_{n+1}\geq 0\}$,  $\R^{n+1}_- := \{x\in \R^{n+1}| \ x_{n+1}\leq 0\}$.
\item For points $x\in \R^{n+1}$ we also use the notation $x=(x',x_{n+1})$ or $x=(x'',x_{n},x_{n+1})$, if we want to emphasize the roles of the respective lower dimensional coordinates.
\item Let $x_0=(x_0',0) \in \R^{n+1}_+$. For the upper half-ball of radius $r>0$ around $x_0$ we write $B_r^+(x_0):=\{x\in \R^{n+1}_+| \ |x-x_0| < r \}$; the projection onto the boundary of $\R^{n+1}_+$ is respectively denoted by $B_r'(x_0):=\{x\in \R^{n} \times \{0\}| \ |x-x_0| < r \}$. If $x_0 = (0,0)$ we also write $B_r^+$ and $B_r'$. Analogous conventions are used for balls in the lower half sphere: $B^-_r(x_0)$. Moreover, we use the notation $B_r''(x_0) = \{x\in \R^{n-1}\times \{(x_0)_n\}\times \{0\}| \ |x''-x''_0|<r\}$, where $x_0=(x_0'', (x_0)_n,0), x= (x'', x_n,0)$.
\item Annuli around a point $x_0=(x_0',0)$ in the upper half-space with radii $0<r<R<\infty$ as well as their projections onto the boundary of $\R^{n+1}_+$ are denoted by by $A_{r,R}^+(x_0):= B_{R}^+(x_0)\setminus B_{r}^+(x_0)$ and $A_{r,R}'(x_0):= B_{R}'(x_0)\setminus B_{r}'(x_0)$ respectively. For annuli around $x_0=(0,0)$ we also omit the center point. Furthermore, we set $A_{r,R}(x_0):= A_{r,R}^+(x_0)\cup A_{r,R}^-(x_0)$.
\item We use $\mathcal{C}_\eta(e_n)$ to denote an $(n+1)$-dimensional cone with with opening angle $\eta$ and axis $e_n$. Analogously, $\mathcal{C}'_\eta(e_n)$ refers to a flat (i.e. $n$-dimensional) cone on $\{x_{n+1}=0\}$ with opening angle $\eta$ and axis $e_n$.
\item For $f: \R^{n+1} \rightarrow \R$ we set $\nabla' f$ and $\nabla'' f$ to denote the derivatives with respect to the $x'$ and $x''$ components of $x$.
\item We use $|\cdot|$ to denote the standard Euclidean norm.
\item Distances with respect to a Riemannian metric $g$ (e.g. if they are induced by certain operators as in Section \ref{sec:propv1}) are denoted by $d_g(\cdot, \cdot)$.
\item  $\dist_{H}(X,Y):= \max\{\sup\limits_{x\in X} d(x,Y), \sup\limits_{y\in Y} d(y,X)\}$ denotes the Hausdorff distance of two subsets $X$ and $Y$ in $\R^m$.
\item Let $w:B_{1}^+ \rightarrow \R$ be a solution of (\ref{eq:varcoef}). Then
\begin{itemize}
\item $\Omega_w:= \{x\in \R^{n}\times \{0\}| \ w(x)>0\}$ denotes the \emph{positivity set}.
\item $\Gamma_w:=\partial_{B_1'} \Omega_w$ is the \emph{free boundary}.
\item $\Lambda_w:= B_1' \setminus \Omega_w$ is the \emph{coincidence set}.
\item $\Gamma_{\frac{3}{2}}(w):= \{x \in \Gamma_w| \kappa_x = \frac{3}{2}\} \subset \Gamma_w$ is the \emph{regular set} or the \emph{regular free boundary}. Here $\kappa_x:= \limsup \limits_{r\rightarrow 0} \frac{\ln(r^{-\frac{n+1}{2}}\left\| w \right\|_{L^2(A_{r,2r}^+(x))})}{\ln(r)} $ is the \emph{vanishing order} of $w$ at $x$.
\end{itemize}
\item As in \cite{KRS14} we use the notation $w_{r}(x):= \frac{w(r x)}{ r^{- \frac{n+1}{2} } \left\| w \right\|_{L^2(B_{r}^+)} }$ to denote the $L^2$ normalized rescaling of $w$ at zero. Analogously we consider the $L^2$-normalized blow-ups at arbitrary free boundary points $x_0 \in \Gamma_w$:
$w_{x_0,r}(x):= \frac{w(x_0+r x )}{ r^{- \frac{n+1}{2} } \left\| w \right\|_{L^2(B_{r}^+(x_0))} }$. 
\item We set $w_{3/2}(x):= c_n \Ree(x_n + i x_{n+1})^{3/2}$, where $c_n$ is chosen such that $\left\| w_{3/2} \right\|_{L^2(B_1^+)}=1$. 
\item In the following we often use the abbreviation $\ell_0:= (2\sqrt{n})^{-1}$ as well as $c_{\ast}:=\left\| \nabla a^{ij} \right\|_{L^p(B_1)}$. 
\item We use $L_0 = a^{ij}\p_{ij}$ and $L = \p_i a^{ij} \p_j$ to denote the non-divergence and divergence form operators involved.
\item We reserve the parameter $\epsilon>0$ to quantify the Reifenberg flatness of a domain $\Gamma \subset \R^{n}\times \{0\}$, as well as the parameter $\epsilon_0$ to measure the closeness of a solution of (\ref{eq:varcoef}) to the $L^2$ normalized model solution $w_{3/2}(x)$.
\item We use the notation $A\lesssim B$ to denote that there exists an only dimension dependent constant such that $A \leq C B$. Similar conventions are used for $\gtrsim$.
\end{itemize}

\section{Regularity of the Regular Free Boundary}
\label{sec:boundary}

In this section we deduce the $C^{1,\alpha}$ regularity of the regular part of the free boundary for solutions of the thin obstacle problem (\ref{eq:varcoef}) and thus provide the proof of Theorem \ref{thm:C1a}.\\

We recall that it is possible to classify free boundary points by their vanishing order $\kappa$ (Section 4 of \cite{KRS14}):
\begin{align*}
\Gamma_w = \Gamma_{\frac{3}{2}}(w) \cup \bigcup\limits_{\kappa \geq 2} \Gamma_{\kappa}(w),
\end{align*}
where $\Gamma_{\kappa}(w):= \{x\in \Gamma_w \big| \ \kappa_{x}= \kappa \}$. The set $\Gamma_{\frac{3}{2}}(w)$ is referred to as the \emph{regular set} of $w$ or the \emph{regular free boundary} of $w$. A corresponding point $x\in \Gamma_{\frac{3}{2}}(w)$ is called \emph{regular}. \\

Although we are mainly interested in the \emph{regular} free boundary, in passing, we make the following observation on the Hausdorff-dimension of the \emph{(whole) free boundary}:

\begin{rmk}[Hausdorff dimension of $\Gamma_w$]
\label{rmk:Hausdorff}
We claim that, if the obstacle is zero and the metric $a^{ij}$ is Lipschitz continuous, the Hausdorff-dimension of the \emph{(whole) free boundary} is less than or equal to $n-1$. Indeed, this follows as in Lemma 4.1 and Theorem 8.10 in \cite{SW10}. Similarly as in these results, we prove a slightly stronger statement: We show that
\begin{align*}
\mathcal{H}^{s}(S_{w}\cap B_1') = 0 \mbox{ for all } s>n-1,
\end{align*}
where $S_w:=\{x\in B_1^+| \ w(x)=|\nabla w(x)|=0\}$ in particular contains $\Gamma_w$.\\ 
We only provide a sketch of the proof here and refer to \cite{SW10} and \cite{Si96}, Chapter 3.3 for the details. As in \cite{SW10} we argue in two steps and first show that if $\varphi \in C^{1,\alpha}$ for some $\alpha>0$ is a solution of the \emph{constant} coefficient thin obstacle problem, then the Hausdorff dimension of the associated free boundary $S_{\varphi}\cap B_{1}'$ is less than or equal to $n-1$. \\
The argument for this follows by contradiction: Assuming that the statement of the claim were wrong, there existed $s>n-1$ such that
\begin{align*}
\mathcal{H}^s(S_{\varphi}\cap B_1') > 0.
\end{align*}
By density arguments for the Hausdorff measure (c.f. \cite{E15}, Chapter 2.3, Theorem 2.7) there exists a point $z\in S_{\varphi}\cap B_1'$ such that
\begin{align*}
\theta_{\mu_s}^{*}(S_{\varphi},z):= \limsup\limits_{\rho \rightarrow 0} \rho^{-s}\mu_s(S_{\varphi}\cap B_{\rho}'(z))>0,
\end{align*}
where for $A\subset B_1'$ we have $\mu_s(A):= \inf \sum\limits_{j=1}^{\infty} \rho_j^s$ and $A \subset \bigcup\limits_{j=1}^{\infty} B_{\rho_j}'(y_j)$ and $y_j \in B_{1}'$. Hence there exists a sequence $\sigma_j>0$ with $\sigma_j \rightarrow 0$ such that
\begin{align*}
\lim\limits_{\sigma_j \rightarrow 0} \sigma_j^{-s} \mu_s(S_{\varphi}\cap B_{\sigma_j}'(z))>0.
\end{align*}
Rescaling the function by setting $\varphi_{j}(x):= \frac{\varphi(z+\sigma_j x)}{\sigma_j^{-\frac{n+1}{2}}\|\varphi\|_{L^2(B_{\sigma_j}^+(z))}}$ with $x\in B_{\sigma_j^{-1}(1-|z|)}$ results in
\begin{align*}
\liminf\limits_{j\rightarrow 0} \mu_s(S_{\varphi_j}\cap \overline{B_1'})>0.
\end{align*}
By compactness (c.f. \cite{SW10}, \cite{PSU}) we have that along a subsequence
\begin{align*}
\varphi_{j'} \rightarrow \psi \text{ in } C^1_{loc}(B_1^+),
\end{align*}
where $\psi \in C^{1,\alpha}\cap W^{2,2}$ is a homogeneous solution with degree larger than or equal to $1+\alpha$ and it is normalized such that $\|\psi\|_{L^2(B_1^+)}=1$. Moreover, as in \cite{SW10}
\begin{align*}
\mu_s(S_{\psi}\cap \overline{B_1'})>0.
\end{align*}
We now repeat the outlined argument with $\psi$ instead of $\varphi$. This yields a function $\psi_1$ with $\mu_s(S_{\psi_1}\cap \overline{B_1'})>0$, $0\in S_{\psi_1}$ and $\psi_1$ being invariant under composition with translations in the direction $\lambda z$ for all $\lambda \in \R$. Indeed, the last point follows from the observation that $\mathcal{N}_{\psi_1}(0)= \mathcal{N}_{\psi_1}(z)$, where 
$$\mathcal{N}_{\psi_1}(x):= \lim\limits_{\rho\rightarrow 0} \frac{\rho \int\limits_{B_{\rho}^+(x)}|D\psi_1(y)|^2dy}{\int\limits_{\partial B_{\rho}(x)}|\psi_1 (y)|^2 dy}$$ 
denotes the frequency function (c.f. \cite{SW10}, Section 2 and \cite{PSU}, Chapter 9 for the existence of this limit and its properties). We also note that this is well-defined for $x\in S_{\psi_1}$ and not only for $x\in\Gamma_{\psi_1}$. Similarly as in Remark 2.4 (2) in \cite{SW10} this implies that for the homogeneous homogeneous function $\psi_1$ it holds $\psi_1(x+\lambda z) = \psi_1(x)$ for all $x\in \R^{n+1}_+$. Arguing inductively and carrying out the argument a further $n-1$ times (which is possible by our contradiction argument), we end up with a function $\psi_{n}(x_1,\dots, x_{n+1})=f(x_{n+1})$ which only depends on the $x_{n+1}$-variable and is non-trivial. As it is however also harmonic and satisfies $\psi_{n}(0)=|\nabla \psi_{n}(0)|=0$, this yields a contradiction and hence results in our claim.\\
In a second step, the more general case of a solution $w$ to the \emph{variable} coefficient problem (\ref{eq:varcoef}) with Lipschitz coefficients $a^{ij}$ is treated. Here the claim is that also in this situation the Hausdorff-dimension of $S_w \cap B_{1/2}'$ is less than or equal to $n-1$, i.e. for all $s>n-1$ it holds $\mathcal{H}^{s}(S_w \cap B_{1/2}')=0$. Again this argument follows by contradiction assuming the claim on the Hausdorff dimension were wrong. Then however blow-up argument (in combination with compactness, which holds in the setting of Lipschitz coefficients, c.f. for instance \cite{SW10}) reduces the situation to that of a non-trivial solution to the constant coefficient problem with Hausdorff dimension larger than $n-1$. This however is in contradiction with our first claim.
\end{rmk}

Returning to our main problem, in the following we study the regularity properties of the regular free boundary $\Gamma_{\frac{3}{2}}(w)$. This is divided into several steps: In Section \ref{sec:Reifenberg} we begin by proving that the regular free boundary is a relatively open set of the free boundary and that, due to the good compactness properties of blow-up solutions, it is Reifenberg flat. Then in the following Section \ref{subsec:Lip} we deduce its Lipschitz regularity. Our argument for this strongly relies on the boundary's Reifenberg flatness, as this allows to construct (nearly) optimally scaling lower barrier functions. In Section \ref{subsec:C1a} we exploit the Lipschitz regularity to infer the $C^{1,\alpha}$ regularity of the regular free boundary. Here we argue via a Carleson and a boundary Harnack estimate. Last but not least, in Section \ref{sec:propv1} we provide the proof of one of our most central technical tools, the splitting technique which is stated in Proposition \ref{prop:v1}. 

\subsection{Reifenberg flatness}
\label{sec:Reifenberg}

We begin the investigation of the regular set by showing that it is a relatively open subset of $\Gamma_w$:

\begin{prop}[Relative openness of $\Gamma_{3/2}(w)$]
\label{prop:rel_op}
Let $a^{ij}: B_1^+ \rightarrow \R^{(n+1)\times (n+1)}_{sym}$ be a uniformly elliptic $W^{1,p}$ tensor field with $p\in (n+1,\infty]$. Let $w$ be a solution of (\ref{eq:varcoef}) in $B_{1}^+$. Then $\Gamma_{3/2}(w)$ is a relatively open subset of $\Gamma_w$.
\end{prop}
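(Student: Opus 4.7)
The plan is to combine two facts already established in \cite{KRS14}: the upper semi-continuity of the vanishing order map $\Gamma_w \ni x \mapsto \kappa_x$ (Proposition \propsemicont) and the \emph{gap} in the set of admissible vanishing orders at free boundary points, namely that $\kappa_x \in \{3/2\} \cup [2,\infty]$ with no values attained in the open interval $(3/2,2)$. The latter is a consequence of the blow-up classification recalled in the introduction: every $L^2$-normalized blow-up at a free boundary point is a homogeneous global solution of the flat Signorini problem, and such solutions exist only for homogeneities $3/2$ and integers $\geq 2$ (or for higher degrees $\geq 2$), so the map $\kappa_x$ cannot take values strictly between $3/2$ and $2$.

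Given these two ingredients, the argument is short. Fix $x_0 \in \Gamma_{3/2}(w)$, so $\kappa_{x_0} = 3/2$. Upper semi-continuity gives, for the choice $\varepsilon = 1/4$, a radius $\rho>0$ such that
\begin{equation*}
\kappa_x < \kappa_{x_0} + \tfrac{1}{4} = \tfrac{7}{4} \quad \text{for all } x \in \Gamma_w \cap B_\rho'(x_0).
\end{equation*}
Since $\tfrac{7}{4} < 2$ and $\kappa_x \notin (3/2,2)$, we conclude $\kappa_x = 3/2$ for every $x \in \Gamma_w \cap B_\rho'(x_0)$, that is, $\Gamma_w \cap B_\rho'(x_0) \subset \Gamma_{3/2}(w)$. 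This exhibits an open neighborhood of $x_0$ in $\Gamma_w$ contained in $\Gamma_{3/2}(w)$, which is exactly relative openness.

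There is no real obstacle in this statement itself; the entire difficulty has been pushed into the companion paper \cite{KRS14}, where upper semi-continuity of $\kappa_x$ is derived from the monotonicity/Carleman machinery, and where the blow-up analysis yields the gap between $3/2$ and $2$. The only care needed is to invoke the correct references and to point out that the gap uses the two-dimensional characterization of the $3/2$-homogeneous blow-ups (so in particular no intermediate homogeneities appear in the variable coefficient setting either, via the reduction to constant coefficients at a blow-up point).
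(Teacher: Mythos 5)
Your proof is correct and follows exactly the same route as the paper: upper semi-continuity of $x\mapsto\kappa_x$ (Proposition \propsemicont) combined with the vanishing-order gap $\kappa_x\notin(3/2,2)$ (Proposition \prophomotwo) gives relative openness of $\{\kappa_x<2\}=\Gamma_{3/2}(w)$. The choice $\varepsilon=1/4$ is just a concrete instance of the observation that any threshold strictly below $2$ works.
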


\begin{proof}
By Proposition \propsemicont the mapping $\Gamma_w \ni x\mapsto \kappa_x$ is upper semi-continuous. Therefore, the preimage of the set of free boundary points with $\kappa_x < 2$ is relatively open. Due to the classification of the possible homogeneous solutions and due to the regularity of general solutions, $\kappa_x <2 $ already implies that $\kappa_x = \frac{3}{2}$ (c.f. Proposition \prophomotwo). Hence, we infer that $\Gamma_{\frac{3}{2}}(w)$ is a relatively open set of the free boundary.
\end{proof}

In order to study the regular set in greater detail, we recall a strong compactness result for $L^2$ normalized blow-ups of solutions of (\ref{eq:varcoef}). In contrast to the setting of free boundary points of higher vanishing order, it is possible to show that at \emph{regular} free boundary points any blow-up is a $3/2$-\emph{homogeneous} global solution. \\

For convenience of notation, in the sequel we set 
\begin{align}
\label{eq:modelsol}
w_{3/2}(x):= c_n \Ree(x_n + i x_{n+1})^{3/2} \mbox{ for } x\in B_{1}^+,
\end{align}
where $c_n>0$ is an only dimension dependent constant, which is chosen such that $\left\| w_{3/2} \right\|_{L^2(B_1^+)}=1$. \\
 
Using this convention, we formulate the $L^2$ normalized blow-up result at regular free boundary points:

\begin{lem}
\label{lem:3/2homo}
Let $a^{ij}: B_1^+ \rightarrow \R^{(n+1)\times (n+1)}_{sym}$ be a uniformly elliptic $W^{1,p}$ tensor field with $p\in (n+1,\infty]$. Let $w$ be a solution of (\ref{eq:varcoef}) in $B_{1}^+$.
Consider $x_0\in \Gamma_{3/2}(w)\cap B_{1/2}'$ and $w_{r,x_0}(x):= \frac{w(x_0+rx)}{r^{-\frac{n+1}{2}} \left\| w \right\|_{L^2(B_r^+(x_0))} }$. Then for any sequence $\{r_j\}_{j\in\N}$ with $r_j \rightarrow 0$ there exists a subsequence $\{r_{j_k}\}_{k\in\N}$ and a matrix $Q\in SO(n+1)$ such that
\begin{align}
\label{eq:3/2homo}
w_{r_{j_k},x_0}(x) \rightarrow  w_{3/2}(Q x) \mbox{ in } C^{1}(B_{1/2}^+).
\end{align} 
\end{lem}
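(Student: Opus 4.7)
The plan is to assemble three ingredients: existence of a subsequential blow-up, the classification of $3/2$-homogeneous global solutions, and a compactness/regularity upgrade that promotes convergence from an $L^2$-type notion to $C^1$. The existence of subsequential blow-ups at free boundary points of finite vanishing order is already provided by Proposition \propblowup. Since $x_0 \in \Gamma_{3/2}(w)$ forces $\kappa_{x_0} = 3/2$, any such blow-up limit $w_0$ must be a $3/2$-homogeneous global solution of the constant coefficient Signorini problem associated with the frozen metric $a^{ij}(x_0) = \delta^{ij}$ (using assumption (A4)), and the $L^2$-normalization together with the doubling bound of Proposition \propdoubling guarantees that $w_0$ is nontrivial with $\|w_0\|_{L^2(B_1^+)} > 0$.

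Given such a limit, I would invoke the classification result Proposition \prophomotwo, which identifies every $3/2$-homogeneous global solution of the flat Signorini problem (up to a rotation that preserves the thin manifold $\{x_{n+1}=0\}$) as a constant multiple of $\Ree(x_n + i x_{n+1})^{3/2}$. The $L^2$-normalization pins the constant down to $c_n$, and absorbing the rotation into an orthogonal matrix $Q \in SO(n+1)$ yields $w_0(x) = w_{3/2}(Qx)$.

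The step requiring the most care is promoting the convergence to $C^1(\overline{B_{1/2}^+})$. The rescaled function $w_{r,x_0}$ satisfies (\ref{eq:varcoef}) with coefficients $a^{ij}_r(x) := a^{ij}(x_0 + rx)$, and a direct scaling computation gives
\begin{align*}
\|\nabla a^{ij}_r\|_{L^p(B_2)} \lesssim r^{1 - (n+1)/p},
\end{align*}
which tends to zero as $r \to 0$ because $p > n+1$. In particular hypotheses (A0)--(A4) hold uniformly along the sequence, and the frozen limit is the flat problem. The almost optimal regularity estimates from \cite{KRS14} (which are scale-invariant precisely because of this observation) then provide a uniform $C^{1,\beta}$ bound for $\{w_{r_{j_k}, x_0}\}$ on $\overline{B_{3/4}^+}$. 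Arzel\`a--Ascoli delivers $C^1$ convergence along a further subsequence, and the limit must coincide with the one identified in the preceding step.

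The main anticipated obstacle is the \emph{uniform boundary} $C^{1,\beta}$ estimate on the rescaled sequence: interior estimates are classical, but the Signorini condition forces a combination of boundary regularity for $W^{1,p}$-metric divergence form equations with the almost optimal estimates. Because (A0)--(A4) are preserved under the rescaling up to a vanishing perturbation of the metric, the estimates of \cite{KRS14} should apply uniformly in $r$, which is exactly what is required for the compactness step to close the argument.
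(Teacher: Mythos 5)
Your argument has a genuine gap at the step where you assert that the blow-up limit $w_0$ along the \emph{given} arbitrary sequence of radii must automatically be a $3/2$-homogeneous global solution. Proposition \propblowup supplies a subsequence converging in $C^1$ to a global solution of the constant coefficient Signorini problem, but it does not assert that this limit is homogeneous. What \cite{KRS14} guarantees (cf.\ the discussion around Proposition \lemhomo) is the existence of \emph{some} blow-up sequence along which the limit is $\kappa_{x_0}$-homogeneous --- a strictly weaker statement than claiming every subsequential limit is homogeneous, and that distinction is precisely the content of Lemma \ref{lem:3/2homo}. Since the Carleman framework of \cite{KRS14} deliberately avoids an Almgren monotonicity formula for $W^{1,p}$ metrics, you cannot pass directly from ``$\kappa_{x_0}=3/2$'' to ``any blow-up limit along $\{r_j\}$ is $3/2$-homogeneous,'' and without homogeneity the classification in Proposition \prophomotwo does not apply to $w_0$. (One could attempt to repair this by running Almgren's monotonicity for the constant coefficient limit $w_0$ itself and exploiting the discreteness of the admissible frequency values at $0$ and at $\infty$, but you do not carry out that argument, and it is a non-trivial extra step.)

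The paper's proof takes a different, self-contained route that sidesteps the homogeneity issue entirely. From Remark \rmktwoD it deduces that $w_0$ grows at rate $R^\kappa$ at infinity with $\kappa\in(1,2)$, so the tangential derivatives $\p_e w_0$ grow at most like $R^{\kappa-1}$; the Friedland--Hayman inequality then forces each $\p_e w_0$ to have a fixed sign, which implies (as in Proposition 9.9 of \cite{PSU}) that $w_0$ depends on only two variables up to rotation. Once the problem is two-dimensional and the coincidence set is identified as a half-line via the sign condition and the growth bound, Benedicks's theorem (or a Liouville argument after opening up the slit) determines $\p_n w_0$ and $\p_{n+1}w_0$ explicitly, hence $w_0 = w_{3/2}(Q\,\cdot\,)$. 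Your final paragraph on uniform $C^{1,\beta}$ bounds and Arzel\`a--Ascoli is correct but redundant: $C^1(B_{1/2}^+)$ convergence is already part of what Proposition \propblowup provides.
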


\begin{proof}
The proof of (\ref{eq:3/2homo}) relies on an argument by Andersson \cite{An} and reduces the problem to Benedicks's theorem \cite{Bene80}. For completeness we give a brief sketch of it. \\
To this end, we first note that by Proposition \propblowup there exists a subsequence $\{r_{j_k}\}_{k\in\N}$ such that
\begin{align*}
w_{r_{j_k},x_0} \rightarrow  w_0 \mbox{ in } C^{1}(B_{1/2}^+).
\end{align*} 
Here $w_0$ is a global solution of the constant coefficient thin obstacle problem with $\|w_0\|_{L^2(B_1^+)}=1$. By Remark \rmktwoD we further infer that $w_0$ grows of a rate $R^{\kappa}$ with $\kappa \in (1,2)$ at infinity. Analogously, any tangential derivative $\p_e w_0$, with $e\in S^{n}\cap B_{1}'$, has at most a growth rate of $R^{\kappa-1}$ with $\kappa \in (1,2)$ at infinity.\\
Next we show that there exists $Q\in SO(n+1)$ such that $w_0(x)=w_{3/2}(Qx)$. Indeed, due to the at most $R^{\kappa-1}$, $\kappa \in (1,2)$, growth at infinity of $\p_e w_0$, the Friedland-Hayman inequality \cite{FH76} (see also Lemma 5.1 in \cite{An}), yields that for any tangential vector $e\in S^{n}\cap B_{1}'$ the directional derivative $\p_e w_0$ has a fixed sign in $\R^{n+1}_+$. Thus, arguing as in Proposition 9.9 in \cite{PSU}, we have that $w_0$ only depends on two variables, which, up to a rotation $Q$, can be written as $w_0(x_n,x_{n+1})$. As a consequence of the sign condition on $\p_n w_0$ (without loss of generality we assume $\p_n w_0\geq 0$), the fact that $\p_{n+1}w(0)=0$ (due to the $C^{1}$ convergence), and the growth rate at infinity, the coincidence set $\Lambda_0$ of $w_0$ is the $n$-dimensional half-plane $\Lambda_0=\{x_n\leq 0,x_{n+1}=0\}$. 
Due to the Signorini condition, after an even reflection about $x_{n+1}$, $\p_n w_0$ solves 
\begin{align*}
\D \p_n w_0 &= 0 \mbox{ in } \R^{n+1}\setminus \Lambda_0, \ \p_n w_0 = 0 \mbox{ on } \Lambda_0, \ \p_n w_0\geq 0 \text{ in } \R^{n+1}\setminus \Lambda_0.
\end{align*}
Applying Benedicks's theorem (c.f. Proposition 1 in \cite{An} and also \cite{Bene80}) to $\p_{n}w_0$ (or a Liouville type theorem for harmonic functions after opening up the domain), we have that up to a multiplicative constant $\p_n w_0(x)=\Ree(x_n+i|x_{n+1}|)^{1/2}$. A similar argument applied to $\p_{n+1}w_0$ yields $\p_{n+1}w_0(x)=\Imm (x_n+ix_{n+1})^{1/2}$ in $B_1^+$. Hence, necessarily $w_0(x) =c_n\Ree(x_n+ix_{n+1})^{3/2}= w_{3/2}(x)$ in $B_1^+$(where we used that $\|w_0\|_{L^2(B_1^+)}=\|w_{3/2}\|_{L^2(B_1^+)}=1$). 
\end{proof}

Keeping this compactness result in the back of our minds, we proceed by showing that the regular free boundary is Reifenberg flat. For this we first recall the following definition (c.f. \cite{CKL05}, \cite{LMS12}), which allows us to infer our first regularity result, Proposition \ref{prop:Reifenberg}, for the free boundary.

\begin{defi}[Reifenberg flatness]
\label{defi:Reifenberg}
A locally compact set $\Gamma \subset \R^{m}$ is \emph{$(\delta, R)$ Reifenberg flat} if for every $x_0\in \Gamma$ and every $r\in (0,R]$ there is a hyperplane $L(r,x_0)$ containing $x_0$ such that
\begin{align}
\label{eq:Reifenberg}
\frac{1}{r}\dist_{H} (\Gamma \cap B_r(x_0), L(r,x_0)\cap B_{r}(x_0)) \leq \delta,
\end{align}
where $\dist_{H}(X,Y):= \max\{\sup\limits_{x\in X} d(x,Y), \sup\limits_{y\in Y} d(y,X)\}$ denotes the Hausdorff distance of two subsets $X$ and $Y$ in $\R^m$.
\end{defi}

Recalling the convention from (\ref{eq:modelsol}), we have the following regularity result:

\begin{prop}
\label{prop:Reifenberg}
Let $a^{ij}: B_1^+ \rightarrow \R^{(n+1)\times (n+1)}_{sym}$ be a uniformly elliptic $W^{1,p}(B_1^+)$ tensor field with $p\in (n+1,\infty]$ which satisfies (A1), (A2) and (A4). Let $w$ be a solution of (\ref{eq:varcoef}) in $B_{1}^+$. Then for each $\epsilon>0$ there exists $\delta>0$ such that if
\begin{itemize}
\item[(i)] $\left\| w - w_{3/2} \right\|_{C^1(B_1^+)} \leq \delta$,
\item[(ii)] $\left\| a^{ij} - \delta^{ij} \right\|_{L^{\infty}(B_1^+)} \leq \delta$,
\end{itemize}
then for each $x_0\in \Gamma_{w} \cap B_{1/2}'$ and each radius $r\in(0,1/2)$, there is a rotation $S=S(r,x_0)\in SO(n+1)$ such that
\begin{align}
\label{eq:Reifenb_close}
\left\| w_{r,x_0}(x) - w_{3/2}(S x) \right\|_{C^1(B_{1/2}^+)} \leq \epsilon.
\end{align} 
In particular, if $\delta$ is chosen sufficiently small, then 
$ \Gamma_w \cap B_{1/2}'= \Gamma_{3/2}(w) \cap B_{1/2}'.$
\end{prop}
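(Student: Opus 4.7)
The plan is a compactness-and-contradiction argument. Assume the claim fails: one produces $\epsilon_0>0$, $\delta_k\to 0$, pairs $(w_k,a^{ij}_k)$ satisfying (i)--(ii) with parameter $\delta_k$, free-boundary points $x_k\in\Gamma_{w_k}\cap B_{1/2}'$ and scales $r_k\in(0,1/2)$ for which the $L^2$-normalized rescalings $v_k:=w_{k,r_k,x_k}$ stay at $C^1(B_{1/2}^+)$-distance $>\epsilon_0$ from every $w_{3/2}(S\,\cdot)$, $S\in SO(n+1)$. After passing to a subsequence I may assume $x_k\to x_\infty\in\overline{B_{1/2}'}$ and $r_k\to r_\infty\in[0,1/2]$, and I split on whether $r_\infty>0$ or $r_\infty=0$.

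In the case $r_\infty>0$ the task is free-boundary stability: I show $x_\infty\in\Gamma_{w_{3/2}}=\{x_n=0,\,x_{n+1}=0\}\cap B_{1/2}'$, after which the $C^1$-convergence combined with an explicit calculation using the $3/2$-homogeneity and $x''$-invariance of $w_{3/2}$ gives $v_k\to w_{3/2}$ in $C^1(B_{1/2}^+)$, contradicting the separation with $S=I$. The inclusion $(x_\infty)_n\le 0$ is immediate from $w_{3/2}(x_\infty)=\lim w_k(x_k)=0$. For $(x_\infty)_n\ge 0$ I use that $x_k\in\overline{\Omega_{w_k}}$ together with the Signorini identity $\partial_{n+1}w_k\equiv 0$ on $\Omega_{w_k}$ and Uraltseva's $C^{1,\alpha}$-regularity up to $B_1'$ to conclude $\partial_{n+1}w_k(x_k)=0$; passing to the $C^0$-limit and using $\partial_{n+1}w_{3/2}(x'',x_n,0^+)=-\tfrac{3c_n}{2}\max(-x_n,0)^{1/2}$ forces $(x_\infty)_n\ge 0$.

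The case $r_\infty=0$ is the heart of the argument. Since $p>n+1$, the scaling
\begin{align*}
\|\nabla(a^{ij}_k(x_k+r_k\,\cdot))\|_{L^p(B_R^+)}\lesssim r_k^{\,1-(n+1)/p}\|\nabla a^{ij}_k\|_{L^p(B_1^+)}
\end{align*}
shows that the rescaled metric converges to $\delta^{ij}$ in $W^{1,p}_{\mathrm{loc}}$, so the compactness statement Proposition \propblowup, applied to $v_k$ (which is $L^2$-normalized in $B_1^+$ and has $0\in\Gamma_{v_k}$), yields a $C^1_{\mathrm{loc}}$-limit $v_\infty$ that is a global constant-coefficient Signorini solution with $0\in\Gamma_{v_\infty}$. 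To identify $v_\infty$ with a $3/2$-homogeneous profile I invoke the almost-monotone frequency function of \cite{KRS14}: at a fixed radius $\rho\in(0,1/8)$ the $C^1$-closeness $w_k\to w_{3/2}$ around $x_k\to x_\infty\in\Gamma_{w_{3/2}}$ (obtained exactly as in the first case) gives $N_{w_k}(\rho,x_k)\to N_{w_{3/2}}(\rho,x_\infty)=3/2$. Combining this with the almost-monotonicity on the interval $[r_k,\rho]$ and the universal lower bound $\kappa_x\ge 3/2$ (Proposition \prophomotwo) forces $N_{v_\infty}(r,0)\equiv 3/2$, so that $v_\infty$ is $3/2$-homogeneous. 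The Benedicks-type argument recorded in Lemma \ref{lem:3/2homo} then identifies $v_\infty=w_{3/2}(Q\,\cdot)$ for some $Q\in SO(n+1)$, contradicting the separation.

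The addendum $\Gamma_w\cap B_{1/2}'=\Gamma_{3/2}(w)\cap B_{1/2}'$ is immediate: the first part of the proposition, applied at an arbitrary $x_0\in\Gamma_w\cap B_{1/2}'$ and all scales $r\in(0,1/2)$, shows every rescaling to be $\epsilon$-close to a $3/2$-homogeneous profile, which forces $\kappa_{x_0}=3/2$. The expected main obstacle lies in the zooming regime $r_\infty=0$ of the second case: one must transfer the $C^1$-information, available only at a fixed scale, down to the vanishing scale $r_k$, and rule out blow-ups of strictly higher vanishing order. This is precisely what the almost-monotonicity of the \cite{KRS14} frequency—tailored to $W^{1,p}$ metrics with $p>n+1$—accomplishes, at the price of a controlled error that vanishes in the compactness limit.
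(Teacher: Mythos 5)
Your Case $r_\infty>0$ is fine and essentially what the paper does. The problem is in the zooming case $r_\infty=0$, and it is not a small one: you invoke an ``almost-monotone frequency function of \cite{KRS14},'' but no such tool exists there. That paper is explicitly built around Carleman estimates \emph{as an alternative to} Almgren-type monotonicity formulas (the frequency-function approach in this low-regularity setting is in Garofalo--Smit Vega Garcia \cite{GSVG14}, and requires $C^{0,1}$ metrics rather than $W^{1,p}$). You have therefore outsourced the heart of the argument to a lemma that does not apply.

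There is a second, linked gap. To pass to the limit $v_k\to v_\infty$ you cite Proposition \propblowup, but that result concerns a fixed solution and a sequence of radii at a fixed free-boundary point; here the solution, the metric, and the center all vary with $k$. What guarantees compactness of $\{v_k\}$ is a \emph{uniform} doubling estimate, which in the paper comes from a preliminary non-degeneracy claim proved via Corollary \corconsequenceCarl: under hypotheses (i)--(ii), for every $\lambda>0$ one can make $\delta$ small so that $r^{-\frac{n+1}{2}}\|w\|_{L^2(B_r^+(x_0))}\ge \tfrac12 r^{3/2+\lambda}$ for \emph{all} $x_0\in\Gamma_w\cap B_{1/2}'$ and \emph{all} $r\in(0,1/2)$, uniformly in $w$. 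This does double duty: it gives the uniform doubling (hence the compactness of $v_k$ across the sequence), and -- together with the uniform upper growth bound of Lemma \lemgrowthimp -- it pins the growth rate of the limit $v_\infty$ at infinity to $R^{\kappa}$ with $\kappa\in(1,2)$, after which the Benedicks-type Lemma \ref{lem:3/2homo} identifies $v_\infty$ as a rotated $w_{3/2}$. Your proposal skips this Carleman-based step entirely, so both the compactness and the identification of the blow-up limit are left unsupported. Note also that the ``in particular'' addendum $\Gamma_w\cap B_{1/2}'=\Gamma_{3/2}(w)\cap B_{1/2}'$ falls out \emph{from this non-degeneracy claim directly} (every vanishing order is $<2$, hence $=3/2$ by \prophomotwo) rather than as a corollary of the convergence statement; structuring it your way makes the logic circular, since you need $\Gamma_w\cap B_{1/2}'\subset\Gamma_{3/2}(w)$ already to make sense of the blow-up at $x_k$.
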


\begin{proof}
We argue in two steps. First, we derive a non-degeneracy condition at free boundary points, which is crucial in order to obtain good compactness properties. This first step also immediately implies that $\Gamma_{3/2}(w) \cap B_{1/2}' = \Gamma_w \cap B_{1/2}'$. Then, in the second step, we argue that (\ref{eq:Reifenb_close}) holds.\\

\emph{Step 1: Compactness.}
We begin by proving the following claim:
\begin{claim*}
Assume that the conditions of Proposition \ref{prop:Reifenberg} are satisfied.
Then for each $\lambda>0$, if $\delta=\delta(n,p,\lambda)$ is chosen sufficiently small, for all $r\in (0,1/2)$ and for all $x_0 \in \Gamma_w \cap B_{1/2}'$, we have that
\begin{align}
\label{eq:nondeg_contr}
r^{-\frac{n+1}{2}}\left\| w \right\|_{L^2(B_{r}^+(x_0))} \geq \frac{1}{2}r^{\frac{3}{2} + \lambda}.
\end{align}
\end{claim*}
We prove the claim as a consequence of the closeness assumption (i) to the model solution and of Corollary \corconsequenceCarl for a suitable choice of $\tau$. \\

Consider $R:= (4\delta)^{\frac{2}{n+4}}$ with $0<\delta<\left(\frac{1}{2}\right)^{\frac{n}{2}+4}$. Then, by the triangle inequality and by condition (i), we have that for any $x_0\in \Gamma_w\cap B_{1/2}'$
\begin{align*}
\left\| w \right\|_{L^2(B_{r}^+(x_0))} \geq  \left\| w_{3/2} \right\|_{L^2(B_{r}^+(x_0))} - \delta  \geq \left\| w_{3/2} \right\|_{L^2(B_{r/2}^+(\bar{x}_0))} - \delta.
\end{align*}
for all $r\in [R,1)$. Here $\bar{x}_0\in \Gamma_{w_{3/2}}\cap B_{1/2}'$ denotes a point such that $|\bar{x}_0 -x_0| \leq \delta$ (which exists by our assumption (i)).
By virtue of the scaling of $w_{3/2}$ at free boundary points and by the choice of $R$, we thus infer that for all $r\in [R,1/2)$
\begin{align}
\label{eq:low_est}
r^{- \frac{n+1}{2}}\left\| w \right\|_{L^2(B_{r}^+(x_0))} \geq \frac{1}{2} r^{\frac{3}{2}}.
\end{align}
This shows (\ref{eq:nondeg_contr}) for all $r\in [R,1/2)$. \\
We now argue that \eqref{eq:nondeg_contr} is also true for $r\in (0,R)$:
Let $r\in(0,R)$ and $\tau= \frac{1}{1+c_0\frac{\pi}{2}}\frac{\ln(\left\| w \right\|_{L^2(B_r^+(x_0))})-\ln (r)}{\ln(r)}$ (where $c_0$ is the constant in Corollary \corconsequenceCarl). Applying Corollary \corconsequenceCarl at the scales $r, R$ and $1/2$ (where $R$ is defined as above) yields:
\begin{align*}
e^{\tau \tilde{\phi}(\ln R)}R^{-1}|\ln R|^{-2}\left\| w \right\|_{L^2(A_{R,2R}^+(x_0))} &\leq C(n,p) \left( e^{\tau \tilde{\phi}(\ln r)}r^{-1} \left\| w\right\|_{L^2(B_r^+(x_0))} \right.\\
& \quad \left. + e^{\tau \tilde{\phi}(\ln(1/4))} \left\| w\right\|_{L^2(B_{1/2}^+(x_0))} \right)\\
& \leq C(n,p).
\end{align*}
Here the last line follows from the definition of $\tau$. As a result,
\begin{align*}
\left\| w \right\|_{L^2(A_{R,2R}^+(x_0))} \leq C(n,p) e^{-\tau \tilde{\phi}(\ln R)}R |\ln R|^2.
\end{align*}
Combining this with (\ref{eq:low_est}) and that $\tilde{\phi}(t)=-(1+c_0\frac{\pi}{2})t-c_0\left(1+\ln(-t)+\mathcal{O} (\frac{1}{t})\right)$ as $t\rightarrow -\infty$, we infer that for our choice of $R=(4\delta)^{\frac{2}{n+4}}$
\begin{align*}
\frac{1}{2}R^{\frac{n+4}{2}} \leq \left\| w \right\|_{L^2(A_{R,2R}^+(x_0))} \leq C(n,p) R^{\tau(1+c_0\frac{\pi}{2})}e^{\frac{C\tau c_0}{|\ln R|}} R|\ln R|^{2+c_0\tau}.
\end{align*}
Hence, using that $c_0,\tau\leq 1$
\begin{align*}
&\left( 1 +c_0 \frac{\pi}{2} +  \frac{C c_0}{|\ln R|}  \right)\tau + 1  \leq \frac{n+4}{2} - \frac{\ln(C(n,p))}{\ln(R)} - 4\frac{\ln(|\ln R|)}{\ln(R)} ,
\end{align*}
which, by plugging in the expression for $\tau$, can be rewritten as 
\begin{align*}
&\left(1 + \frac{Cc_0}{(1+c_0 \frac{\pi}{2})|\ln R|} \right)\ln(\left\| w \right\|_{L^2(B_r^+(x_0))}) \ \\
&  \geq \left( \frac{n+4}{2}  - \frac{\ln(C(n,p))}{\ln(R)} - 4 \frac{\ln|\ln(R)|}{\ln(R)} + \frac{Cc_0}{1+c_0 \frac{\pi}{2}|\ln R|}\right) \ln(r).
\end{align*}
Thus, a sufficiently small choice of $R$ (and thus $\delta $ depending on $\lambda$) then yields \eqref{eq:nondeg_contr}. Recalling that $r\in(0,R)$ was chosen arbitrarily and combining the just derived estimate with (\ref{eq:low_est}), finally yields the full non-degeneracy condition (\ref{eq:nondeg_contr}) (i.e. for the full range of radii $r\in(0,1/2)$) and hence concludes the proof of the claim.\\

As a direct consequence of the claim, we note that any free boundary point $x_0\in \Gamma_w \cap B_{1/2}'$ has vanishing order at most $\frac{3}{2}+\lambda$. By choosing $\lambda < \frac{1}{2}$, we infer that $\kappa_{x_0}\in [0,2)$. By Proposition \prophomotwo this implies that $\kappa_{x_0}= 3/2$. Therefore, $\Gamma_w \cap B_{1/2}' = \Gamma_{3/2}(w)\cap B_{1/2}'$.\\

\emph{Step 2: Proof of (\ref{eq:Reifenb_close}).} 
We argue by contradiction. 
Suppose that (\ref{eq:Reifenb_close}) were wrong. Then there existed a parameter $\epsilon_0>0$, a sequence $\delta_k\rightarrow 0$ and a sequence, $w^k$, of solutions to the thin obstacle problem with associated metrics $a^{ij}_k$, i.e.
\begin{align*}
\p_i a^{ij}_k \p_j w^k = 0 \text{ in } B_1^+,\\
w^k\geq 0, \ -\partial_{n+1} w^k\geq 0, \ w^k \p_{n+1}w^k=0 \text{ on } B'_1,
\end{align*} 
as well as a sequence of radii $r_k \in (0,1/2)$ and a sequence of points $x_k \in \Gamma_{w^k}\cap B_{1/2}'$ such that 
\begin{itemize}
\item[(i)] $\|w^k - w_{3/2}\|_{C^1(B_{1}^+)}\leq \delta_k$, 
\item[(ii)] $\|a^{ij}_k-\delta^{ij}\|_{L^{\infty}(B_1^+)}\leq \delta_k$,
\end{itemize}
but for all $S\in SO(n+1)$
\begin{align}
\label{eq:contra}
\left\| w_{r_k,x_k}^k(x) - w_{3/2}(S x) \right\|_{C^1(B_{1/2}^+)} \geq \epsilon_0.
\end{align} 
We can also assume that $\Gamma_{w^k}\cap B_{1/2}'  \ni x_k \rightarrow x_0 \in \Gamma_{w_{3/2}}\cap \overline{B_{1/2}'}$ and that $r_k \rightarrow r_0$.\\

We now show how the compactness statement of step 1 yields a contradiction to our assumption (\ref{eq:contra}). For this we consider the $L^{2}$ normalized rescalings, $w_{r,\bar{x}}^k(x)$, of $w$ at a free boundary point $\bar{x}\in \Gamma_{w^k}\cap B_{1/2}'$. As the conditions of the claim from step 1 are satisfied at any free boundary point $\bar{x}\in \Gamma_{w^k}\cap B_{1/2}'$, the doubling property (c.f. Proposition \propdoubling) is satisfied uniformly in the choice of the free boundary point $\bar{x}\in \Gamma_{w^k}$ and in the sequence $w^k$. Hence, considering the sequences $r_k$, $x_k$ from above, entails that on the one hand $w_{r_k,x_k}^k(x) $ is bounded in $H^1(B_1^+)$. Therefore, up to a subsequence it converges strongly in $L^2(B_1^+)$ to a limit $w_0(x)\in H^1(B_1^+)$ with $\|w_0\|_{L^2(B_1^+)}=1$. By the $L^2-C^{1,\alpha}$ estimates this also entails that $w_{r_k,x_k}^k(x) \rightarrow w_0$ in $C^1(B_{1/2}^+)$. 
On the other hand, we recall the $C^1$ convergence 
\begin{align}
\label{eq:converge}
w^k (x)\rightarrow w_{3/2}(x),
\end{align}
which follows from (i). We now distinguish two cases:
\begin{itemize}
\item[Case 1:] $r_0>0$. In this case we observe that due to (\ref{eq:converge})
\begin{align*}
w^k(x_k + r_k x) &\rightarrow w_{3/2}(x_0+ r_0 x) = r_0^{3/2} w_{3/2}(x),\\
r^{-\frac{n+1}{2}}_k \left\| w^k\right\|_{L^2(B_{r_k}^+(x_k))} &\rightarrow r^{-\frac{n+1}{2}}_0 \left\| w_{3/2}\right\|_{L^2(B_{r_0}^+(x_0))} = r^{3/2}_0,
\end{align*}
where in the last line we used the $L^2$ normalization of $w_{3/2}$ and that $w_{3/2}$ is homogeneous of degree $3/2$. Consequently, $w_{r_k, x_k}^k(x) \rightarrow w_{3/2}(x)$ in $C^1(B_{1/2}^+)$, which is a contradiction to (\ref{eq:contra}).
\item[Case 2:] $r_0=0$. In this case we note that, since $w^k$ is a solution of the thin obstacle problem, the same is true for $w_{r_k,x_k}^k(x)$. Moreover, by assumption (ii) $a^{ij}_k \rightarrow \delta^{ij}$, which implies that $w_0(x)$ is a nontrivial global solution of the constant coefficient thin obstacle problem which satisfies $\|w_0\|_{L^2(B_1^+)}=1$. By \eqref{eq:nondeg_contr} and the uniform upper growth estimate of Lemma \lemgrowthimp, the growth of $w_0$ at infinity is of the rate $R^\kappa$ with $\kappa\in (1,2)$. Thus, by the proof of Lemma \ref{lem:3/2homo}, $w_0(x)= w_{3/2}(S x)$ for some rotation $S\in SO(n+1)$. This yields a contradiction to (\ref{eq:contra}).
\end{itemize}
Combining the previous two cases shows (\ref{eq:Reifenb_close}).
\end{proof}

Proposition \ref{prop:Reifenberg} immediately entails the following Corollary:

\begin{cor}
\label{cor:Reifenberg}
Let $a^{ij}: B_1^+ \rightarrow \R^{(n+1)\times (n+1)}_{sym}$ be a uniformly elliptic $W^{1,p}(B_1^+)$ tensor field with $p\in (n+1,\infty]$ which satisfies (A1), (A2) and (A4). Let $w$ be a solution of (\ref{eq:varcoef}) in $B_{1}^+$. For each $\epsilon>0$ there exists $\delta>0$ such that if
\begin{itemize}
\item[(i)] $\left\| w - w_{3/2} \right\|_{C^1(B_1^+)} \leq \delta$,
\item[(ii)] $\left\| a^{ij} - \delta^{ij} \right\|_{L^{\infty}(B_1^+)} \leq \delta$,
\end{itemize} 
then the free boundary $\Gamma_w\cap B_{1/2}'$ is $(\epsilon, 1/2)$ Reifenberg flat.
\end{cor}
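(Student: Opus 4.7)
The corollary is essentially a direct consequence of Proposition \ref{prop:Reifenberg}: the approximating hyperplane demanded by Definition \ref{defi:Reifenberg} will be taken as the free boundary of the model solution $w_{3/2}(S\cdot)$, translated to $x_0$ and rescaled, and Hausdorff-closeness will be extracted from the $C^1$ proximity of $w_{r,x_0}$ to $w_{3/2}(S\cdot)$.

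Given $\epsilon>0$, I would apply Proposition \ref{prop:Reifenberg} with a smaller parameter $\epsilon'=\epsilon'(\epsilon,n)$ to obtain a $\delta>0$. Then for each $x_0\in\Gamma_w\cap B_{1/2}'$ and each $r\in(0,1/2)$ there is a rotation $S=S(r,x_0)\in SO(n+1)$ with $\|w_{r,x_0}-w_{3/2}(S\cdot)\|_{C^1(B_{1/2}^+)}\leq\epsilon'$. As in Lemma \ref{lem:3/2homo}, the limit $w_{3/2}(S\cdot)$ must itself be a thin obstacle solution, so $S$ preserves the thin space $\{x_{n+1}=0\}$, and the free boundary of $w_{3/2}(S\cdot)$ in $\{x_{n+1}=0\}$ is an $(n-1)$-dimensional affine hyperplane $\widehat L_{r,x_0}$ through the origin with unit normal $q\in S^{n-1}\cap\{x_{n+1}=0\}$. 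Setting $L(r,x_0):=x_0+r\widehat L_{r,x_0}$ then produces a plane containing $x_0$, which is the candidate for Definition \ref{defi:Reifenberg}.

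The core estimate is
$$\dist_H\bigl(\Gamma_{w_{r,x_0}}\cap B_{1/2}',\ \widehat L_{r,x_0}\cap B_{1/2}'\bigr)\lesssim\epsilon,$$
which by scale invariance transfers back to the original scale and yields the required Reifenberg inequality (up to a harmless adjustment of the admissible radius range). I would establish it by two one-sided bounds. (i) If $y\in\Gamma_{w_{r,x_0}}\cap B_{1/2}'$ and $\alpha:=q\cdot y$, the case $\alpha>0$ is immediate from $w_{r,x_0}(y)=0$, $w_{3/2}(Sy)=c_n\alpha^{3/2}$ and the $C^1$ closeness, giving $\alpha\lesssim(\epsilon')^{2/3}$; in the case $\alpha<0$ the point $Sy$ lies in the coincidence set of the model, $w_{3/2}(S\cdot)$ is quantitatively small on a small half-ball around $y$ (its size is controlled by $|\alpha|^{1/2}z_{n+1}$ for $z\in B_\rho^+(y)$, $\rho<|\alpha|/2$), and comparison with the uniform non-degeneracy of $w_{r,x_0}$ at its free boundary points established in Step 1 of the proof of Proposition \ref{prop:Reifenberg} forces $|\alpha|$ to be small. (ii) For $z\in\widehat L_{r,x_0}\cap B_{1/2}'$ I would move along the in-plane normal $q$: along $\gamma(t)=z+tq$ one has $w_{3/2}(S\gamma(t))=c_nt_+^{3/2}$, and $C^1$-closeness (with $\epsilon'$ chosen sufficiently small) fixes the sign of $w_{r,x_0}(\gamma(\pm\epsilon))$, producing by continuity a free boundary point within Euclidean distance $\epsilon$ of $z$.

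The decisive step is the $\alpha<0$ case of (i): the limit $w_{3/2}(S\cdot)$ vanishes identically on its coincidence set in the thin space, so $C^1$-closeness alone cannot prevent $\Gamma_{w_{r,x_0}}$ from drifting arbitrarily far into that coincidence set. This is precisely where the \emph{uniform} non-degeneracy proven in Step 1 of Proposition \ref{prop:Reifenberg} is indispensable; it supplies a quantitative lower bound on $\|w_{r,x_0}\|_{L^\infty(B_\rho^+(y))}$ at every free boundary point $y$ that is incompatible with $w_{3/2}(S\cdot)$ being too small on $B_\rho^+(y)$. Once this incompatibility is pinned down, the two one-sided bounds combine into the Hausdorff estimate, and rescaling delivers the claimed $(\epsilon,1/2)$ Reifenberg flatness.
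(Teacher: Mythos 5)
Your overall plan is the same as the paper's (which is only a few lines: apply Proposition \ref{prop:Reifenberg} to get a rotation $S(r,x_0)$, take $L(r,x_0)=\{x_0+x\in\R^n\times\{0\}\,|\,x\cdot(S(r,x_0)e_n)=0\}$, and read off the Hausdorff estimate from the $C^1$ closeness). Your one-sided bound in (ii) and your $\alpha>0$ bound in (i) are fine.

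The $\alpha<0$ branch of (i), however, is both mischaracterized and, as written, does not close. You assert that ``$C^1$-closeness alone cannot prevent $\Gamma_{w_{r,x_0}}$ from drifting into the coincidence set,'' but the $C^1$ norm controls the gradient, and that is exactly what prevents the drift. At any free boundary point $y$ of the thin obstacle problem one has $\nabla w_{r,x_0}(y)=0$ (the tangential derivatives vanish because $y$ is a minimum of $w_{r,x_0}|_{\{x_{n+1}=0\}}$, and $\partial_{n+1}w_{r,x_0}(y)=0$ by continuity from the positivity set under the Signorini conditions). Meanwhile, for the rotated model one computes on the thin space
\[
|\nabla\bigl(w_{3/2}(S\cdot)\bigr)(y)|=|\nabla w_{3/2}(Sy)|=\tfrac{3}{2}c_n\,|(Sy)_n|^{1/2}=\tfrac{3}{2}c_n\,|q\cdot y|^{1/2},
\]
irrespective of the sign of $q\cdot y$ (the $\partial_n$-component carries the size when $(Sy)_n>0$, the $\partial_{n+1}$-component when $(Sy)_n<0$). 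The $C^1$ bound $\|w_{r,x_0}-w_{3/2}(S\cdot)\|_{C^1(B_{1/2}^+)}\le\epsilon'$ then gives $|q\cdot y|\le(2\epsilon'/(3c_n))^2$ in one line and for \emph{both} signs of $\alpha$, which is what the paper's ``due to the closeness'' is implicitly invoking. Your substitute argument via the non-degeneracy from Step~1 of Proposition \ref{prop:Reifenberg} does not yield a contradiction: the $C^0$ bound $\|w_{r,x_0}\|_{L^\infty(B_\rho^+(y))}\lesssim |\alpha|^{1/2}\rho+\epsilon'$ for $\rho<|\alpha|/2$ is perfectly compatible with an $L^2$ non-degeneracy of order $\rho^{(n+1)/2}\rho^{3/2+\lambda}$, since with $\rho\sim|\alpha|$ both sides scale like $|\alpha|^{3/2}$ up to a $\lambda$-small exponent; moreover transferring the non-degeneracy from $w$ to the $L^2$-normalized rescalings $w_{r,x_0}$ introduces a factor $r^{\lambda}$ that further weakens the lower bound. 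So the ``decisive step'' as you set it up is both unnecessary and unproven; replace it with the gradient estimate above.
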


\begin{proof}
We regard $\Gamma_{w}\cap B_{1/2}'$ as a subset of $\R^{n} \times \{0\}$. By Proposition \ref{prop:Reifenberg}, we have that for each $r\in(0,1/2)$ and $x_0\in \Gamma_{w}\cap B_{1/2}'$, there exists $S(r,x_0)\in SO(n+1)$ with
\begin{align}
\label{eq:close1}
\| w_{x_0, r}( S(r,x_0)x) - w_{3/2}(x) \|_{C^1(B_1^+)} \leq \epsilon.
\end{align}
Consider $L(r,x_0):=\{x_0 + x\in \R^{n}\times \{0\}| \  x\cdot (S(r,x_0) e_{n}) = 0\}$. Due to (\ref{eq:close1}), we obtain that
\begin{align*}
\frac{1}{r}\dist_{H} (\Gamma_w \cap B_r(x_0), L(r,x_0)\cap B_{r}(x_0)) \leq \epsilon,
\end{align*}
which shows the $(\epsilon,1/2)$ Reifenberg flatness of $\Gamma_{w}\cap B_{1/2}'$.
\end{proof}

By using an $L^2$ normalized blow-up, this can be transferred to obtain the Reifenberg flatness of the regular free boundary of a general solution of the thin obstacle problem (which does not necessarily satisfy the closeness assumption (i)):

\begin{cor}[Reifenberg flatness of $\Gamma_{3/2}(w)$]
\label{cor:Reifenberg2}
Let $a^{ij}: B_1^+ \rightarrow \R^{(n+1)\times (n+1)}_{sym}$ be a uniformly elliptic $W^{1,p}$ tensor field with $p\in (n+1,\infty]$. Let $w$ be a solution of (\ref{eq:varcoef}) in $B_{1}^+$. 
For any $\epsilon>0$ and any $K\Subset \Gamma_{3/2}(w)$, there exists a radius $r_0= r_0(\epsilon,K,w)\in (0,1)$, such that $\Gamma_{3/2}(w)\cap K$ is $(\epsilon,r_0)$-Reifenberg flat.
\end{cor}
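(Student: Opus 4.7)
The plan is to reduce the uniform Reifenberg flatness on $K$ to the pointwise statement of Corollary \ref{cor:Reifenberg} via an $L^2$-normalized rescaling at each point of $K$, with uniformity in the base point supplied by a compactness argument. Fix $\epsilon>0$ and let $\delta=\delta(\epsilon)>0$ be the parameter produced by Corollary \ref{cor:Reifenberg}. For each $x_0\in K$ I first perform an affine change of coordinates centered at $x_0$ (smooth in $x_0$ and uniformly bounded over $K$) so that (A1), (A2) and (A4) hold at the new origin. The rescaled metric $a^{ij}_{x_0,r}(y):=a^{ij}(x_0+ry)$ then satisfies $\|a^{ij}_{x_0,r}-\delta^{ij}\|_{L^{\infty}(B_1^+)}\lesssim c_{\ast}r^{1-(n+1)/p}$ by Morrey's embedding, so hypothesis (ii) of Corollary \ref{cor:Reifenberg} is met with parameter $\delta$ as soon as $r\leq \bar r_1(\delta,c_{\ast},p)$.

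For hypothesis (i), Lemma \ref{lem:3/2homo} combined with a subsequence argument gives, for each fixed $x_0\in K$, a threshold $\bar r_2(x_0,\delta)>0$ below which one can choose a rotation $Q=Q(x_0,r)\in SO(n+1)$ (absorbed into the affine change above) so that $\|w_{x_0,r}-w_{3/2}\|_{C^1(B_1^+)}\leq\delta$. The main obstacle, and the crucial step, is upgrading this to a threshold $\bar r_2(\delta,K)$ independent of the base point $x_0\in K$. I would argue exactly as in Step~2 of the proof of Proposition \ref{prop:Reifenberg} by contradiction: if no such uniform threshold existed, sequences $x_k\in K$ and $r_k\to 0$ would produce rescalings $w_{x_k,r_k}$ with $\|w_{x_k,r_k}-w_{3/2}(Q\cdot)\|_{C^1(B_1^+)}>\delta$ for every $Q\in SO(n+1)$. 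Passing to subsequences with $x_k\to x_{\ast}\in K\subset\Gamma_{3/2}(w)$, and using the uniform non-degeneracy and doubling estimates at points of frequency $3/2$ (Proposition \propdoubling together with the Claim in the proof of Proposition \ref{prop:Reifenberg}), the family $\{w_{x_k,r_k}\}$ is pre-compact in $C^1_{\mathrm{loc}}(\R^{n+1}_+)$; any cluster point is a global constant-coefficient solution with growth rate in $(1,2)$ at infinity, hence by the Friedland--Hayman plus Benedicks argument from the proof of Lemma \ref{lem:3/2homo} it must coincide with some $w_{3/2}(Q\cdot)$, contradicting the assumption.

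Setting $r_1:=\min\{\bar r_1,\bar r_2\}$ and applying Corollary \ref{cor:Reifenberg} to the rescaled problem centered at each $x_0\in K$ at scale $r_1$, I obtain $(\epsilon,1/2)$-Reifenberg flatness of the rescaled free boundary in $B_{1/2}'$; evaluated at the rescaled origin (corresponding to $x_0$) and scaled back, this gives $r^{-1}\dist_{H}(\Gamma_w\cap B_r(x_0), L(r,x_0)\cap B_r(x_0))\leq\epsilon$ for every $x_0\in K$ and every $r\in(0,r_0]$ with $r_0:=r_1/2$. Finally, since $K\Subset\Gamma_{3/2}(w)$ and $\Gamma_{3/2}(w)$ is relatively open in $\Gamma_w$ (Proposition \ref{prop:rel_op}), further shrinking $r_0$ if necessary ensures $\Gamma_w\cap B_{r_0}(x_0)\subset\Gamma_{3/2}(w)$ for all $x_0\in K$, so the above estimate transfers from $\Gamma_w$ to $\Gamma_{3/2}(w)\cap K$ and completes the proof.
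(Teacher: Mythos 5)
Your proposal is correct and essentially mirrors the paper's proof: both establish a uniform $C^1$-closeness of the $L^2$-normalized rescalings $w_{x_0,r}$ to a rotated $w_{3/2}$ over the compact set $K$ via a contradiction/compactness argument that relies on Lemma~\ref{lem:3/2homo}, and then reduce to Corollary~\ref{cor:Reifenberg} by rescaling. The only minor divergence is that for $C^1_{\rm loc}$-precompactness of $\{w_{x_k,r_k}\}$ at variable centers the paper invokes Proposition~\propdoubling together with Corollary~\corlowunif, whereas you cite the Claim inside the proof of Proposition~\ref{prop:Reifenberg}; the latter can be made to work after a preliminary rescaling around the limit point $\bar x$ (so that its closeness hypotheses become available), but the paper's reference is the more direct justification.
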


\begin{proof}
\emph{Step 1: Uniformity in compact sets.}
As in Proposition \ref{prop:Reifenberg}, the Corollary reduces to showing the following claim:
\begin{claim*}
\label{claim:Reifenberg2}
Let $a^{ij}: B_1^+ \rightarrow \R^{(n+1)\times (n+1)}_{sym}$ be a uniformly elliptic $W^{1,p}$ tensor field with $p\in (n+1,\infty]$. Let $w$ be a solution of (\ref{eq:varcoef}) in $B_{1}^+$. Then, for any $\epsilon>0$ and $K\Subset \Gamma_{3/2}(w)$, there exists $r_0=r_0(\epsilon, w, K)$  such that for any $x_0\in K$ and $r\leq r_0$, there is a matrix $S=S(r,x_0)\in SO(n+1)$ such that
\begin{align}
\label{eq:close}
\|w_{x_0,r} (Sx)-w_{3/2}(x)\|_{C^1(B_1^+)}\leq \epsilon,
\end{align}
where $w_{3/2}(x)$ is as above.
\end{claim*}
In order to infer the claim, we argue by contradiction and assume that the statement were wrong. Then there were an $\epsilon>0$, a sequence $r_k$ with $r_k\rightarrow 0$ and and a sequence $x_k$ with $x_k \in K$ such that for all $S\in SO(n+1)$
\begin{align*}
\| w_{x_k, r_k}(S x) - w_{3/2}(x) \|_{C^1(B_1^+)} > \epsilon \quad \mbox{ for all } k\in \N.
\end{align*}
By compactness of $K$ we may assume that $x_k \rightarrow \bar{x}$. However, by compactness (which for variable centers we infer from a combination of Proposition \propdoubling and Corollary \corlowunif) of the sequence $w_{x_k, r_k}$ and by Lemma \ref{lem:3/2homo}, we obtain a subsequence, which we do not relabel, and a rotation $S$ such that
\begin{align*}
\| w_{x_k, r_k}( x) - w_{3/2}(S x) \|_{C^1(B_1^+)} \rightarrow 0.
\end{align*}
\emph{Step 2: Conclusion.}
By rescaling with the radius $r_{0}=r_{0}(\epsilon, K, w)$, it is possible to reduce the claim of Corollary \ref{cor:Reifenberg2} to the setting of Corollary \ref{cor:Reifenberg}.
\end{proof}

\subsection{Lipschitz regularity}
\label{subsec:Lip}

In this section we present a first improvement of the regularity of the regular free boundary.  \\

In the sequel we will always assume that $0\in \Gamma_w$. Moreover, we will use the notation
$$w_{3/2}(x):=c_n\Ree(x_n+ix_{n+1})^{3/2},$$
to denote our model solution. Here $c_n>0$ is a normalization constant such that $\|w_{3/2}\|_{L^2(B_1^+)}=1$. Keeping this convention in the back of our minds, we prove the Lipschitz regularity of the regular free boundary:

\begin{prop}[Lipschitz regularity]
\label{prop:Lip}
Let $a^{ij}:B_1^+ \rightarrow \R^{(n+1)\times (n+1)}_{sym}$ be a tensor field which satisfies (A1), (A2), (A3), (A4). Let $w$ be a solution of (\ref{eq:varcoef}) in $B_{1}^+$. 
Assume that for some small positive constants $\epsilon_0$ and $c_\ast$
\begin{itemize}
\item[(i)] $\left\| w -  w_{3/2} \right\|_{C^1(B_{1}^+)} \leq \epsilon_0$,
\item[(ii)]$\|\nabla a^{ij}\|_{L^p(B_1^+)}\leq c_\ast$.
\end{itemize}
Then if $\epsilon_0$ and $c_\ast$ depending on $n,p$ are chosen sufficiently small, there exists a Lipschitz function $g:B_{1/2}'' \rightarrow \R$ such that (possibly after a rotation)
\begin{align*}
\Gamma_w\cap B_{1/2}' = \{x_{n}= g(x'') \big| x'' \in B_{1/2}'' \}.
\end{align*}
Moreover, there is a large constant $\tilde{C}=\tilde{C}(n,p)$, such that $\|\nabla'' g\|_{L^\infty(B''_{1/2})}\leq \tilde{C}\max\{\epsilon_0,c_\ast\}$.
\end{prop}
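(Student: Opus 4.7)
The plan is to establish a cone of monotonicity: after a suitable rotation, there exists $\theta_0 \lesssim \max\{\epsilon_0, c_\ast\}$ such that $\partial_e w \geq 0$ on $B_{1/2}^+$ for every unit vector $e$ tangential to $\{x_{n+1}=0\}$ making an angle at most $\pi/2 - \theta_0$ with $e_n$. Once such a cone of tangential monotonicity is in hand, the standard argument represents $\Omega_w \cap B_{1/2}'$ as the subgraph $\{x_n > g(x'')\}$ of a Lipschitz function $g$ with $\|\nabla'' g\|_{L^\infty(B_{1/2}'')} \leq \tan\theta_0 \lesssim \max\{\epsilon_0, c_\ast\}$, giving both the regularity and the quantitative bound in the statement.

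To produce the monotonicity in a direction $e$ tangential to $\{x_{n+1}=0\}$ and close to $e_n$, I set $u := \partial_e w$. Differentiating (\ref{eq:varcoef}) formally yields the divergence form equation $\partial_i a^{ij} \partial_j u = -\partial_i\bigl((\partial_e a^{ij}) \partial_j w\bigr)$ with merely $L^p$ right-hand side, while differentiating the complementarity condition produces a Signorini-type boundary condition for $u$ (in particular $u \geq 0$ on $\Lambda_w$). The low-regularity right-hand side blocks a direct application of comparison principles, so I invoke the splitting of Proposition~\ref{prop:v1}, writing $u = v_1 + v_2$, where $v_1$ solves a clean, essentially constant-coefficient Signorini problem and $v_2$ absorbs the low-regularity defect. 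The key quantitative output of the splitting is that $v_2$ has \emph{better than square-root} growth at $\Gamma_w$, namely $|v_2(x)| \lesssim c_\ast\, \dist(x, \Gamma_w)^{1/2+\alpha}$ for some $\alpha = \alpha(n,p) > 0$.

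For the main term $v_1$, closeness of $w$ to the model $w_{3/2}$ in $C^1$ together with the explicit bound $\partial_n w_{3/2}(x) = \tfrac{3}{2} c_n \Ree(x_n + i x_{n+1})^{1/2} \gtrsim \dist(x, \Gamma_{w_{3/2}})^{1/2}$ makes plausible the square-root lower bound $v_1(x) \gtrsim \dist(x, \Gamma_w)^{1/2}$ near the free boundary. To establish this rigorously in the low-regularity setting — where $\Gamma_w$ is not known to be flat, only Reifenberg flat by Corollary~\ref{cor:Reifenberg} — I rely on the non-degeneracy Proposition~\ref{prop:nondeg}, whose proof uses the Reifenberg-adapted barrier of Proposition~\ref{prop:barrier}: at each free boundary point and each small scale, the Reifenberg flatness furnishes an almost-tangent-hyperplane, and one builds against it a subsolution with nearly square-root growth against which $v_1$ can be compared from below. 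Combined with the error bound from the splitting, this gives
\begin{align*}
\partial_e w(x) = v_1(x) + v_2(x) \geq c\, \dist(x,\Gamma_w)^{1/2} - C c_\ast\, \dist(x,\Gamma_w)^{1/2+\alpha} \geq 0
\end{align*}
for $e = e_n$ once $c_\ast$ is small. A continuity-in-$e$ argument, tracking how the lower bound on $v_1$ degrades as $e$ rotates away from $e_n$ (loss controlled by $\epsilon_0$ and the $C^1$ proximity to $w_{3/2}$), propagates $\partial_e w \geq 0$ to all directions $e$ in the cone of angle $\pi/2 - \theta_0$ with $\theta_0 \lesssim \max\{\epsilon_0, c_\ast\}$.

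The principal obstacle is the construction of the almost-optimal lower barrier for $v_1$. In the classical constant-coefficient Lipschitz-regularity argument, the tangential derivatives of the blow-up are strictly positive and the free boundary is literally flat, so a bounded harmonic barrier suffices. Here the barrier must capture \emph{almost exactly} the square-root behavior in order to dominate the low-regularity error $v_2$, and the only geometric information available on $\Gamma_w$ at this stage is its Reifenberg flatness. Calibrating the barrier to the Reifenberg approximating hyperplane at each scale — the content of Proposition~\ref{prop:barrier} invoked inside Proposition~\ref{prop:nondeg} — is the technical heart of the proof and the only place where the low regularity of the metric is sharply felt.
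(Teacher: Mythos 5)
Your proposal follows the paper's argument essentially step by step: Reifenberg flatness via Corollary~\ref{cor:Reifenberg}, the splitting of Proposition~\ref{prop:v1} to tame the $L^p$ divergence-form right-hand side, the almost-square-root lower barrier of Proposition~\ref{prop:barrier} built against the Reifenberg approximating planes, the resulting non-degeneracy of Proposition~\ref{prop:nondeg} with a lower bound $\gtrsim \dist(x,\Gamma_w)^{1/2+\bar\epsilon/2}$ dominating the error $\lesssim c_\ast\dist(x,\Gamma_w)^{1/2+\alpha}$, and finally the cone of tangential monotonicity with half-angle controlled by $\max\{\epsilon_0,c_\ast\}$. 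One small correction: after reflection, both components of the split solve \emph{Dirichlet} problems with zero data on the slit $\Lambda_w$ (with a zeroth-order term $-K\dist(x,\Gamma_w)^{-2}$ in one of them), not Signorini problems; the complementarity condition enters only indirectly through the fact that $\partial_e w = 0$ on $\Lambda_w$, so the phrase ``clean, essentially constant-coefficient Signorini problem'' misdescribes the role of the decomposition, though it does not affect the logic.
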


We remark that in addition to the results from Proposition \ref{prop:Lip}, we actually show that $g$ -- and hence the regular free boundary -- is $C^{1}(B_{1/2}'')$ regular (c.f. Remark \ref{rmk:diff}). \\

In order to prove the Lipschitz regularity of the free boundary, we deduce positivity of the directional derivatives in a cone of tangential directions (c.f. Proposition \ref{prop:nondeg}). 
To achieve this, we use a comparison argument, which is standard in the constant coefficient case. We start by considering the equation for $v:=\p_e w$, where $e\in S^n \cap B_1'$ is a tangential direction. From the $H^2$ estimate of $w$ (c.f. \cite{U87}), $v\in H^1(B^+_1)$. By differentiating the equation of $w$, we obtain the equation of $v$ in $B^+_1\cap \{x_{n+1}>0\}$:
\begin{align*}
\p_i a^{ij}\p_j v &=\p_i F^i\text{ in } B_1^+\cap \{x_{n+1}>0\},\quad\text{where } F^i =-(\p_ea^{ij})\p_j w.
\end{align*} 
Compared with the constant coefficient situation, the fact that the coefficients of our equation are not better than $W^{1,p}$ regular, leads to a divergence right hand side with $F^i\in L^p$. This causes difficulties in applying the comparison argument directly. To resolve this difficulty we use an appropriate decomposition to split the solution $v$ into an ``error term'' $v_1$ (c.f. equation \eqref{eq:v1}), which deals with the divergence term $F^i$, and into a ``main term'' $v_2$ (c.f. equation \eqref{eq:v2}), which captures the behavior of the respective solutions. 

\begin{rmk}[Reflection and extension]
\label{rmk:ref_ext}
As it proves convenient to work on the whole ball instead of only on the upper half ball (for instance for applying interior elliptic regularity theory), we reflect the metric $a^{ij}$ to $B_1^-$ by setting
\begin{equation}
\label{eq:extend_reflect}
\begin{split}
a^{ij}(x',x_{n+1})&=a^{ij}(x',-x_{n+1}),\quad i,j=1,\dots,n,\\
a^{n+1,j}(x',x_{n+1})&=-a^{n+1,j}(x',-x_{n+1}),\quad j=1,\dots,n,\\
a^{n+1,n+1}(x',x_{n+1})&=a^{n+1,n+1}(x',-x_{n+1}),
\end{split}
\end{equation}
for $(x',x_{n+1})\in B_1^-$. We reflect $w$ to $B^-_1$ by defining $w(x',x_{n+1})=w(x',-x_{n+1})$.  Here we use the same notation to denote the functions after refection. 
Due to the off-diagonal assumption (A1), the reflected coefficient matrix $a^{ij}$ is in $W^{1,p}(B_1)$.
Since $\p_{n+1}w=0$ in $B'_1\setminus \Lambda_w$, we have $w\in C^{1,1/2-\delta}(B_1\setminus \Lambda_w)\cap H^2(B_1\setminus \Lambda_w)$, where $\delta\in(0,1)$ is an arbitrarily small constant, and $v=\p_ew$ satisfies
\begin{align}\label{eq:extend_v}
\p_ia^{ij}\p_j v&=\p_i F^i\text{ in } B_1\setminus \Lambda_w, \quad F^i=-(\p_e a^{ij})\p_j w, \\
v&=0\text{ on } \Lambda_w. \notag
\end{align}

As we are mainly interested in the local properties of solutions and of the (regular) free boundary and in order to avoid technical issues on the boundary $\p B_1\cap \Lambda_w$, we will often consider \eqref{eq:extend_v} as an equation in the whole space $\R^{n+1}$: We simply extend $a^{ij}$ to $\R^{n+1}$ such that the extended coefficients remain Sobolev regular, $a^{ij}\in W^{1,p}_{loc}(\R^{n+1})$, with equivalent ellipticity constants and with $\Vert \nabla a^{ij} \Vert_{L^p(\R^{n+1})} $ at most twice as large as it had originally been. Moreover, we extend $F^i$ to $\R^{n+1}$ by multiplying by the characteristic function $\chi_{B_1}$. With a slight abuse of the notation, we denote the set $\Lambda_w\cap B'_1$ by $\Lambda_w$. In the sequel, whenever the equations are written in the whole space, we will always have this extension procedure in mind, even though we may not refer to it explicitly.
\end{rmk}  

In order to deal with the $L^p$ divergence form right hand side in (\ref{eq:extend_v}), we decompose $v=v_1+v_2$ with 
\begin{equation}
\label{eq:v1}
\begin{split}
\p_i a^{ij} \p_jv_1 -K\dist(x,\Gamma_w)^{-2}v_1 &= \p_i F^i \mbox{ in } \R^{n+1}\setminus \Lambda_w,\\
v_1&= 0 \mbox{ on } \Lambda_w,
\end{split}
\end{equation}
where $K$ is a large constant. Then $v_2:= v - v_1$ solves 
\begin{equation}
\label{eq:v2}
\begin{split}
\p_i a^{ij} \p_j v_2 &= - K\dist(x,\Gamma_w)^{-2} v_1 \mbox{ in } \R^{n+1} \setminus \Lambda_w,\\
v_2 &= 0 \mbox{ on } \Lambda_w.
\end{split}
\end{equation} 
Intuitively, we expect that $v_1$ is a ``controlled error'' (c.f. the bounds in Proposition \ref{prop:v1}) and that $v_2$ determines the behavior of our solution $v$.\\

That this is indeed the case and that $v_1$ can be treated as a ``controlled error'' is a consequence of the following more general result which will be proved in Section \ref{sec:propv1}:

\begin{prop}
\label{prop:v1}
Let $\Lambda$ and $\Gamma$ be closed, non-empty sets in $\R^n\times\{0\}$ with $\Gamma = \partial \Lambda$. Let $a^{ij}:\R^{n+1} \rightarrow \R^{(n+1)\times (n+1)}_{sym}$ be a bounded, measurable, uniformly elliptic tensor field whose eigenvalues are in $[1/2, 2]$. For $K>0$ consider the equation
\begin{equation}
\label{eq:globalv1}
\begin{split}
\partial_ia^{ij}\partial_j u-K\dist(x, \Gamma)^{-2} u &= \partial_i F^i + g \text{ in } \R^{n+1}\setminus \Lambda,\\
u&=0 \text{ on } \Lambda,
\end{split}
\end{equation}
with $F^i\in L^2(\R^{n+1})$, $i\in \{1,\dots,n+1\}$ and $\dist(x,\Gamma) g\in L^2(\R^{n+1})$. Then there exists a unique solution $u$ with $\nabla u \in L^2(\R^{n+1})$, $\dist(x,\Gamma)^{-1}u\in L^2(\R^{n+1})$ and $u(x)=0$ on $\Lambda$ for $H^n$ a.e. $x$. Under the above conditions, if we additionally
assume that for some $\sigma\geq 0$ and $p\in (n+1,\infty]$
\begin{align*}
F^i\dist(x,\Gamma)^{-\sigma}&\in L^p(\R^{n+1}), \  i\in\{1,\dots, n+1\},\\
 g\dist(x,\Gamma)^{1-\frac{n+1}{p}-\sigma}&\in L^{p/2}(\R^{n+1}), 
\end{align*}
then there exist $c=c(n)$ large and $C_0=C_0(n,p)$,  such that for $K=c(n)(\sigma+1)$,
\begin{multline*}
     |u(x)| \le C_0   \dist(x,\Gamma)^{1-\frac{n+1}{p} +\sigma}\left(\left\|F^i \dist(\cdot,\Gamma)^{-\sigma}\right\|_{L^p(\R^{n+1})}\right.\\
     \left.+\|g \dist(\cdot,\Gamma)^{1-\frac{n+1}{p}-\sigma}\|_{L^{p/2}(\R^{n+1})}\right).
\end{multline*} 
Moreover, $u$ vanishes continuously as $x$ approaches $\Lambda$.
\end{prop}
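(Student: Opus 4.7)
The proposition has three ingredients: existence and uniqueness in a weighted energy space, the weighted pointwise bound, and continuity at $\Lambda$. I would address each in turn, with the pointwise bound being the main technical core.

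For existence and uniqueness, the natural setting is the Hilbert space $\mathcal{H}$ defined as the completion of $C_c^\infty(\R^{n+1}\setminus\Lambda)$ with respect to
\[
\|u\|_{\mathcal{H}}^2 := \int_{\R^{n+1}}|\nabla u|^2\,dx + K\int_{\R^{n+1}}\dist(x,\Gamma)^{-2}u^2\,dx.
\]
The bilinear form $B(u,v) = \int a^{ij}\partial_i u\,\partial_j v + K\int \dist(\cdot,\Gamma)^{-2}uv$ associated with the left-hand side of \eqref{eq:globalv1} is bounded and coercive on $\mathcal{H}$ by the uniform ellipticity (A2) and the positivity of $K$. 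The right-hand side yields a continuous linear functional on $\mathcal{H}$: the term $\int F^i\partial_i v$ is controlled by $\|F^i\|_{L^2}\|\nabla v\|_{L^2}$, while the Cauchy--Schwarz inequality $\int gv \leq \|\dist(\cdot,\Gamma)\,g\|_{L^2}\,\|\dist(\cdot,\Gamma)^{-1}v\|_{L^2}$ handles the non-divergence source. Lax--Milgram produces a unique $u\in\mathcal{H}$, and the Sobolev trace that $u$ vanishes on $\Lambda$ is built into the space $\mathcal{H}$.

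For the pointwise bound, fix $x_0\in\R^{n+1}\setminus\Lambda$, set $d:=\dist(x_0,\Gamma)$, and let $M$ denote the right-hand side of the estimate. On the interior ball $B_{d/2}(x_0)$ the Hardy weight satisfies $K\dist(\cdot,\Gamma)^{-2}\leq 4K/d^2$, i.e.\ it is a bounded zero-order term on that scale. Rescaling $\tilde u(y):=u(x_0+(d/2)y)$ to the unit scale, the equation becomes a standard elliptic equation with $L^p$ divergence source and $L^{p/2}$ bulk source; standard Morrey / De Giorgi--Nash--Moser estimates then give
\[
|u(x_0)| \lesssim d^{-\frac{n+1}{2}}\|u\|_{L^2(B_{d/2}(x_0))} + d^{1-\frac{n+1}{p}}\|F^i\|_{L^p(B_{d/2}(x_0))} + d^{2-\frac{2(n+1)}{p}}\|g\|_{L^{p/2}(B_{d/2}(x_0))}.
\]
Since $\dist(\cdot,\Gamma)\sim d$ on $B_{d/2}(x_0)$, the two data terms are already controlled by $d^{1-(n+1)/p+\sigma}\,M$, so the task reduces to estimating $\|u\|_{L^2(B_{d/2}(x_0))}$ by $d^{1-(n+1)/p+\sigma+(n+1)/2}\,M$.

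To obtain this weighted $L^2$ bound I would test \eqref{eq:globalv1} against $u\eta^2\dist(\cdot,\Gamma)^{-2\sigma}$ with a cutoff $\eta$ localizing to an annular region of distances comparable to $d$ from $\Gamma$. Integration by parts produces a good term $\int|\nabla u|^2\eta^2\dist^{-2\sigma}$ plus the Hardy contribution $K\int u^2\eta^2\dist^{-2-2\sigma}$, while the weight derivative generates a bad term of size $\sigma^2\int u^2\eta^2\dist^{-2-2\sigma}$; the right-hand side is handled via Cauchy--Schwarz against $F^i\dist^{-\sigma}$ and $g\dist^{1-(n+1)/p-\sigma}$ after invoking a weighted Sobolev embedding for the latter. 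Choosing $K=c(n)(\sigma+1)$ with $c(n)$ large absorbs the bad $\sigma^2$ term into the Hardy term, and a dyadic summation over annuli $\{2^{k-1}d\leq \dist(\cdot,\Gamma)\leq 2^{k+1}d\}$ (combined with a Hardy inequality on the part farthest from $\Gamma$) closes the estimate. The main obstacle is exactly this calibration: the exponent $1-(n+1)/p+\sigma$ appearing in the claim forces the test weight $\dist^{-2\sigma}$, and it is the balance between the resulting $\sigma^2$ defect and the Hardy term that dictates the scaling $K=c(n)(\sigma+1)$.

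Finally, continuity of $u$ at $\Lambda$ is immediate from the pointwise bound: because $p>n+1$ and $\sigma\geq 0$, the exponent $1-(n+1)/p+\sigma$ is strictly positive, so $|u(x)|\to 0$ as $\dist(x,\Gamma)\to 0$, and since $u=0$ on $\Lambda$ in the Sobolev sense already, this upgrades to continuous vanishing as $x$ approaches any point of $\Lambda$.
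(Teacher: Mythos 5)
Your Step 1 (Lax--Milgram on the weighted space $\mathcal{H}$) matches the paper, and the local elliptic estimate that begins your Step 2 is essentially the paper's (step 3 of its proof): on the Whitney cube $\hat Q$ around $\hat x$, the weak maximum principle gives $|u(\hat x)| \lesssim \hat d^{-\frac{n+1}{2}}\|u\|_{L^2(\mathcal{N}_{\hat Q})} + \hat d^{1-\frac{n+1}{p}}\|F^i\|_{L^p(\mathcal{N}_{\hat Q})} + \hat d^{2(1-\frac{n+1}{p})}\|g\|_{L^{p/2}(\mathcal{N}_{\hat Q})}$. The gap is in how you propose to control $\|u\|_{L^2(\mathcal{N}_{\hat Q})}$.

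Testing against $u\eta^2\dist(\cdot,\Gamma)^{-2\sigma}$ with $\eta$ localized to dyadic annuli $\{2^{k-1}d\leq\dist(\cdot,\Gamma)\leq 2^{k+1}d\}$ does not localize near $x_0$: such an annulus is a \emph{tube} around the entire set $\Gamma$ and extends arbitrarily far from $x_0$ in the directions tangential to $\Gamma$. The resulting weighted energy estimate therefore only gives a \emph{global} bound, essentially $\|d^{-1-\sigma}u\|_{L^2(\R^{n+1})} \lesssim \|d^{-\sigma}F^i\|_{L^2(\R^{n+1})}+\dots$, and restricting the left side to $B_{d/2}(x_0)$ yields only $\|u\|_{L^2(B_{d/2}(x_0))}\lesssim d^{1+\sigma}M$ (after invoking Hölder on the compactly supported data). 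Plugging into the local $L^\infty$ estimate then produces $|u(x_0)|\lesssim d^{1+\sigma-\frac{n+1}{2}}M$, which is off by the factor $d^{(n+1)(\frac12-\frac1p)}$ from the claimed $d^{1-\frac{n+1}{p}+\sigma}M$. That lost factor is exactly the Hölder gain $|Q|^{\frac12-\frac1p}$ one would obtain if the data contribution to the energy estimate could be \emph{restricted to a single Whitney cube of size $d$}; without spatial localization you cannot collect it.

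The paper closes this gap with a genuinely different mechanism. It conjugates by $e^\phi$ with $\phi = -\kappa\ln d$, $\kappa = (n+1)(\frac12-\frac1p)+\sigma$, and then tests against $e^{-s\,d_{\bar g}(x,W)}\tilde u$, where $d_{\bar g}$ is the \emph{intrinsic} distance induced by the conformally scaled metric $d(x)^{-2}|\xi|^2$. This exponential weight decays not in $\dist(\cdot,\Gamma)$ but in a hyperbolic-type distance from the Whitney cube $W = \mathcal{N}_{\hat Q}$, and the Claim in the paper's Step 4 (summability of $e^{-s\,d_{\bar g}(\cdot,W)}$ over the Whitney decomposition) is precisely the geometric fact that exponential decay in $d_{\bar g}$ beats the exponential growth in the number of Whitney cubes at intrinsic distance $k$. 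This is what allows the data term to be measured in $L^2(Q')$ \emph{per cube}, converted to $\|d^{-\sigma}F^i\|_{L^p}$ by a cube-by-cube Hölder inequality (where the cube measure $|Q'|^{\frac12-\frac1p}$ exactly cancels the extra weight $d^{-\kappa+\sigma}$), and then summed. Your annulus-in-$\dist$ localization has no analogue of this summation, so the $L^2\to L^p$ conversion cannot be done locally. (Relatedly, your statement that $K=c(n)(\sigma+1)$ absorbs the $\sigma^2$ defect from Cauchy--Schwarz does not hold literally for $\sigma$ large; an absorption argument of this kind generically requires $K\gtrsim\sigma^2$, and the paper's mechanism is different --- $K$ is calibrated to the Lipschitz constant of $\phi$ with respect to $d_{\bar g}$, which is $\sim(\sigma+1)$.)

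There is also a smaller gap at the end. Your pointwise bound shows $u(x)\to 0$ as $\dist(x,\Gamma)\to 0$, which proves continuous vanishing only \emph{at $\Gamma$}. For $x_0$ in the interior of $\Lambda$, $\dist(x,\Gamma)$ stays bounded away from $0$ as $x\to x_0$, so the pointwise bound gives no decay. The paper's Step 5 handles this by observing that near any such $x_0$ the zero-order coefficient $K\dist(\cdot,\Gamma)^{-2}$ is bounded, so continuous attainment of the zero Dirichlet data follows from standard boundary regularity for divergence-form equations (e.g.\ Corollary~8.28 in \cite{GT}) applied in a ball around $x_0$ that does not meet $\Gamma$.
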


\begin{rmk}
With slight modifications of the function spaces, the statement of Proposition \ref{prop:v1} remains true for $\sigma < 0$.
\end{rmk}

\begin{rmk}
\label{rmk:impv1}
If in addition $a^{ij}\in W^{1,p}(B_1^+)$ with $p\in (n+1,\infty)$, it is possible to make the vanishing order of $u$ towards $\Lambda$ more explicit (c.f. Remark~\ref{rmk:impv2} after the proof of Proposition \ref{prop:v1}). More precisely, for $p\in (n+1,\infty)$ we have
\begin{multline*}
     |u(x)| \le C_0  \dist(x,\Gamma)^{\sigma}\dist(x,\Lambda)^{1-\frac{n+1}{p}}  \left(\left\|F^i \dist(\cdot,\Gamma)^{-\sigma}\right\|_{L^p(\R^{n+1})}\right.\\
     \left. +\|g \dist(\cdot,\Gamma)^{1-\frac{n+1}{p}-\sigma}\|_{L^{p/2}(\R^{n+1})}\right).
\end{multline*} 
\end{rmk}

Although we postpone the proof of Proposition \ref{prop:v1} to the end of this section, we will in the sequel frequently apply it, for instance with $\Lambda = \Lambda_w$ and $\Gamma = \Gamma_w$.\\

The remainder of the proof of Proposition \ref{prop:Lip} is organized as follows: Working in the framework of general $(\epsilon,1)$-Reifenberg flat sets $\Lambda$ and $\Gamma$, in Section \ref{sec:Reifenbergbarrier} we first construct a lower barrier function (Proposition \ref{prop:barrier}) which serves as the basis for the then following comparison argument. Here we deduce an (almost) optimal non-degeneracy condition in cones for elliptic equations with controlled inhomogeneities in domains with Reifenberg slit (Proposition \ref{prop:nondeg}). In Section \ref{sec:proofLip} this is then applied to prove the Lipschitz regularity of the (regular) free boundary for the thin obstacle problem (Proposition \ref{prop:Lip}).

\subsubsection{Construction of a barrier function and a comparison argument} 
\label{sec:Reifenbergbarrier}

In this section we first construct a barrier function for the slit domain with Reifenberg flat slit (c.f. Proposition \ref{prop:barrier}). This is crucial for the then following comparison argument which allows us to work with essentially critically scaling bounds.\\

In the whole subsection, we work under the following assumption:
\begin{assum}
\label{assum:Reifenberg}
We consider a closed set $\Lambda\subset\R^n\times\{0\}$ with boundary $\Gamma:= \partial_{\R^n\times\{0\}} \Lambda$. We assume that $0\in \Gamma$ and that $\Gamma\cap B_1$ is $(\epsilon, 1)$-Reifenberg flat. Here $\epsilon>0$ is a fixed small constant. 
\end{assum}

We start by considering a Whitney decomposition,  $\{Q_j\}_{j\in \N}$, of $B_{1} \setminus \Gamma$, i.e. $\{Q_j\}$ is a collection of dyadic cubes $Q_j$ with disjoint interiors that cover $B_{1} \setminus \Gamma$ such that: 
\begin{itemize}
\item [(W1)] $\diam(Q_j)\leq \dist(Q_j, \Gamma \cap B_{1})\leq 4\diam(Q_j)$.
\item [(W2)] If $Q_1$ and $Q_2$ touch, then $(1/4)\diam(Q_2)\leq \diam(Q_1)\leq 4\diam (Q_2)$.
\item [(W3)] There are at most $(12)^{n+1}$ cubes in $\{Q_j\}$ which touch a given cube $Q\in \{Q_j\}$ (cf. Chapter VI \cite{St} for the existence of such cubes). 
\end{itemize}
Moreover, as $\Lambda, \Gamma \subset \R^{n}\times \{0\}$, it is further possible to choose the Whitney cubes symmetric with respect to the plane $\{x_{n+1}=0\}$.\\

For each Whitney cube $Q_j$ with center $\hat x_j$ and diameter $r_j$, let $x_j \in \Gamma$ be a (not necessarily unique) point such that $\dist(\hat x_j,\Gamma)=|\hat x_j-x_j|$. Using this point, we associate a rotation $S_j$ to each Whitney cube $Q_j$ by choosing one of the (not necessarily unique) rotations $S(64 r_j,x_j)$ which are defined by the property that $S(64 r_j,x_j) e_n$ determines the normal of the plane $L( 64 r_j, x_j)$ in the Reifenberg condition (\ref{eq:Reifenberg}) with $\delta = \epsilon$ for some $\epsilon\ll 1$. We define $\nu_j:=S_j e_n$, which we refer to as the \textsl{approximate normal} at $x_j$ in $B_{64r_j}(x_j)$ associated with $Q_j$. Note that $\nu_j \in S^n\cap \{x_{n+1}=0\}$, as we interpret $\Gamma$ as a subset of $\R^{n}\times \{0\}$.\\

We observe that the approximate normals only vary slowly on neighboring Whitney cubes:

\begin{lem}
\label{lem:normals}
Let $ \Lambda, \Gamma$ be as in Assumption \ref{assum:Reifenberg}.
Let $\{Q_j\}$ be a Whitney decomposition of $B_{1}\setminus \Gamma$ and let $Q_j$, $Q_k$ be neighboring Whitney cubes. Let $\nu_j$ and $\nu_k$ be the associated approximate normals defined above. Then
\begin{align*}
|\nu_j - \nu_k| \leq C \epsilon.
\end{align*}
\end{lem}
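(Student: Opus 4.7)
The approach is a direct geometric comparison of the two Reifenberg approximating hyperplanes $L_j := L(64 r_j, x_j)$ and $L_k := L(64 r_k, x_k)$, viewed as $(n-1)$-planes in $\R^n\times\{0\}$ with unit normals $\nu_j,\nu_k$. From (W2) we have $r_j\asymp r_k$; touching cubes satisfy $|\hat x_j-\hat x_k|\lesssim r_j$, and (W1) gives $|\hat x_\ast - x_\ast|\lesssim r_\ast$ for $\ast\in\{j,k\}$, so the triangle inequality yields $|x_j-x_k|\le C(n) r_j$. Consequently, for a small dimensional constant $c=c(n)>0$, the inclusion $B_{c r_j}(x_j)\subset B_{64 r_j}(x_j)\cap B_{64 r_k}(x_k)$ holds, giving a common window on which both hyperplanes approximate the same portion of $\Gamma$.

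I would then use both directions of the Hausdorff bound in \eqref{eq:Reifenberg}. Pick $n-1$ points $y_1,\dots,y_{n-1}\in L_j\cap B_{c r_j/2}(x_j)$ whose tangent vectors $(y_i-x_j)/r_j$ form a uniformly well-conditioned basis of $T_j := \nu_j^\perp\cap(\R^n\times\{0\})$; concretely, one may take $y_i = x_j + (c r_j/4)\, e_i^{(j)}$ for an orthonormal basis $\{e_i^{(j)}\}$ of $T_j$. The $L_j\to\Gamma$ direction of \eqref{eq:Reifenberg} at $(x_j, 64 r_j)$ yields $z_i\in \Gamma$ with $|y_i - z_i|\le 64\epsilon r_j$. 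For $\epsilon$ small depending on $n$, each $z_i$ remains in $B_{c r_j}(x_j)\subset B_{64 r_k}(x_k)$, so the $\Gamma\to L_k$ direction of \eqref{eq:Reifenberg} at $(x_k, 64 r_k)$ gives $\dist(z_i, L_k)\le 64\epsilon r_k\lesssim \epsilon r_j$. The triangle inequality then yields $\dist(y_i, L_k)\lesssim \epsilon r_j$, and the same bound applies to $x_j\in\Gamma\cap B_{64 r_k}(x_k)$.

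Writing the distance to $L_k$ as the $\nu_k$-projection relative to any fixed point of $L_k$, this translates into $|\nu_k\cdot(y_i-x_j)|\lesssim \epsilon r_j$ for $i=1,\dots,n-1$, i.e.\ $|\nu_k\cdot v_i|\lesssim \epsilon$ for the normalized basis vectors $v_i = (y_i-x_j)/|y_i-x_j|$ of $T_j$. Well-conditionedness of this basis forces the component of $\nu_k$ along $T_j$ to be $O(\epsilon)$, and since $\mathrm{span}(\nu_j)$ is the one-dimensional orthogonal complement of $T_j$ inside $\R^n\times\{0\}$, we obtain $\nu_k = \pm\nu_j + O_n(\epsilon)$. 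Fixing the sign consistently across neighboring cubes, which is possible since the Whitney graph is connected and one may propagate a single root orientation, yields the claimed bound $|\nu_j - \nu_k|\le C(n)\epsilon$.

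The only real bookkeeping point is the choice of $c=c(n)$: it must be large enough that the $y_i-x_j$ give an $r_j$-scale, well-conditioned basis of $T_j$, and simultaneously small enough that the perturbed points $z_i$ stay inside the common window $B_{c r_j}(x_j)\subset B_{64 r_k}(x_k)$. This is a dimensional-constant issue rather than a substantive analytic obstacle; once $\epsilon$ is below the corresponding $n$-dependent threshold, the argument closes.
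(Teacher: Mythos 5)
Your proposal is correct and takes essentially the same route as the paper, which compares the two Reifenberg approximating planes $L_j$ and $L_k$ via the bound $|x_j-x_k|\lesssim r_j$ and the triangle inequality for the Hausdorff distance; you simply unpack the paper's terse ``comparing the two inequalities'' into the explicit common-window, well-conditioned-basis, and double-projection steps. Two small remarks: when verifying that the common window $B_{cr_j}(x_j)$ lies inside $B_{64 r_k}(x_k)$ in the case $r_k\ll r_j$, one should use that the bound on $|x_j-x_k|$ itself scales with $r_k$ (via $|\hat x_k-x_k|\lesssim r_k$), so the worst cases for $|x_j-x_k|$ and for $64 r_k$ do not occur simultaneously; and your observation that the raw geometric argument only gives $\nu_k=\pm\nu_j+O(\epsilon)$ is a valid point that the paper leaves implicit — in the paper's application the sign is fixed globally by the sided structure of $\Lambda$ (the approximate normal points away from the coincidence set), which is a cleaner resolution than propagating an orientation across the Whitney graph.
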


\begin{proof}
Let $\hat x_j$ and $\hat x_k$ be the centers of the Whitney cubes $Q_j$ and $Q_k$ respectively, and let $x_j,x_k\in \Gamma_w$ be (not necessarily unique) points which realize the distance of $\hat x_j,\hat x_k$ to $\Gamma_w$. Let $r_j$ be the diameter of $Q_j$. By using (W1) and (W2), it is immediate to check that
\begin{align*}
0\leq |x_j-x_k|\leq 32 r_j.
\end{align*}
Let $S_j=S(64r_j, x_j)$ and let $\nu_j=S_j e_n$ be as above. 
Let $L_j= \{x\in \R^n\times \{0\}| \ x\cdot \nu_j=0\}$ be an $(n-1)$-dimensional hyperplane in $\R^n\times \{0\}$. Then condition \eqref{eq:Reifenberg} in Definition \ref{defi:Reifenberg} gives that 
$$\dist_H(\Gamma_w\cap B_{64 r_j}(x_j), x_j+L_j \cap B_{64 r_j}(x_j))\leq C\epsilon  r_j.$$ 
Similarly at $x_k$ we have
\begin{align*}
\dist_H(\Gamma_w\cap B_{64 r_k}(x_k), x_k+L_k \cap B_{64 r_k}(x_k))\leq C\epsilon r_k.
\end{align*}
Comparing the above two inequalities and using the triangle inequality for $\dist_H$, we deduce
$$|\nu_j-\nu_k|\leq C\epsilon.$$
\end{proof}

With this auxiliary result at hand, we construct a first barrier function for the divergence form operator $L= \p_i a^{ij}\p_j$ with $a^{ij}$ satisfying the assumptions (A2), (A3), (A4). The idea is first to locally construct subsolutions in each Whitney cube $Q_j$ and then to patch them together by a partition of unity. The patched function remains a subsolution as the approximate normals vary slowly due to Lemma~\ref{lem:normals}. In order to implement this idea, in a first step, we view the operator $L$ as a perturbation of the non-divergence form operator $L_0:= a^{ij}\p_{ij}$. Then in the second step we use the splitting technique from Proposition \ref{prop:v1} to derive a barrier for the full operator $L$.\\

In order to simplify notation, we introduce the following parameters as abbreviations:
\begin{notation} 
\label{not:not1}
We abbreviate by setting
\begin{itemize}
\item $c_{\ast}:=\left\| \nabla a^{ij} \right\|_{L^p(B_1)}$,
\item $\ell_0:= (2^{4}\sqrt{n})^{-1}$.
\end{itemize}
\end{notation}

Using this notation we proceed with the barrier construction.

\begin{prop}[Barrier function]
\label{prop:barrier}
Let $\Lambda, \Gamma, \epsilon$ be as in Assumption \ref{assum:Reifenberg} and let $c_{\ast}, \ell_0$ be as in Notation \ref{not:not1}.
Then for any $s\in (0,1/2)$, if $\epsilon=\epsilon(n,s)$ and $c_\ast=c_\ast(n,p,s)$ are chosen sufficiently small, there exists a function $h^-_s: B_1 \rightarrow \R $ such that
\begin{itemize}
\item[(i)] $L h^-_s(x) \geq c_s\dist(x,\Gamma)^{-\frac{3}{2}+\frac{s}{2}} \mbox{ in } B_1 \setminus \Lambda$ for some $c_s>0$,
\item[(ii)] $h^-_s(x)   \geq  c_n \dist(x,\Gamma)^{\frac{1}{2}+\frac{s}{2}} \text{ on } B_1\cap \{x| \ \dist(x,\Lambda)\geq\ell_0 \dist(x,\Gamma)\}$,
\item[(iii)] $h^-_s(x) \geq -c_n \dist(x,\Gamma)^{\frac{3}{2}-\frac{n+1}{p}}$ in $B_1$ and $h^-_s(x)=0$ on $\Lambda$. 
\end{itemize}
\end{prop}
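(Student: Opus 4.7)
The plan is to build $h^-_s$ in two stages: first, patch together local two-dimensional barriers adapted to the Whitney decomposition of $B_1\setminus\Gamma$ to produce a subsolution $\phi$ of the non-divergence form operator $L_0 = a^{ij}\p_{ij}$; then apply Proposition~\ref{prop:v1} to correct $\phi$ into a subsolution of the divergence form operator $L$.

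For the local model, fix a unit vector $\nu\in \mathbb{S}^n\cap\{x_{n+1}=0\}$ and a reference point $y\in\Gamma$, introduce rotated coordinates $z_n := (x-y)\cdot\nu$, $z_{n+1} := x_{n+1}$, with polar variables $r,\theta\in(-\pi,\pi]$ in the $(z_n,z_{n+1})$-plane, and set
\[
\varphi(x):=r^{1/2}\cos(\theta/2)+\mu\, r^{(1+s)/2}\bigl[\cos\bigl(\tfrac{(1+s)\theta}{2}\bigr)+\sin\bigl(\tfrac{s\pi}{2}\bigr)\bigr]
\]
with a small $\mu=\mu(s)>0$. The harmonic Lewy-type term $r^{1/2}\cos(\theta/2)$ vanishes on the model slit $\{z_n\leq 0,\, z_{n+1}=0\}$, is nonnegative in $\R^{n+1}$, and is bounded below by $c\,r^{1/2}$ in any non-tangential cone; the identity $\cos\bigl(\tfrac{(1+s)\pi}{2}\bigr)=-\sin\bigl(\tfrac{s\pi}{2}\bigr)$ makes the second term also vanish on the slit and be nonnegative, while $\Delta[r^{(1+s)/2}]=\bigl(\tfrac{1+s}{2}\bigr)^2 r^{-3/2+s/2}$ yields
\[
\Delta\varphi \;=\; c_s\, r^{-3/2+s/2}, \qquad c_s := \mu\bigl(\tfrac{1+s}{2}\bigr)^2\sin\bigl(\tfrac{s\pi}{2}\bigr)>0.
\]
Let $\{Q_j\}$ be a symmetric Whitney decomposition of $B_1\setminus\Gamma$, $x_j\in\Gamma$ a nearest boundary point to the centre of $Q_j$, and $\nu_j$ the associated approximate normal from Lemma~\ref{lem:normals}; let $\varphi_j$ be the local barrier based at $x_j$ with normal $\nu_j$. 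Using a partition of unity $\{\eta_j\}$ subordinate to mild enlargements of $Q_j$ with $|\nabla^\ell\eta_j|\lesssim r_j^{-\ell}$, define $\phi:=\sum_j\eta_j\varphi_j$. Taylor expansion in the rotation combined with Lemma~\ref{lem:normals} yields $|D^\ell(\varphi_j-\varphi_k)|\lesssim\epsilon\, r^{1/2-\ell}$ on cube overlaps, producing a patching error $|\Delta\phi-\Delta\varphi_j|\lesssim \epsilon\, r^{-3/2}$ on $Q_j$. Morrey's embedding with the cube-dependent reference $\tilde a^{ij}_j := a^{ij}(x_j)$ gives $|a^{ij}(x)-\tilde a^{ij}_j|\lesssim c_\ast r^{1-(n+1)/p}$ on $Q_j$, hence an error of order $c_\ast r^{-1/2-(n+1)/p}$ when comparing $L_0\phi$ with $\sum_j\eta_j \tilde a^{ij}_j\p_{ij}\varphi_j$. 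Fixing $\mu=\mu(s)$ first and then taking $\epsilon+c_\ast\ll\mu\sin\bigl(\tfrac{s\pi}{2}\bigr)$ produces
\[
L_0\phi(x)\;\geq\;\tfrac{c_s}{2}\dist(x,\Gamma)^{-3/2+s/2} \text{ in } B_1\setminus\Lambda,
\]
together with $\phi\geq 0$, $\phi=0$ on $\Lambda$, and $\phi\geq c\, r^{1/2}$ on the non-tangential set $\{\dist(\cdot,\Lambda)\geq\ell_0\dist(\cdot,\Gamma)\}$.

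To pass from $L_0$ to $L$, write $L\phi=L_0\phi+b^j\p_j\phi$ with $b^j:=\p_i a^{ij}\in L^p$, and let $\psi$ be the corrector provided by Proposition~\ref{prop:v1} for
\[
\p_i a^{ij}\p_j\psi-K\dist(x,\Gamma)^{-2}\psi=-b^j\p_j\phi \text{ in } \R^{n+1}\setminus\Lambda, \qquad \psi=0 \text{ on } \Lambda,
\]
with $K=K(n)$ large, after rewriting the right-hand side as $\p_i F^i+g$ with $F^i = -(a^{ij}-\tilde a^{ij}_j)\p_j\phi$ (patched across cubes) and $g$ a lower-order remainder. Morrey supplies the required weighted $L^p$ and $L^{p/2}$ norms for $\sigma$ slightly below $1/2$, and the refined estimate of Remark~\ref{rmk:impv1} then yields $|\psi(x)|\lesssim c_\ast\dist(x,\Gamma)^\sigma\dist(x,\Lambda)^{1-(n+1)/p}$. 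Setting $h^-_s:=\phi+\psi$, this bound verifies (iii), is of strictly lower order than $\phi\geq c\, r^{1/2}$ in the cone (yielding (ii)), and makes the additional term $K\dist^{-2}\psi$ in $Lh^-_s=L_0\phi+K\dist^{-2}\psi$ of magnitude $\ll \dist^{-3/2+s/2}$ for $p$ large, so that (i) survives.

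The main obstacle is to balance the three smallness parameters $(\mu,\epsilon,c_\ast)$ so that, at every Whitney scale, the positive contribution $\mu\sin\bigl(\tfrac{s\pi}{2}\bigr)r^{-3/2+s/2}$ dominates both the Reifenberg patching error $\epsilon\, r^{-3/2}$ and the coefficient oscillation $c_\ast r^{-1/2-(n+1)/p}$, yielding $c_s>0$ uniform in scale; this is the origin of the subordination $\epsilon+c_\ast\ll s$ in the hypotheses. A secondary delicate point is the simultaneous compatibility of the exponent $\sigma$ in Proposition~\ref{prop:v1}: $\sigma$ must be large enough that $|\psi|$ satisfies (iii), while the weighted norms on $F^i$ and $g$ remain finite; this is arranged for $p$ sufficiently large thanks to Remark~\ref{rmk:impv1}.
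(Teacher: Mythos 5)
Your overall strategy mirrors the paper's: patch local two-dimensional barriers over a Whitney decomposition of $B_1\setminus\Gamma$ using Lemma~\ref{lem:normals} to control the variation of the approximate normals, obtain a subsolution of $L_0$, and then correct it into a subsolution of $L$ via Proposition~\ref{prop:v1}. However, your specific choice of local model introduces a fatal scaling mismatch in Step 1.

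The problem is the harmonic leading term $r^{1/2}\cos(\theta/2)$. You correctly compute that the patching error for $L_0\phi$ is of order $\epsilon\,r^{-3/2}$ (because this term has gradient $\sim r^{-1/2}$, so the cross-terms $\nabla\eta_j\cdot\nabla\varphi_j$ and $\Delta\eta_j\,\varphi_j$ scale like $r^{-1}\cdot\epsilon r^{-1/2}$ and $r^{-2}\cdot\epsilon r^{1/2}$), while the positive contribution from the subharmonic correction is only $\mu\sin(s\pi/2)\left(\tfrac{1+s}{2}\right)^2 r^{-3/2+s/2}$. You then claim that taking $\epsilon+c_\ast\ll\mu\sin(s\pi/2)$ yields $L_0\phi\geq\tfrac{c_s}{2}r^{-3/2+s/2}$. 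This cannot work: the required inequality $\epsilon\,r^{-3/2}\lesssim\mu\sin(s\pi/2)\,r^{-3/2+s/2}$ is equivalent to $\epsilon\lesssim\mu\sin(s\pi/2)\,r^{s/2}$, whose right-hand side tends to zero as $r=\dist(x,\Gamma)\to 0$. No fixed smallness of $\epsilon$ (however it depends on $n,s$) can dominate a quantity that vanishes at the boundary; the patching error from the harmonic term is strictly worse than the available positive Laplacian near $\Gamma$.

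The paper avoids this by \emph{not} including a pure $r^{1/2}$-homogeneous piece. Its local model is $v^-=w_{1/2}^{1+s}$, a single function of homogeneity $\tfrac{1+s}{2}$, so $|\nabla v^-|\lesssim r^{(s-1)/2}$ and the patching error is $\lesssim\max\{c_\ast r^\gamma,\epsilon\}\,r^{-3/2+s/2}$ (see the estimate \eqref{eq:barrier_est} and the subsequent computations), which is of \emph{the same} order $r^{-3/2+s/2}$ as the positive term $cs(1+s)\,r^{-3/2+s/2}$ from $\Delta v^-$. The smallness of $\epsilon$ and $c_\ast$ relative to $s$ then closes the argument uniformly in $r$. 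Notice also that the paper's required lower bound (ii) only demands $h^-_s\gtrsim\dist(x,\Gamma)^{1/2+s/2}$ in the non-tangential region, which the homogeneity-$\tfrac{1+s}{2}$ model already provides; the additional harmonic term in your ansatz is therefore both unnecessary for (ii) and lethal for (i). A fix is to drop $r^{1/2}\cos(\theta/2)$ from your model (your remaining term $r^{(1+s)/2}[\cos((1+s)\theta/2)+\sin(s\pi/2)]$ has the right homogeneity and all the required sign properties), at which point your argument would parallel the paper's; your Step 2 (the splitting via Proposition~\ref{prop:v1} and Remark~\ref{rmk:impv1}) is then sound modulo the assertion that it only works for $p$ large, which you should not need -- the paper obtains (iii) for all $p\in(n+1,\infty]$.
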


\begin{proof}
\emph{Step 1: Barrier for $L_0$.}
Let $\eta_j$ be a partition of unity associated to the Whitney decomposition of $B_1\setminus \Gamma$ from above, i.e. the functions $\eta_j$ satisfy $0\leq \eta_j\leq 1$, $\eta_j = 1$ on $\frac{1}{2}Q_j$ and $\eta_j = 0$ for all $x\in Q_k$ if $Q_k\notin \mathcal{N}(Q_j):=\{Q_i| \ Q_i \text{ touches }Q_j\}$.
We consider
\begin{align*}
h^-(x) :=  \sum\limits_{k\in \N} \eta_k (x) v_k^-(x),
\end{align*}
where $v_k^-$ is constructed as follows:
We first define an affine transformation, $T_k$, associated with each Whitney cube $Q_k$. More precisely, we set $T_k (x):= R_k B^{-1}_k(x- x_k)$, where as before $x_k$ denotes a point on $\Gamma\cap B_1$ which realizes the distance of the center $\hat x_k$ of $Q_k$ to $\Gamma\cap B_1$, $B_k B_k^{t}:=(a^{ij}(x_k))$, and $R_k\in SO(n+1)$ is defined such that $R_k B^{t}_k \nu_k =\bar{c}_{1,k} e_n$ and $R_k B^{t}_k e_{n+1}=\bar{c}_{2,k} e_{n+1}$. Here $\nu_k$ is the associated approximate normal which was defined before and described in Lemma~\ref{lem:normals}. The constants $\bar{c}_{1,k},\bar{c}_{2,k}$ with $\bar{c}_{1,k},\bar{c}_{2,k} \in (\frac{1}{\sqrt{2}}, \sqrt{2})$ only depend on the ellipticity constants of $a^{ij}$. 
Next, for $s\in (0,1)$ we consider
$$v^-(x) := w_{1/2}(x_n, x_{n+1})^{1+s}, \quad w_{1/2}(x_n, x_{n+1})=\Ree(x_n+i|x_{n+1}|)^{1/2}.$$
A direct computation leads to 
\begin{align*}
\Delta v^-(x)&=s(1+s)(x_n^2+x_{n+1}^2)^{-1/2}(w_{1/2})^{-1+s}\\
&\geq s(1+s) (x_n^2+x_{n+1}^2)^{-\frac{3}{4}+\frac{s}{4}}.
\end{align*}
Then we define 
$$v_k^-(x):=v^-(T_k x).$$ 
We observe that by definition
\begin{align}\label{eq:Tk}
T_k (x)\cdot e_n = \bar{c}_{1,k}^{-1} (x-x_k)\cdot \nu_k, \quad T_k (x)\cdot e_{n+1}= \bar{c}_{2,k}^{-1} (x-x_k)\cdot e_{n+1}.
\end{align}
Thus, $v_k^-(x)=(\Ree(\bar{c}_{1,k}^{-1}(x-x_k)\cdot \nu_k+ i |\bar{c}_{2,k}^{-1} (x-x_k)\cdot e_{n+1}|)^{\frac{1}{2}})^{1+s}$.
By using the computation for $\Delta v^-(x)$, one can check that $v_k^-$ satisfies 
\begin{align*}
a^{ij}(x_k)\p_{ij} v_k^-(x) &\geq s(1+s)((T_k(x)\cdot e_n)^2+(T_k(x)\cdot e_{n+1})^2)^{-\frac{3}{4}+\frac{s}{4}}\\ &\quad \text{ in } \R^{n+1}\setminus \{x_{n+1}=0, (x-x_k)\cdot \nu_k\leq 0\},\\
v_k^-(x) &= 0 \text{ on } \{x_{n+1}=0, (x-x_k)\cdot \nu_k\leq 0\}.
\end{align*}
Due to the $(\epsilon, 1)$-Reifenberg condition on $\Gamma$, the definition \eqref{eq:Tk} and due to the property (W2), there exists an absolute constant $c\in (0,1)$ such that
\begin{align*}
a^{ij}(x_k)\p_{ij} v_k^-(x) \geq cs(1+s) r_k^{-\frac{3}{2}+\frac{s}{2}}, \quad r_k=\diam(Q_k)
\end{align*}
for each $x\in \supp(\eta_k)$.
By (W1) and (W2) this can further be rewritten as
\begin{align}\label{eq:lower_bar}
a^{ij}(x_k)\p_{ij} v_k^-(x) \geq cs(1+s) \dist(x,\Gamma)^{-\frac{2}{3}+\frac{s}{2}}, \quad x\in \supp(\eta_k).
\end{align}
We compute $L_0(\sum_k\eta_kv_k^-)$, which is
\begin{align*}
L_0(\sum_k\eta_k v_k^-)&=\sum_k a^{ij}(x_k)\p_{ij}(\eta_kv_k^-)+\sum_k(a^{ij}(x)-a^{ij}(x_k))\p_{ij}(\eta_k v_k^-)\\
&=\sum_k(a^{ij}(x_k)\p_{ij}v_k^-) \eta_k + \sum_k(a^{ij}(x_k)\p_{ij}\eta_k) v_k^- \\
& \quad+ 2\sum_k a^{ij}(x_k)\p_i\eta_k\p_j v_k^-  +\sum_k(a^{ij}(x)-a^{ij}(x_k))\p_{ij}(\eta_k v_k^-).
\end{align*}
Given any $x\in B_1\setminus \Gamma$, there is a unique $Q_\ell$ such that $x\in Q_\ell$. By definition of our partition of unity, $\eta_j \neq 0$ in $Q_\ell$ iff $Q_j \in \mathcal{N}(Q_\ell)$. 
Using \eqref{eq:lower_bar} and (W2) we thus deduce (with a possibly different $c$)
\begin{align*}
\sum_k(a^{ij}(x_k)\p_{ij}v_k^-(x)) \eta_k(x)\geq cs(1+s) r_\ell^{-\frac{3}{2}+\frac{s}{2}}.
\end{align*}
This will be the main contribution in $L_0 h^-$. In order to see this, we now estimate the error terms:
For any $x\in  Q_\ell$,
\begin{align*}
&\sum_k(a^{ij}(x_k)\p_{ij}\eta_k(x))v_k^-(x)\\
=&\sum_k [a^{ij}(x_k)-a^{ij}(x_\ell)] \p_{ij}\eta_k(x) v_k^-(x) + \sum_k a^{ij}(x_\ell) \p_{ij} \eta_k(x) v_k^-(x)\\
=&\sum_k [a^{ij}(x_k)-a^{ij}(x_\ell)] \p_{ij}\eta_k(x) v_k^-(x) + \sum_k a^{ij}(x_\ell) \p_{ij} \eta_k(x) [v_k^-(x)-v_\ell^-(x)]\\
\leq &C(n)\left(c_{\ast} r_\ell^{\gamma -\frac{3}{2}+\frac{s}{2}}+ \epsilon r_\ell^{-\frac{3}{2}+\frac{s}{2}}\right), \quad \gamma=1-\frac{n+1}{p},
\end{align*}
where in the second equality we used $a^{ij}(x_\ell)\p_{ij} (\sum_k \eta_k(x))=\sum_k a^{ij}(x_\ell)\p_{ij}\eta_k(x)=0$. In the third inequality we applied Lemma~\ref{lem:normals} and noted that for $x\in \supp(\eta_\ell)$ and $Q_k\in \mathcal{N}(Q_\ell)$,
\begin{align}\label{eq:barrier_est}
|v_k^-(x)-v_\ell^-(x)|=|v^-(T_k(x))-v^-(T_\ell (x))|\leq C \max\{c_\ast r_\ell^{\gamma}, \epsilon\} r_\ell^{\frac{1+s}{2}}.
\end{align} 
Indeed, this follows, since $|T_k(x)-T_l(x)| \lesssim \max\{c_\ast r_\ell^{\gamma}, \epsilon\} r_\ell $.\\
Similarly, by using $\sum_k a^{ij}(x_\ell)\p_i\eta_k(x)=0$ for all $j\in \{1,\dots, n+1\}$ as well as Lemma~\ref{lem:normals}, we infer that
\begin{align*}
&\sum_k a^{ij}(x_k)\p_i\eta_k\p_j v_k^- \\
&= \sum_k [a^{ij}(x_k)-a^{ij}(x_\ell)]\p_i\eta_k(x)\p_j v_k^-(x)+ \sum_k a^{ij}(x_\ell) \p_i \eta_k(x)[\p_jv_k^-(x)-\p_jv_\ell^-(x)]\\
&\leq C(n)\left(c_{\ast} r_\ell^{\gamma -\frac{3}{2}+\frac{s}{2}}+ \epsilon r_\ell^{-\frac{3}{2}+\frac{s}{2}} \right).
\end{align*}
Analogous to the estimate from before we used that
\begin{align*}
|\p_j v_k^-(x)-\p_jv_\ell^-(x)|=|\p_j v^-(T_k x)-\p_jv^-(T_\ell x)|\lesssim \max\{c_\ast r_\ell^{\gamma},\epsilon\} r_\ell^{\frac{s-1}{2}} 
\end{align*}
in the last inequality.
Finally, 
\begin{align*}
\sum_k(a^{ij}(x)-a^{ij}(x_k))\p_{ij}(\eta_k v_k^-(x))\leq C(n)c_{\ast} r_\ell^{\gamma-\frac{3}{2}+\frac{s}{2}}.
\end{align*}
Combining all the previous estimates, if $\epsilon=\epsilon(n,s)$ and $c_{\ast} =c_{\ast}(n,s)$ are small enough, we have for some $c_s>0$
\begin{equation}
\label{eq:lower_bound}
L_0h^-(x) \geq c_s\dist(x,\Gamma)^{- \frac{3}{2}+\frac{s}{2}}.
\end{equation}

Next we show that $h^-=0$ on $\Lambda$. By construction we directly note that $h^-=0$ on $\Lambda\setminus \Gamma$. Due to the continuity of $h^-$ and due to the fact that $\Gamma = \partial \Lambda$, this then also holds on $\Gamma$.\\

\emph{Step 2: Perturbation.}
We now modify $h^-$, so as to become a barrier for the full operator $L$. For this we note that $G= \chi_{B_{1}}(\p_i a^{ij}) \p_j h^-$ satisfies the bound
\begin{align*}
\| G(x) \dist(x,\Gamma)^{\frac{1}{2}} \|_{L^p(\R^{n+1})} \leq C(n) c_{\ast},
\end{align*}
since
\begin{align*}
|\p_j h^-(x)| \leq C(n)\dist(x,\Gamma)^{-\frac{1}{2}} \mbox{ for all } x\in B_{1}\setminus \Lambda.
\end{align*}
Let $q(x)$ be the solution of
\begin{align*}
L q(x) - K \dist(x,\Gamma)^{-2} q(x) &= - G(x) \mbox{ in } \R^{n+1}\setminus \Lambda,\\
q(x) & = 0 \mbox{ on } \Lambda,
\end{align*}
for some large constant $K=K(n)$. Then, by Proposition \ref{prop:v1} there exists a constant $C_0(n,p)$ such that
\begin{align}\label{eq:est_q}
|q(x)| \leq C_0 c_{\ast} \dist(x,\Gamma)^{\frac{3}{2}- \frac{n+1}{p}},
\end{align}
and $q$ vanishes continuously up to $\Lambda$. Let 
$$h^-_s(x):=h^-(x)+q(x).$$
Thus, for $c_{\ast}=c_{\ast}(n,p,s)$ small, we have 
\begin{align*}
L(h^- _s) &= L_0 h^- +(\p_i a^{ij}) \p_j h^- + K\dist(x,\Gamma)^{-2}q - (\p_i a^{ij}) \p_j h^-\\
& \geq c_s\dist(x,\Gamma)^{-\frac{3}{2}+\frac{s}{2}} - C_0 K c_{\ast} \dist(x,\Gamma)^{-\frac{1}{2} - \frac{n+1}{p}}\\
& \geq \frac{1}{2}c_s\dist(x,\Gamma)^{-\frac{3}{2}+\frac{s}{2}}.
\end{align*}
Moreover, we note that for each $k$ with $\supp(\eta_k)\cap \{\dist(x,\Lambda)\geq \ell_0\dist(x,\Gamma)\}\neq \emptyset$ and for each $x\in\supp(\eta_k)\cap \{\dist(x,\Lambda)\geq \ell_0\dist(x,\Gamma)\}$, we have that $v_k^-(x)\geq c_n \dist(x,\Gamma)^{\frac{1}{2}+\frac{s}{2}}$. Using the definition of $h^-$ and the estimate \eqref{eq:est_q} for $q(x)$ (as well as a sufficiently small choice of $c_{\ast}$), we therefore arrive at
\begin{align}
\label{eq:h+}
h^-_s(x)\geq c_n \dist(x,\Gamma)^{\frac{1}{2}+\frac{s}{2}} \text{ for } x\in B_1\cap \{\dist(x,\Lambda)\geq \ell_0 \dist(x,\Gamma)\}.
\end{align}
\end{proof}

Using the previously constructed barrier function, we proceed by deducing a central non-degeneracy condition (in cones) for solutions of elliptic equations with controlled inhomogeneities in domains with Reifenberg slits. This corresponds to a comparison principle for solutions to divergence form, elliptic equations with rough metrics and divergence form right hand sides in domains with Reifenberg slits. \\
For convenience of notation, we change the geometry slightly, i.e. in the following we use cylinders instead of balls: For $r,\ell>0$ and $x_0\in \R^{n}\times\{0\}$, $B'_r(x_0)\times (-\ell,\ell):=\{(x',x_{n+1}) \in \R^{n+1}| \ |x'-x'_0|<r, |x_{n+1}|<\ell\}$.

\begin{prop}
\label{prop:nondeg}
Let $\Lambda, \Gamma, \epsilon$ be as in Assumption \ref{assum:Reifenberg}. Assume that $c_{\ast}$, $\ell_0$ are as in Notation \ref{not:not1}.
Suppose that $u\in H^1(B'_1\times (-1,1))\cap C(B'_1\times (-1,1))$ solves
\begin{align*}
\p_ia^{ij}\p_j u=\p_i F^i +g_1 + g_2 \text{ in } B'_1\times (-1,1)\setminus \Lambda, \quad u= 0 \text{ on } \Lambda,
\end{align*}
and that the following conditions are satisfied:
\begin{itemize}
\item[(i)] The metric $a^{ij}$ satisfies the assumptions (A1), (A2), (A3), (A4) and for some constants $\delta_0\in (0,1)$, $\sigma>\frac{n+1}{p}-\frac{1}{2}$ and $\bar{\delta}>0$,
\begin{align*}
&\sum_{i=1}^{n+1}\left\|\dist(\cdot, \Gamma)^{-\sigma}F^i \right\|_{L^p(B'_1\times(-1,1))}
+
\left\|\dist(\cdot,\Gamma)^{1-\frac{n+1}{p}-\sigma}g_1\right\|_{L^{p/2}(B'_1\times(-1,1))}\\
& \quad + 
\left\|\dist(\cdot,\Gamma)^{\frac{3}{2}-\delta_0}g_2\right\|_{L^{\infty}(B'_1\times(-1,1))}\leq \bar{\delta}.
\end{align*}
\item[(ii)] $u \geq 1$ on $B'_1\times [-1,1]\cap \{|x_{n+1}|\geq \ell_0\}$.
\item[(iii)] $u\geq -2^{-8} $ in $B'_1\times (-\ell_0,\ell_0)$.
\end{itemize}
Then, if $\epsilon$, $\bar \delta$, $c_{\ast}$ are sufficiently small depending on $n,p,\delta_0, \sigma$, there exists a constant $c_n>0$ such that  
\begin{equation}
\label{eq:nondeg_a}
u(x)\geq c_n \dist(x,\Gamma)^{\frac{1}{2}+\frac{\bar\epsilon}{2}},\quad \bar \epsilon=\min\{\delta_0,\frac{1}{2}-\frac{n+1}{p}+\sigma\}
\end{equation}
for all $x\in  (B_{1/2}'\times (-1/2,1/2)) \cap \{x| \dist(x,\Lambda)\geq \ell_0 \dist(x,\Gamma)\}$, and 
\begin{equation}
\label{eq:nondeg_b}
u(x)\geq - c_n\dist(x,\Gamma)^{1-\frac{n+1}{p}+\sigma} 
\end{equation}
for all $x\in B_{1/2}'\times (-1/2,1/2)$. 
\end{prop}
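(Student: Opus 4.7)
The approach is a splitting-comparison argument. First, I would decompose $u = u_1 + u_2$ by invoking Proposition \ref{prop:v1}. Concretely, after zero-extending $F^i, g_1, g_2$ outside the cylinder and extending $a^{ij}$ to $\R^{n+1}$ as in Remark \ref{rmk:ref_ext}, pick $K = K(n,\sigma)$ large and take $u_1$ to solve
\begin{equation*}
    \partial_i a^{ij}\partial_j u_1 - K\dist(x,\Gamma)^{-2} u_1 = \partial_i F^i + g_1 + g_2\text{ in }\R^{n+1}\setminus \Lambda, \quad u_1|_{\Lambda} = 0.
\end{equation*}
The weighted norms of $F^i$ and $g_1$ in hypothesis (i) directly fit the Proposition's hypotheses, and the $L^{\infty}$-weighted bound on $g_2$ yields the required $L^{p/2}$-weighted bound on the bounded cylinder (integrability is granted by $\delta_0>0$ and by $\sigma>\frac{n+1}{p}-\frac{1}{2}$). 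Consequently $|u_1(x)| \leq C(n,p)\bar\delta \dist(x,\Gamma)^{1-\frac{n+1}{p}+\sigma}$ with continuous vanishing at $\Lambda$. The remainder $u_2 := u - u_1$ then satisfies $\partial_i a^{ij}\partial_j u_2 = -K\dist^{-2}u_1$ with $u_2 = 0$ on $\Lambda$ and right-hand side controlled by $CK\bar\delta \dist^{-1-\frac{n+1}{p}+\sigma}$; this observation already reduces \eqref{eq:nondeg_b} to showing a matching lower bound on $u_2$.

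Next I would set $\bar\epsilon := \min\{\delta_0,\, \frac{1}{2} - \frac{n+1}{p}+\sigma\}$ and invoke Proposition \ref{prop:barrier} with $s = \bar\epsilon$ (whose smallness requirements on $\epsilon$ and $c_\ast$ are absorbed into those of the present proposition) to obtain a barrier $h := h^-_{\bar\epsilon}$ with $Lh \geq c_{\bar\epsilon}\dist^{-\frac{3}{2}+\frac{\bar\epsilon}{2}}$. The decisive exponent inequality
\begin{equation*}
    -\tfrac{3}{2} + \tfrac{\bar\epsilon}{2} \;<\; -1 - \tfrac{n+1}{p} + \sigma,
\end{equation*}
which follows from the definition of $\bar\epsilon$ combined with $\sigma>\frac{n+1}{p}-\frac{1}{2}$, shows that on the bounded cylinder $c_n\, Lh$ pointwise dominates $|Lu_2|$ once $\bar\delta$ is small compared to $c_n$. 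Hence $L(u_2 - c_n h)\le 0$ in $B_1'\times(-1,1)\setminus\Lambda$, so the weak maximum principle is applicable to $u_2 - c_n h$.

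The boundary analysis is the delicate step. On $\Lambda$ both $u_2$ and $h$ vanish. On the horizontal and lateral part of $\partial(B_1'\times(-1,1))$ lying in $\{|x_{n+1}|\geq \ell_0\}$, hypothesis (ii) gives $u\geq 1$, so $u_2 - c_n h \geq \frac{1}{2}$ for $c_n, \bar\delta$ small (using $h\leq C_n$ on $B_1$). The \emph{main obstacle} is the remaining lateral piece $\{|x'|=1,\,|x_{n+1}|<\ell_0\}$, where only hypothesis (iii) $u\geq -2^{-8}$ is available; here a direct comparison with $c_n h$ is insufficient because $h$ can be of unit size. To push the maximum principle through I would modify the barrier by a cutoff supported away from this face (the cutoff error terms being harmless because they live on a region where $\dist(\cdot,\Gamma)\gtrsim 1$ and the equation is uniformly elliptic), and exploit the numerical gap between $-2^{-8}$ and $1$ to calibrate $c_n$ so that $u_2 \geq c_n h$ is preserved on the sub-cylinder $B_{1/2}'\times(-1/2,1/2)$. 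Once this is established, conclusion \eqref{eq:nondeg_a} follows in the cone from property (ii) of Proposition \ref{prop:barrier}, while \eqref{eq:nondeg_b} follows globally by combining property (iii) of Proposition \ref{prop:barrier} with the bound on $|u_1|$ derived in the first step.
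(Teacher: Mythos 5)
Your overall scheme — split off a controlled-error piece via Proposition \ref{prop:v1}, then run a comparison against the barrier from Proposition \ref{prop:barrier} — is the right one, and the exponent bookkeeping (the role of $\bar\epsilon$, and why $-\frac32+\frac{\bar\epsilon}{2}$ dominates $-1-\frac{n+1}{p}+\sigma$) is correct. But the step you yourself flag as delicate, the lateral boundary $\{|x'|=1,\ |x_{n+1}|<\ell_0\}$, is where the plan has a real gap. You propose to cut off the barrier near that face and dismiss the resulting $L(\chi h)$ error terms on the grounds that $\dist(\cdot,\Gamma)\gtrsim 1$ on the support of $\nabla\chi$. That is not true in general: $\Gamma$ is only assumed $(\epsilon,1)$-Reifenberg flat in $B_1$ and, being $\epsilon$-close to a hyperplane through the origin, it typically reaches all the way out to the equatorial sphere $\{|x'|=1,\ x_{n+1}=0\}$. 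So the cutoff region can come arbitrarily close to $\Gamma$, and the error terms $\nabla\chi\cdot\nabla h_s^-$ and $h_s^- L\chi$ are then of the same singular order $\dist(\cdot,\Gamma)^{-3/2+\cdots}$ as the barrier's own $Lh_s^-$ and cannot be absorbed. In addition, cutting the barrier off also destroys the lower bound $h_s^-\gtrsim\dist(\cdot,\Gamma)^{\frac12+\frac{s}{2}}$ in the cone near that face, so one would additionally lose \eqref{eq:nondeg_a} there.

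The paper sidesteps the lateral boundary without any cutoff. Instead of splitting $u$ directly, one fixes the evaluation point $x_0\in B_{1/2}'\times(-\ell_0,\ell_0)$ and forms
\[
h(x):= u(x) + |x'-(x_0)'|^2 - 2^{-8}h_{\bar\epsilon}^-(x) - 2n\,x_{n+1}^2,
\]
then splits \emph{this} combination $h=h_1+h_2$ via Proposition \ref{prop:v1} and runs the maximum principle for $h_2$ on the smaller cylinder $B_{3/4}'\times(-\ell_0,\ell_0)$. The added polynomial does three jobs at once: (a) on $\p B_{3/4}'\times(-\ell_0,\ell_0)$ one has $|x'-(x_0)'|\ge 1/4$ because $x_0\in B_{1/2}'$, so the $1/16$ beats the $-2^{-8}$ from hypothesis (iii) and gives $h_2\ge 0$ on that face with room to spare; (b) at the evaluation point $x=x_0$ the quadratic vanishes and the term $2n\,x_{n+1}^2\le 2n\ell_0^2$ is negligible, so the lower bound for $h(x_0)$ transfers directly to $u(x_0)$; and (c) the polynomial's $L$-image is bounded, hence goes harmlessly into the $g_1,g_2$-type right-hand side that Proposition \ref{prop:v1} already absorbs. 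This is the mechanism you were reaching for with the cutoff; the smooth quadratic correction does it without introducing any uncontrolled singular terms near $\Gamma$.
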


\begin{rmk}
By a scaling argument, Proposition \ref{prop:nondeg} remains true, if condition (iii) is replaced by $u\geq \bar c_0$ on $B'_1\times [-1,1]\cap \{|x_{n+1}|\geq \ell_0\}$ for some $\bar c_0>0$ (and if the remaining constants in Proposition \ref{prop:nondeg} are rescaled appropriately).
\end{rmk}

\begin{rmk}
The parameter $\sigma>\frac{n+1}{p}-\frac{1}{2}$ quantifies the necessary decay conditions on the inhomogeneities $F^i$, $i\in\{1,\dots,n+1\},$ $g_1$, $g_2$ in dependence on the integrability of these quantities (measured in terms of $p$). In the application to the thin obstacle problem, the corresponding inhomogeneities satisfy these decay assumptions, if either
\begin{itemize}
\item $p>n+1$ and no obstacle is present,
\item or $p>2(n+1)$ in which case an arbitrary $W^{2,p}$ obstacle is allowed.
\end{itemize}
More precisely, in the setting of the thin obstacle problem, we apply Proposition~\ref{prop:nondeg} to the tangential derivatives $\p_e w$, which satisfy the divergence form equation
\begin{align*}
\p_ia^{ij}\p_j u = \p_i F^i, \quad \text{where } F^i=(\p_ea^{ij})\p_j w+\p_e(a^{ij}\p_j\phi),
\end{align*} 
with metric $a^{ij}\in W^{1,p}$ and obstacle $\phi \in W^{2,p}$. We note that on the one hand by Lemma \lemfreebgrowth, the first term in $F^i$ satisfies
\begin{align*}
 \dist(x,\Gamma_w)^{-1/2}\ln (\dist(x,\Gamma_w))^{-2} (\p_ea^{ij})\p_j w \in L^p.
\end{align*}
On the other hand, the second term $\p_e(a^{ij}\p_j\phi)\in L^p$, which originates from a potentially non-vanishing obstacle (c.f. Section \ref{sec:nonfl}), does \emph{not} carry any additional decay. However, 
\begin{itemize}
\item if $p\in (n+1,2(n+1)]$, then $\sigma>\frac{n+1}{p}-\frac{1}{2}\geq 0$, i.e. decay on $F^i$ is required. The assumption (i) from Proposition \ref{prop:nondeg} on $F^i$ is thus in general only satisfied if we set $\phi=0$. 
\item If $p\in (2(n+1),\infty]$, we have $\frac{n+1}{p}-\frac{1}{2}<0$. Hence, we do not need any additional decay on $F^i$. In this case Proposition~\ref{prop:nondeg} applies for any $\phi\in W^{2,p}$. 
\end{itemize}
\end{rmk}

\begin{proof}
In order to prove the result, we consider an appropriate comparison function. For $x_0\in B_{1/2}'\times (-\ell_0, \ell_0)$, we set $s=\bar\epsilon$ (where $\bar{\epsilon}$ is the constant from (\ref{eq:nondeg_a})) and consider
\begin{align*}
h(x):= u(x) + |x'-(x_0)'|^2 - 2^{-8}h_s^-(x) - 2 n x_{n+1}^2.
\end{align*}
A direct computation shows that $h$ satisfies
\begin{align*}
\p_i a^{ij} \p_j h =  \tilde{g}_1+\tilde{g}_2,
\end{align*}
where 
\begin{align*}
\tilde{g}_1&= \p_iF^i+g_1 + 2(x_j-(x_0)_j)(\p_ia^{ij}) - 4n(x_{n+1})( \p_{j} a^{n+1,j}) ,\\
\tilde{g}_2&= g_2 -2^{-8}\p_ia^{ij}\p_j h_s^-+2 a^{ii} - 4 n a^{n+1,n+1}.
\end{align*}
We split $h=h_1+h_2$, where
\begin{align*}
\p_i a^{ij}\p_j h_1 -K \dist(x,\Gamma)^{-2} h_1&=\tilde{g}_1 \text{ in } \R^{n+1}\setminus \Lambda,\\
h_1&=0 \text{ on } \Lambda.
\end{align*}
By Proposition~\ref{prop:v1}, there exists $C_0=C_0(n,p)$ such that 
\begin{align}
\label{eq:estimate_for_h1}
|h_1(x)|\leq 
C_0 (c_{\ast}+ \bar{\delta} )\dist(x,\Gamma)^{1-\frac{n+1}{p}+\sigma}.
\end{align}

Now we consider $h_2$. Recalling the definition of $\ell_0$, we have that  
\begin{align*}
h_2&\geq 1 -C_0 (c_{\ast} + \bar{\delta})  - 2^{-8} - 2n\ell_0^{2} \geq \frac{1}{2} \mbox{ on } B_{\frac{3}{4}}'\times \{\pm \ell_0\},\\
h_2&\geq -2^{-8} +\frac{1}{16 } -C_0 (c_{\ast} + \bar{\delta}) - 2^{-8} - 2n\ell_0^{2} \geq 2^{-8} \mbox{ on } \p B'_{\frac{3}{4}}\times (-\ell_0,\ell_0).
\end{align*}
We further recall that $u=h_1=h_s^-=0$ on $\Lambda$. Thus, $h_2 \geq  0$ on $\Lambda$.\\
If $\bar{\delta}$ and $c_{\ast}$ are chosen sufficiently small, $h_2$ satisfies
\begin{align*}
\p_ia^{ij}\p_jh_2&=\tilde{g}_2-K\dist(x,\Gamma)^{-2}h_1\\
&\leq (\bar{\delta}+ c_{\ast}) C_0K\dist(x,\Gamma)^{-1-\frac{n+1}{p}+\sigma}- 2^{-8}c_s\dist(x,\Gamma)^{- \frac{3}{2}+ \frac{s}{2}} \\
& \quad + 2a^{ii} -4 n a^{n+1,n+1} + \bar{\delta} \dist(x,\Gamma)^{-\frac{3}{2}+\delta_0} \\
&\leq 0 \quad \mbox{ in } (B'_{\frac{3}{4 }}\times (-\ell_0,\ell_0)) \setminus \Lambda,
\end{align*}
where we used the smallness of $\bar{\delta}$ and the definition of $s$ in terms of $\bar{\epsilon}$.
Hence, by the comparison principle, $h_2\geq 0$ in $B'_{\frac{3}{4 }}\times (-\ell_0,\ell_0)$. 
Combining this with the estimate \eqref{eq:estimate_for_h1} gives 
$$h(x)\geq -C_0 (c_{\ast} + \bar{\delta})\dist(x,\Gamma)^{1-\frac{n+1}{p}+\sigma}\text{ in }B'_{\frac{3}{4 }}\times (-\ell_0,\ell_0).$$  
In particular, 
$$h(x_0)\geq -C_0 (c_{\ast}+ \bar{\delta}) \dist(x_0,\Gamma)^{1-\frac{n+1}{p}+\sigma}.$$ 
Rewriting this in terms of $u_2$, yields 
\begin{align*}
u(x_0) &\geq - C_0 (c_{\ast} + \bar{\delta}) \dist(x_0,\Gamma)^{1-\frac{n+1}{p}+\sigma} + 2^{-8}h_s^{-}(x_0).
\end{align*}
Using the growth properties of the barrier function $h_s^-$ (c.f. Proposition \ref{prop:barrier}) and by choosing $c_{\ast}, \bar{\delta}$ sufficiently small in dependence of $n,p$ and $\delta_0$, implies the estimates (\ref{eq:nondeg_a}) and (\ref{eq:nondeg_b}) in $B_{1/2}'\times (-\ell_0, \ell_0)$.
Finally, the full non-degeneracy in $B_{1/2}'\times (-1/2,1/2) \cap \{x| \dist(x,\Lambda)\geq \ell_0 \dist(x,\Gamma)\}$ follows from our assumption (ii) in combination with the fact that $0\in \Gamma$.
\end{proof}

\subsubsection{Proof of Proposition \ref{prop:Lip}}
\label{sec:proofLip}

With the aid of Proposition~\ref{prop:nondeg} we can now prove the Lipschitz (and $C^1$) regularity of the regular set of the free boundary. We only sketch the proof as there are no real modifications with respect to the one in \cite{PSU}:

\begin{proof}[Proof of Proposition \ref{prop:Lip}]
By choosing $\epsilon_0$ and $c_{\ast}$ sufficiently small and by invoking Proposition \ref{prop:Reifenberg}, we may assume that $\Gamma_{w}\cap B_{1/2}'$ is $(\epsilon,1/2)$ Reifenberg flat. Using this and the $C^1(B_1^+)$ closeness of $w$ to $w_{3/2}$, we apply Proposition \ref{prop:nondeg}.
In order to satisfy its assumptions, we transfer positivity from $\p_e w_{3/2}$ to $\p_e w$, where $e\in S^{n-1}\times \{0\}$ is a tangential direction. We consider $\ell_0= (2^{4}\sqrt{n})^{-1}$ as in Proposition \ref{prop:nondeg}. Then for any $\eta>0$ and any tangential vector $e\in \{x \in S^n \cap B_1' \big| \ x_{n} > \eta |x''|\}$, the function $w_{3/2}$ satisfies: 
\begin{align*}
\p_e w_{3/2}(x) &\geq 0 \mbox{ if } x\in B_{1},\\
\p_e w_{3/2}(x',x_{n+1}) &\geq c_n \eta (1+\eta^2)^{-1/2}\ell_0^{1/2} >0 \mbox{ if } x \in B_{1} \cap \{|x_{n+1}|\geq \ell_0\}.
\end{align*}
Moreover, $\p_e w$ solves
\begin{align*}
\p_i (a^{ij}  \p_j \p_ew ) =  \p_i F^{i}\mbox{ in } B_{1}\setminus \Lambda_{w},\quad F^i= -(\p_e a^{ij} )\p_j w.
\end{align*}
By Lemma \lemfreebgrowth,  $\|\dist(\cdot,\Gamma_w)^{-\frac{1}{2}}\ln(\dist(x,\Gamma_w))^{-2}F^i\|_{L^p(B_1)}\leq C(n,p)c_\ast$.
Hence, by Proposition~\ref{prop:nondeg} (applied with $\sigma = \frac{1}{2}-\mu$, where $\mu$ is an arbitrarily small but fixed constant), if $\epsilon_0$ and $c_\ast$ are chosen sufficiently small depending on $n, p, \mu$, then there exists a positive constant $c_{n,p}$ such that 
\begin{equation}
\label{eq:positive_cone}
\p_{e}w(x)\geq c_{n,p}\dist(x,\Gamma_{w})^{\frac{1}{2}+ \frac{\bar\epsilon}{2}}\geq 0
\end{equation}
for all $x\in B_{1/2}\cap \{x| \dist(x,\Lambda_{w}) \geq \ell_0\dist(x,\Gamma_{w})  \} $ and $e\in \{x\in S^n\cap B'_1| x_n>\eta|x''|\}$. Here $\eta\geq \tilde{C}\max\{\epsilon_0, c_\ast\}$ for a large enough positive constant $ \tilde{C}=\tilde{C}(n,p)$ and $\bar{\epsilon}= \sigma+\frac{1}{2}-\frac{n+1}{p}$. Moreover, Proposition \ref{prop:nondeg} also implies the global lower bound $\p_e w(x) \geq -\dist(x,\Gamma_w)^{1-\frac{n+1}{p}+\sigma}$ in $B_{1/2}$.
As $\p_e w=0$ on $\Lambda_w$ and as $\Omega_w$ is covered by the set $\{x| \dist(x,\Lambda_{w}) \geq \ell_0\dist(x,\Gamma_{w})  \} $ (on which the bound (\ref{eq:positive_cone}) holds), we deduce that $\p_e w \geq 0$ in $B'_{ 1/2}$, with strict positivity in $B'_{ 1/2}\cap \{x| \dist(x,\Lambda_{w}) > \ell_0 \dist(x,\Gamma_{w})  \} $.
As a consequence (c.f. \cite{PSU}),
\begin{align*}
\{w(x',0)>0\}\cap B_{1/2}' = \{x_{n}>g(x'') \big| \ x'' \in B''_{1/2}\},
\end{align*}
where $g$ is a Lipschitz continuous function with $\|\nabla'' g\|_{L^\infty(B''_{1/2})}\leq \tilde{C}\max\{\epsilon_0,c_\ast\}$.
\end{proof}

\begin{rmk}
\label{rmk:diff}
If we return to an arbitrary solution $w$ of  (\ref{eq:varcoef}), we can even conclude the $C^1$ regularity of the free boundary from the previously given argument. Indeed, after rescaling, it is always possible to satisfy the closeness assumption (i) in Proposition \ref{prop:Lip} with an arbitrarily small parameter $\epsilon_0$.
Hence, we can infer that for each $\eta>0$ there exists $\rho_{\eta}>0$ such that $\left\| \nabla'' g\right\|_{L^{\infty}(B_{\rho_{\eta}}'')}\leq \eta$. This yields that $g$ is differentiable at $0$, or in other words, the tangent plane of $\Gamma_{3/2}(w)$ exists at the origin. The same holds true at other regular free boundary points. Moreover, from \eqref{eq:positive_cone} it is not hard to see that the tangent plane varies continuously in a neighborhood of the origin.  Hence, we indeed obtain that $\Gamma_{3/2}(w)\cap B_\rho$ is not only Lipschitz but $C^1$ regular.
\end{rmk}

\begin{rmk}
At this stage and with these (comparison) techniques we do not know how to further estimate the modulus of continuity. Heuristically, the improvement comes from the fact that the convergence to the homogeneous solution is much faster than the compactness arguments suggest, once the solution is close to the homogeneous solution in the sense of Proposition \ref{prop:Reifenberg}. Thus, in the next section we use a boundary Harnack inequality to overcome this difficulty.
\end{rmk}

\subsection{$C^{1,\alpha}$ regularity}
\label{subsec:C1a}

In this section we improve the regularity of the (regular) free boundary to obtain its $C^{1,\alpha}$ regularity for some $\alpha \in (0,1]$:

\begin{prop}
\label{prop:C1a}
Let $a^{ij}$ be a uniformly elliptic $W^{1,p}(B_1^+)$, $p\in(n+1,\infty]$, tensor field with $a^{ij}(0)=\delta^{ij}$.
Assume that the conditions (i) and (ii) of Proposition \ref{prop:Lip} are satisfied with the same constants $\epsilon_0$ and $c_\ast$. Then if $\epsilon_0, c_{\ast}$ are sufficiently small depending on $n,p$, there exists $\alpha = \alpha(n,p) \in(0,1-\frac{n+1}{p}]$ such that $g\in C^{1,\alpha}(B_{1/4}'')$. Moreover, $\|\nabla ''g\|_{C^{0,\alpha}(B''_{1/4})}\leq C(n,p)\max\{\epsilon_0,c_\ast\}$.
\end{prop}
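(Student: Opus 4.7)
The strategy is to deduce the $C^{1,\alpha}$ regularity from a boundary Harnack estimate applied to pairs of tangential derivatives of $w$, in the spirit of \cite{ACS08} and Chapter~9 of \cite{PSU}, but adapted to our low-regularity divergence-form setting by means of the splitting technique of Proposition~\ref{prop:v1}. By Proposition~\ref{prop:Lip}, $\Lambda_w$ is locally the subgraph of the Lipschitz function $g$, so that $B_{1/2}^+\setminus \Lambda_w$ is a Lipschitz (hence NTA) domain. Moreover, by \eqref{eq:positive_cone}, for every tangential direction $e$ in a fixed cone around $e_n$, the function $\p_e w$ is a non-negative solution of the differentiated equation $\p_i a^{ij}\p_j(\p_e w) = \p_i F^i$ in $B_1\setminus \Lambda_w$ with $F^i = -(\p_e a^{ij})\p_j w$ and $\p_e w = 0$ on $\Lambda_w$. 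The obstacle is that the divergence-form right hand side $\p_i F^i$ makes a direct application of standard boundary Harnack inaccessible.

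To bypass this, I would split $\p_e w = v_{e,1}+v_{e,2}$ as in \eqref{eq:v1}--\eqref{eq:v2}. By Proposition~\ref{prop:v1} and Remark~\ref{rmk:impv1}, the error $v_{e,1}$ obeys a quantitative pointwise bound of the form $|v_{e,1}(x)|\lesssim c_\ast \dist(x,\Gamma_w)^{\sigma}\dist(x,\Lambda_w)^{1-(n+1)/p}$ for some $\sigma$ slightly less than $1/2$ (using the extra decay of $F^i$ provided by Lemma \lemfreebgrowth), while $v_{e,2}$ solves $\p_i a^{ij}\p_j v_{e,2} = -K\dist(x,\Gamma_w)^{-2}v_{e,1}$ with a right hand side that is strictly subcritical with respect to the natural scaling $\dist(\cdot,\Gamma_w)^{-3/2}$ of the problem. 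For two directions $e,e'$ in the positivity cone, I would then first establish a Carleson-type estimate for $v_{e,2}$ — bounding it non-tangentially by its value at a single carrot point — and then prove a boundary Harnack estimate of Hölder type for the quotient $v_{e,2}/v_{e',2}$ in the Lipschitz domain $B_{1/2}^+\setminus \Lambda_w$, with exponent $\alpha\in(0, 1-(n+1)/p]$ depending only on $n,p$ and the Lipschitz constant of $g$. The classical Lipschitz-domain proofs of Carleson and boundary Harnack should carry over because the inhomogeneous right hand side is subcritical and the NTA structure of $B_{1/2}^+\setminus\Lambda_w$ is uniform up to the free boundary.

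Combined with the non-degeneracy $v_{e',2}\gtrsim \dist(\cdot,\Gamma_w)^{1/2+\bar\epsilon/2}$ from Proposition~\ref{prop:nondeg}, the refined decay of $v_{e,1}$ implies that $v_{e,1}/v_{e',2}$ is itself Hölder continuous up to $\Gamma_w$, so the full quotient $\p_e w/\p_{e'} w$ inherits the same Hölder regularity on $\overline{\Omega_w}\cap B_{1/4}$. Choosing $e=e_n+\tau\tilde e$ and $e'=e_n$ with $\tilde e\in \{0\}^{n-1}\times S^{n-2}\times\{0\}$ and passing $\tau\to 0$, a standard computation (cf.\ Chapter~9 of \cite{PSU}) identifies the boundary trace of $\p_{\tilde e}w/\p_n w$ on $\Gamma_{3/2}(w)$ with $-\p_{\tilde e}g$, yielding $\nabla''g\in C^{0,\alpha}(B_{1/4}'')$ with the asserted quantitative bound by $\max\{\epsilon_0,c_\ast\}$. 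The main technical obstacle is making the Carleson and boundary Harnack estimates quantitative for the equation satisfied by $v_{e,2}$; here the refined pointwise bound in Remark~\ref{rmk:impv1} is decisive, both for keeping the right hand side of the equation for $v_{e,2}$ subcritical at the free boundary and for fixing the final Hölder exponent $\alpha\leq 1-(n+1)/p$.
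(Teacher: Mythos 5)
Your strategy matches the paper's — a Carleson estimate and boundary Harnack inequality for quotients of tangential derivatives $\p_e w$ in the Lipschitz slit domain, enabled by the splitting of Proposition \ref{prop:v1}, followed by a level-set identification of $\nabla'' g$ with $-\nabla'' w/\p_n w$. The fact that you pre-apply the split to obtain $v_{e,2}$, whereas the paper builds the splitting \emph{inside} the proofs of Lemmas \ref{lem:Carleson} and \ref{lem:boundHarn} and applies them to $\p_e w$ directly, is cosmetic.

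The genuine gap is in your central assertion that ``the classical Lipschitz-domain proofs of Carleson and boundary Harnack should carry over because the inhomogeneous right-hand side is subcritical.'' That step is not automatic, and the refined pointwise bound of Remark \ref{rmk:impv1} alone does not deliver it. To compare $v_{e,2}$ with the solution of the homogeneous equation carrying the same boundary data (for Carleson), and to run the dyadic improvement loop (for the H\"older boundary Harnack), you need a strict $L$-subsolution $h_s^-$ vanishing on $\Lambda_w$, comparable to $\dist(\cdot,\Gamma_w)^{1/2+s/2}$ in the non-tangential cone, whose excess $L h_s^- \gtrsim \dist(\cdot,\Gamma_w)^{-3/2+s/2}$ dominates the source $K\dist(\cdot,\Gamma_w)^{-2}v_{e,1}$ in the comparison. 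This barrier is Proposition \ref{prop:barrier}: it is patched over a Whitney decomposition from rotated near-homogeneous $(1+s)/2$-profiles using the Reifenberg/Lipschitz structure of $\Gamma_w$, and a \emph{further} application of Proposition \ref{prop:v1} is needed to absorb the divergence-form defect of that patching. Your sketch never invokes it, so the Carleson and boundary Harnack steps are left unjustified. A secondary loose end is the identity $\p_j g = -\p_j w/\p_n w$ on $\Gamma_w$: this is not a formal corollary of boundary Harnack. The paper obtains it by showing the level sets $\{w=h\}$ are $C^1$ graphs $g_h$ for $h>0$ (using the strict positivity $\p_n w>0$ in $\Omega_w$ supplied by \eqref{eq:positive_cone}) and sending $h\searrow 0$ via Dini's theorem; your ``take $e=e_n+\tau\tilde e$, pass $\tau\to 0$'' phrasing by itself does not establish that $g$ is differentiable at points of $\Gamma_w$.
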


We remark that Theorem \ref{thm:C1a} is an immediate consequence of Proposition \ref{prop:C1a}:

\begin{proof}[Proof that Proposition~\ref{prop:C1a} implies Theorem \ref{thm:C1a}]
As a result of the analysis in \cite{KRS14}, we may assume that for an appropriate sequence of radii $r_j>0$, $r_j \rightarrow 0$, the ($L^2$)-rescalings, $w_{r_k}(x):= \frac{w(r_k x)}{ r_{k}^{- \frac{n+1}{2}} \left\| w \right\|_{L^2(B_{r_k}^+)}  }$, converge to a global solution $w_0$ in $C^{1,\alpha}_{loc}(\R^{n+1}_+)$ for some $\alpha \in (0,1/2)$, where after a possible rotation of coordinates
\begin{align*}
w_0(x)=w_{3/2}(x)=c_n \Ree(x_{n}+ ix_{n+1})^{\frac{3}{2}}.
\end{align*}
For each $k$, $w_{r_{k}}$ is a solution of \eqref{eq:varcoef} in $B_{r_k^{-1}}$ with metric $a^{ij}_{r_k}$, where $a^{ij}_{r_k}(x)=a^{ij}(r_k x)$, with rescaled free boundary $\Gamma_{w_{r_k}}$.    
A direct calculation shows that $$\|\nabla a^{ij}_{r_k}\|_{L^p(B_1)}=r_k^{1-\frac{n+1}{p}}\|\nabla a^{ij}\|_{L^p(B_{r_k})}.$$
Thus, for $r_{k_0}$ small enough, assumptions (i), (ii) in Proposition~\ref{prop:Lip} are satisfied for $w_{r_{k_0}}$. Thus, Proposition~\ref{prop:Lip} implies that $\Gamma_{w_{r_{k_0}}}\cap B_{1/2}$ is a Lipschitz graph with sufficiently small Lipschitz constant. Hence, Proposition \ref{prop:C1a} is applicable to $w_{r_{k_0}}$. Rescaling back to $w$ yields Theorem~\ref{thm:C1a}.
\end{proof}

The improvement from Lipschitz to $C^{1,\alpha}$ regularity of the regular free boundary is achieved with the aid of a boundary Harnack inequality (c.f. Theorem 7 in \cite{ACS08}) applied to tangential derivatives $v=\p_e w$ in the slit domain $B_1\setminus \Lambda_w$. Here we follow the ideas of \cite{CSS} by making use of the non-degeneracy condition of the solution. In this context, we have to be slightly more careful, as our inhomogeneity is in divergence form. Consequently, we rely on the decomposition, $v=v_1+v_2$, which we introduced above (c.f. Proposition \ref{prop:v1}).
Another difficulty, which we have to overcome, is the fact that we may not assume that $v$ is positive in the whole domain $B_1\setminus \Lambda_w$: On the one hand it satisfies (by Proposition~\ref{prop:nondeg})
\begin{equation*}
v(x)\geq c_0 \dist(x,\Gamma_w)^{\frac{1}{2}+\frac{\bar\epsilon}{2}} \text{ in } \{x\in B_1| \ \dist(x,\Lambda_w)\geq \ell_0 \dist(x,\Gamma_w)\},
\end{equation*}
for $\ell_0 = (2^{4}\sqrt{n})^{-1}$ and $\bar{\epsilon}=\min\{\delta_0,\sigma+\frac{1}{2}-\frac{n+1}{p}\}$,
while on the other hand in the complement we only have 
\begin{equation*}
v(x)\geq - c_1\dist(x,\Gamma_w)^{1-\frac{n+1}{p}+\sigma} \text{ in } B_1.
\end{equation*}
Here $\sigma>\frac{n+1}{p}-\frac{1}{2}$ is the same constant as in Proposition \ref{prop:nondeg} (i).\\

Despite these additional difficulties, it is possible to adapt the general strategy of \cite{CSS}: Working in the framework of general elliptic equations with controlled inhomogeneities in domains with Lipschitz slits, in Section \ref{sec:Carl_Harn} we first prove a Carleson estimate (Lemma \ref{lem:Carleson}) which is adapted to our setting. Then we continue with an appropriate boundary Harnack inequality (Lemma \ref{lem:boundHarn}). In Section \ref{sec:proofC1a} we finally apply these results in the setting of the thin obstacle problem and hence infer the desired regularity result of Proposition \ref{prop:C1a}. 

\subsubsection{Carleson and boundary Harnack estimates}
\label{sec:Carl_Harn}

In the sequel, we work in the slightly more general set-up of elliptic equations with controlled inhomogeneities in domains with Lipschitz slits. Hence, in this section we make the following assumptions:

\begin{assum} 
\label{assum:set}
Let $g: \R^{n-1}\rightarrow \R$ be a Lipschitz function with Lipschitz constant $\epsilon>0$ and $g(0)=0$.
Then we consider
\begin{align*}
\Lambda &:= \{x_{n}\leq g(x'')\} \subset \R^{n}\times \{0\},\\
\Gamma &:=\{x_n=g(x'')\}\subset \R^n\times \{0\}.
\end{align*} 
\end{assum}

We remark that the Lipschitz regularity of $\Gamma$ in particular implies its $(\epsilon,1)$-Reifenberg flatness. 
Moreover, in our set-up, the smallness assumption on the Lipschitz constant of $g$ does not pose restrictions and can always be achieved by choosing the constants $\epsilon_0, c_\ast$ sufficiently small in Proposition \ref{prop:Lip}.\\

As in the previous Section \ref{sec:boundary}, we use the abbreviations from Notation \ref{not:not1}, i.e. we again set $\ell_0 := (2^{4} \sqrt{n})^{-1}$ and $c_{\ast}:=\left\| \nabla a^{ij} \right\|_{L^p(B_1)}$ for notational convenience.\\

In this framework we derive the following Carleson estimate:

\begin{lem}[Carleson estimate]
\label{lem:Carleson} 
Let $\Lambda, \Gamma, \epsilon$ satisfy Assumption \ref{assum:set}.
Suppose that $u\in C(B_1)\cap H^{1}(B_1 \setminus \Lambda)$ solves
\begin{align*}
\p_i a^{ij} \p_j u &= \p_i F^i+g_1+g_2 \mbox{ in } B_1\setminus \Lambda, \quad u=0 \text{ on } \Lambda,
\end{align*} 
where we assume that condition (i) from Proposition \ref{prop:nondeg} and the following non-degeneracy assumption holds: For $\sigma, \bar{\epsilon}$ as in Proposition \ref{prop:nondeg} we have
\begin{equation}
\label{eq:nondeg}
\begin{split}
u(x)&\geq  \dist(x,\Gamma)^{\frac{1}{2}+\frac{\bar{\epsilon}}{2}} \text{ in } \{x\in B_1| \ \dist(x,\Lambda)\geq \ell_0 \dist(x,\Gamma)\},\\
u(x)&\geq - \dist(x,\Gamma)^{1-\frac{n+1}{p}+\sigma} \text{ in } B_1.
\end{split}
\end{equation}
Then if $\bar\delta=\bar\delta(n,p,\delta_0)$ and $\epsilon=\epsilon(n,p,\delta_0)$ are small enough, there exists $C=C(n,p,\epsilon)$ such that 
\begin{align*}
\sup\limits_{B_{r}(Q)}u \leq C u(Q+ r e_{n}),
\end{align*}
for all $Q\in \Gamma \cap B'_{1/2}$ and $r\in (0,1/4)$.
\end{lem}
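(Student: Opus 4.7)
The plan is to reduce the Carleson estimate for $u$ to the classical CFMS-type Carleson inequality (see e.g.\ Theorem 7 in \cite{ACS08}) for divergence-form elliptic equations vanishing on a Lipschitz slit, using the splitting technique of Proposition \ref{prop:v1} as the bridge that absorbs the $W^{-1,p}$-type inhomogeneity into a sharp pointwise error estimate.

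First, after extending $a^{ij}, F^i, g_1, g_2$ to $\R^{n+1}$ as in Remark \ref{rmk:ref_ext}, I apply Proposition \ref{prop:v1} with $K=c(n)(\sigma+1)$ and write $u = u_1 + u_2$, where $u_1$ solves
\begin{align*}
\p_i a^{ij}\p_j u_1 - K\dist(x,\Gamma)^{-2}u_1 &= \p_i F^i + g_1 + g_2 \text{ in }\R^{n+1}\setminus \Lambda,\\
u_1 &= 0 \text{ on }\Lambda.
\end{align*}
The hypotheses of assumption (i) from Proposition \ref{prop:nondeg} on the inhomogeneities yield the pointwise bound $|u_1(x)| \le C_0\bar\delta\,\dist(x,\Gamma)^{1-\frac{n+1}{p}+\sigma}$, while $u_2 := u - u_1$ satisfies $\p_ia^{ij}\p_j u_2 = -K\dist(x,\Gamma)^{-2}u_1$ in $B_1\setminus \Lambda$, with $u_2 = 0$ on $\Lambda$ and right-hand side controlled by $\bar\delta\,\dist(x,\Gamma)^{-1-\frac{n+1}{p}+\sigma}$.

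Second, I transfer the non-degeneracy and reduce to a Carleson bound for $u_2$. The condition $\sigma > \frac{n+1}{p}-\frac{1}{2}$ together with the definition $\bar\epsilon=\min\{\delta_0, \sigma+\frac{1}{2}-\frac{n+1}{p}\}$ forces $1-\frac{n+1}{p}+\sigma > \frac{1}{2}+\frac{\bar\epsilon}{2}$, so the error from $u_1$ is of strictly higher order than the leading non-degeneracy rate. Hence, for $\bar\delta$ sufficiently small, the hypothesis \eqref{eq:nondeg} carries over to $u_2$:
\[
u_2(x) \ge \tfrac{1}{2}\dist(x,\Gamma)^{1/2+\bar\epsilon/2} \text{ on } \{\dist(x,\Lambda)\ge \ell_0\dist(x,\Gamma)\}, \quad u_2(x) \ge -2\dist(x,\Gamma)^{1-\frac{n+1}{p}+\sigma} \text{ in } B_1.
\]
Since the Lipschitz constant of $g$ is small, the corkscrew point $A_r := Q+re_n$ lies in the non-degeneracy cone with $\dist(A_r,\Lambda)\simeq r$, so that $u_2(A_r) \simeq u(A_r) \simeq r^{1/2+\bar\epsilon/2}$. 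Combined with the uniform bound $\sup_{B_r(Q)}|u_1| \lesssim \bar\delta\, r^{1-\frac{n+1}{p}+\sigma} \ll u(A_r)$, any Carleson estimate for $u_2$ transfers directly to $u$.

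Third, I prove $\sup_{B_r(Q)} u_2 \le C\,u_2(A_r)$ by the classical CFMS argument in Lipschitz domains. Modulo the small lower-order defect $-2\dist(x,\Gamma)^{1-\frac{n+1}{p}+\sigma}$, $u_2$ is a non-negative solution of a divergence-form elliptic equation in the Lipschitz slit $B_1\setminus \Lambda$ which vanishes on the Lipschitz graph $\Gamma$. For such objects the Caffarelli-Fabes-Mortola-Salsa Carleson inequality is available in its variable-coefficient form for $W^{1,p}$ metrics with small $c_\ast$ (cf.\ Theorem 7 in \cite{ACS08}); a standard Harnack-chain/barrier proof that handles the mild sign defect and the $\dist^{-1-\frac{n+1}{p}+\sigma}$ right-hand side via comparison with an explicit supersolution of the form $\dist(x,\Gamma)^{1-\frac{n+1}{p}+\sigma}$ (of the type built in Proposition \ref{prop:barrier}) then yields the desired $L^\infty$-bound.

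\textbf{Main obstacle.} The principal difficulty is to reconcile a \emph{signed} solution of a divergence-form equation carrying a $W^{-1,p}$-type inhomogeneity with the classical Carleson machinery, which is designed for non-negative solutions of homogeneous equations. Proposition \ref{prop:v1} is precisely the tool that dissolves this difficulty: it extracts a pointwise bound on the contribution of the divergence and zero-order sources at a distance rate strictly higher than the non-degeneracy rate, leaving $u_2$ as an effectively non-negative solution to which the CFMS argument applies essentially verbatim.
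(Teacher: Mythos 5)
Your general plan — split off the inhomogeneity with Proposition~\ref{prop:v1}, then invoke Theorem 7 of \cite{ACS08} for the remainder — is in the same territory as the paper's proof, but your third step contains a genuine gap. After the global splitting $u = u_1 + u_2$, the piece $u_2$ is \emph{not} a nonnegative solution of a homogeneous equation: it carries a right-hand side $-K\dist(x,\Gamma)^{-2}u_1$ of size of order $\bar\delta\dist(x,\Gamma)^{-1-\frac{n+1}{p}+\sigma}$, and it is still permitted to dip below zero (we only retain $u_2\ge -2\dist(x,\Gamma)^{1-\frac{n+1}{p}+\sigma}$). Theorem 7 of \cite{ACS08} therefore does not apply to $u_2$ directly, and your appeal to ``a standard Harnack-chain/barrier proof that handles the mild sign defect and the right-hand side'' does not specify the mechanism that closes the argument. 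That mechanism is precisely the crux of the paper's proof: one introduces the harmonic replacement $u^*$ of $\max\{u,0\}$ on $\p(B_{2r}(Q)\setminus\Lambda)$ (so that $u^*$ is genuinely nonnegative and solves the homogeneous equation, and Theorem 7 applies cleanly), and then estimates $u^*-u$ on $B_{2r}(Q)$ via the comparison function $h := u^* - u - h_s^- + 8r^{(1+s)/2}$, which is itself split via Proposition~\ref{prop:v1} into $h=h_1+h_2$ so that $h_1$ absorbs the divergence data, $h_2\ge 0$ by the maximum principle, and hence $u^*-u\ge -Cr^{\frac12+\frac{\bar\epsilon}{2}}$ (and symmetrically). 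Your preliminary global splitting is, moreover, somewhat redundant: even after extracting $u_1$, you would still need a local harmonic replacement of $u_2$ at scale $2r$ and the same barrier comparison, so you end up with more steps than the paper, not fewer. Steps 1 and 2 of your sketch are sound — the exponent arithmetic showing that the non-degeneracy transfers to $u_2$ is correct, as is the reduction ``Carleson for $u_2$ implies Carleson for $u$'' — and your identification of Proposition~\ref{prop:v1}, Proposition~\ref{prop:barrier} and Theorem 7 of \cite{ACS08} as the essential ingredients is on target. But the argument cannot be completed without making the harmonic-replacement comparison explicit, and your sketch leaves exactly that step implicit.
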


\begin{proof}
Let $Q\in \Gamma\cap B'_{1/2}$ and $r\in (0,1/4)$. Let $u^*$ be the solution of 
\begin{align*}
\p_i a^{ij} \p_j u^* &= 0 \mbox{ in } B_{2 r}(Q)\setminus \Lambda,\\
u^* &= \max\{u,0\} \mbox{ on } \p (B_{2 r}(Q)\setminus \Lambda).
\end{align*}
Then on $B_{r}(Q) \setminus \Lambda$ Theorem 7 of \cite{ACS08} implies that there exists $C=C(n,\epsilon)$ such that
\begin{align*}
u^*(x) \leq Cu^*(Q+ r e_{n}).
\end{align*}
We transfer this estimate to $u$ by exploiting a comparison argument which makes use of the non-degeneracy of $u$. We consider
\begin{align*}
h(x):=  u^*(x) -u(x)-h^-_s(x) +8r^{\frac{1}{2}+\frac{s}{2}}, 
\end{align*}
where $h^-_s$ is the barrier function from Proposition \ref{prop:barrier} and $s=\bar \epsilon$. 
For $h$ we obtain
\begin{align*}
\p_i a^{ij}\p_j h&=-\p_iF^i -g_1-g_2-\p_ia^{ij}\p_j h^-_s\quad  \text{ in }  B_{2 r}(Q)\setminus \Lambda,\\
h&\geq 4 r^{(1+s)/2}\quad \mbox{ on } \p ( B_{2 r}(Q)\setminus \Lambda),
\end{align*}
where in the second inequality we have used $|u^\ast-u|=\max\{-u,0\}\leq r^{1-\frac{n+1}{p}+\sigma}$ on $\p (B_{2 r}(Q)\setminus \Lambda)$.
In order to estimate $u$, we use the splitting technique again. With a slight abuse of notation we write $\Lambda=\Lambda\cap B_{2r}(Q)$ for simplicity. We decompose $h=h_1+h_2$, where $h_1$ solves
\begin{align*}
\p_ia^{ij}\p_j h_1-K\dist(x,\Gamma)^{-2}h_1&=-\p_iF^i-g_1 \text{ in } B_1\setminus \Lambda,\\
h_1&=0 \text{ on } \Lambda.
\end{align*}
Then by Proposition~\ref{prop:v1}, 
$$|h_1(x)|\leq C(n,p) \bar \delta \dist(x,\Gamma)^{1-\frac{n+1}{p}+\sigma}.$$
For $\bar \delta=\bar \delta(n,p)$ sufficiently small, $h_2\geq 0$ on $\p (B_{2 r}(Q)\setminus \Lambda )$. In $B_{2 r}(Q)\setminus \Lambda$, for $\bar\delta=\bar\delta(n,p,\delta_0)$ sufficiently small,
\begin{align*}
\p_ia^{ij}\p_j h_2&=-K\dist(x,\Gamma)^{-2}h_1-g_2-\p_ia^{ij}\p_j h^-_s\\
&\leq KC(n,p) \bar \delta \dist(x,\Gamma)^{-1-\frac{n+1}{p}+\sigma}+\bar\delta \dist(x,\Gamma)^{-\frac{3}{2}+\delta_0}\\
& \quad -c(\bar \epsilon)\dist(x,\Gamma)^{-\frac{3}{2}+\bar \epsilon}\\
&\leq 0.
\end{align*}
Thus, by the comparison principle we have $h_2\geq 0$ in $B_{2r}(Q)$.
Hence, we obtain 
$$h(x)\geq -  r^{1-\frac{n+1}{p}+\sigma}\text{ for any } x\in B_{2r}(Q)\setminus\Lambda.$$
Combining this with the choice of $s$ yields 
\begin{equation*}
u^*-u\geq -Cr^{\frac{1}{2}+\frac{\bar\epsilon}{2}} \text{ in }  B_{2r}(Q)\setminus\Lambda,
\end{equation*}
for some absolute constant $C$.
Similarly, we deduce $u-u^*\geq -Cr^{\frac{1}{2}+\frac{\bar\epsilon}{2}}$ in $B_{2r}(Q)\setminus\Lambda$. 
As a result, for $x\in B_{r}(Q)$, 
\begin{align*}
u(x) &\leq u^*(x) + Cr^{\frac{1}{2}+\frac{\bar\epsilon}{2}}\\
&\leq C u^*(Q+r e_{n}) +  Cr^{\frac{1}{2}+\frac{\bar\epsilon}{2}}\\
&\leq C u(Q+r e_{n}) + Cr^{\frac{1}{2}+\frac{\bar\epsilon}{2}}+ Cr^{\frac{1}{2}+\frac{\bar\epsilon}{2}}\\
& \leq 3 C u(Q+r e_{n}), 
\end{align*}
where in the last line we used the non-degeneracy of $u$.
\end{proof}

With this preparation, we come to the boundary Harnack inequality. Let $\Lambda$ and $\Gamma$ be as in Assumption \ref{assum:set}. We now in addition choose $\theta=\theta(n,\epsilon)\in (0,2\pi)$ such that for any $x_0\in \Gamma$ 
$$x_0+\mathcal{C}_\theta'(e_n)\subset \{ x\in \R^{n}\times \{0\}| \ \dist(x,\Lambda)\geq \ell_0 \dist(x,\Gamma)\}. $$
Here $\mathcal{C}_{\theta}'(e_n)$ is a cone in $\R^{n}\times \{0\}$ with opening angle $\theta$ and $\ell_0$ is as in Notation \ref{not:not1}. 

\begin{lem}[Boundary Harnack]
\label{lem:boundHarn}
Let $\Lambda, \Gamma, \epsilon$ satisfy Assumption \ref{assum:set}.
Suppose that $u_1,u_2\in C(B_1)\cap H^{1}(B_1 \setminus \Lambda)$ solve
\begin{align*}
\p_i a^{ij} \p_j u_1 &= \p_i F^i +g_1+g_2 \mbox{ in }  B_{1}\setminus \Lambda,\quad  u_1=0 \text{ on } \Lambda,\\
\p_i a^{ij}\p_j u_2&=\p_i \tilde{F}^i+ \tilde{g}_1+\tilde{g}_2 \mbox{ in }  B_{1}\setminus \Lambda, \quad  u_2=0 \text{ on } \Lambda,
\end{align*}
where
\begin{itemize}
\item the coefficients $a^{ij}$ and inhomogeneities satisfy the condition (i)  from Proposition \ref{prop:nondeg} with the same constants $\sigma$ and $\bar\delta$,
\item both functions $u_1, u_2$ satisfy the non-degeneracy condition (\ref{eq:nondeg}) of Lemma~\ref{lem:Carleson} with the same constant $\bar \epsilon$. 
\end{itemize}
Then, if $\epsilon$, $\bar\delta$ and $c_{\ast}$ are sufficiently small depending on $n,p$ and $\delta_0$, there exists a constant $s_0=s_0(n,p, \bar\epsilon, \sigma)>0$, such that 
\begin{align*}
s_0\frac{u_2(\frac{1}{2}e_{n})}{u_1(\frac{1}{2} e_{n})}\leq \frac{u_2(x)}{u_1(x)} \leq s_0^{-1}\frac{u_2(\frac{1}{2}e_{n})}{u_1(\frac{1}{2} e_{n})} \text{ in } \bigcup_{x_0\in B_{1/2}\cap \Gamma} \left(x_0+\mathcal{C}'_\theta(e_n)\right)\cap B_{1/2}.
\end{align*}
Moreover, $u_2/u_1$ extends to a function which is Hölder continuous on $\Gamma\cap B'_{1/4}$. More precisely, there exist constants $\alpha \in (0,1]$ and $C>0$  depending on $n,p$ and $\bar\epsilon$, $\sigma$
such that for all $x_0\in \Gamma\cap B'_{1/4}$ 
\begin{align*}
\left|\frac{u_2}{u_1}(x) - \frac{u_2}{u_1}(x_0) \right| \leq C \frac{u_2(\frac{1}{2}e_n)}{u_1(\frac{1}{2}e_n)} |x-x_0|^{\alpha},\quad x \in B_{1/4}(x_0)\cap \left(x_0+\mathcal{C}'_\theta(e_n)\right).
\end{align*}
\end{lem}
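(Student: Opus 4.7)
The plan is to adapt the classical boundary Harnack argument of Caffarelli--Fabes--Mortola--Salsa type (as in \cite{ACS08}, Theorem 7, and \cite{CSS}) to our divergence-form, inhomogeneous setting. The extra difficulties---the divergence right hand side $\partial_i F^i$, the source terms $g_1, g_2$, and the fact that $u_j$ is only known to be non-negative on the cone region, with a controlled negative error $\dist(\cdot,\Gamma)^{1-\frac{n+1}{p}+\sigma}$ globally---are handled respectively by the splitting of Proposition \ref{prop:v1} and by the barrier $h^-_s$ of Proposition \ref{prop:barrier}.

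First I would normalize $u_j(\tfrac{1}{2} e_n) = 1$ for $j = 1, 2$. Combining the Carleson estimate (Lemma \ref{lem:Carleson}), the non-degeneracy \eqref{eq:nondeg}, and an interior Harnack chain connecting the Carleson corkscrew points through the cone, yields
\begin{equation*}
c\, r^{1/2} \leq u_j(Q + r e_n) \leq \sup_{B_r(Q)} u_j \leq C\, r^{1/2},
\end{equation*}
for $Q \in \Gamma \cap B'_{1/2}$ and $r \in (0, 1/4)$, with constants depending only on $n, p, \epsilon$. This already delivers the two-sided Harnack estimate of the lemma once the H\"older continuity of $u_2/u_1$ at $\Gamma$ is established, so the heart of the proof is an oscillation-decay scheme for $R := u_2/u_1$ at an arbitrary $Q \in \Gamma \cap B'_{1/4}$. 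Writing $M_r := \sup R$ and $m_r := \inf R$ over $B_r(Q) \cap (Q + \mathcal{C}'_\theta(e_n))$, the target is
\begin{equation*}
M_r - m_r \leq \gamma\,(M_{2r} - m_{2r}) + C r^{\alpha_0}, \qquad 0 < \gamma < 1,\ \alpha_0 > 0,
\end{equation*}
which, upon geometric iteration, yields $R \in C^{0,\alpha}(\Gamma \cap B'_{1/4})$ for some $\alpha = \alpha(n, p, \bar\epsilon, \sigma) \in (0, 1]$.

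To obtain the contraction, I would consider $v := M_{2r}\, u_1 - u_2$, which solves a divergence-form equation in $B_{2r}(Q) \setminus \Lambda$, vanishes on $\Lambda$, and is non-negative on the spherical part of $\partial B_{2r}(Q)$ modulo a controlled error. The splitting $v = v^{(1)} + v^{(2)}$ from Proposition \ref{prop:v1} absorbs all of the inhomogeneous data of $v$ into $v^{(1)}$, which then satisfies $|v^{(1)}| \leq C(1 + M_{2r})\,\bar\delta\, \dist(\cdot,\Gamma)^{1-\frac{n+1}{p}+\sigma}$, while $v^{(2)}$ solves an essentially homogeneous elliptic equation with zero boundary data on $\Lambda$ and non-negative boundary data on the remainder of $\partial(B_{2r}(Q) \setminus \Lambda)$ up to the same error. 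Applying Theorem 7 of \cite{ACS08} to $v^{(2)}$ at the Carleson corkscrew point $Q + r e_n$, and then reinserting $v^{(1)}$ by absorbing it into the denominator using the lower bound $u_1 \gtrsim \dist(\cdot,\Gamma)^{1/2 + \bar\epsilon/2}$ in the cone, yields the upper contraction for $M_r$; the analogous argument applied to $u_2 - m_{2r}\, u_1$ provides the matching lower contraction for $m_r$.

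The main obstacle is ensuring that the splitting error $v^{(1)}$ is \emph{strictly} subdominant to $u_1$ in the cone region. This requires
\begin{equation*}
1 - \tfrac{n+1}{p} + \sigma > \tfrac{1}{2} + \tfrac{\bar\epsilon}{2},
\end{equation*}
which, in view of the prescription $\bar\epsilon = \min\{\delta_0, \sigma + \tfrac{1}{2} - \tfrac{n+1}{p}\}$ and the hypothesis $\sigma > \tfrac{n+1}{p} - \tfrac{1}{2}$, holds with a universal positive gap $\alpha_0 > 0$. The other delicate point is that the classical ACS boundary Harnack applies only to genuinely non-negative solutions; every such invocation must therefore be performed on the split-off piece $v^{(2)}$ rather than on $v$ itself, with the error $v^{(1)}$ reintroduced only at the final normalization step---a pattern entirely parallel to the proof of the Carleson bound (Lemma \ref{lem:Carleson}).
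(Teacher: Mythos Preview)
Your overall architecture (normalize, two–sided comparison, then iterate for H\"older decay of the oscillation of $u_2/u_1$) matches the paper's, and you correctly identify that the error produced by Proposition~\ref{prop:v1} must be strictly subdominant to the non-degeneracy $\dist(\cdot,\Gamma)^{1/2+\bar\epsilon/2}$. However, the iteration step as you describe it does not close.

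The problem is the sentence ``Applying Theorem~7 of \cite{ACS08} to $v^{(2)}$ \dots\ yields the upper contraction for $M_r$.'' Two things go wrong here. First, Theorem~7 of \cite{ACS08} is a Carleson \emph{upper} bound for \emph{non-negative} solutions of the \emph{homogeneous} equation; $v^{(2)}$ carries the residual source $-K\dist(\cdot,\Gamma)^{-2}v^{(1)}$ and, more seriously, neither $v=M_{2r}u_1-u_2$ nor $v^{(2)}$ is non-negative outside the cone region: there you only know $u_j\geq -\dist(\cdot,\Gamma)^{1-\frac{n+1}{p}+\sigma}$, which gives no useful sign for the difference. Second, even if a Carleson estimate were available for $v^{(2)}$, it would bound $\sup v^{(2)}$ by $v^{(2)}(Q+re_n)$, whereas what you need for the contraction $M_r\le M_{2r}-c(M_{2r}-m_{2r})$ is a \emph{lower} bound of the form $v\gtrsim (M_{2r}-m_{2r})\,u_1$ on $B_r(Q)$ intersected with the cone. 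That lower bound is a boundary comparison for the pair $(v,u_1)$ and cannot be read off from a Carleson estimate alone. (A related minor point: the display $c\,r^{1/2}\le u_j(Q+re_n)\le Cr^{1/2}$ is not available here; you only have the lower bound with exponent $\tfrac12+\tfrac{\bar\epsilon}{2}$, and the sharp $r^{1/2}$ upper bound is proved only later, in Corollary~\ref{cor:upper_bound}, \emph{using} this lemma.)

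The paper avoids this circularity by not invoking \cite{ACS08} in the iteration at all. Step~1 shows, via Proposition~\ref{prop:nondeg}, that if two functions satisfy the normalized hypotheses then $u_2-s_0u_1\ge 0$ in the cone; the point is that Proposition~\ref{prop:nondeg} is designed to \emph{produce} non-degeneracy from the weak hypotheses (ii)--(iii), which tolerate a controlled negative part. Step~2 then rescales and normalizes the difference
\[
w_1(x)=\frac{u_2(2^{-k}x)-a_ku_1(2^{-k}x)}{(b_k-a_k)\,u_1(2^{-k-1}e_n)},\qquad \bar u(x)=\frac{u_1(2^{-k}x)}{u_1(2^{-k-1}e_n)},
\]
and checks that the pair $(2w_1,\bar u)$ again satisfies all hypotheses of Step~1: the inhomogeneity stays $\le\bar\delta$ because of the non-degeneracy $u_1(2^{-k-1}e_n)\ge 2^{-(k+1)(1/2+\bar\epsilon/2)}$ \emph{and} the lower bound $b_k-a_k\ge C\mu_2^k$ with $\mu_2\ge 2^{-\bar\epsilon/2}$, and the non-degeneracy of $2w_1$ is regenerated by interior Harnack together with Proposition~\ref{prop:nondeg}. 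Reapplying Step~1 gives $2w_1/\bar u\ge s_0$, i.e.\ the contraction $a_{k+1}=a_k+\tfrac{s_0}{2}(b_k-a_k)$. The explicit lower bound on $b_k-a_k$ (your condition ($\beta$) analogue) is essential and has no counterpart in your outline; without it the rescaled inhomogeneity in $w_1$ would blow up once the oscillation decays faster than $2^{-k\bar\epsilon/2}$.
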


\begin{proof}
We only prove the Hölder continuity at $x_0=0$. The estimate at the remaining points follows analogously. \\

\emph{Step 1: Boundedness of the quotient.} Without loss of generality we may assume that $u_1(\frac{e_{n}}{2}) = u_2(\frac{e_{n}}{2})=1$.
Then the Carleson estimate (Lemma~\ref{lem:Carleson}) and the non-degeneracy assumptions on $u_1,u_2$ imply that for all $\delta >0$ 
\begin{align*}
u_1(x) &\leq C \mbox{ in } B_{1},\\
u_2(x) & \geq \ell_0^{\frac{1}{2}+\frac{\bar{\epsilon}}{2}} \mbox{ for all } x \mbox{ with } |x_{n+1}|\geq \ell_0. 
\end{align*}
Therefore, Proposition \ref{prop:nondeg} implies that there exists $s_0\in (0,1)$ (depending only on the admissible quantities) such that
$$u_2-s_0 u_1 \geq 0 \mbox{ in } \{x\in B_{\frac{1}{2}}| \ \dist(x,\Lambda)\geq\ell_0 \dist(x,\Gamma)\}.$$
Hence, in particular,
\begin{align*}
\frac{u_2(x)}{u_1(x)} \geq s_0 \mbox{ for all } x\in \{x\in B_{\frac{1}{2}}| \ \dist(x,\Lambda)\geq\ell_0 \dist(x,\Gamma)\}.
\end{align*}
Exchanging the role of $u_1$ and $u_2$, we have
$$ u_1 -s_0u_2 \geq 0 \mbox{ in }\{x\in B_{\frac{1}{2}}| \ \dist(x,\Lambda)\geq\ell_0 \dist(x,\Gamma)\}.$$ 
\emph{Step 2: Iteration argument.} 
We only prove the H\"older regularity at $x_0=0$.\\
We show that there exist $a_k, b_k$ depending on $s_0$ and $n,p,\bar{\epsilon}$ such that
\begin{itemize}
\item[($\alpha$)] $s_0\leq a_k \leq 1\leq b_k \leq s_0^{-1}$ and $b_k - a_k \leq C \mu_1^k$ for some $\mu_1 \in (2^{-\frac{\bar{\epsilon}}{2}},1)$, 
\item[($\beta$)] there exists $\mu_2 \in [2^{-\frac{\bar{\epsilon}}{2}},\mu_1]$ with $b_{k}-a_{k}\geq C\mu_2^{k}$ for some constant $C\geq 4$,
\item[($\gamma$)] on $B_{2^{-k}}\cap \mathcal{C}'_\theta(e_n)$ we have
$ a_k u_1\leq u_2 \leq b_k u_1 $.
\end{itemize}
We argue by induction and define 
\begin{align*}
\tilde{w}_1(x) := \frac{u_2(2^{-k}x)- a_k u_1(2^{-k}x)}{b_k -a_k}, \ \tilde{w}_2(x):= \frac{b_k u_1(2^{-k}x) - u_2(2^{-k}x) }{b_k-a_k}.
\end{align*}
As $\tilde{w}_1(x) + \tilde{w}_2(x) = u_1(2^{-k}x)$, we may without loss of generality assume that $\tilde{w}_1(\frac{e_{n}}{2})\geq \frac{1}{2} u_1(\frac{2^{-k}e_{n}}{2})$. Dividing by $u_1(\frac{2^{-k}e_{n}}{2})$ and setting 
\begin{align*}
w_1(x) := \frac{\tilde{w}_{1}(x)}{ u_1(\frac{2^{-k}e_{n}}{2})}, \ \bar{u}(x):=\frac{u_1(2^{-k}x)}{ u_1(\frac{2^{-k}e_{n}}{2})},
\end{align*}
we obtain that 
\begin{align*}
2w_1(\frac{e_{n}}{2}) \geq  \bar{u}(\frac{e_{n}}{2}) = 1.
\end{align*}
Seeking to apply step 1, we have to show that $2 w_1$ and $\bar{u}$ satisfy its assumptions, i.e. the bounds on the associated inhomogeneity and the non-degeneracy. For this we observe:
\begin{itemize}
\item The inhomogeneity associated with $2w_1$ satisfies the bounds from Lemma \ref{lem:Carleson}. Indeed, $w_1$ solves
\begin{align*}
\p_i a^{ij}_{2^{-k}}\p_j w_1  &= \p_iH^i+G_1+G_2 \text{ in } B_1\setminus \Lambda_{2^{-k}},
\end{align*}
where
\begin{align*}
H^i(x)&=\frac{2^{-k}}{(b_k -a_k)  u_1(\frac{2^{-k}e_{n}}{2})}\left[\tilde{F}^i(2^{-k}x)-a_kF^i(2^{-k}x)\right],\\
G_1(x)&=\frac{2^{-2k}}{(b_k -a_k)  u_1(\frac{2^{-k}e_{n}}{2})}\left[\tilde{g}_1(2^{-k}x)-a_kg_1(2^{-k}x)\right],\\
G_2(x)&= \frac{2^{-2k}}{(b_k -a_k)  u_1(\frac{2^{-k}e_{n}}{2})}\left[\tilde{g}_2(2^{-k}x)-a_kg_2(2^{-k}x)\right],\\
a^{ij}_{2^{-k}}(x)&=a^{ij}(2^{-k}x),\\
\Lambda_{2^{-k}}&= \frac{\Lambda}{2^{-k}}=\{x\in B_1| \ 2^{-k} x \in \Lambda\},\\
\Gamma_{2^{-k}}&= \frac{\Gamma}{2^{-k}}=\{x\in B_1| \ 2^{-k} x \in \Gamma\}.
\end{align*}
Since $\dist(x,\Gamma_{2^{-k}})=2^k \dist(2^{-k}x,\Gamma)$, 
we estimate
\begin{equation}
\begin{split}
\label{eq:inhom}
&\left\| \dist(\cdot,\Gamma_{2^{-k}})^{-\sigma}H^i\right\|_{L^{p}(B_{1})}+\left\| \dist(\cdot,\Gamma_{2^{-k}})^{1-\frac{n+1}{p}-\sigma} G_1\right\|_{L^{p/2}(B_{1})}+\left\|\dist(\cdot,\Gamma_{2^{-k}})^{\frac{3}{2}-\delta_0}G_2\right\|_{L^\infty(B_1)}\\
&\leq \frac{2^{-k(1-\frac{n+1}{p}+\sigma)}}{(b_k -a_k)u_1(\frac{2^{-k}e_{n}}{2})}\left(\|\dist(\cdot,\Gamma)^{-\sigma}\tilde{F}^i\|_{L^p(B_{2^{-k}})} + a_k\|\dist(\cdot,\Gamma)^{-\sigma}F^i\|_{L^p(B_{2^{-k}})} \right. \\
& \quad \left.+\| \dist(\cdot,\Gamma)^{1-\frac{n+1}{p}-\sigma} \tilde{g}_1\|_{L^{p/2}(B_{2^{-k}})} + a_k\| \dist(\cdot,\Gamma)^{1-\frac{n+1}{p}-\sigma } g_1\|_{L^{p/2}(B_{2^{-k}})}\right)\\
& \quad +\frac{2^{-k(\frac{1}{2}+\delta_0)}}{{(b_k -a_k)u_1(\frac{2^{-k}e_{n}}{2})}}\left(\|\dist(\cdot,\Gamma)^{\frac{3}{2}-\delta_0} \tilde{g}_2\|_{L^{\infty}(B_{2^{-k}})}+a_k\| \dist(\cdot,\Gamma)^{\frac{3}{2}-\delta_0 }g_2\|_{L^{\infty}(B_{2^{-k}})}\right) \\
&\leq \frac{1}{(b_k -a_k)u_1(\frac{2^{-k}e_{n}}{2}) }2\bar \delta\cdot 2^{-k(\frac{1}{2}+\bar \epsilon)} \leq   \bar \delta.
\end{split}
\end{equation}
Here we used the non-degeneracy assumption \eqref{eq:nondeg} in the form $u_1(\frac{2^{-k}e_{n}}{2})\geq  2^{-(k+1)(\frac{1}{2}+\frac{\bar{\epsilon}}{2})}$ (for which we recall that $\bar\epsilon=\min\{\sigma+\frac{1}{2}-\frac{n+1}{p}, \delta_0\}$) as well as the condition ($\beta$), i.e. $\mu_2\geq 2^{-\frac{\bar{\epsilon}}{2}}$.
The inhomogeneity associated with $\bar{u}$ can be estimated similarly.
\item $2w_1$ and $\bar{u}$ satisfy the non-degeneracy assumption. As $2w_1(e_{n}/2)\geq 1$ and $\bar{u}(e_{n}/2)=1$, Harnack's inequality in the domain $B_1\cap \{x| \ \dist(x,\Lambda_{2^{-k}})\geq\ell_0\dist(x,\Gamma_{2^{-k}})\}$ combined with Proposition \ref{prop:nondeg} (after a rescaling which only depends on $n$) implies that there exists $c_0=c_0(n,p)$ such that the non-degeneracy condition \eqref{eq:nondeg} is satisfied for $2w_1$ and $\bar u$:
\begin{align*}
2w_1(x), \bar u(x)&\geq c_0 \dist(x,\Gamma_{2^{-k}})^{\frac{1}{2}+\frac{\bar{\epsilon}}{2}} \\
&\text{ in } \{x\in B_1| \ \dist(x,\Lambda_{2^{-k}})\geq \ell_0\dist(x,\Gamma_{2^{-k}})\},\\
2w_1(x), \bar u(x)&\geq - c_0\dist(x,\Gamma_{2^{-k}})^{1-\frac{n+1}{p}+\sigma} \\
&\text{ in } \{x\in B_1| \ \dist(x,\Lambda_{2^{-k}})< \ell_0 \dist(x,\Gamma_{2^{-k}})\}.
\end{align*}
We remark that the constant $c_0$ can be chosen in the same way in each iteration step.
\end{itemize}
Therefore, step 1 implies (with a possibly different $s_0$ but with the same parameter dependence)
\begin{align*}
\frac{2 w_1}{\bar{u}}\geq s_0 \text{ in } \{x\in B_{1/2}| \ \dist(x,\Lambda_{2^{-k}})\geq\ell_0 \dist(x,\Gamma_{2^{-k}})\}.
\end{align*}
Rescaling back and using the scale invariance of $\mathcal{C}'_\theta (e_n)$, we infer
\begin{align*}
\frac{u_2-a_k u_1}{(b_k -a_k)u_1} \geq \frac{s_0}{2}\text{ in } B_{2^{-k-1}}\cap \mathcal{C}'_\theta(e_n).
\end{align*}
Therefore,
$$
(a_k + \frac{s_0}{2}(b_k-a_k)) u_1 \leq u_2 \leq b_k u_1 \text{ in } B_{2^{-k-1}}\cap \mathcal{C}'_\theta(e_n).
$$
Thus, we define $a_{k+1}:= a_k + \frac{s_0}{2}(b_k-a_k)$ and $b_{k+1}:=b_k$. These satisfy
\begin{align*}
b_{k+1} - a_{k+1} = \left(1-\frac{s_0}{2}\right)(b_k -a_k).
\end{align*}
Let $\mu_1= 1 - \frac{s_0}{3}$ and $\mu_2 =1-\frac{s_0}{2}$ with $s_0$ chosen so small that $1-\frac{s_0}{2}>2^{-\frac{\bar{\epsilon}}{2}}$. This yields the conditions in ($\alpha$) and ($\beta$) and hence proves the desired estimate.
\end{proof}

\subsubsection{Proof of Proposition \ref{prop:C1a}}
\label{sec:proofC1a}
\begin{proof}[Proof of Proposition \ref{prop:C1a}]
The proof is standard (c.f. Theorem 6.9 of \cite{PSU}). We only give a brief sketch of it and indicate the main modifications. The idea is to apply the boundary Harnack inequality (Lemma~\ref{lem:boundHarn}) to the tangential derivatives $\p_e w$, $e\in \mathcal{C}'_\eta(e_n)$. This gives the uniform (in $h$) $C^{1,\alpha}$ regularity of level sets $\{w=h\}$ with small $h>0$. Then the uniform convergence of the level sets as $h\rightarrow 0$ allows to deduce  the $C^{1,\alpha}$ regularity of the zero level set, which is the free boundary $\Gamma_w$.\\

We consider tangential derivatives $v:=\p_e w$ with $e\in \{x\in S^n\cap B'_1\big| x_n>  \eta |x''|\}$. Here $\eta= \tilde{C}(n,p)\max\{\epsilon_0, c_\ast\}$  is the same constant as in \eqref{eq:positive_cone}. Using the off-diagonal assumption (A1) and extending $v$ to the whole ball $B_1$ as described in Remark \ref{rmk:ref_ext}, we note that it solves
\begin{align*}
\p_i a^{ij} \p_{j} v &=  \p_i F^{i} \mbox{ in } B_{1}\setminus \Lambda_w,\\
v &= 0 \mbox{ on } B_{1}' \cap \Lambda_w.
\end{align*}
By Lemma \lemfreebgrowth, $\|\dist(\cdot,\Gamma_w)^{-\frac{1}{2}}\ln(\dist(\cdot,\Gamma_w))^{-2}F^i\|_{L^p(B_1)}\leq C(n,p)c_\ast$.\\
Moreover, by \eqref{eq:positive_cone}, $v$ satisfies the non-degeneracy conditions in Lemma~\ref{lem:boundHarn} (after passing to the normalized function $\frac{v}{c_n}$). 
Thus, for $\epsilon_0$ and $c_\ast$ sufficiently small, the boundary Harnack inequality (Lemma~\ref{lem:boundHarn}) yields the existence of constants $\alpha \in (0,1)$, $\theta=\theta(n,p)$ and $C=C(n,p)$ such that for all $x_0 \in \Gamma_w\cap B'_{1/4}$
\begin{align}
\label{eq:boundary_holder}
\left| \frac{\p_e w}{\p_n w}(x) - \frac{\p_e w}{\p_n w }(x_0)\right| \leq C \frac{\p_e w}{\p_nw}(e_n/2)|x-x_0|^{\alpha}, \quad x\in (x_0+\mathcal{C}_\theta' (e_n))\cap B_{\frac{1}{4}}(x_0).
\end{align} 
We observe that for $j\in\{1,\dots,n-1\}$, the estimate (\ref{eq:boundary_holder}) can be improved to the following more quantitative control
\begin{align}
\label{eq:boundary_holder1}
\left|\frac{\p_j w}{\p_n w}(x)-\frac{\p_j w}{\p_nw}(x_0)\right|\leq C \max\{\epsilon_0,c_\ast\} |x-x_0|^{\alpha}.
\end{align}
Indeed, given $e_j\in S^n\cap B'_1$ with $j\in\{1,\dots, n-1\}$, we consider the vector $e=c_1 e_j+c_2 e_n$, where the constants $c_1,c_2$ are chosen such that $e\in \{x\in S^n\cap B'_1\big | x_n> \eta|x''|\}$. We note that (for sufficiently small values of $\eta$) it is possible to consider $ c_1 \geq \frac{1}{2}$ and $\tilde{C}(n,p)\max\{\epsilon_0,c_\ast\}\leq c_2\leq 2 \tilde{C}(n,p)\max\{\epsilon_0,c_\ast\}$. Inserting this into (\ref{eq:boundary_holder}) thus leads to (\ref{eq:boundary_holder1}).\\
We claim that (\ref{eq:boundary_holder}) and (\ref{eq:boundary_holder1}) imply the desired $C^{1,\alpha}$ regularity of the free boundary and give the desired explicit bound for the Hölder constant of $g$. Indeed, heuristically this follows from the identity 
\begin{align}
\label{eq:derg}
\p_j g(x'') = -\left. \frac{\p_j w}{\p_n w}\right|_{(x'',g(x''),0)},
\end{align}
and the fact that $\frac{\p_j w(x)}{\p_n w(x)}\big | _{B'_1\setminus \Lambda_w}$ is Hölder continuous up to $\Gamma_w$. Rigorously, we have to justify that the derivative of $g$ exists and satisfies the claimed identity. In fact, since $\p_n w>0$ in $B'_1\setminus \Lambda_w$, for $h>0$ small, the level sets $\{w=h\}\cap B'_1$ are given as graphs $x_n=g_h(x'')$, where $\{g_h\}_{h\geq 0}$ is an increasing sequence in $h$ which, by \eqref{eq:boundary_holder}, is uniformly bounded in $C^1(B''_1)$. Thus, $g_h$ converges uniformly to $g$ as $h$ goes to zero by Dini's theorem. Next we notice that 
\begin{align*}
\p_j g_h(x'')=- \left. \frac{\p_j w}{\p_n w}\right|_{(x'',g_h(x''),0)}.
\end{align*}
Therefore, by \eqref{eq:boundary_holder}, 
\begin{align*}
\left|\p_j g_h(x'')-\left. \left(-\frac{\p_j w}{\p_n w}\right)\right|_{(x'',g(x''),0)} \right|&= \left|\left.\frac{\p_j w}{\p_n w}\right|_{(x'',g_h(x''),0)}-\left. \frac{\p_j w}{\p_n w}\right|_{(x'',g(x''),0)} \right|\\
&\leq C\left| g_h(x'')-g(x'')\right|^\alpha.
\end{align*}
Since the sequence $g_h$ converges to $g$ uniformly, we infer
\begin{align*}
\left\| \p_j g_{h}(x'') - \left.\left(-\frac{\p_j w}{\p_n w} \right)\right|_{(x'',g(x''),0)} \right\|_{L^{\infty}(B_{\frac{1}{2}}'')} \rightarrow 0 \mbox{ as } h \rightarrow 0.
\end{align*} 
Therefore, by the fundamental theorem of calculus, the partial derivatives of $g$ exist and they satisfy the identity (\ref{eq:derg}). Moreover, by \eqref{eq:boundary_holder} we have $\p_jg\in C^{0,\alpha}(\Gamma_w\cap B'_{1/4})$. This finishes the proof of Proposition \ref{prop:C1a}.
\end{proof}

\begin{rmk}
\label{rmk:alpha} 
Under the conditions of Proposition \ref{prop:C1a}, the $C^{1,\alpha}$ norm of $g$ is bounded by a constant, which only depends on $n$ and $p$.  Indeed, by Proposition~\ref{prop:Lip} and \eqref{eq:boundary_holder1} (and by $g(0)=0$) we have that 
$$\|\nabla g\|_{C^{1,\alpha}(B''_{1/2})}\leq C(n,p)\max\{\epsilon_0,c_\ast\}.$$ 
Moreover, the proof of Lemma \ref{lem:boundHarn} and the proof of Proposition \ref{prop:C1a} show that in this case the H\"older exponent $\alpha\in(0,1)$ only depends on $n$ and $p$ (if $\epsilon_0, c_{\ast}$ are chosen sufficiently small in dependence of $n,p$).
\end{rmk}

\subsection{Proof of Proposition \ref{prop:v1}}
\label{sec:propv1} 

Last but not least, we come to the proof of Proposition \ref{prop:v1}. For convenience, we introduce the following notation
$$d(x):=\dist(x,\Gamma).$$ 

The proof of Proposition \ref{prop:v1} is a consequence of weighted energy estimates (with weights which are naturally induced by the symbol of the operator) and local pointwise estimates.

\begin{proof}
\emph{Step 1: Existence and uniqueness.} We define a Hilbert space $\mathcal{H}$ in which we solve an appropriate weak formulation of our problem. Let $u,v \in C^{\infty}_{0}(\R^{n+1}\setminus \Lambda)$. Consider the following inner product
\begin{align*}
&(\cdot, \cdot)_{\mathcal{H}}: C^{\infty}_{0}(\R^{n+1}\setminus \Lambda) \times C^{\infty}_{0}(\R^{n+1}\setminus \Lambda) \rightarrow \R,\\
& (u,v)_{\mathcal{H}}:= \int\limits_{\R^{n+1}}\nabla u \cdot\nabla v dx + \int\limits_{\R^{n+1}} d(x)^{-2} u v dx \mbox{ for all } u,v \in \mathcal{H}.
\end{align*}
Using this, we define our Hilbert space $\mathcal{H}$ as the closure of $C^{\infty}_{0}(\R^{n+1}\setminus \Lambda)$ with respect to $\left\| u\right\|_{\mathcal{H}}^2:=(u,u)_{\mathcal{H}}$:
\begin{align*}
\mathcal{H} : = \overline{C^{\infty}_{0}(\R^{n+1}\setminus \Lambda)}^{\left\| \cdot \right\|_{\mathcal{H}}} 
\end{align*}
On $\mathcal{H}$ the bilinear form associated with the equation \eqref{eq:globalv1},
\begin{equation*}
\mathcal{B}(u,v)=\int\limits_{\R^{n+1}} a^{ij}\partial_iu \partial_j v + K\int\limits_{\R^{n+1}} d(x)^{-2} uv, \quad u,v\in \mathcal{H},
\end{equation*}
is bounded and coercive, thus by the Lax-Milgram theorem a unique solution exists in $\mathcal{H}$. This proves the first part of Proposition \ref{prop:v1}.\\

\emph{Step 2: Weighted energy estimate.}
Let $\bar{g}$ be the intrinsic metric determined by the symbol $d(x)^2 |\xi|^2$ in the cotangent or by the symbol $d(x)^{-2}|\xi|^2$ in the tangent space, i.e
\begin{equation*}
\bar{g}_x(v,w)=d(x)^{-2}v\cdot w, \quad v,w \in T_x(\R^{n+1}\setminus \Gamma), \ x\in \R^{n+1}\setminus \Gamma.
\end{equation*}
The distance function associated with $\bar{g}$ is given by
\begin{equation}\label{eq:intrinsic_dist}
d_{\bar{g}}(x_1,x_2)=\inf_{C}\int_C d(x(s))^{-1} d s,
\end{equation}
where the infimum is taken over all rectifiable curves $C$ in $\R^{n+1}\setminus \Gamma$ joining two points $x_1, x_2 \in \R^{n+1}\setminus \Gamma$ parametrized by arc length. We note that $d_{\bar{g}}(\cdot,\cdot)$ is finite on each connected component of $\R^{n+1}\setminus \Gamma$. Let $f:\R^{n+1}\setminus \Gamma\rightarrow \R$ be a Lipschitz function w.r.t. the intrinsic metric $d_{\bar{g}}$.
It is not hard to see that $f$ is Lipschitz w.r.t. $d_{\bar{g}}$ if and only if it is locally Lipschitz w.r.t. the Euclidean distance and it satisfies
$$ |\nabla f(x)|\leq C d(x)^{-1} \text{ for a.e. } x\in\R^{n+1}\setminus \Gamma.$$

We decompose the open set $\R^{n+1}\setminus \Gamma$ into Whitney cubes $\{Q_j\}$ with respect to the Euclidean distance in $\R^{n+1}\setminus \Gamma$ (c.f. Section \ref{sec:Reifenbergbarrier} for the definition of a Whitney decomposition). Without loss of generality, we may assume that the decomposition is symmetric with respect to the $x_{n+1}$ variable. Indeed, the symmetry assumption can always be realized as $\Gamma \subset \R^{n}\times \{0\}$.\\
We now make the following claim:
\begin{claim*}
For any $\tilde{\kappa}>0$, there exists $s=s(n, \tilde{\kappa})>0$ sufficiently large, such that for any compact subset $W\subset \R^{n+1}\setminus \Gamma$ we have
\begin{equation}\label{eq:exp}
\sum_{j=1}^{\infty}\|d^{-\tilde{\kappa}}e^{-sd_{\bar{g}}(x,W)}\|_{L^\infty(Q_j)}\leq C(n)\dist(W,\Gamma)^{- \tilde{\kappa}}.
\end{equation}
\end{claim*}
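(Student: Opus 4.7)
The plan is to reduce the supremum on each Whitney cube to the cube's intrinsic distance from $W$, organize the resulting sum by Whitney graph distance, and close a geometric series by choosing $s$ large.

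First, I would show that on each $Q_j$ the quantity is essentially constant up to dimensional factors. By property (W1), $d(x)\sim r_j:=\diam(Q_j)$ uniformly on $Q_j$, which gives $\|d^{-\tilde{\kappa}}\|_{L^\infty(Q_j)}\lesssim r_j^{-\tilde{\kappa}}$. Moreover, since the Euclidean diameter of $Q_j$ is $r_j$ while $d^{-1}\sim r_j^{-1}$ throughout $Q_j$, every pair of points in $Q_j$ is joined by a curve of $\bar g$-length $\lesssim 1$, so the $\bar g$-diameter of $Q_j$ is bounded by an absolute constant. By the triangle inequality for $d_{\bar g}$, setting $d_j:=\inf_{x\in Q_j}d_{\bar g}(x,W)$, we obtain $|d_{\bar g}(x,W)-d_j|\le C(n)$ on $Q_j$ and hence
\[
\|d^{-\tilde{\kappa}}e^{-s d_{\bar g}(\cdot,W)}\|_{L^\infty(Q_j)}\lesssim r_j^{-\tilde{\kappa}}\,e^{-sd_j}.
\]

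Next, I would parametrize by a Whitney graph distance: declare two cubes neighbors if they touch, so by (W3) each cube has at most $12^{n+1}$ neighbors, by (W2) neighbors differ in radius by at most a factor of $4$, and by (W1) crossing a single Whitney cube contributes $\sim 1$ to $d_{\bar g}$. Writing $N_j$ for the graph distance from $Q_j$ to the set of cubes that meet $W$, these properties give $N_j\sim d_j$ up to absolute constants. Any cube meeting $W$ has $r_j\gtrsim d_0:=\dist(W,\Gamma)$ by (W1) applied at a point of $W$, and along a chain of $m$ Whitney steps the radius can decrease by at most a factor $4^m$, so $r_j^{-\tilde{\kappa}}\lesssim 4^{m\tilde{\kappa}} d_0^{-\tilde{\kappa}}$ whenever $N_j\le m$. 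Combined with the bound $(12^{n+1})^m$ on the number of cubes within $m$ graph-steps of a fixed starting cube, the total splits as
\[
\sum_{j=1}^{\infty} r_j^{-\tilde{\kappa}}\,e^{-sd_j}\;\lesssim\; d_0^{-\tilde{\kappa}}\sum_{m=0}^{\infty}\bigl(12^{n+1}\cdot 4^{\tilde{\kappa}}\cdot e^{-cs}\bigr)^m,
\]
which converges to $C(n)\,d_0^{-\tilde{\kappa}}$ as soon as $s=s(n,\tilde{\kappa})$ is taken large enough to make the common ratio strictly smaller than one.

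The main obstacle is the simultaneous control of the combinatorial neighbor-growth $12^{n+1}$ and the algebraic growth $4^{\tilde{\kappa}}$ of $r_j^{-\tilde{\kappa}}$ along Whitney chains: the exponent $s$ must be large enough to absorb both factors at once, and both come from geometric features of the Whitney decomposition rather than from $\Gamma$ itself. A secondary technical point is the bi-Lipschitz comparison $N_j\sim d_j$ between graph and intrinsic distances; this reduces to the absolute upper bound on the $\bar g$-diameter of each Whitney cube together with the absolute lower bound on the $\bar g$-distance between non-touching cubes, both consequences of (W1)–(W2).
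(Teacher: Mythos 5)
Your proof takes essentially the same route as the paper: the paper organizes $\{Q_j\}$ into concentric layers $\mathcal{N}_k$ (the analogue of your graph-distance $N_j$), records the counting bound $\sharp\mathcal{N}_k\lesssim 12^{k(n+1)}$ (your $(12^{n+1})^m$), the $4^k$ radius comparison from (W2), and the intrinsic lower bound $d_{\bar g}\gtrsim (k-1)/\sqrt{n+1}$ on $\mathcal{N}_k$, then closes the same geometric series by taking $s$ large. One point worth making explicit in your write-up: both your argument (via the phrase ``a fixed starting cube'') and the paper (via ``WLOG $W=\mathcal{N}(\hat Q)$'') actually prove the claim only for $W$ covered by a dimensionally bounded number of Whitney cubes, which is all that is needed in Step~3, but is a genuine restriction on the compact sets $W$ for which the stated inequality holds uniformly.
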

We postpone the proof of the claim to step 4 and continue the proof of Proposition \ref{prop:v1}.\\

Let $\phi$ be a Lipschitz function w.r.t. the intrinsic distance $d_{\bar{g}}$ and assume that $|\phi(x)|\leq \kappa|\ln d(x)|$ for some $\kappa>0$. Consider $\tilde{u}:=e^{\phi} u$. Then (in a distributional sense) $\tilde{u}$ is a solution of 
\begin{equation}
\label{eq:conj_weight}
\begin{split}
\partial_i(a^{ij}\partial_j \tilde{u})-(\partial_i \phi) a^{ij}\partial_j \tilde{u}-\partial_i(a^{ij} \tilde{u}\partial_j \phi)
+(\partial_i \phi) (\partial_j \phi) a^{ij}\tilde{u}-Kd^{-2}\tilde{u}\\
=\partial_i(e^\phi F^i)-(\partial_i \phi) e^\phi F^i+e^\phi g.
\end{split}
\end{equation}
We insert $e^{-sd_{\bar g}(x,W)}\tilde{u}$ as a test function into the divergence form of (\ref{eq:conj_weight}), where $s=s(n,\kappa)$ is the constant from the claim and $W\subset \R^{n+1}\setminus \Gamma$ is a given set. Note that $e^{-sd_{\bar g}(x,W)}\tilde{u}$ is a function in $\mathcal{H}$. Indeed, using the definition of $\tilde{u}$, Hölder's inequality and the statement of the claim yields
\begin{align*}
\|d^{-1}e^{-sd_{\bar g}(x,W)}\tilde{u} \|_{L^2(\R^{n+1})}&\leq \|d^{-\kappa-1}e^{-sd_{\bar g}(x,W)} u \|_{L^2(\R^{n+1})}\\
&\leq \left( \sum_j \|d^{-\kappa}e^{-sd_{\bar g}(x,W)}\|_{L^\infty(Q_j)} \right) \sup\limits_{\hat{Q} \subset \{Q_k\}_k}\|d^{-1}u\|_{L^2(\hat{Q})}\\
&\leq C(n)\dist(W,\Gamma)^{-\kappa} \|d^{-1}u\|_{L^2(\R^{n+1})}.
\end{align*}
Similarly is is possible to estimate $\nabla (e^{-sd_{\bar g}(x,W)}\tilde{u})$.
As a consequence of the admissibility of $e^{-sd_{\bar g}(x,W)}\tilde{u}$ as a test function, we infer that if $K=K(\|\phi\|_{C^{0,1}_{\bar{d}_g}}, n)$ is large (where $\|\cdot \|_{C^{0,1}_{\bar{d}_g}}$ denotes the Lipschitz semi-norm with respect to the metric $d_{\bar{g}}$), then there exists $C>0$ depending on the ellipticity constants and dimension $n$ such that:
\begin{align*}
\| e^{-sd_{\bar g}(x,W)} \nabla \tilde{u} \|_{L^2(\R^{n+1})}+ K/2\|d^{-1}e^{-sd_{\bar g}(x,W)}\tilde{u}\|_{L^2(\R^{n+1})}\\
\leq C \left(\|e^{-sd_{\bar g}(x,W)} e^\phi F^i \|_{L^2(\R^{n+1})}+ \|e^{-sd_{\bar g}(x,W)} e^\phi d g \|_{L^2(\R^{n+1})}\right).
\end{align*}
This can be written in terms of $u$ as
\begin{align}\label{eq:weighted_energy}
\|e^{-s d_{\bar g}(x,W)} e^{\phi}\nabla u \|_{L^2(\R^{n+1})}+ \|e^{-s d_{\bar g}(x,W)} e^{\phi} d^{-1}u \|_{L^2(\R^{n+1})}\notag\\
\leq C\left(\|e^{-s d_{\bar g}(x,W)} e^\phi F^i\|_{L^2(\R^{n+1})}+\|e^{-s d_{\bar g}(x,W)} e^{\phi}d g\|_{L^2(\R^{n+1})}\right).
\end{align}

\emph{Step 3: Local pointwise estimate:} 
Consider a given point $\hat x \in \R^{n+1}\setminus \Gamma$ and let $\hat Q\in \{Q_j\}$ be the Whitney cube which contains $\hat x$. Let $$\hat d := d(\hat x).$$ By the weak maximum principle (c.f. Theorems 8.15 and 8.17 in \cite{GT}) as well as by the properties (W1) and (W2) from the definition of a Whitney decomposition (c.f. Section \ref{sec:Reifenbergbarrier}), there exists $C=C(n,p)$ such that
\begin{align}\label{eq:ptwise}
|u(\hat x)| \leq C\left( \hat d^{-\frac{n+1}{2}}\|u\|_{L^2(\mathcal{N}_{\hat Q})}+\hat d^{1-\frac{n+1}{p}}\|F^i\|_{L^p(\mathcal{N}_{\hat Q})}+\hat d^{2(1-\frac{n+1}{p})}\|g\|_{L^{p/2}(\mathcal{N}_{\hat Q})}\right),
\end{align}
where $\mathcal{N}_{\hat Q}$ is the union of all the Whitney cubes which touch $\hat Q$. Recalling the assumptions on $F^i$ and $g$ and using the properties (W1), (W2), we have
\begin{align}
\|F^i\|_{L^p(\mathcal{N}_{\hat Q})}&\lesssim  \|d^{-\sigma}F^i\|_{L^p(\R^{n+1})} \hat d^{\sigma}, \label{eq:estF1}\\
\|g\|_{L^{p/2}(\mathcal{N}_{\hat Q})}&\lesssim \|d^{1-\frac{n+1}{p}-\sigma}g\|_{L^{p/2}(\R^{n+1})}\hat d^{\sigma - 1+\frac{n+1}{p}} .\label{eq:estG1}
\end{align}

To estimate $\hat d^{-(n+1)/2}\|u\|_{L^2(\mathcal{N}_{\hat Q})}$, we seek to apply \eqref{eq:weighted_energy} with $W= \mathcal{N}_{\hat Q}$ and
$$\phi(x)=-\kappa\ln d(x), \quad \kappa=(n+1)\left(\frac{1}{2}-\frac{1}{p}\right)+\sigma.$$  
The Lipschitz constant (w.r.t. $d_{\bar{g}}$) of $\phi$ is bounded by $c_n(\sigma+1)$. Thus, by taking $K=C(\sigma+1)$ with $C=C(n)$ large, we have \eqref{eq:weighted_energy}. We notice that from the properties (W1) and (W2) for Whitney cubes as well as \eqref{eq:weighted_energy},
\begin{equation}
\label{eq:ch}
\begin{split}
\hat{d}^{-\frac{n+1}{2}}\|u\|_{L^2(\mathcal{N}_{\hat Q})}
&\lesssim \hat d^{\sigma+1-\frac{n+1}{p}} \|d^{-\kappa-1} u\|_{L^2(\mathcal{N}_{\hat Q})}\\
&\lesssim \hat d^{\sigma+1-\frac{n+1}{p}} \|d^{-\kappa-1}e^{-sd_{\bar g}(x,\mathcal{N}_{\hat Q})} u\|_{L^2(\R^{n+1})}\\
&\lesssim  \hat d^{\sigma+1-\frac{n+1}{p}} \left(\|d^{-\kappa}e^{-sd_{\bar g}(x,\mathcal{N}_{\hat Q})}F^i\|_{L^2(\R^{n+1})}\right.\\
& \quad \left.+\|d^{-\kappa+1}e^{-sd_{\bar g}(x,\mathcal{N}_{\hat Q})} g\|_{L^2(\R^{n+1})}\right).
\end{split}
\end{equation}
Due to H\"older's inequality, property (W2) and due to the definition of $\kappa$
\begin{align*}
\|d^{-\kappa} F^i\|_{L^2(Q')}&\lesssim \|d^{-\sigma} F^i\|_{L^p(\R^{n+1})}\|d^{-\kappa+\sigma}\|_{L^{\frac{2p}{p-2}}(Q')}  \\
&\lesssim\|d^{-\sigma} F^i\|_{L^p(\R^{n+1})},\notag\\
\|d^{-\kappa+1} g\|_{L^2(Q')}&\lesssim \|d^{1-\frac{n+1}{p}-\sigma} g\|_{L^{p/2}(\R^{n+1})},
\end{align*}
for any $Q'\in \{Q_j\}$. Thus, combining this estimate with \eqref{eq:exp} (applied with $\tilde{\kappa}=0$) leads to
\begin{equation}
\label{eq:exp2}
\begin{split}
\|d^{-\kappa}e^{-sd_{\bar g}(x,\mathcal{N}_{\hat Q})} F^i\|_{L^2(\R^{n+1})}
& \leq \sum\limits_j \left\| e^{-s d_{\bar{g}}(x,\mathcal{N}_{\hat{Q}})} \right\|_{L^{\infty}(Q_j)} \sup\limits_{Q'\in \{Q_j\}}\|d^{-\kappa} F^i\|_{L^2(Q')}\\
&\leq C(n) \|d^{-\sigma} F^i\|_{L^p(\R^{n+1})}.
\end{split}
\end{equation}
Analogously, we infer
\begin{align*}
\|d^{-\kappa+1}e^{-sd_{\bar g}(x,\mathcal{N}_{\hat Q})} g\|_{L^2(\R^{n+1})}&\leq C(n)\|d^{1-\frac{n+1}{p}-\sigma}g\|_{L^{p/2}(\R^{n+1})}.
\end{align*}
Combining \eqref{eq:ptwise}-\eqref{eq:estG1}, \eqref{eq:ch} and \eqref{eq:exp2}, yields a constant $C_0=C_0(n,p)$ such that 
\begin{equation*}
|u(\hat x)|\leq C_0 \hat d ^{\sigma+1-\frac{n+1}{p}}\left(\|d^{-\sigma}F^i\|_{L^p(\R^{n+1})}+\|d^{1-\frac{n+1}{p}-\sigma}g\|_{L^{p/2}(\R^{n+1})}\right).
\end{equation*}

\emph{Step 4: Proof of the claim.} Without loss of generality, we only prove the claim for $W=\mathcal{N}(\hat Q)$. Let $\mathcal{N}_1:=\{Q_j| \ Q_j\text{ touches } \mathcal{N}(\hat Q)\}$ be the set of all Whitney cubes which touch $\mathcal{N}(\hat Q)$. In general, for $k\geq 2$ let $\mathcal{N}_k$ denote all Whitney cubes which touch at least one of the Whitney cubes in $\mathcal{N}_{k-1}$. Hence $\mathcal{N}_k\nearrow \{Q_j\}$. It is immediate from property (W3) for Whitney cubes that:
\begin{itemize}
\item[($\alpha$)] Let $\sharp\mathcal{N}_k$ denote the number of the Whitney cubes contained in $\mathcal{N}_k$. Define $\sharp\mathcal{N}_0=0$. Then $\sharp\mathcal{N}_k-\sharp\mathcal{N}_{k-1}\leq (12)^{k(n+1)}$, $k\geq 1$. 
\end{itemize}
Moreover, from \eqref{eq:intrinsic_dist}, property (W1) for Whitney cubes and an induction argument, it is not hard to see that:
\begin{itemize}
\item[($\beta$)] For any $x\in Q_j\in \mathcal{N}_k$, $k\geq 2$ we have
$d_{\bar{g}}(x,\mathcal{N}(\hat Q))\geq \frac{k-1}{4\sqrt{n+1}}$.
\end{itemize}
Let $r_0$ be the minimal sidelength of $Q_j \in \mathcal{N}(\hat Q)$. Note that by (W1) and (W2) we have $r_0\leq \dist(\mathcal{N}_{\hat Q},\Gamma)$. Combining ($\alpha$) and ($\beta$), we obtain
\begin{align*}
&\sum_{j=1}^{\infty}\|d^{-\kappa}e^{-sd_{\bar{g}}(x,\mathcal{N}_{\hat Q})}\|_{L^\infty(Q_j)}\\
&\lesssim 
r_0^{-\kappa}+ (4^{-1}r_0)^{-\kappa} \sharp \mathcal{N}_1 +\sum_{k=2}^\infty\sum_{Q_j\in \mathcal{N}_k\setminus \mathcal{N}_{k-1}}(4^{-k}r_0)^{-\kappa}\|e^{-sd_{\bar{g}}(x,\mathcal{N}_{\hat Q})}\|_{L^\infty(Q_j)}\\
&\leq r_0^{-\kappa}+4^{\kappa}12^{n+1} r_0^{-\kappa} + r_0^{-\kappa}\sum_{k=2}^{\infty}4^{k \kappa}e^{-\frac{s(k-1)}{4\sqrt{n+1}}}12^{k(n+1)}\\
&\leq C(n)r_0^{-\kappa},
\end{align*}
if $s=s(n,\kappa)$ is chosen to be sufficiently large.\\

\emph{Step 5: Continuous realization of the boundary data.} In order to obtain the continuous attainment of the boundary data, we observe that any point $x \in \Lambda$ lies in an associated Whitney cube $Q$. Since the Whitney decomposition is considered with respect to $\Gamma$, the cube $Q$ does not intersect $\Gamma$. As $\Gamma = \partial \Lambda$, this implies that $Q \cap (\R^{n} \times \{0\}) \subset \Lambda$. Moreover, the same is true for all neighboring Whitney cubes $\mathcal{N}(Q)$. Hence, we find a ball $B_Q$ which contains $Q$ such that 
\begin{align*}
\p_i a^{ij}\p_j u - d(x)^{-2}u &= \p_i F^i + g \mbox{ in } B_Q,\\
u& = 0 \mbox{ on } B_Q\cap (\R^{n}\times \{0\}).
\end{align*}
In particular, this implies that the boundary data of $u$ are attained continuously in $Q$ (c.f. \cite{GT}, Corollary 8.28). As this holds for any point $x\in \Lambda$, it proves the desired continuous attainment of the boundary data.
\end{proof}

\begin{rmk}\label{rmk:impv2}
In order to deduce the improved behavior close to $\Lambda$ from Remark \ref{rmk:impv1}, we argue by elliptic regularity and a scaling argument. In the case of a point $x_0\in B_1$ with $d(x_0)\sim 1$ but $\dist(x_0,\Lambda)\leq c_n$ (where $c_n$ is a dimensional constant less than $1/2$) we have the bound
\begin{align*}
\| u\|_{W^{1,p}(B_{1/2}(x_0))} \leq C (\| d^{-\sigma}F\|_{L^p(\R^{n+1})} + \| d^{-1-\sigma}g\|_{L^{p/2}(\R^{n+1})} ).
\end{align*}
By the Sobolev embedding theorem and the fact that $u$ vanishes on $\Lambda$, this implies that 
\begin{align*}
|u(x_0)| \leq C (\| d^{-\sigma}F\|_{L^p(\R^{n+1})} + \| d^{1-\sigma-\frac{n+1}{p}}g\|_{L^{p/2}(\R^{n+1})} )\dist(x_0,\Lambda)^{1-\frac{n+1}{p}}.
\end{align*}
The general case follows by rescaling to the set-up with $d(x_0)\sim1$.
\end{rmk}

\section[Optimal Regularity]{Optimal Regularity and Homogeneity of Blow-Up Solutions}
\label{sec:optimal}

In this section we return to the study of solutions of the thin obstacle problem (\ref{eq:varcoef}) and present some consequences of the $C^{1,\alpha}$ regularity of the free boundary. Here our main result is an optimal regularity result for solutions of (\ref{eq:varcoef}), c.f. Theorem \ref{thm:optimal_reg}, and a leading order asymptotic expansion of the solution at free boundary points, c.f. Proposition~\ref{prop:wasympt}.\\

In the sequel, we assume that the metric $a^{ij}$ satisfies the assumptions (A1), (A2), (A3), (A4). 
Moreover, the metric $a^{ij}$ and the solution $w$ are extended to $B_1^-$ in the same way as in Remark~\ref{rmk:ref_ext}.
In this section we mainly work with the equation of $\p_j w$ in our $C^{1,\alpha}$ slit domain. For $j\in \{1,\dots, n\}$, $\p_j w$ satisfies \eqref{eq:extend_v} in $B_1\setminus \Gamma_w$. For $j=n+1$,
we extend $w$ to $B_1^-$ by setting $w(x',x_{n+1})=-w(x',-x_{n+1})$. Then $\p_{n+1} w\in C^{0,1/2-\delta}(B_1\setminus \overline{\Omega_w})\cap H^1(B_1\setminus \overline{\Omega_w})$ for any $\delta>0$, as $w=0$ on $\overline{\Omega_w}$. Moreover, it solves the divergence form equation
 \begin{align*}
 \p_i a^{ij}\p_j v &= \p_i F^i \text{ in } B_1\setminus \overline{\Omega_w}, \text{ where } F^i=-(\p_{n+1} a^{ij})\p_j w,\\
 v&=0 \text{ on }\overline{\Omega_w}.
 \end{align*}

Our main results rely on identifying the asymptotic behavior of solutions, c.f. Lemma \ref{lem:lowerbarrier} and Propositions \ref{prop:asympt} and \ref{prop:wasympt}, for which we invoke the $C^{1,\alpha}$ regularity of the regular free boundary. This then implies the optimal local growth behavior (c.f. Corollaries \ref{cor:lowerbound}-\ref{cor:optgrowth}) which in combination with our Carleman estimate from \cite{KRS14} yields the optimal regularity of solutions.\\

This section is organized as follows: We first deduce the behavior of general solutions of the Dirichlet problem in slit domains with $C^{1,\alpha}$ slits in Section \ref{subsubsec:low_up_barrier}. Then in Section \ref{subsubsec:local32} we apply these results to solutions of the thin obstacle problem, which yields the optimal local non-degeneracy and local growth estimates as well as a leading order asymptotic expansion in appropriate cones. In Section \ref{sec:opti} the local results are then improved to global estimates and the optimal regularity result. Finally, in Section \ref{sec:lower} we prove uniform lower bounds for a suitably normalized solution of the variable coefficient thin obstacle problem at regular free boundary points.

\subsection{Asymptotics and growth estimates}
\label{subsubsec:low_up_barrier}

In this subsection we exploit the $C^{1,\alpha}$ regularity of the free boundary, in order to obtain a leading order asymptotic expansion of solutions of elliptic equations with controlled inhomogeneities in $C^{1,\alpha}$ slit domains (Proposition \ref{prop:asympt}). This expansion holds in appropriate cones. Using the asymptotic behavior, we then immediately obtain (local) growth estimates in the form of a Hopf principle (Corollary \ref{cor:lower_bound}) and the optimal upper growth bounds (Corollary \ref{cor:upper_bound}). Our approach relies on applying the boundary Harnack estimate to a given solution and a model function, which we construct in Lemma \ref{lem:lowerbarrier}.\\

In the whole of this section, we work under the following assumptions:

\begin{assum}
\label{assum:set2}
We assume that $\Lambda$ and $\Gamma$ are closed subsets of $\R^n\times\{0\}$ with 
\begin{align*}
\Lambda=\{x_n\leq g(x'')\}, \quad \Gamma=\{x_n=g(x'')\},
\end{align*}
where $g\in C^{1,\alpha}_{loc}(\R^{n-1})$ for some $\alpha\in (0,1]$ and $g(0)=0$. 
\end{assum}
As in the previous section, we use the abbreviation $L$ for the operator $L=\p_i a^{ij} \p_j$, and $L_0$ for $L_0 = a^{ij}\p^2_{ij}$. Again we use the abbreviations from Notation \ref{not:not1} and set $\ell_0 := (2^{4}\sqrt{n})^{-1}$ and $c_{\ast}:=\left\| \nabla a^{ij} \right\|_{L^p(B_1)}$.\\

Under these assumptions we now refine the estimates from the barrier construction from Proposition \ref{prop:barrier} in Section \ref{sec:boundary}, in order to identify the leading order contribution in the solutions to our equations close to free boundary points.

\begin{lem}
\label{lem:lowerbarrier}
Let $\Lambda, \Gamma$  and $g$ satisfy the conditions in Assumption \ref{assum:set2} and let $\ell_0, c_{\ast}$ be as in Notation \ref{not:not1}. There exists a function $h_0:B_1\rightarrow \R$ such that
\begin{itemize}
\item[(i)] $h_0 \geq 0$ on $B_1 \setminus \Lambda$ \mbox{ and } $h_0=0$ on $\Lambda$,
\item[(ii)] $h_0(x)\geq c_n\dist(x,\Gamma)^{1/2}$ in $B_1\cap \{\dist(x,\Lambda)\geq \ell_0\dist(x,\Gamma)\}$,
\item[(iii)] $L h_0=g_1+g_2$, where $g_1$ and $g_2$ satisfy 
\begin{align*}
\|\dist(\cdot,\Gamma)^{\frac{1}{2}}g_1\|_{L^p(B_1)}&\leq C_n c_{\ast},\\
\|\dist(\cdot,\Gamma)^{\frac{3}{2}-\min\{\alpha,1-\frac{n+1}{p}\}}g_2\|_{L^\infty(B_1)}&\leq C_n\left(c_{\ast}+\|\nabla'' g\|_{C^{0,\alpha}(B_1'')}\right).
\end{align*}
\end{itemize}
\end{lem}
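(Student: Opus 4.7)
The plan is to construct $h_0$ by patching the model function $w_{1/2}(x_n,x_{n+1})=\Ree(x_n+i|x_{n+1}|)^{1/2}$ on a symmetric Whitney decomposition $\{Q_k\}$ of $B_1\setminus\Gamma$, following the scheme of Proposition~\ref{prop:barrier} but now with the \emph{exact} square root rather than its $(1+s)$-power. For each cube $Q_k$ I pick a closest point $x_k\in\Gamma$, the exact unit normal $\nu_k$ to $\Gamma$ at $x_k$ (available since $\Gamma\in C^{1,\alpha}$), and the affine map $T_k(x)=R_k B_k^{-1}(x-x_k)$ with $B_k B_k^T=(a^{ij}(x_k))$ and $R_k\in SO(n+1)$ aligning $R_k B_k^T\nu_k$ with $e_n$ and $R_k B_k^T e_{n+1}$ with $e_{n+1}$, exactly as in Proposition~\ref{prop:barrier}. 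Setting $v_k^-(x):=w_{1/2}(T_k x)$, I form the patched candidate $\tilde h_0:=\sum_k \eta_k v_k^-$ via the usual partition of unity $\{\eta_k\}$. The crucial new point is that $w_{1/2}$ is itself harmonic off its slit, so $a^{ij}(x_k)\p_{ij}v_k^-\equiv 0$ and the leading positive term in $L_0 v_k^-$ that drove Proposition~\ref{prop:barrier} now vanishes identically; $L_0\tilde h_0$ is composed solely of error terms.

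Property~(iii) will then follow from the term-by-term expansion $L\tilde h_0=L_0\tilde h_0+(\p_ia^{ij})\p_j\tilde h_0$ already carried out in Proposition~\ref{prop:barrier}. Since $\|d^{1/2}\nabla\tilde h_0\|_{L^\infty}\lesssim 1$ and $\|\nabla a^{ij}\|_{L^p}\leq c_\ast$, the divergence piece $g_1:=(\p_ia^{ij})\p_j\tilde h_0$ satisfies $\|d^{1/2}g_1\|_{L^p}\lesssim c_\ast$. For the non-divergence piece $g_2:=L_0\tilde h_0$, the errors split into (a) metric variation inside a cube, bounded via Morrey's inequality by $c_\ast d^{-3/2+(1-(n+1)/p)}$, and (b) patching across neighboring cubes, where the $C^{1,\alpha}$ regularity of $\Gamma$ upgrades Lemma~\ref{lem:normals} to $|\nu_j-\nu_k|\lesssim \|\nabla''g\|_{C^{0,\alpha}}d^{\alpha}$ and likewise controls $|v_k^--v_\ell^-|$ and $|\nabla v_k^--\nabla v_\ell^-|$ on overlapping supports, producing a contribution of size $\|\nabla''g\|_{C^{0,\alpha}}d^{-3/2+\alpha}$. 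Combining yields $\|d^{3/2-\alpha'}g_2\|_{L^\infty}\lesssim c_\ast+\|\nabla''g\|_{C^{0,\alpha}}$ with $\alpha':=\min\{\alpha,1-(n+1)/p\}$. Property~(ii) is inherited from the local bound $v_k^-\gtrsim d^{1/2}$ valid whenever $\supp\eta_k$ meets $\{\dist(\cdot,\Lambda)\geq\ell_0 d\}$, combined with $\sum_k\eta_k=1$.

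The main obstacle is enforcing property~(i) \emph{exactly}. Each $v_k^-$ vanishes on the tangent half-plane $\{(x-x_k)\cdot\nu_k\leq 0,\ x_{n+1}=0\}$, which approximates $\Lambda\cap\supp\eta_k$ only up to the $C^{1,\alpha}$ error $|g(x'')-g(x_k'')-\nabla''g(x_k'')\cdot(x''-x_k'')|\leq \|\nabla''g\|_{C^{0,\alpha}}|x''-x_k''|^{1+\alpha}$, so $\tilde h_0|_\Lambda$ is in general nonzero but bounded by $\|\nabla''g\|_{C^{0,\alpha}}^{1/2}d(x)^{(1+\alpha)/2}$. To remedy this, I would proceed as in Step~2 of the proof of Proposition~\ref{prop:barrier}: after extending $\tilde h_0|_\Lambda$ controllably into $B_1\setminus\Lambda$, I would subtract a correction $q$ obtained by applying Proposition~\ref{prop:v1} (with a small positive $\sigma$) so that $h_0:=\tilde h_0-q$ vanishes continuously on $\Lambda$. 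The resulting pointwise bound $|q(x)|\lesssim(c_\ast+\|\nabla''g\|_{C^{0,\alpha}})d(x)^{1/2+\alpha'}$ is strictly subordinate to $d^{1/2}$, so $q$ preserves property~(ii) at the cost of shrinking $c_n$, leaves non-negativity of $h_0$ intact, and adds only contributions to $g_1,g_2$ that respect the same weighted bounds.
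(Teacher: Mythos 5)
Your core construction and the estimates for $g_1$ and $g_2$ are exactly those of the paper, which also patches $v_k^0 = w_{1/2}\circ T_k$ built from the exact normals $\nu_k$ to $\Gamma$, observes that the leading term $\sum_k\eta_k\,a^{ij}(x_k)\p_{ij}v_k^0$ vanishes identically, and uses the $C^{1,\alpha}$ upgrade of Lemma~\ref{lem:normals} ($|\nu_j-\nu_k|\lesssim\|\nabla''g\|_{C^{0,\alpha}}r_\ell^\alpha$) to bound the patching errors. The one place you diverge is the extra correction step for property~(i), and there your reasoning goes astray in two ways.

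First, the correction is unnecessary. For $x\in\Lambda\cap Q_\ell$, property (W1) of the Whitney decomposition forces $g(x'')-x_n\geq\dist(x,\Gamma)\geq r_\ell$, while for any $Q_k\in\mathcal{N}(Q_\ell)$ the Taylor remainder of $g$ at $x_k''$ satisfies $|g(x'')-g(x_k'')-\nabla''g(x_k'')\cdot(x''-x_k'')|\lesssim\|\nabla''g\|_{C^{0,\alpha}}\,r_\ell^{1+\alpha}$ since $|x''-x_k''|\lesssim r_\ell$. Because $r_\ell$ is bounded in $B_1$, the remainder is $\lesssim\|\nabla''g\|_{C^{0,\alpha}}\,r_\ell$, so once $\|\nabla''g\|_{C^{0,\alpha}(B_1'')}$ is below a universal constant (the only regime in which estimate~(iii) has content), it is strictly absorbed by $-(g(x'')-x_n)\leq -r_\ell$, giving $(x-x_k)\cdot\nu_k<0$ and hence $v_k^0(x)=0$ for every index with $\eta_k(x)\neq0$. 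Thus $\tilde h_0$ already vanishes on $\Lambda\setminus\Gamma$ exactly, and on $\Gamma$ by continuity — which is what the paper means by ``by construction.'' The bound $\tilde h_0|_\Lambda\lesssim\|\nabla''g\|_{C^{0,\alpha}}^{1/2}d(x)^{(1+\alpha)/2}$ that you derive would only kick in for large Hölder norm, precisely when the lemma is vacuous.

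Second, even if a correction were needed, Proposition~\ref{prop:v1} is the wrong tool: it produces a function $q$ with \emph{zero} trace on $\Lambda$, so subtracting it from $\tilde h_0$ cannot change $\tilde h_0|_\Lambda$. What you would actually need is to subtract an extension of $\tilde h_0|_\Lambda$, and then it becomes delicate to keep nonnegativity (for~(i) and~(ii)) and the pointwise structure of $g_2$. Better to notice that the Whitney scale already dominates the curvature error and skip the correction entirely, as the paper does.
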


\begin{proof}
As the proof follows along the lines of Proposition \ref{prop:barrier}, we only indicate the main differences.\\
Similarly as in our previous construction, we consider
\begin{align*}
v^{0}(x):= w_{1/2}(x_n, x_{n+1}),
\end{align*}
and define $v_k^0(x)$ by composition with the change of coordinates $T_k$ as in Proposition \ref{prop:barrier}.
Using these building blocks, we define $h_0$ in analogy to Proposition \ref{prop:barrier}, i.e. for an appropriate partition of unity, $\eta_k$, we set:
\begin{align*}
h_0(x) := \sum\limits_{k} \eta_k v_k^0(x).
\end{align*}
By construction $h_0$ satisfies (i)-(ii).\\
To show (iii), we first notice that $Lh_0=(\p_i a^{ij})(\p_j h_0)+L_0h_0:=g_1+g_2.$ The bound for $g_1$ follows immediately by noting that $|\nabla h_0(x)|\leq C_n\dist(x,\Gamma)^{-1/2}$.
In order to bound $g_2$, we compute $L_0(h_0)$ similarly as in step 1 of Proposition \ref{prop:barrier}. Combining the fact that for each $k$, $a^{ij}(x_k)\p_{ij}v_k^0(x)=0$ in $\R^{n+1}\setminus\{x_{n+1}=0,(x-x_k)\cdot \nu_k\leq 0\}$, with the properties of the Whitney decomposition leads to
\begin{align*}
\sum_k (a^{ij}(x_k)\p_{ij}v_k^0(x))\eta_k(x) =0 \text{ in } B_1\setminus \Lambda.
\end{align*}
We proceed with the estimate for the error contributions: Let $\nu_k$ and $\nu_\ell$ denote the normals of $\Lambda$ at $x_k \in \Gamma$ and $x_\ell \in \Gamma$. Here $x_k, x_\ell\in \Gamma\cap B_1$ are the projection points of two centers, $\hat x_k$ and $\hat x_\ell$, of neighboring Whitney cubes, $Q_k$ and $Q_\ell$. For these normals we infer $|\nu_k-\nu_\ell|\lesssim \|\nabla'' g\|_{C^{0,\alpha}(B_1'')} r_\ell^{\alpha}$.  Thus, defining $\gamma:=1-\frac{n+1}{p}$ and following the computation in Proposition~\ref{prop:barrier}, we have that for all $x\in Q_\ell$,
\begin{align*}
&\sum_k(a^{ij}(x_k)\p_{ij}\eta_k(x)) v_k^0(x)
\leq C(n)\left(c_{\ast} r_\ell ^{-\frac{3}{2}+\gamma}+ \|\nabla'' g\|_{C^{0,\alpha}(B_1'')} r_\ell^{-\frac{3}{2}+\alpha}\right),\\
&\sum_k a^{ij}(x_k) \p_i \eta_k \p_j v_k^0 
\leq C(n) \left(c_{\ast} r_\ell^{-\frac{3}{2}+ \gamma}+ \|\nabla'' g\|_{C^{0,\alpha}(B_1'')} r_\ell^{-\frac{3}{2}+\alpha}\right),\\
&\sum_k (a^{ij}(x)-a^{ij}(x_k))\p_{ij} (\eta_k v_k^0(x))\leq C(n) c_{\ast} r_\ell ^{-\frac{3}{2}+\gamma}.
\end{align*}
Thus, the bound for $g_2$ follows. 
\end{proof}

We continue by deriving the boundary asymptotic behavior of solutions $u$ to general divergence form equations
 in slit domains with $C^{1,\alpha}$ slit.  The main tool is our boundary Harnack inequality (Lemma~\ref{lem:boundHarn}).\\

\begin{prop}
\label{prop:asympt}
Let $\Lambda, \Gamma$ and $g$ be as in Assumption \ref{assum:set2}.
Suppose that $u\in C(B_1)\cap H^{1}(B_1)$ is a solution of 
\begin{equation}
\label{eq:divergence}
\p_i a^{ij}\p_j u =\p_i F^i +g_1 \text{ in } B_1\setminus \Lambda, \quad u=0\text{ on }\Lambda.
\end{equation}
Further assume that $a^{ij}$, $F^i$, $g_1$ and $u$ satisfy the assumptions from Proposition~\ref{prop:nondeg}.\\
If $c_\ast$, $\bar\delta$ and $\|\nabla'' g\|_{C^{0,\alpha}(B_1'')}$ are sufficiently small depending on $n$, $p$ and $\alpha$, then
there exists a constant $\beta\in (0,\min\{\alpha, 1-\frac{n+1}{p}\}]$, a $C^{0, \beta}(\Gamma \cap B_{1/2}')$ function $b: \Gamma \rightarrow \R$, $b>0$,
such that for all $x_0\in \Gamma\cap  B_{1/4}'$ and $x\in B_{1/4}(x_0)$ 
\begin{equation}
\label{eq:asympt}
\begin{split}
\left|u(x) - b(x_0) w_{1/2}\left(\frac{\nu_{ x_0} \cdot (x- x_0) }{(\nu_{ x_0} \cdot A( x_0) \nu_{ x_0})^{1/2}},\frac{x_{n+1}}{(a^{n+1,n+1}( x_0))^{1/2}}\right)\right|\\
 \leq C \left(c_\ast+\left\| \nabla'' g \right\|_{C^{0,\alpha}(B_{1/2}'')}\right) |x - x_{0}|^{\frac{1}{2}+\beta}. 
\end{split}
\end{equation}
Here $\nu_{x_0}$ is the outer unit normal of $\Lambda$ at $x_0\in \Gamma$, $g$ denotes the parametrization of $\Gamma$ and $A(x):= (a^{ij}(x))$.
\end{prop}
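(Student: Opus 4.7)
The plan is to obtain the expansion in two reductions: first compare $u$ to the explicit model $h_0$ of Lemma \ref{lem:lowerbarrier} via the boundary Harnack inequality (Lemma \ref{lem:boundHarn}), and then identify the pointwise asymptotics of $h_0$ itself with the frozen coefficient target
\[
\Phi_{x_0}(x) := w_{1/2}\!\left( \frac{\nu_{x_0}\cdot(x-x_0)}{(\nu_{x_0}\cdot A(x_0)\nu_{x_0})^{1/2}}, \, \frac{x_{n+1}}{(a^{n+1,n+1}(x_0))^{1/2}} \right).
\]
Fixing $x_0 \in \Gamma \cap B_{1/4}'$, I would verify that the pair $(u,h_0)$ satisfies the hypotheses of Lemma \ref{lem:boundHarn}: the bounds on the inhomogeneity and the non-degeneracy of $u$ are given by hypothesis together with Proposition \ref{prop:nondeg}, while those for $h_0$ are exactly items (ii)--(iii) of Lemma \ref{lem:lowerbarrier} with $\delta_0 = \min\{\alpha, 1-\tfrac{n+1}{p}\}$. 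Applying the lemma produces a $C^{0,\beta}$ function $b_0(x_0) := \lim_{x\to x_0,\, x\in x_0+\mathcal{C}'_\theta(e_n)} u(x)/h_0(x)$ on $\Gamma \cap B_{1/4}'$, together with the estimate $|u(x) - b_0(x_0) h_0(x)| \le C(c_\ast + \|\nabla''g\|_{C^{0,\alpha}})|x-x_0|^{1/2+\beta}$ valid inside the cone.

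For the second reduction, I would examine the partition-of-unity construction of $h_0$. On a Whitney cube $Q_\ell$ of diameter $r\sim|x-x_0|$, the local building block is $v^0(T_\ell x)$, where $T_\ell$ is the affine change of coordinates built from $a^{ij}(x_\ell)$ and the approximate normal $\nu_\ell$ at the projection point $x_\ell \in \Gamma$; since $|x_\ell - x_0|\lesssim r$, Morrey's embedding together with $g\in C^{1,\alpha}$ yield $|T_\ell - T_{x_0}|\lesssim (c_\ast + \|\nabla''g\|_{C^{0,\alpha}}) r^{\min\{\alpha,\, 1-(n+1)/p\}}$. Differentiating $v^0$ off its branch line and controlling the partition-of-unity errors exactly as in Lemma \ref{lem:lowerbarrier} produces
\[
|h_0(x) - c(x_0)\,\Phi_{x_0}(x)| \lesssim (c_\ast + \|\nabla''g\|_{C^{0,\alpha}})|x-x_0|^{1/2+\beta}
\]
for the normalisation $c(x_0)$ coming from $T_{x_0}$. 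Combining this with the previous step yields \eqref{eq:asympt} in the cone with $b(x_0):=c(x_0)b_0(x_0)$, and the $C^{0,\beta}$ regularity of $b$ follows from the Hölder dependence of $b_0$ (Lemma \ref{lem:boundHarn}) and the obvious continuity of $c(x_0)$ through the metric and the normal.

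The main technical obstacle will be the passage from the flat cone $x_0+\mathcal{C}'_\theta(e_n)$, where the boundary Harnack inequality is formulated, to the full ball $B_{1/4}(x_0)$ appearing in the statement. I would apply the estimates above to the function $v(x) := u(x) - b(x_0)\Phi_{x_0}(x)$; this $v$ solves $Lv = \partial_iF^i + g_1 + R$ where the remainder $R$ encodes the variable-coefficient discrepancy between $L$ and the frozen operator $L_{x_0}=a^{ij}(x_0)\partial_{ij}$, with decay controlled by Morrey's embedding. The in-cone bound on $v$ provides boundary data of order $\rho^{1/2+\beta}$ on each annular set $\{|x-x_0|=\rho\} \cap (x_0+\mathcal{C}'_\theta(e_n))$, and a dyadic rescaling combined with interior elliptic estimates for $L$ in each annulus $A_{\rho,2\rho}^{+}(x_0)$, together with the continuous vanishing of $v$ on $\Lambda$ realised via the splitting technique of Proposition \ref{prop:v1}, extends the cone bound to the full ball and yields \eqref{eq:asympt}.
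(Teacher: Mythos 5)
Your first two reductions match the paper's: the boundary Harnack inequality (Lemma \ref{lem:boundHarn}) applied to the pair $(u,h_0)$ with $h_0$ from Lemma \ref{lem:lowerbarrier} produces the Hölder function $b_0$ and the in-cone estimate, and comparing $h_0$ to $\Phi_{x_0}$ via the partition-of-unity structure together with Lemma \ref{lem:normals} (or rather its $C^{1,\alpha}$ variant $|\nu_k - \nu_{x_0}| \lesssim \|\nabla''g\|_{C^{0,\alpha}}|x_k - x_0|^\alpha$) gives $|h_0(x) - w_{1/2}(T_{x_0}x)| \lesssim (c_\ast + \|\nabla''g\|)|x-x_0|^{1/2+\min\{\alpha, 1-(n+1)/p\}}$. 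One small slip: the normalization constant $c(x_0)$ you introduce is identically $1$, since $h_0 = \sum_k \eta_k\, w_{1/2}\circ T_k$ and the comparison target is $w_{1/2}\circ T_{x_0}$, so there is no extra factor.

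The genuine gap is in your passage from the cone to the full ball. You propose to estimate $v := u - b(x_0)\Phi_{x_0}$ by viewing it as a solution of $Lv = \p_i F^i + g_1 + R$ and invoking the splitting of Proposition \ref{prop:v1}. But $\Phi_{x_0}$ vanishes only on the \emph{tangent half-plane} $\{(x-x_0)\cdot\nu_{x_0}\leq 0,\ x_{n+1}=0\}$, not on $\Lambda$ itself, so $v$ does not satisfy the zero boundary condition $v=0$ on $\Lambda$ that Proposition \ref{prop:v1} (and the comparison machinery throughout Section \ref{sec:boundary}) requires. Moreover, $L\Phi_{x_0}$ has its square-root singularity along the tangent half-line rather than along $\Gamma$, so the remainder $R$ does not satisfy the weighted bounds (in $\dist(\cdot,\Gamma)$) that the barrier and splitting arguments hinge on; for a $C^{1,\alpha}$ slit the two distance functions only agree up to errors of order $|x-x_0|^{1+\alpha}$, which is precisely the region where the competing singularities live. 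The paper avoids this by running the two comparisons in the opposite order: it first extends the bound on $u - b(x_0)h_0$ from the cone to all of $B_{1/4}(x_0)$ (this difference \emph{does} vanish on $\Lambda$, and the extension follows from the global negativity bound \eqref{eq:lower_bound_a} at all scales together with the barrier/Carleson arguments used inside the proof of Lemma \ref{lem:boundHarn}), and only then substitutes $h_0 \approx \Phi_{x_0}$, an estimate that is manifestly pointwise in the full ball. Reorganizing your proof that way — extend $u - b(x_0)h_0$ first, then replace $h_0$ — removes the difficulty.
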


\begin{proof}
We seek to apply the boundary Harnack inequality (Lemma~\ref{lem:boundHarn}) to the pair $u, h_0$ (with $\delta_0=\min\{\alpha,1-\frac{n+1}{p}\}$), where $h_0$ is the function from Lemma~\ref{lem:lowerbarrier}. By Lemma~\ref{lem:lowerbarrier}, $h_0$ satisfies the conditions in Lemma~\ref{lem:boundHarn}. Moreover, we note that by Proposition~\ref{prop:nondeg}
\begin{equation}\label{eq:nondege}
u(x)\geq c_n\dist(x,\Gamma)^{\frac{1}{2}+\frac{\bar\epsilon}{2}} \text{ in } B_{1/2}\cap \{\dist(x,\Lambda)\geq\ell_0\dist(x,\Gamma)\},
\end{equation}
and 
\begin{equation}\label{eq:lower_bound_a}
u(x)\geq -\dist(x,\Gamma)^{\frac{1}{2}+\bar\epsilon} \text{ in } B_{1/2}.
\end{equation}
Here $\bar{\epsilon}= \sigma+\frac{1}{2}-\frac{n+1}{p}$ and $\sigma$ is the constant from Proposition \ref{prop:nondeg}.
Hence, by the boundary Harnack inequality we have that 
\begin{align}
\label{eq:bound_Harnack}
s_0\frac{u(e_n/2)}{h_0(e_n/2)} \leq \frac{u(x)}{h_0(x)}\leq s_0^{-1} \frac{u(e_n/2)}{h_0(e_n/2)},
\end{align} 
for $x\in B_{1/2}\cap \{\dist(x,\Lambda)\geq \ell_0\dist(x,\Gamma)\}$ and for some $s_0=s_0(n,p,\|\nabla'' g\|_{C^{0,\alpha}(B_1'')})$. Moreover, $\frac{u(x)}{h_0(x)}$ is H\"older continuous in $\mathcal{C}_\theta(\Gamma\cap B'_{1/4})$ up to $\Gamma\cap B'_{1/4}$. More precisely, for each $x_0\in \Gamma\cap B'_{1/4}$, the limit of $\frac{u(x)}{h_0(x)}$ as $x\rightarrow x_0$, $x\in x_0+\mathcal{C}_\theta(e_n)$, exists. Denoting it by $b(x_0)$, gives
\begin{align}
\label{eq:quant_est}
\left| \frac{u(x)}{h_0(x)} - b(x_0) \right| \leq C \frac{u(e_n/2)}{h_0(e_n/2)}|x-x_0|^{\tilde{\beta}}, \quad x\in (x_0+\mathcal{C}_\theta(e_n)) \cap B_{1/2},
\end{align} 
where $C=C(n,p)$, for some $\tilde{\beta} \in (0,1)$. By \eqref{eq:nondege}, Lemma~\ref{lem:lowerbarrier} (iii) for $h_0$ and by the fact that $g\in C^{1,\alpha}(B_1'')$, we have $ u(e_n/2)/h_0(e_n/2)\geq c_n>0$.  Combining this with the lower bound in \eqref{eq:bound_Harnack} implies that $b(x_0)>0$.\\
We now extend this to the whole neighborhood $B_{1/4}(x_0)$ instead of only considering the cones $x_0 + C_{\theta}(e_n)$:
Using (\ref{eq:lower_bound_a}) on all scales and arguing as in the proof of Lemma~\ref{lem:boundHarn}, we obtain for  $x\in B_{1/4}(x_0)$
\begin{align*}
u(x) - b(x_0) h_0(x) &\geq  -C|x-x_0|^{\min\{\frac{1}{2}+\bar\epsilon, \frac{1}{2}+\tilde{\beta}\}},\\
u(x)-b(x_0)h_0(x)&\leq  C|x-x_0|^{\min\{\frac{1}{2}+\bar\epsilon, \frac{1}{2}+\tilde{\beta}\}}.
\end{align*}
Consequently, for $\beta=\min\{\tilde{\beta}, \bar\epsilon\}$, 
\begin{align}
\label{eq:dist1}
|u(x)-b(x_0)h_0(x)|\leq C|x-x_0|^{\frac{1}{2}+\beta} \mbox{ for } x\in B_{1/4}(x_0).
\end{align}

Next we show that for each $x_0\in \Gamma\cap B'_{1/4}$, 
\begin{equation}\label{eq:asym_h0}
|h_0(x)-w_{1/2}(T_{x_0}x)|\leq C \left(c_\ast+\left\| \nabla'' g \right\|_{C^{\alpha}(B_{1/2}'')}\right) |x-x_0|^{\frac{1}{2}+\min\{\alpha,1-\frac{n+1}{p}\}}, \quad x\in  B_{1/2},
\end{equation}
where $T_{x_0}$ is defined as the following affine transformation in $\R^{n+1}$: $T_{x_0}(x)=R_{x_0}B_{x_0}^{-1}(x-x_0)$, where $B_{x_0}B_{x_0}^t=A(x_0)$ and $R_{x_0}\in SO(n+1)$ is such that $R_{x_0}B_{x_0}^t \nu_{x_0}=\bar c_{1,x_0} e_n$, $R_{x_0}B_{x_0}^t e_{n+1}=\bar c_{2,x_0} e_{n+1}$. A direct computation leads to 
\begin{align*}
(\bar{c}_{1,x_0})^2 &=  (R_{x_0} B_{x_0}^{t} \nu_{x_0}) \cdot (R_{x_0} B_{x_0}^{t} \nu_{x_0}) = \nu_{x_0} \cdot A(x_0) \nu_{x_0},\\
(\bar{c}_{2,x_0})^2 &= (R_{x_0} B_{x_0}^{t} e_{n+1}) \cdot (R_{x_0} B_{x_0}^{t} e_{n+1}) = a^{n+1, n+1}(x_0).
\end{align*}
In order to show \eqref{eq:asym_h0}, we note that by a similar estimate as \eqref{eq:barrier_est} with $s=0$, we deduce
\begin{align}
\label{eq:varyT}
|w_{1/2}(T_k x)-w_{1/2}(T_{x_0}x)|\leq C \left(c_\ast+\left\| \nabla'' g \right\|_{C^{\alpha}(B_{1/2}'')}\right) |x-x_0|^{\frac{1}{2}+\min\{\alpha,1-\frac{n+1}{p}\}},  
\end{align}
for all $x\in Q\in \mathcal{N}(Q_k)$. Instead of Lemma~\ref{lem:normals}, we use
\begin{align*}
|\nu_k-\nu_{x_0}|\leq C \left\| \nabla'' g \right\|_{C^{0,\alpha}(B_{1/2}'')} |x_k-x_0|^{\alpha},
\end{align*}
in this context (we recall that $\nu_k$ is the normal of $\Gamma$ at $x_k$, where $x_k\in \Gamma$ realizes the distance of the center $\hat x_k$ of $Q_k$ to $\Gamma$). \\
Thus, by using the properties (W1)-(W3) of the Whitney decomposition, we have
\begin{align*}
|h_0(x)-w_{1/2}(T_{x_0}x)|&\leq \sum_k \eta_k |w_{1/2}(T_k x)-w_{1/2}(T_{x_0}x)|\\
&\leq C\left(c_\ast+\left\| \nabla'' g \right\|_{C^{\alpha}(B_{1/2}'')}\right) |x-x_0|^{\frac{1}{2}+\min\{\alpha,1-\frac{n+1}{p}\}}.
\end{align*}
Writing $T_{x_0}$ out explicitly, we therefore obtain
\begin{align*}
&\left| h_0(x) - w_{1/2}\left(\frac{\nu_{ x_0}\cdot (x- x_0) }{(\nu_{ x_0} \cdot A( x_0) \nu_{ x_0})^{1/2}},\frac{x_{n+1}}{(a^{n+1,n+1}( x_0))^{1/2}}\right)\right|\\
 & \leq C\left(c_\ast+\left\| \nabla'' g \right\|_{C^{\alpha}(B_{1/2}'')}\right) |x-x_0|^{\frac{1}{2}+\min\{\alpha,1-\frac{n+1}{p}\}}.
\end{align*}
Using the above asymptotics of $h_0$ in \eqref{eq:dist1}, we arrive at the desired result. 
\end{proof}

The above asymptotic estimate leads to two immediate corollaries: On the one hand, it yields a Hopf principle and on the other hand, if combined with an interior elliptic estimate, it results in an upper growth bound.

\begin{cor}
\label{cor:lower_bound}
Assume that the conditions of Proposition~\ref{prop:asympt} are satisfied and that $0\in \Gamma$. 
Then, if $c_{\ast}, \bar\delta$ and $\|\nabla'' g\|_{C^{0,\alpha}(B_1'')}$ are sufficiently small depending on $n,p$ and $\alpha$, there is a dimensional constant $c_n>0$ such that
\begin{align*}
u(x)\geq c_n\dist(x,\Gamma)^{\frac{1}{2}}
\end{align*}
for $x\in B'_{1/2}\times (-1/2,1/2)\cap\{x| \dist(x,\Lambda)\geq \ell_0 \dist(x,\Gamma)\}$.
\end{cor}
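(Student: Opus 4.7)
The plan is to extract the sharp $1/2$-homogeneous lower bound from the leading-order asymptotic expansion \eqref{eq:asympt} of Proposition~\ref{prop:asympt}, and to handle the regime away from $\Gamma$ separately using Proposition~\ref{prop:nondeg}. The argument will be a two-scale combination of these ingredients.

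First, I will show that the coefficient $b(x_0)$ in \eqref{eq:asympt} is uniformly bounded below by a dimensional constant $c_n>0$ for every $x_0\in\Gamma\cap B_{1/4}'$. In the proof of Proposition~\ref{prop:asympt}, $b$ is defined as the non-tangential limit of $u/h_0$ on $\Gamma$, and the boundary Harnack inequality (Lemma~\ref{lem:boundHarn}) combined with the lower bound $u(e_n/2)/h_0(e_n/2)\geq c_n$ (which follows from hypothesis (ii) of Proposition~\ref{prop:nondeg} applied to $u$ and the construction of $h_0$ giving $h_0(e_n/2)\leq C_n$) yields the uniform positivity.

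Next, I will establish the geometric lower bound $w_{1/2}(T_{x_0}(x))\geq c_n|x-x_0|^{1/2}$ whenever $x$ is in the non-tangential region $\{\dist(\cdot,\Lambda)\geq\ell_0\dist(\cdot,\Gamma)\}$ and $x_0$ is a point on $\Gamma$ realizing $\dist(x,\Gamma)=|x-x_0|$. Since the affine map $T_{x_0}$ is bi-Lipschitz with constants depending only on the ellipticity bounds on $A(x_0)$, one has $|T_{x_0}(x)|\sim|x-x_0|$. The model function $w_{1/2}(y_n,y_{n+1})=\Ree(y_n+i|y_{n+1}|)^{1/2}$ degenerates only near the half-axis $\{y_n\leq 0, y_{n+1}=0\}$, and by the $C^{1,\alpha}$ regularity of $\Gamma$ together with the smallness of $\|\nabla''g\|_{C^{0,\alpha}}$, the slit $\Lambda$ lies within a $\|\nabla''g\|_{C^{0,\alpha}}|x-x_0|^{1+\alpha}$-neighborhood of its tangent half-plane $\{\nu_{x_0}\cdot(x-x_0)\leq 0, x_{n+1}=0\}$ at $x_0$ on the scale $|x-x_0|$. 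The non-tangentiality condition $\dist(x,\Lambda)\geq\ell_0\dist(x,\Gamma)$ thereby traps $T_{x_0}(x)$ in a cone uniformly separated from the negative $y_n$-axis, which yields the claimed lower bound on $w_{1/2}(T_{x_0}(x))$.

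Combining these two ingredients with \eqref{eq:asympt} gives, for $x$ in the non-tangential region whose projection $x_0$ lies in $B_{1/4}'$ and with $|x-x_0|\leq 1/4$,
\begin{equation*}
u(x)\geq c_n|x-x_0|^{1/2}-C\bigl(c_\ast+\|\nabla''g\|_{C^{0,\alpha}(B_{1/2}'')}\bigr)|x-x_0|^{1/2+\beta}.
\end{equation*}
Under the smallness hypotheses on $c_\ast$ and $\|\nabla''g\|_{C^{0,\alpha}(B_{1/2}'')}$, the first term dominates, yielding $u(x)\geq\tfrac{c_n}{2}\dist(x,\Gamma)^{1/2}$. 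Points $x\in B_{1/2}'\times(-1/2,1/2)$ whose projections lie outside $B_{1/4}'$ are covered by rescaled applications of Proposition~\ref{prop:asympt} around nearby boundary points via a finite covering. Finally, for $x$ with $\dist(x,\Gamma)\geq r_0>0$, the conclusion follows from the weaker estimate $u(x)\geq c_n\dist(x,\Gamma)^{1/2+\bar\epsilon/2}$ of Proposition~\ref{prop:nondeg}, since $\dist(x,\Gamma)^{\bar\epsilon/2}\geq r_0^{\bar\epsilon/2}$ in this regime.

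The main obstacle is the geometric step: quantitatively matching the Euclidean non-tangentiality condition $\dist(x,\Lambda)\geq\ell_0\dist(x,\Gamma)$ against the axis-adapted non-degeneracy of $w_{1/2}$ in $(y_n,y_{n+1})$-coordinates, uniformly in $x_0$. This relies essentially on the smallness of $\|\nabla''g\|_{C^{0,\alpha}}$ to ensure that on the scale $|x-x_0|$ the $C^{1,\alpha}$ slit $\Lambda$ is well-approximated by its tangent half-plane at $x_0$, so that the error incurred by replacing the true $\Lambda$ by this half-plane does not eat up the non-tangentiality margin $\ell_0$.
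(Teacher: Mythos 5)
Your proof is correct and follows essentially the same route as the paper, which presents Corollary~\ref{cor:lower_bound} without a separate argument, calling it an immediate consequence of Proposition~\ref{prop:asympt}. You correctly identify the three ingredients one must supply: the uniform lower bound $b(x_0)\geq c_n>0$ (which is already contained in the proof of Proposition~\ref{prop:asympt} via the boundary Harnack estimate \eqref{eq:bound_Harnack}), the geometric lower bound $w_{1/2}(T_{x_0}x)\gtrsim|x-x_0|^{1/2}$ on the non-tangential region (the step that the paper leaves implicit, and which you resolve correctly by quantitatively comparing the $C^{1,\alpha}$ slit to its tangent half-plane and using that $T_{x_0}$ is bi-Lipschitz with dimensional constants), and the absorption of the error term, of order $|x-x_0|^{1/2+\beta}$, by smallness of $c_\ast$ and $\|\nabla''g\|_{C^{0,\alpha}}$.
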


\begin{cor}
\label{cor:upper_bound}
Assume that the conditions of Proposition~\ref{prop:asympt} are satisfied. If $c_\ast$, $\bar\delta$ and $\|\nabla'' g\|_{C^{0,\alpha}(B_1'')}$ are sufficiently small depending on $n,p$ and $\alpha$, then there exists a constant $\bar C=\bar C(n,p)>0$ such that for all $x\in B_{1/2}$
\begin{align*}
|u(x)|\leq \bar C M_0 \dist(x,\Gamma)^{\frac{1}{2}},
\end{align*}
where $M_0:=\sup _{B_1}|u|$. 
\end{cor}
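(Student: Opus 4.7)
The plan is to derive the upper bound from the leading-order asymptotic expansion of $u$ near the free boundary established in Proposition~\ref{prop:asympt}, combined with the trivial bound at points away from $\Gamma$.

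For $x\in B_{1/2}$ with $d(x):=\dist(x,\Gamma)\geq 1/8$, the estimate is immediate: $|u(x)|\leq M_0 \leq 2\sqrt{2}\,M_0\, d(x)^{1/2}$. Hence I focus on the regime $d(x)<1/8$. For such $x$, let $x_0\in\Gamma$ be a nearest point, so that $|x-x_0|=d(x)$ and $x_0\in\overline{B'_{5/8}}$ (because $0\in\Gamma$ and $x\in B_{1/2}$). After a translation and rescaling centered at $x_0$, which preserves (and in fact improves) the smallness assumptions on $c_\ast$ and $\|\nabla''g\|_{C^{0,\alpha}}$, I would apply Proposition~\ref{prop:asympt} to obtain
\begin{equation*}
\bigl| u(x)-b(x_0)\,w_{1/2}(T_{x_0}(x)) \bigr| \leq C\bigl( c_\ast + \|\nabla''g\|_{C^{0,\alpha}(B''_1)} \bigr)\, |x-x_0|^{1/2+\beta}.
\end{equation*}
Since $|w_{1/2}(T_{x_0}(x))|\leq C(n)\,|x-x_0|^{1/2}$ (from the $1/2$-homogeneity of $w_{1/2}$ and the uniform control of $T_{x_0}$ via the ellipticity constants), the problem reduces to bounding the leading coefficient $b(x_0)$ by $C(n,p)\,M_0$.

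To this end I would invoke the boundary Harnack inequality \eqref{eq:bound_Harnack} applied to the pair $(u,h_0)$, with $h_0$ as in Lemma~\ref{lem:lowerbarrier}. It yields
\begin{equation*}
b(x_0) \leq s_0^{-1}\,\frac{u(e_n/2)}{h_0(e_n/2)} \leq C(n,p)\,M_0,
\end{equation*}
using $u(e_n/2)\leq M_0$ together with the dimensional lower bound $h_0(e_n/2)\geq c_n>0$ from Lemma~\ref{lem:lowerbarrier}(ii) (note that $e_n/2$ lies in the non-tangential cone from $0\in\Gamma$). Combining this with the asymptotic, and using that $M_0\geq 1$ (which is forced by the normalized non-degeneracy in Proposition~\ref{prop:nondeg}(ii) inherited by the hypotheses of Proposition~\ref{prop:asympt}), I conclude
\begin{equation*}
|u(x)| \leq C(n,p)\,M_0\,d(x)^{1/2} + C\,d(x)^{1/2+\beta} \leq \bar C\,M_0\,d(x)^{1/2}.
\end{equation*}

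The main obstacle is to ensure that the error constant from Proposition~\ref{prop:asympt} indeed carries the correct $M_0$-dependence. That proposition is stated under the normalization $u\geq 1$ on a reference set, so the error constant $C$ implicitly scales linearly with $u(e_n/2)/h_0(e_n/2)\lesssim M_0$; making this explicit requires retracing the boundary-Harnack chain \eqref{eq:bound_Harnack}--\eqref{eq:quant_est}. A secondary issue is that Proposition~\ref{prop:asympt} is stated for base points $x_0\in\Gamma\cap B'_{1/4}$, while the nearest-point projection above may land in $\Gamma\cap\overline{B'_{5/8}}$; this is resolved by a translation and a rescaling (or equivalently by a covering argument) applied to a slightly shrunken pair of concentric balls, which does not affect the final dependence of $\bar C$ on $n$ and $p$.
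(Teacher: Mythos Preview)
Your proposal is correct and follows essentially the same route the paper indicates: the paper states the corollary as an immediate consequence of the asymptotic expansion in Proposition~\ref{prop:asympt}, and your argument makes this explicit by bounding $b(x_0)\lesssim M_0$ via the boundary Harnack relation~\eqref{eq:bound_Harnack}, controlling the model profile by $d(x)^{1/2}$, and absorbing the $d(x)^{1/2+\beta}$ error using $M_0\ge 1$ (which indeed follows from condition~(ii) of Proposition~\ref{prop:nondeg}). The two technical caveats you flag (the implicit $M_0$-scaling of the error constant and the $B'_{1/4}$ versus $B'_{5/8}$ issue for the projected base point) are both genuine but routine, and your suggested resolutions by retracing \eqref{eq:quant_est} and by translation/covering are adequate.
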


\subsection{Local $3/2$-growth estimate, asymptotics and blow-up uniqueness}
\label{subsubsec:local32}
In this section we transfer the results from the previous section into the setting of the thin obstacle problem. Hence, this yields leading order asymptotics in appropriate cones, a Hopf principle and an optimal local growth estimate. Last but not least, we explain how the asymptotics and the growth estimates can be used to derive the uniqueness of the \emph{$3/2$-homogeneous blow-ups.}\\

We begin by transferring the general results of Section \ref{subsubsec:low_up_barrier} to the setting of solutions of the thin obstacle problem:

\begin{prop}[First order asymptotics]
\label{prop:wasympt}
Let $w$ be a solution of (\ref{eq:varcoef}) satisfying (A0), (A1), (A2), (A3), (A4). 
Assume that for some small positive constants $\epsilon_0$ and $c_\ast$
\begin{itemize}
\item[(i)] $\left\| w -  w_{3/2} \right\|_{C^1(B_{1}^+)} \leq \epsilon_0$,
\item[(ii)]$\|\nabla a^{ij}\|_{L^p(B_1^+)}\leq c_\ast$.
\end{itemize}
Then if $\epsilon_0$ and $c_\ast$ are chosen sufficiently small depending on $n,p$, there exists a function $b_e:\Gamma_w\cap B_{1/2}\rightarrow \R$, $b_e\in C^{0,\alpha}$ for some $\alpha\in (0,1-\frac{n+1}{p}]$, 
(where $\alpha$ can be chosen the same as the H\"older regularity of $\Gamma_w$ in Proposition~\ref{prop:C1a}), 
such that for all $x_0\in \Gamma_w\cap B_{1/2}$, we have the following asymptotic expansion for $\p_e w$, $e\in S^{n-1}\times\{0\}$:\\
\begin{multline}
\label{eq:asymptotics2}
\left| \p_{e} w(x)-b_e(x_0)w_{1/2}\left(\frac{(x- x_0) \cdot\nu_{ x_0}}{(\nu_{ x_0} \cdot A( x_0) \nu_{ x_0})^{1/2}},\frac{x_{n+1}}{(a^{n+1,n+1}( x_0))^{1/2}}\right)\right|\\
\leq C(n,p)\max\{\epsilon_0,c_\ast\}  \dist(x,\Gamma_w)^{\frac{1}{2}+\alpha},\quad x\in B_{1/4}(x_0).
\end{multline} 
Here $\nu_{ x_0}$ is the outer unit normal of $\Lambda_w$ at $ x_0$ and $A(x_0)=(a^{ij}(x_0))$. 
\end{prop}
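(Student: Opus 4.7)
The plan is to reduce Proposition~\ref{prop:wasympt} to Proposition~\ref{prop:asympt} applied to the tangential derivatives $\partial_e w$, using linearity to pass from one well-chosen direction to an arbitrary tangential $e$. Under the smallness assumptions (i), (ii), Propositions~\ref{prop:Lip} and~\ref{prop:C1a} (see also Remark~\ref{rmk:alpha}) imply that $\Gamma_w\cap B_{1/2}=\Gamma_{3/2}(w)\cap B_{1/2}$ is a $C^{1,\alpha}$ graph $x_n=g(x'')$ for some $\alpha\in(0,1-\tfrac{n+1}{p}]$ depending only on $n,p$, with $\|\nabla'' g\|_{C^{0,\alpha}(B''_{1/2})}\le C(n,p)\max\{\epsilon_0,c_\ast\}$; hence Assumption~\ref{assum:set2} is in force. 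After the reflection of Remark~\ref{rmk:ref_ext}, each tangential derivative $v=\partial_e w$, $e\in S^{n-1}\times\{0\}$, lies in $H^1(B_1\setminus\Lambda_w)\cap C(B_1)$ and solves
\begin{align*}
\partial_i a^{ij}\partial_j v=\partial_i F^i\text{ in }B_1\setminus\Lambda_w,\qquad v=0\text{ on }\Lambda_w,\qquad F^i=-(\partial_e a^{ij})\partial_j w,
\end{align*}
with $\|\dist(\cdot,\Gamma_w)^{-\sigma}F^i\|_{L^p(B_1)}\le C(n,p)c_\ast$ for every $\sigma<1/2$ by Lemma~\lemfreebgrowth.

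For the distinguished normal direction $e=e_n$, the proof of Proposition~\ref{prop:Lip} via Proposition~\ref{prop:nondeg} already yields the two-sided non-degeneracy $\partial_n w(x)\ge c_n\dist(x,\Gamma_w)^{1/2+\bar\epsilon/2}$ on $\{\dist(\cdot,\Lambda_w)\ge\ell_0\dist(\cdot,\Gamma_w)\}$ together with $\partial_n w(x)\ge -\dist(x,\Gamma_w)^{1-\frac{n+1}{p}+\sigma}$ in $B_{1/2}$, where $\bar\epsilon=\min\{\alpha,1-\tfrac{n+1}{p}\}$ for a suitable choice of $\sigma$. A rescaling by $c_n$ matches the hypotheses of Proposition~\ref{prop:asympt}, which supplies a H\"older coefficient $b_{e_n}\in C^{0,\alpha}(\Gamma_w\cap B'_{1/4})$ and the expansion \eqref{eq:asymptotics2} for $e=e_n$. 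The error is initially of order $|x-x_0|^{1/2+\alpha}$; however, both $\partial_n w$ and $b_{e_n}(x_0)w_{1/2}(\cdot)$ vanish on $\Lambda_w$ and both satisfy the divergence equation with controlled $L^p$ inhomogeneity, so interior $W^{1,p}$ estimates on Whitney cubes bootstrap the error to the claimed $\dist(x,\Gamma_w)^{1/2+\alpha}$.

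For a generic tangential $e\in S^{n-1}\times\{0\}$ I invoke linearity. Fix $\lambda\in(0,1)$ (independent of $x_0$) so small that the unit vector $(e_n+\lambda e)/|e_n+\lambda e|$ still lies in the cone of ``good'' directions from the proof of Proposition~\ref{prop:Lip}; $\lambda=1/4$ works once $\epsilon_0,c_\ast$ are sufficiently small. The combination $u_\lambda:=\partial_n w+\lambda\partial_e w=\partial_{e_n+\lambda e}w$ then solves the same type of divergence form equation, with the same bound on its inhomogeneity and the same two-sided non-degeneracy, so Proposition~\ref{prop:asympt} applied to $u_\lambda$ yields a coefficient $b_{u_\lambda}\in C^{0,\alpha}(\Gamma_w\cap B'_{1/4})$ with the common profile $w_{1/2}(\cdot)$. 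Subtracting the expansion for $\partial_n w$ and dividing by $\lambda$ gives \eqref{eq:asymptotics2} with
\begin{align*}
b_e(x_0):=\lambda^{-1}\bigl(b_{u_\lambda}(x_0)-b_{e_n}(x_0)\bigr)\in C^{0,\alpha}(\Gamma_w\cap B'_{1/4}),
\end{align*}
and with error $C(n,p)\max\{\epsilon_0,c_\ast\}\dist(x,\Gamma_w)^{1/2+\alpha}$ inherited from the two applications of Proposition~\ref{prop:asympt}.

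The principal obstacle is that Proposition~\ref{prop:asympt} rests on a boundary Harnack principle and therefore requires a one-sided non-degeneracy, whereas for a generic tangential $e$ the derivative $\partial_e w$ can change sign; the linear-combination trick above bypasses this by reducing the general case to the single distinguished direction $e_n$. A secondary technical point is that the H\"older exponent $\alpha$ produced by Proposition~\ref{prop:asympt} and the constants in the error estimate depend only on $n$ and $p$, which is exactly the content of Remark~\ref{rmk:alpha} and justifies the claimed form of the bound.
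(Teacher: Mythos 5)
Your argument is correct and follows the same basic template as the paper's — reduce to Proposition~\ref{prop:asympt} for the tangential derivatives after using Propositions~\ref{prop:Lip} and~\ref{prop:C1a} to put the free boundary in $C^{1,\alpha}$ graph form — but it diverges at one genuine point: how to pass from the cone of directions where $\partial_e w$ is non-degenerate to an arbitrary tangential $e$. The paper handles the general $e$ by invoking Remark~\ref{rmk:asym}(ii), i.e.\ the algebraic linearity of $b_e(x_0)$ in $e$, which is itself established (via the curl computation in Corollary~\ref{cor:wasympt}) only after Proposition~\ref{prop:wasympt} is stated; one first proves the expansion for $e$ in the good cone, extracts the explicit formula for the coefficients, and then extends by linearity of the formula. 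Your linear-combination trick $u_\lambda=\partial_{e_n+\lambda e}w$ with a fixed small $\lambda$ sidesteps that forward reference entirely: it needs only two applications of Proposition~\ref{prop:asympt} to directions inside the good cone, and the subtraction is elementary. This is a cleaner logical ordering and I would call it a modest improvement in exposition.

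One place where you are more cautious than the paper, perhaps unnecessarily so: Proposition~\ref{prop:asympt} produces an error of size $|x-x_0|^{1/2+\beta}$, whereas Proposition~\ref{prop:wasympt} is stated with $\dist(x,\Gamma_w)^{1/2+\alpha}$, which is a genuinely stronger quantity (since $\dist(x,\Gamma_w)\le|x-x_0|$). You notice this mismatch and claim to close it by an ``interior $W^{1,p}$ bootstrap on Whitney cubes,'' but that argument as sketched does not obviously improve the exponent in the direction you need — interior estimates near $\Lambda_w$ would control $\dist(x,\Lambda_w)$, not $\dist(x,\Gamma_w)$, and telescoping along $\Gamma_w$ with the H\"older continuity of $b_e$ only gives $|x-x_0|^{\alpha}\dist(x,\Gamma_w)^{1/2}$, not $\dist(x,\Gamma_w)^{1/2+\alpha}$. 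The paper itself makes the same jump silently, and the subsequent use of the estimate in Corollary~\ref{cor:wasympt} again parametrizes by $(t_1,t_2)\sim x-x_0$, so the intended reading is almost certainly $|x-x_0|^{1/2+\alpha}$ on the right-hand side. You should either state the error in terms of $|x-x_0|$ (matching what Proposition~\ref{prop:asympt} actually gives) or supply a genuine two-point argument (apply the expansion at the nearest free boundary point $y_0$ to $x$ and estimate the change of profile and coefficient between $x_0$ and $y_0$) — the latter is not as immediate as your sketch suggests.
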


Before coming to the proof of this result, we make the following observations:

\begin{rmk}\label{rmk:asym}
\begin{itemize}
\item[(i)] Similarly as in \eqref{eq:asymptotics2}, it is possible to obtain an asymptotic expansion for $\p_{n+1}w$: There exists a function $ b_{n+1}\in C^{0,\alpha}(\Gamma_w)$, such that for $ x_0\in \Gamma_w\cap B_{1/2}'$ and $x \in B_{1/4}(x_0)$
\begin{align*}
\left| \p_{n+1} w(x)-b_{n+1}(x_0)\bar w_{1/2}\left(\frac{(x- x_0) \cdot\nu_{ x_0}}{(\nu_{ x_0} \cdot A( x_0) \nu_{ x_0})^{1/2}},\frac{x_{n+1}}{(a^{n+1,n+1}(x_0))^{1/2}}\right)\right|\\
\leq C(n,p)\max\{\epsilon_0,c_\ast\}  \dist(x,\Gamma_w)^{\frac{1}{2}+\alpha},
\end{align*} 
where $\bar w_{1/2}(x)=-\Imm(x_n+ix_{n+1})^{1/2}$.
\item[(ii)] Later, we will see that $b_e$ and $b_{n+1}$ satisfy the following relation (c.f. \eqref{eq:curl} and \eqref{eq:defa} in Corollary~\ref{cor:wasympt}): There exists a $C^{0,\alpha}$ function $a:\Gamma_w \cap B_{1/2}\rightarrow \R$, $a>0$,  such that 
\begin{align*}
b_e(x_0)=\frac{3}{2}\frac{a(x_0)(e\cdot \nu_{x_0})}{(\nu_{x_0}\cdot A(x_0)\nu_{x_0})^{1/2}},\quad
b_{n+1}(x_0)=\frac{3}{2} \frac{a(x_0)}{(a^{n+1,n+1}(x_0))^{1/2}}.
\end{align*}
\end{itemize}
\end{rmk}

\begin{proof}[Proof of Proposition \ref{prop:wasympt}]
The proof of Proposition \ref{prop:wasympt} follows immediately from Proposition~\ref{prop:asympt}. More precisely, we first note that, by Proposition \ref{prop:C1a} if $c_\ast, \epsilon_0$ are sufficiently small depending on $n,p$, then (up to a rotation)
$$\Gamma_w\cap B_{1/2}=\{ x_n=g(x'')\}\cap B_{1/2},$$
where $g\in C^{1,\alpha}$,  $\|g\|_{\dot{C}^{1,\alpha}}\lesssim \max\{c_\ast,\epsilon_0\}$ and $\alpha$ only depends on $n$ and $p$. 
Let $v=\partial_e w$ and recall from the proof of Proposition~\ref{prop:Lip} that it satisfies the following divergence form equation in $B_1\setminus \Lambda_w$
\begin{align*}
\partial_i a^{ij}\partial_j v = \partial_i F^i,\quad F^i=-(\partial_e a^{ij})\partial_j w,
\end{align*}
with $\|\dist(\cdot,\Gamma_w)^{-\frac{1}{2}}\ln(\dist(\cdot,\Gamma_w))^{-2}F^i\|_{L^p(B_1)}\leq C(n,p)c_\ast$ (c.f. Lemma \lemfreebgrowth).
Moreover, $\p_e w$ satisfies the assumptions of Proposition \ref{prop:asympt} if $e$ is in a sufficiently large cone of directions, i.e. $e\in \mathcal{C}'_\eta (e_n)$ with $\eta>0$ sufficiently small. Thus applying Proposition~\ref{prop:asympt} we obtain the desired result.  Note that the exponent $\alpha\in (0,1-\frac{n+1}{p}]$ can be chosen to have the same value as the H\"older regularity exponent of $\Gamma_w$ in Proposition~\ref{prop:C1a}, since both of these come from the boundary Harnack inequality (Lemma~\ref{lem:boundHarn}). The result for general directions $e$ follows from Remark~\ref{rmk:asym} (ii). 
\end{proof}

As a corollary of the asymptotic expansion for $\nabla w$, it is now possible to obtain the asymptotic expansion for $w$ via integration along appropriate contours:

\begin{cor}
\label{cor:wasympt}
Under the assumptions of Proposition~\ref{prop:wasympt},
there exists a Hölder continuous function $a:\Gamma_w\rightarrow \R$, $a>0$, such that for $x_0 \in \Gamma_w \cap B'_{1/2}$ and all $x\in  B_{1/4}(x_0)$ with $x -x_0 \in \spa \{ \nu_{x_0}, e_{n+1}\}$ 
\begin{multline}
\label{eq:asymptotics}
\left|w(x)-a(x_0)w_{3/2}\left(\frac{(x- x_0) \cdot\nu_{ x_0}}{(\nu_{ x_0} \cdot A( x_0) \nu_{ x_0})^{1/2}},\frac{x_{n+1}}{(a^{n+1,n+1}( x_0))^{1/2}}\right)\right| \\
\leq C(n,p)\max\{\epsilon_0,c_\ast\}\dist(x,\Gamma_w)^{\frac{3}{2}+\alpha}.
\end{multline}
Here $\nu_{ x_0}$ is the outer unit normal of $\Lambda_w$ at $ x_0$ and $A(x_0)=(a^{ij}(x_0))$. \\
In particular, $w(x_0+rx)/r^{3/2}$ has a unique blow-up limit as $r\rightarrow 0$.
\end{cor}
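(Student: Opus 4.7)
The plan is to derive the asymptotics of $w$ by integrating the leading-order expansions of $\partial_{\nu_{x_0}} w$ (from Proposition~\ref{prop:wasympt} with $e=\nu_{x_0}$) and of $\partial_{n+1} w$ (from Remark~\ref{rmk:asym}(i)) along straight segments from $x_0$ to $x$ in the two-dimensional plane $\spa\{\nu_{x_0}, e_{n+1}\}$. Continuity of $w$ and the vanishing of $w$ on $\Lambda_w$ give $w(x_0)=0$. Writing $x-x_0 = t\nu_{x_0} + s\,e_{n+1}$ and $\gamma(\lambda)=x_0+\lambda(x-x_0)$, the fundamental theorem of calculus yields
\begin{align*}
w(x)=\int_0^1 \bigl(t\,\partial_{\nu_{x_0}}w + s\,\partial_{n+1}w\bigr)(\gamma(\lambda))\,d\lambda.
\end{align*}
Setting $\tilde T_{x_0}(y):=\bigl((y-x_0)\cdot\nu_{x_0}/(\nu_{x_0}\cdot A(x_0)\nu_{x_0})^{1/2},\,y_{n+1}/(a^{n+1,n+1}(x_0))^{1/2}\bigr)$, one has $\tilde T_{x_0}(\gamma(\lambda))=\lambda\,\tilde T_{x_0}(x)$, so the $1/2$-homogeneity of $w_{1/2}$ and $\bar w_{1/2}$ factors out $\lambda^{1/2}$ from the integrand; computing $\int_0^1\lambda^{1/2}\,d\lambda = 2/3$ and using $|\gamma(\lambda)-x_0|=\lambda|x-x_0|$ to control the remainders produces
\begin{align*}
w(x) = \tfrac{2}{3}\bigl[\, t\, b_{\nu_{x_0}}(x_0)\, w_{1/2}(\tilde T_{x_0}(x)) + s\, b_{n+1}(x_0)\, \bar w_{1/2}(\tilde T_{x_0}(x)) \,\bigr] + O\bigl(\max\{\epsilon_0,c_\ast\}|x-x_0|^{3/2+\alpha}\bigr).
\end{align*}

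To recast the bracket as a multiple of $w_{3/2}(\tilde T_{x_0}(x))$, I would use $\partial_{\tilde t}w_{3/2} = \tfrac{3c_n}{2}w_{1/2}$, $\partial_{\tilde s}w_{3/2} = \tfrac{3c_n}{2}\bar w_{1/2}$, which together with Euler's relation for the $3/2$-homogeneous $w_{3/2}$ give
\begin{align*}
c_n\bigl(\tilde t\,w_{1/2}(\tilde t,\tilde s) + \tilde s\,\bar w_{1/2}(\tilde t,\tilde s)\bigr) = w_{3/2}(\tilde t,\tilde s).
\end{align*}
The bracket therefore takes the form $a(x_0)\,w_{3/2}(\tilde T_{x_0}(x))$ for a single value $a(x_0)$ \emph{provided} the compatibility
\begin{align*}
\frac{b_{\nu_{x_0}}(x_0)}{(\nu_{x_0}\cdot A(x_0)\nu_{x_0})^{1/2}} = \frac{b_{n+1}(x_0)}{(a^{n+1,n+1}(x_0))^{1/2}}
\end{align*}
holds. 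Verifying this compatibility is the main obstacle, and I would establish it by invoking Lemma~\ref{lem:3/2homo}: after passing to an $L^2$-normalized blow-up at $x_0$, the $C^1(B_{1/2}^+)$ convergence allows one to pass the expansions of $\partial_{\nu_{x_0}}w$ and $\partial_{n+1}w$ to the limit, so that the two quantities $b_{\nu_{x_0}}(x_0)$, $b_{n+1}(x_0)$ must be identified (up to a common normalization factor) with the coefficients of the tangential and normal derivatives of one and the same homogeneous global solution $w_{3/2}(Q\,\cdot)$, for which the displayed equality is a direct computation. With the compatibility in hand I set $a(x_0) := \tfrac{2}{3c_n}(a^{n+1,n+1}(x_0))^{1/2}\,b_{n+1}(x_0)$; positivity $a(x_0)>0$ is inherited from the Hopf-type bound of Corollary~\ref{cor:lower_bound}, and $a\in C^{0,\alpha}(\Gamma_w\cap B'_{1/2})$ from the Hölder regularity of $b_{n+1}$ (Remark~\ref{rmk:asym}) and of $a^{n+1,n+1}$.

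For the uniqueness of the $3/2$-homogeneous blow-up, Lemma~\ref{lem:3/2homo} provides that any subsequential $L^2$-normalized blow-up at $x_0$ converges in $C^1(B_{1/2}^+)$ to a function of the form $w_{3/2}(Q\,\cdot)$ with $Q\in SO(n+1)$. Restricting the asymptotic \eqref{eq:asymptotics} to the slice $\spa\{\nu_{x_0},e_{n+1}\}$ and rescaling identifies the values of this limit on the 2D slice as $a(x_0)\,w_{3/2}(\tilde T_{x_0}(\cdot))$ up to the $L^2$-normalization constant. Since every $3/2$-homogeneous global Signorini solution is two-dimensional (cf.\ the proof of Lemma~\ref{lem:3/2homo} via Benedicks's theorem), its values on this particular 2D slice determine $Q$ uniquely (up to symmetries preserving $w_{3/2}$); consequently the blow-up limit is independent of the subsequence, so $r^{-3/2}w(x_0+r\,\cdot)$ converges to a unique limit as $r\to 0$.
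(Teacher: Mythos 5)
Your overall skeleton matches the paper: restrict $w$ to the 2D slice $\spa\{\nu_{x_0},e_{n+1}\}$, write $w(x)$ via the fundamental theorem of calculus along the straight segment from $x_0$ to $x$, pull out the $\lambda^{1/2}$ factor using the $1/2$-homogeneity of $w_{1/2},\bar w_{1/2}$, and finally recombine via Euler's relation into a single multiple of $w_{3/2}$. Your $a(x_0)$ and the positivity/Hölder assertions are also consistent with the paper (modulo the normalization constant $c_n$, which the paper tacitly drops).

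There is, however, a concrete error in the compatibility relation, and your route to it differs from the paper's. Writing $c_1=(\nu_{x_0}\cdot A(x_0)\nu_{x_0})^{1/2}$, $c_2=(a^{n+1,n+1}(x_0))^{1/2}$, $\tilde t = t/c_1$, $\tilde s = s/c_2$, Euler's relation gives $c_n(\tilde t\,w_{1/2}+\tilde s\,\bar w_{1/2})=w_{3/2}$, so the bracket $t\,b_{\nu_{x_0}}w_{1/2}+s\,b_{n+1}\bar w_{1/2}$ is a scalar multiple of $w_{3/2}(\tilde t,\tilde s)$ iff $b_{\nu_{x_0}}=\mu c_n/c_1$ and $b_{n+1}=\mu c_n/c_2$ for a common $\mu$, i.e.\ \emph{$b_{\nu_{x_0}}c_1=b_{n+1}c_2$}. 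You wrote the inverted relation $b_{\nu_{x_0}}/c_1=b_{n+1}/c_2$, which is false unless $c_1=c_2$. This is exactly the relation \eqref{eq:curl} the paper proves, and the paper proves it by a simpler and cleaner mechanism: the mixed distributional second derivatives of $\tilde w(t_1,t_2)$ must agree (the 2D curl of $\nabla\tilde w$ vanishes), and matching the leading $\dist^{-1/2}$ singularity of $\p_{t_1}\p_{t_2}$ applied to the two expansions forces the coefficient to vanish. This argument never invokes blow-up limits.

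Your proposed verification via Lemma~\ref{lem:3/2homo} is not only overkill but, as sketched, would give the wrong answer: if the $L^2$-normalized blow-up at $x_0$ really converged to $w_{3/2}(Q\,\cdot)$ with $Q\in SO(n+1)$, then identifying the $\nu_{x_0}$- and $e_{n+1}$-derivative coefficients would force $b_{\nu_{x_0}}=b_{n+1}$, which is inconsistent with the true relation $b_{\nu_{x_0}}c_1 = b_{n+1}c_2$ when $c_1\neq c_2$. The resolution is that for $x_0\neq 0$ the blow-up satisfies the constant-coefficient Signorini problem with metric $A(x_0)$, not the Laplacian, so the limit is a multiple of $w_{3/2}(T_{x_0}\cdot)$ with $T_{x_0}$ the affine map (not a rotation) that straightens $A(x_0)$; the factors $c_1,c_2$ enter through $T_{x_0}$. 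You would have to carry out this change of coordinates explicitly to extract the right relation from the blow-up, whereas the paper's curl argument gets it directly with no reference to blow-up uniqueness or to $A(x_0)$. Your uniqueness argument at the end (pin the blow-up down on the 2D slice, then use two-dimensionality of $3/2$-homogeneous Signorini solutions) is fine and essentially what the paper intends by ``in particular''.
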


\begin{proof} 
The asymptotic expansion of $w$ follows from the asymptotics for $\p_{\nu_{x_0}} w$, $\p_{n+1}w$ and an integration along an appropriate path in the plane spanned by $\nu_{x_0}, e_{n+1}$. Given $x_0\in \Gamma_w$, we write  
$$ x=x_0+t_1\nu_{x_0}+t_2 e_{n+1} \text{ for all }x\in \spa\{\nu_{x_0}, e_{n+1}\}.$$
In particular, $t_1=(x-x_0)\cdot \nu_{x_0}$ and $t_2=(x-x_0)\cdot e_{n+1}=x_{n+1}$. We consider the restriction, $\tilde{w}$, of $w$ onto the two-dimensional plane spanned by $\nu_{x_0}$ and $e_{n+1}$, i.e. 
$$\tilde{w}(t_1,t_2):=w(x_0+t_1\nu_{x_0}+t_2e_{n+1}).$$ 
Then, 
\begin{equation*}
\begin{split}
\p_{t_1}\tilde{w}(t_1,t_2)&=\p_{\nu_{x_0}} w(x_0+t_1\nu_{x_0}+t_2e_{n+1}),\\
\p_{t_2}\tilde{w}(t_1,t_2)&=\p_{e_{n+1}}w(x_0+t_1\nu_{x_0}+t_2e_{n+1}).
\end{split}
\end{equation*}
From the asymptotics derived in Proposition \ref{prop:wasympt}, we obtain 
\begin{equation}
\label{eq:expand1}
\begin{split}
\p_{t_1}\tilde{w}(t_1,t_2)&=b_{\nu_{x_0}}w_{1/2}(c_{1}^{-1}t_1, c_{2}^{-1}t_2)+g_1(t_1,t_2),\\
\p_{t_2}\tilde{w}(t_1,t_2)&=b_{n+1}\bar w_{1/2}(c_{1}^{-1} t_1, c_{2}^{-1}t_2)+g_2(t_1,t_2).
\end{split}
\end{equation}
Here
$$|g_1(t_1,t_2)|,\ |g_2(t_1,t_2)|\leq C(n,p)\max\{\epsilon_0,c_\ast\}(|t_1|^{1/2+\alpha}+|t_2|^{1/2+\alpha})$$
and  $b_{\nu_{x_0}}=b_{\nu_{x_0}}(x_0)$, $b_{n+1}=b_{n+1}(x_0)$. The constants  $c_{1}$, $c_2$ are (for fixed $x_0$) defined as
\begin{align*}
c_{1} = (\nu_{x_0} \cdot A(x_0) \nu_{x_0})^{\frac{1}{2}}, \quad
c_{2} =  (a^{n+1, n+1}(x_0))^{\frac{1}{2}}.
\end{align*}
Since $\frac{3}{2}(w_{1/2},\bar w_{1/2})=\nabla w_{3/2}$,
we can rewrite (\ref{eq:expand1}) as
\begin{equation}
\label{eq:expand2}
\begin{split}
\p_{t_1}\tilde{w}(t_1,t_2)&=\frac{2b_{\nu_{x_0}}c_1}{3}\p_{t_1}w_{3/2}(c_{1}^{-1}t_1, c_{2}^{-1}t_2)+g_1(t_1,t_2),\\
\p_{t_2}\tilde{w}(t_1,t_2)&=\frac{2b_{n+1}c_2}{3}\p_{t_2} w_{3/2}(c_{1}^{-1}t_1, c_{2}^{-1}t_2)+g_2(t_1,t_2).
\end{split}
\end{equation}
Since 
\begin{align*}
\int\limits_{\R^2} \p_{t_1}\tilde{w}\p_{t_2}\phi=\int\limits_{\R^2} \p_{t_2}\tilde{w}\p_{t_1}\phi \text{ for any } \phi\in C^\infty_c(\R^2),
\end{align*}
we have
\begin{align*}
\left(\frac{2b_{\nu_{x_0}}c_1}{3}-\frac{2b_{n+1}c_2}{3}\right)\int\limits_{\R^2} w_{3/2}(c_{1}^{-1}t_1, c_{2}^{-1}t_2) \p^2_{t_1t_2}\phi= \int\limits_{\R^2} \tilde{g}(t_1,t_2) \p^2_{t_1t_2}\phi.
\end{align*}
Here
\begin{align*}
\tilde{g}(t_1,t_2)&=\tilde{g}_1(t_1,t_2)-\tilde{g}_2(t_1,t_2)=-\int_0^{t_1}g_1(s,t_2)ds+\int_0^{t_2}g_2(t_1,s) ds.
\end{align*}
A direct computation then gives
\begin{align*}
|\tilde{g}(t_1,t_2)| &\leq  C(n,p)\max\{\epsilon_0,c_\ast\}(|t_1|^{3/2+\alpha}+|t_1||t_2|^{1/2+\alpha}\\
& \quad +|t_2||t_1|^{1/2+\alpha}+|t_2|^{3/2+\alpha}),
\end{align*}
which is of higher (than $3/2$) order in $(t_1,t_2)$. Thus necessarily we have
\begin{align}\label{eq:curl}
b_{\nu_{x_0}}c_{1}=b_{n+1}c_{2}.
\end{align}
Now let $\ell(s)$ be the path from $(0,0)$ to $(t_1,t_2)$:
$\ell(s)= (t_1 s, t_2 s)$, $s\in [0,1]$, 
and let 
$\widetilde{\ell}(s)= (c_{1}^{-1}t_1 s, c_{2}^{-1}t_2 s)$. Then
\begin{align*}
\left|\frac{d }{ds} \tilde{w}(\ell(s))- \frac{2b_{n+1}c_{2}}{3}\frac{d }{ds} w_{3/2}(\widetilde{\ell}(s)) \right| &\leq C(n,p)\max\{\epsilon_0,c_\ast\}|\tilde{\ell}(s)|^{1/2+\alpha}.
\end{align*}
Integrating from $0$ to $1$ leads to
$$\left| \tilde{w}(t_1,t_2) -\frac{2b_{n+1}c_2}{3}w_{3/2}(c_1^{-1}t_1,c_2^{-1}t_2)\right| \leq C(n,p)\max\{\epsilon_0,c_\ast\}(|t_1|^{3/2+\alpha}+|t_2|^{3/2+\alpha}).$$
Finally defining
\begin{align}\label{eq:defa}
a(x_0):=\frac{2 b_{n+1}(x_0)c_2}{3},
\end{align}
and recalling the explicit expression of $c_{1}$ and $c_{2}$ yields the asymptotics \eqref{eq:asymptotics} for $w$.\\
The fact that $a>0$ follows from Corollary~\ref{cor:lowerbound}.
\end{proof}

From Proposition~\ref{prop:wasympt} and Remark~\ref{rmk:asym} (i), we immediately obtain the following lower and upper bounds on the growth rate of $\p_e w$ away from the regular free boundary:

\begin{cor}[Lower bound]
\label{cor:lowerbound}
Under the assumptions of Proposition~\ref{prop:wasympt} there exist a constant $c_0=c_0(n,p)>0$ and a radius $r_0=r_0(n,p)$ such that
\begin{align*}
\p_ew(x)\geq c_0 \dist(x,\Gamma_w)^{\frac{1}{2}}, \quad x\in \{x\in B_{1/2}^+| \ \dist(x,\Lambda_w)\geq \ell_0 \dist(x,\Gamma_w)\},
\end{align*}
for any $e\in \{x\in S^{n-1} \times \{0\}| \ x_n\geq \frac{1}{2} |x''|\}$.
\end{cor}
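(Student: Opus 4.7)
The plan is to combine the asymptotic expansion of $\p_e w$ from Proposition~\ref{prop:wasympt} with a uniform lower bound on its leading coefficient. Given $x\in B^+_{1/2}$ with $\dist(x,\Lambda_w)\ge \ell_0\dist(x,\Gamma_w)$, I would let $x_0\in\Gamma_w$ realize the distance $|x-x_0|=\dist(x,\Gamma_w)$; Proposition~\ref{prop:wasympt} then gives
\begin{equation*}
\p_e w(x)\ge b_e(x_0)\,w_{1/2}\!\left(\frac{(x-x_0)\cdot\nu_{x_0}}{(\nu_{x_0}\cdot A(x_0)\nu_{x_0})^{1/2}},\frac{x_{n+1}}{(a^{n+1,n+1}(x_0))^{1/2}}\right)-C(n,p)\max\{\epsilon_0,c_\ast\}\,\dist(x,\Gamma_w)^{\frac{1}{2}+\alpha}.
\end{equation*}

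First, I would bound $b_e(x_0)$ from below uniformly. By Remark~\ref{rmk:asym}(ii), $b_e(x_0)=\tfrac{3}{2}\,a(x_0)(e\cdot\nu_{x_0})/(\nu_{x_0}\cdot A(x_0)\nu_{x_0})^{1/2}$. The cone condition $e_n\ge |e''|/2$ forces $e\cdot e_n\ge 1/\sqrt{5}$; the Lipschitz graph representation of $\Gamma_w$ with small Lipschitz constant (Proposition~\ref{prop:Lip}) gives $|\nu_{x_0}-e_n|\lesssim \max\{\epsilon_0,c_\ast\}$, so $e\cdot\nu_{x_0}\ge c_n>0$, while ellipticity controls the denominator. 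The quantitative positivity $a(x_0)\ge c_n>0$ I would extract by plugging $x=x_0+t\nu_{x_0}$ with $t$ small into the expansion of Corollary~\ref{cor:wasympt} and comparing to the pointwise bound $|w-w_{3/2}|\le\epsilon_0$, using that $\nu_{x_0}$ and $A(x_0)$ are $O(\max\{\epsilon_0,c_\ast\})$-close to $e_n$ and to the identity, respectively. Second, for the leading profile I would note that $w_{1/2}(r\cos\theta,r\sin\theta)=r^{1/2}\cos(\theta/2)$ and observe that the cone condition, together with the near-flatness of $\Gamma_w$ and the bi-Lipschitz control on the affine change of variables, keeps the polar angle of the rescaled point in a sector bounded away from $\pm\pi$; hence this term is $\ge c_n\dist(x,\Gamma_w)^{1/2}$.

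Combining the two ingredients yields $\p_e w(x)\ge c_n\dist(x,\Gamma_w)^{1/2}-C(n,p)\max\{\epsilon_0,c_\ast\}\dist(x,\Gamma_w)^{\frac{1}{2}+\alpha}$, and since $\max\{\epsilon_0,c_\ast\}$ can be chosen arbitrarily small depending only on $n,p$ under the hypotheses of Proposition~\ref{prop:wasympt}, the error term is absorbed into half the main term (equivalently, by restricting to $\dist(x,\Gamma_w)\le r_0(n,p)$), yielding the claim with some $c_0(n,p)>0$. I expect the quantitative positivity $a(x_0)\ge c_n>0$ in step two to be the only slightly delicate point; every other step is bookkeeping of constants.
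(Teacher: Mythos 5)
Your proposal is in substance correct, but it takes a more roundabout route than the paper's, and there is a logical subtlety you should flag explicitly. The paper treats Corollary~\ref{cor:lowerbound} as an immediate consequence of Proposition~\ref{prop:wasympt} for a simple reason: the coefficient $b_e(x_0)$ is \emph{already known} to be uniformly bounded below. In the proof of Proposition~\ref{prop:asympt}, the boundary Harnack chain \eqref{eq:bound_Harnack} gives $b(x_0)\ge s_0\,u(e_n/2)/h_0(e_n/2)\ge s_0\,c_n>0$ with $s_0=s_0(n,p)$ once $\|\nabla'' g\|_{C^{0,\alpha}}$ is small; the anchor value $u(e_n/2)=\p_e w(e_n/2)\ge \p_e w_{3/2}(e_n/2)-\epsilon_0\ge c_n>0$ follows from (i) of Proposition~\ref{prop:wasympt} and the cone condition $e\cdot e_n\gtrsim 1$. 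Combined with $w_{1/2}(\cdot)\gtrsim \dist(x,\Gamma_w)^{1/2}$ in the cone $\{\dist(x,\Lambda_w)\ge\ell_0\dist(x,\Gamma_w)\}$ and absorption of the $\dist^{1/2+\alpha}$ error, this is the whole argument. No representation of $b_e$ in terms of $a$ is needed.

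Your route instead passes through Remark~\ref{rmk:asym}(ii), i.e.\ the formula $b_e(x_0)=\tfrac{3}{2}\,a(x_0)(e\cdot\nu_{x_0})/(\nu_{x_0}\cdot A(x_0)\nu_{x_0})^{1/2}$, which is established in Corollary~\ref{cor:wasympt}. But the proof of Corollary~\ref{cor:wasympt} cites Corollary~\ref{cor:lowerbound} precisely for the positivity $a>0$, so invoking Remark~\ref{rmk:asym}(ii) wholesale would be circular. You avoid this by producing the quantitative bound $a(x_0)\ge c_n>0$ yourself, which is fine, but you should say explicitly that you use only the \emph{algebraic identity} for $b_e$ — derived from \eqref{eq:curl} and \eqref{eq:defa} independently of any sign information — and not the positivity claim from Corollary~\ref{cor:wasympt}. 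Two further points of care in your ``delicate step'': first, $|w-w_{3/2}|\le\epsilon_0$ is a $C^1$ bound that does not scale, so the test scale $t$ in $x=x_0+t\nu_{x_0}$ cannot be taken ``small'' — it must be bounded below by an absolute constant (say $t\sim 1/8$) so that $t^{3/2}\gg \epsilon_0$ and the error $\max\{\epsilon_0,c_*\}t^{3/2+\alpha}$ is also dominated; second, you need $x_0+t\nu_{x_0}\in B_{1/4}(x_0)$ with $x_0\in\Gamma_w\cap B_{1/2}'$ for the expansion to apply, which forces the restriction to $B_{r_0}^+$ in the corollary's statement. With these points made explicit the argument closes; it simply carries more moving parts than the paper's, which reads off the positivity of $b_e$ directly from the boundary Harnack estimate.
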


\begin{rmk}
A uniform lower bound may not hold for $\p_e w$ in a full neighborhood of the origin. However, after subtracting an ``error term" or under higher regularity assumptions on the metric $a^{ij}$, it is possible to obtain a uniform lower bound in a full neighborhood of the origin (c.f. Section~\ref{sec:lower}).
\end{rmk}

\begin{cor}[Upper bound]
\label{cor:optgrowth}
Under the assumptions of Proposition~\ref{prop:wasympt} there exist a constant $\bar C=\bar C(n,p)$ and a radius $r_0= r_0(n,p)\in (0,1)$ such that for each $x_0\in \Gamma_w\cap B_{1/2}$ 
\begin{equation}
\label{eq:rescale_upper}
\sup_{B_r(x_0)}|\nabla w|\leq \bar C r^{1/2}, \quad 0<r\leq r_0.
\end{equation}
\end{cor}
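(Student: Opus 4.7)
The plan is to read off the upper bound directly from the first--order pointwise expansions of $\partial_j w$ provided by Proposition~\ref{prop:wasympt} (for tangential directions $j \in \{1,\dots,n\}$) and by Remark~\ref{rmk:asym}(i) (for the normal direction $j = n+1$). Under the hypotheses of Proposition~\ref{prop:wasympt}, Proposition~\ref{prop:C1a} already gives the $C^{1,\alpha}$ regularity of $\Gamma_w \cap B_{1/2}'$, so every $x_0 \in \Gamma_w \cap B_{1/2}'$ falls within the scope of those expansions.

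First I would fix $r_0 \in (0,1/4]$, so that $B_r(x_0) \subset B_{1/4}(x_0) \subset B_1$ for every $x_0 \in \Gamma_w \cap B_{1/2}'$ and every $r \leq r_0$. At each such $x_0$ the expansions take the schematic form
\[
\bigl|\partial_{j} w(x) - b_j(x_0)\, W_{j,x_0}(x - x_0)\bigr|
\leq C(n,p)\max\{\epsilon_0, c_\ast\}\, \dist(x,\Gamma_w)^{1/2+\alpha},
\]
where $W_{j,x_0}$ is a linear change of coordinates (controlled by $\nu_{x_0}$ and $A(x_0)$) of $w_{1/2}$ or $\bar w_{1/2}$; in particular $W_{j,x_0}$ is $1/2$-homogeneous and, by uniform ellipticity, satisfies $|W_{j,x_0}(y)| \leq C |y|^{1/2}$.

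Next I would argue that the amplitudes $b_e$ and $b_{n+1}$ are uniformly bounded on $\Gamma_w \cap B_{1/2}'$. This is where the only technical work lies: the $C^1$-closeness of $w$ to $w_{3/2}$ yields $\|\nabla w\|_{L^\infty(B_1^+)} \leq C(n,p)$, and evaluating the asymptotic expansion at a point of the slit domain at which the model function $W_{j,x_0}$ is of order one (for instance on a segment $\{x_0 + t\,\nu_{x_0}\}$ with $t$ comparable to $1$) recovers the uniform estimate $|b_j(x_0)| \leq C(n,p)$. Alternatively, one may invoke the Hölder continuity of $b_j$ over the compact set $\overline{\Gamma_w \cap B_{1/2}'}$ together with a single base-point bound.

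Finally, combining the uniform amplitude bound with the size of the model function and the remainder, for every $x \in B_r(x_0)$ and every $j$ we obtain
\[
|\partial_{j} w(x)|
\leq C(n,p)\bigl(|x - x_0|^{1/2} + \dist(x,\Gamma_w)^{1/2+\alpha}\bigr)
\leq \bar C(n,p)\, r^{1/2},
\]
using $\dist(x,\Gamma_w) \leq |x-x_0| \leq r \leq r_0 \leq 1$. Summing over $j = 1,\dots,n+1$ yields the claimed gradient bound. The main (and only genuinely nontrivial) obstacle is the uniform boundedness of the amplitudes $b_j$ along the free boundary; everything else is immediate from the expansions already at our disposal.
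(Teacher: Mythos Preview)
Your approach is correct and is essentially the same as the paper's: the authors state that Corollary~\ref{cor:optgrowth} (together with Corollary~\ref{cor:lowerbound}) follows \emph{immediately} from the expansions in Proposition~\ref{prop:wasympt} and Remark~\ref{rmk:asym}(i), which is precisely what you do. One small technical caveat: your suggested evaluation point $x_0+t\,\nu_{x_0}$ works for the tangential model $w_{1/2}$ but vanishes for the normal model $\bar w_{1/2}=-\Imm(x_n+ix_{n+1})^{1/2}$; for $j=n+1$ you should instead evaluate at a point with $x_{n+1}\neq 0$ (e.g.\ $x_0+t\,e_{n+1}$), after which the amplitude bound $|b_j(x_0)|\leq C(n,p)$ follows exactly as you describe.
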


\begin{rmk}
The above upper bound holds for solutions which are sufficiently close to $w_{3/2}$ (c.f. assumption (i) of Proposition~\ref{prop:wasympt}). In the next section we will remove this assumption and show a uniform upper bound for solutions $w$ with $\|w\|_{L^2(B_1^+)}=1$ (c.f. \eqref{eq:optimain}). 
\end{rmk}

\subsection{Optimal regularity of the solution}
\label{sec:opti}

In this section we exploit the comparison arguments from the previous section in combination with the Carleman estimates from \cite{KRS14}, to obtain the optimal regularity of solutions to the thin obstacle problem with coefficients $a^{ij}\in W^{1,p}$ with $p\in (n+1,\infty]$. In comparison to the results of Proposition \propalmost our main improvement here is a \emph{uniform, optimal} upper growth bound without losses (c.f. (\ref{eq:optimain})). This then allows us to remove the logarithmic losses from the regularity estimates from Proposition \propalmost (which were exemplified in the non-uniform constant $C(\gamma)>0$ and the only $C^{1,1/2-\epsilon}$ regularity of solutions to the variable coefficient problem in Proposition \propalmost) and to obtain the following optimal regularity estimates:

\begin{thm}
\label{thm:optimal_reg}
Let $w$ be a solution of (\ref{eq:varcoef}) in $B_{1}^+$ which satisfies the normalization condition (A0) and let $a^{ij}:B_{1}^+ \rightarrow \R^{(n+1)\times (n+1)}_{sym}$ be a $W^{1,p}$ tensor field with $p\in(n+1,\infty]$ satisfying (A1), (A2), (A3) and (A4). Then the following statements hold:
\begin{itemize}
\item[(i)] There exist a constant $C=C(n,p)$ and a radius $R_0=R_0(n,p, \left\| \nabla a^{ij}\right\|_{L^p(B_1)})$, such that  
\begin{equation}\label{eq:optimain}
\sup _{B_r(x_0)}|w|\leq C r^{3/2}\text{ for any } r\in (0,R_0) \text{ and } x_0\in \Gamma_w\cap B'_{\frac{1}{2}}.
\end{equation}
\item[(ii)] If $p\in(n+1,2(n+1)]$, it holds that $w\in C^{1,\gamma}(B_{1/2}^+)$ with $\gamma= 1-\frac{n+1}{p}$. Moreover, there exists a constant $C=C(n,p, \| a^{ij} \|_{ W^{1,p}(B_1^+)})$ (which remains bounded as $\gamma \nearrow 1/2$) such that
\begin{align*}
|\nabla w(x^1)- \nabla w(x^2)| \leq C|x^1 - x^2|^{\gamma} \mbox{ for all } x^1,x^2 \in B_{1/2}^+ . 
\end{align*}
\item[(iii)] If $p\in [2(n+1),\infty]$, then $w\in C^{1,\frac{1}{2}}(B_{1/2}^+)$. More precisely, there exists a constant $C=C(n,p, \| a^{ij} \|_{ W^{1,p}(B_1^+)})$ such that
\begin{align*}
|\nabla w(x^1)- \nabla w(x^2)| \leq C|x^1 - x^2|^{1/2} \mbox{ for all } x^1,x^2 \in B_{1/2}^+ . 
\end{align*}
\end{itemize}
\end{thm}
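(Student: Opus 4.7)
The theorem has three parts: a uniform optimal $3/2$-growth bound (i), and the derived Hölder regularity of $\nabla w$ in (ii) and (iii). My plan is to establish (i) first by combining the local optimal growth from Corollary~\ref{cor:optgrowth} with the global ``almost homogeneity'' of Lemma \lemalmosthom, and then deduce (ii) and (iii) by a rescaling/interior regularity argument.

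\textbf{Step 1 (Uniform $3/2$-growth, part (i)).} Fix $x_0 \in \Gamma_w \cap B'_{1/2}$. I distinguish regular and higher-order points. If $\kappa_{x_0} \geq 2$, Proposition \propalmost immediately yields $\sup_{B_r(x_0)}|w| \leq Cr^{2-\epsilon}$ on small scales, which is stronger than (\ref{eq:optimain}). If $x_0 \in \Gamma_{3/2}(w)$, Lemma~\ref{lem:3/2homo} supplies a scale $\rho_0(x_0) > 0$ at which the rescaled solution $w_{\rho_0, x_0}$ is $C^1$-close to $w_{3/2}(Q\cdot)$, triggering Corollary~\ref{cor:optgrowth}. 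Unscaling produces (\ref{eq:optimain}) but with a constant $C(x_0)$ and threshold radius $r_0(x_0)$ that \emph{a priori} depend on $x_0$. To globalize, I invoke Lemma \lemalmosthom from \cite{KRS14}: this almost-homogeneity statement controls the $L^2$ doubling ratio $r \mapsto r^{-(n+1)/2 - 3/2}\|w\|_{L^2(B^+_r(x_0))}$ uniformly in $x_0$ on the compact set $\Gamma_w \cap B'_{1/2}$ once one-scale closeness has been verified. Passing from $L^2$ to $L^\infty$ via Caccioppoli/Moser iteration on the reflected equation of Remark~\ref{rmk:ref_ext} finishes (i).

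\textbf{Step 2 (Hölder regularity, parts (ii) and (iii)).} Given (i), a standard rescaling argument produces both bounds. For $x^1, x^2 \in B^+_{1/2}$, let $x_0 \in \Gamma_w$ be nearest to $x^1$ and set $d := |x^1 - x_0|$. The rescaling $\tilde w(y) := d^{-3/2}w(x_0 + dy)$ on $B_2^+$ is a Signorini solution which is uniformly bounded in $L^\infty$ by (i), with rescaled metric $\tilde a^{ij}(y) := a^{ij}(x_0 + dy)$ satisfying $\|\nabla \tilde a^{ij}\|_{L^p(B_2)} = d^{1-(n+1)/p}\|\nabla a^{ij}\|_{L^p(B_{2d})}$. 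In the near-diagonal regime $|x^1 - x^2| \leq d/4$, both points lie at distance $\gtrsim d$ from $\Gamma_w$, so after reflection the equation is divergence-form elliptic in a fixed-size ball around $(x^1 - x_0)/d$; interior $W^{2,p}$ theory plus Morrey's embedding give
\begin{equation*}
|\nabla \tilde w(y^1) - \nabla \tilde w(y^2)| \leq C |y^1 - y^2|^{\gamma'}, \qquad \gamma' := \min\{1/2,\, 1 - (n+1)/p\}.
\end{equation*}
Unscaling and using $d \leq 1$, $\gamma' \leq 1/2$ yields $|\nabla w(x^1) - \nabla w(x^2)| \leq C d^{1/2 - \gamma'}|x^1 - x^2|^{\gamma'} \leq C |x^1 - x^2|^{\gamma'}$. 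In the far-diagonal regime $|x^1 - x^2| \geq d/4$, the pointwise gradient bound $|\nabla w(x^i)| \leq C d^{1/2}$ (from (i) via interior gradient estimates on $\tilde w$) together with $d \leq 4|x^1 - x^2|$ gives $|\nabla w(x^1) - \nabla w(x^2)| \leq C|x^1 - x^2|^{1/2}$. Combining the two regimes yields (ii) for $p \in (n+1, 2(n+1)]$ (where $\gamma' = 1-(n+1)/p$) and (iii) for $p \geq 2(n+1)$ (where $\gamma' = 1/2$).

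\textbf{Main obstacle.} The crux is Step 1: the constants in Corollary~\ref{cor:optgrowth} depend implicitly on $x_0$ and can degenerate as $x_0$ approaches the higher-order stratum $\bigcup_{\kappa \geq 2}\Gamma_\kappa(w)$. Lemma \lemalmosthom is tailor-made to overcome this by trading one-scale $C^1$-closeness for an all-scales quantitative statement, but carefully matching its hypotheses across the transition between the regular and higher-order strata of $\Gamma_w$ is the delicate point and is precisely the reason we return to the framework of \cite{KRS14}.
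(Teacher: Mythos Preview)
Your overall strategy is correct, and Step 2 is the standard reduction of H\"older regularity to the growth bound (i); the paper simply refers back to Proposition \propalmost once (i) is established, so your more explicit argument there is fine. You also correctly identify the main obstacle in Step 1 and the key tool (Lemma \lemalmosthom). However, your description of \emph{how} Lemma \lemalmosthom is applied is imprecise and does not match the actual mechanism; as written, Step 1 has a gap.

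The paper's argument for (i) proceeds in three steps. First, a compactness argument (contradiction sequence plus the uniform $C^{1,\gamma_0}$ bound from Proposition \propalmost) shows that the quantitative almost-$3/2$-homogeneity condition $\|x\cdot\nabla w - \tfrac{3}{2} w\|_{L^2(A^+_{1/2,1})}\le \epsilon$, together with coefficient smallness, forces $C^1$-closeness to a model solution. Your direct invocation of Lemma~\ref{lem:3/2homo} is weaker: it produces closeness only along some subsequence of radii, not from a checkable scalar condition at a given scale, so it cannot interface with Lemma \lemalmosthom. Second, at any scale $r$ where almost-homogeneity holds, one rescales, applies the first step, and invokes Corollary~\ref{cor:optgrowth} to get the $3/2$-growth on $B^+_{\mu_0 r}$ with constant $C_1 r^{-3/2-(n+1)/2}\|w\|_{L^2(B^+_r)}$. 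Third---and this is where your sketch is inaccurate---one takes the \emph{largest} radius $r_1\le R_0$ at which almost-homogeneity holds, so that it \emph{fails} on $(r_1,R_0]$, and runs a dichotomy: if $\|w\|_{L^2(B^+_{r_1})}< r_1^{3/2+(n+1)/2+\epsilon/2}$, the Carleman estimate (Corollary \corconsequenceCarl) propagates the $3/2$-bound from $r_1$ outward to all of $(0,R_0)$; if instead $\|w\|_{L^2(B^+_{r_1})}\ge r_1^{3/2+(n+1)/2+\epsilon/2}$, Lemma \lemalmosthom, applied on the interval $[r_1,R_0]$ where almost-homogeneity \emph{fails}, forces $r_1\ge c(n,p,\|\nabla a\|_{L^p})$.

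So Lemma \lemalmosthom is not a statement that ``one-scale closeness implies all-scale doubling control''; it is invoked precisely on the range of scales where almost-homogeneity fails, and it is paired with the Carleman estimate in a dichotomy on the size of $\|w\|_{L^2}$ at the threshold scale. Your proposal is missing both the compactness step linking almost-homogeneity to the $C^1$-closeness hypothesis of Corollary~\ref{cor:optgrowth}, and this dichotomy structure.
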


\begin{proof}
We will only show the growth estimate from part (i). With \eqref{eq:optimain} at hand, arguing similarly as for Proposition \propalmost, we then obtain the corresponding optimal regularity results (ii) and (iii). \\
By Lemma \lemgrowthimp and by the gap of the vanishing order (i.e. either $\kappa_{x_0} = 3/2$ or $\kappa_{x_0} \geq 2$), it suffices to show \eqref{eq:optimain} for $x_0\in \Gamma_{3/2}(w)\cap B'_{1/2}$. 
For simplicity we assume that $0\in \Gamma_{3/2}(w)$ and show the growth estimate (\ref{eq:optimain}) at $x_0=0$.\\

\emph{Step 1:} Suppose that $w$ is a solution to the thin obstacle problem which satisfies the assumptions (A0), (A1), (A2), (A4) with a metric $a^{ij}\in W^{1,p}(B_1^+)$ for some $p\geq p_0$, $p_0 \in (n+1,\infty)$. Suppose that $0\in \Gamma_{3/2}(w)$. We will show that for any $\delta>0$, there exists a constant $\epsilon=\epsilon(n,p, p_0, \delta)>0$, such that if 
\begin{itemize}
\item[(i)] $\|a^{ij}-\delta^{ij}\|_{L^\infty(B_1^+)}\leq \epsilon$,
\item[(ii)] $\|x\cdot \nabla w - \frac{3}{2} w\|_{L^2(A_{1/2,1}^+)}\leq \epsilon$, 
\end{itemize}
then there exists a nontrivial $3/2$-homogeneous global solution $w_0$ such that $\|w-w_0\|_{C^1(B_{3/4}^+)}\leq \delta$.

Suppose that this were wrong, then there existed a parameter $\delta_0>0$, a sequence $\epsilon_k\rightarrow 0$ and a sequence, $w_k$, of solutions to the thin obstacle problem   
\begin{align*}
\p_i a^{ij}_k \p_j w_k = 0 \text{ in } B_1^+,\\
w_k\geq 0, \ -\partial_{n+1} w_k\geq 0, \ w_k \p_{n+1}w_k=0 \text{ on } B'_1,
\end{align*}
which satisfy the assumptions (A0), (A1), (A2), (A3), (A4), such that 
\begin{itemize}
\item[(i)] $a^{ij}_k \in W^{1,p}(B_1^+)$ for some $p\geq p_0$ with $p_0$ as above, i.e. $p_0\in (n+1,\infty]$ and $p_0$ being independent of $k$,
\item[(ii)] $\|a^{ij}_k-\delta^{ij}\|_{L^\infty(B_1^+)}\leq \epsilon_k$,
\item[(iii)] $\|x\cdot \nabla w_k-\frac{3}{2}w_k\|_{L^2(A_{1/2,1}^+)}\leq \epsilon_k$, 
\end{itemize}
but $\|w_k- w_0\|_{C^1(B_{3/4}^+)}\geq \delta_0$ for any nontrivial $3/2$-homogeneous global solution $w_0$.

By the fundamental theorem of calculus, there exists an absolute constant $C>0$ with
\begin{align*}
\|w_k\|_{L^2(A_{7/8,1}^+)}&\leq C \|w_k\|_{L^2(A_{1/2,7/8}^+)}+ C\| |x|^{3/2}x\cdot \nabla (|x|^{-3/2} w_k)\|_{L^2(A_{1/2,1}^+)}\\
&\leq C\|w_k\|_{L^2(A_{1/2,7/8}^+)}+ C\epsilon_k \|w_k\|_{L^2(B_1^+)}\\
&\leq C\|w_k\|_{L^2(B_{7/8}^{+})}+C\epsilon_k\|w_k\|_{L^2(A_{7/8,1}^+)}+C\epsilon_k\|w_k\|_{L^2(B_{7/8}^+)}.
\end{align*}
Hence, if $\epsilon_k>0$ is sufficiently small, then 
$$\|w_k\|_{L^2(A_{7/8,1}^+)}\leq 2C \|w_k\|_{L^2(B_{7/8}^+)},$$
which implies that
\begin{align}\label{eq:doubling22}
1=\|w_k\|_{L^2(B_1^+)}\leq (2C+1)\|w_k\|_{L^2(B_{7/8}^+)}.
\end{align}
By the interior $H^1$ estimate and by Proposition \propalmost, $w_k$ is uniformly bounded in $H^1(B_{7/8}^+)$ and $C^{1,\gamma_0}(B_{7/8}^+)$ for some $\gamma_0=\gamma_0(n,p_0)>0$. Thus, up to a subsequence
\begin{align*}
& w_k\rightarrow \bar{w} \text{ weakly in }H^1(B_{7/8}^+), \\
& w_k\rightarrow \bar{w} \text{ strongly in }L^2(B_{7/8}^+),\\
& w_k\rightarrow \bar{w} \text{ in } C^{1}_{loc}(B_{7/8}^+),
\end{align*}
where $\bar{w}$ (by the contradiction assumption (i)) solves the constant coefficient thin obstacle problem
\begin{align*}
\Delta \bar{w} =0 \text{ in } B_{7/8}^+,\\
\bar{w} \geq 0, \ -\partial_{n+1} \bar{w} \geq 0, \ \bar{w} \p_{n+1}\bar{w}=0 \text{ on } B'_{7/8}.
\end{align*}
Furthermore, by the contradiction assumption (ii), $\bar{w}$ is homogeneous of degree $3/2$ in $B_{7/8}^+$. By analyticity, $\bar{w}$ is a $3/2$-homogeneous global solution in $\R^{n+1}$. Moreover, by \eqref{eq:doubling22} and by the strong convergence in $L^2(B_{7/8}^+)$,
$$\|\bar{w}\|_{L^2(B_{7/8})}\geq (2C+1)^{-1},$$
thus $\bar{w}$ is nontrivial. Therefore, we have found a nontrivial $3/2$-homogeneous global solution $\bar{w}$ such that (up to a subsequence) $\|w_k-\bar{w}\|_{C^{1}(B_{3/4}^+)}\rightarrow 0$ as $k\rightarrow \infty$. This is a contradiction.\\

\emph{Step 2:} We show that there exists $\epsilon=\epsilon(n,p)$, such that if for some $r\in (0,R_0)$ with $R_0=R_0(n,p,\|\nabla a^{ij}\|_{L^p(B_1^+)})$
\begin{equation}\label{eq:opti2}
\|x\cdot \nabla w-\frac{3}{2}w\|_{L^2(A_{r/2,r}^+)}\leq \epsilon \|w\|_{L^2(B_{r}^+)},
\end{equation} 
then there exist constants $\mu_0=\mu_0(n,p)\in (0,1)$ and 
$C_1=C_1(n,p)>1$ such that 
\begin{align*}
|w(x)|\leq C_1 r^{-\frac{3}{2}} r^{- \frac{n+1}{2}} \left\| w \right\|_{L^2(B_{r}^+)} |x|^{\frac{3}{2}} \mbox{ for } x\in B_{\mu_0 r}^{+}.
\end{align*}

For this we first notice that \eqref{eq:opti2} can be rewritten in terms of $L^2$ normalized rescalings
\begin{equation}
\label{eq:L2blowup}
w_r(x) := \frac{w(rx)}{r^{- \frac{n+1}{2}} \left\| w \right\|_{L^2(B_{r}^+)}},
\end{equation}
to yield
\begin{align}\label{eq:opti3}
\|x\cdot \nabla w_r - \frac{3}{2} w_r\|_{L^2(A_{1/2,1}^+)}\leq \epsilon \text{ for some } r\in (0,R_0).
\end{align}
Here $w_r$ is a solution to the thin obstacle problem with coefficients $a^{ij}_r (x):=a^{ij}(rx)$, which satisfy $\|\nabla a^{ij}_r\|_{L^p(B_1^+)}= r^{1-\frac{n+1}{p}}\|\nabla a^{ij}\|_{L^p(B_r^+)}$. \\
Secondly, by Proposition \ref{prop:C1a} in Section~\ref{sec:boundary}, there exist constants $\epsilon_0=\epsilon_0(n,p)$ and $c_{\ast}=c_{\ast}(n,p)$, which are less than some universal constant, such that if $\|w_r-w_{3/2}\|_{C^1(B_{3/4}^+)}<\epsilon_0$ with $w_{3/2}(x)=c_n\Ree(x_n+ix_{n+1})^{3/2}$ (possibly after a rotation) and $\| \nabla a^{ij} \|_{L^p(B_{3/4}^+)}\leq c_{\ast}$, then the free boundary of $w_r$ is a $C^{1,\alpha}$ graph in $B'_{1/2}$. Moreover, its $\dot{C}^{1, \alpha}$ Hölder norm is uniformly bounded, depending only on $n,p$. The Hölder exponent, $\alpha$, only depends on $n,p$ (c.f. Remark \ref{rmk:alpha}).
Thus, by Corollary~\ref{cor:optgrowth}, there exist constants $\mu_0 $ and $C_1$ depending on $n,p$, such that
$$|w_{r}(x)|\leq C_1|x|^{3/2} \mbox{ for any } x\in B_{\mu_0}^+.$$ 
Scaling back then yields
\begin{align}
\label{eq:rescaled_opt}
| w(x) | \leq C_1 r^{- \frac{3}{2}} r^{- \frac{n+1}{2}}\left\| w \right\|_{L^2(B_{r}^+)}  |x|^{\frac{3}{2}} \text{ for any } x\in B_{\mu_0r}^+.
\end{align}
To complete the proof of step 2, we apply step 1 to $w_r$ with $\delta=\epsilon_0$. This determines the parameter $\epsilon=\epsilon(n,p)$ (if $R_0=R_0(n,p,\|\nabla a^{ij}\|_{L^p(B_1^+)})$ is chosen such that $\|\nabla a^{ij}_r\|_{L^p(B_1^+)}\leq c_\ast(n,p)$ for any $r\in (0,R_0]$). \\

\emph{Step 3:} Fix $\epsilon>0$ as in step 2. Let $r_1\in (0,R_0]$ be the largest radius such that \eqref{eq:opti2} holds (we remark that the existence of an $r_1>0$ such that (\ref{eq:opti2}) is satisfied, follows from the fact that the $L^2$ rescaling has a $3/2$ homogeneous blow-up along a certain subsequence, c.f. Proposition \lemhomo). Then for any $r\in [r_1,R_0]$,
\begin{align}
\label{eq:anti2}
\|x\cdot \nabla w-3/2w\|_{L^2(A_{r/2,r)}}\geq \epsilon \|w\|_{L^2(A_{r/2,r}^+)}.
\end{align}
In this regime we discuss two cases:
\begin{itemize}
\item{Case 1:} $\|w\|_{L^2(B_{ r_1}^+)}< r_1^{\frac{3}{2}+\frac{n+1}{2}+\frac{\epsilon}{2}}$. 
In this case
\begin{align*}
r_1^{- \frac{3}{2}} r_1^{- \frac{n+1}{2}}\left\| w \right\|_{L^2(B_{r_1}^+)} \leq r_1^{\frac{\epsilon}{2}}.
\end{align*}
Thus, by (\ref{eq:rescaled_opt}) in step 2, there exists $C=C(n,p,\|\nabla a^{ij}\|_{L^{p}})$ such that
$$|w(x)|\leq C r^{\frac{\epsilon}{2}}_1 |x|^{3/2} \text{ for any } x\in B_{\mu_0r_1}^+.$$
Recalling the parameter dependence of $\mu_0$, we obtain 
\begin{equation}
\label{eq:est_imp}
|w(x)|\leq C |x|^{\frac{3}{2} + \frac{\epsilon}{2}}\text{ for any } x\in A_{\mu_0 r_1, r_1}^+.
\end{equation}
For any $r \in (r_1,R_0)$, we then use Corollary \corconsequenceCarl (note that this estimate has a logarithmic loss so that it is necessary to use the slightly improved estimate (\ref{eq:est_imp}) in the application of the Carleman inequality; alternatively it would have been possible to use the Carleman estimate from Lemma \lemnewCarl, for instance in the version of Remark \rmkten which does not need the $\epsilon$-improvement). Thus, we obtain 
\begin{align*}
\|w\|_{L^2(A_{r/2,r}^+)}\leq C r^{\frac{3}{2}+\frac{n+1}{2}} \mbox{ with } C=C(n,p,\|\nabla a^{ij}\|_{L^{p}})
\end{align*}
for all $r\in(r_1, R_0)$.
\item{Case 2:} $\|w\|_{L^2(B_{r_1}^+)}\geq r_1^{\frac{3}{2}+\frac{n+1}{2}+\frac{\epsilon}{2}}$.
In this case statement (i) from Lemma \lemalmosthom (with the two radii $r_1$ and $R_0$) would either imply that $r_1$ is bounded from below, i.e. $r_1\geq C$, for some constant $C>0$ which only depends on $n,p, \left\| \nabla a^{ij}\right\|_{L^p}$, or it would imply a contradiction to the maximal choice of $r_1$. 
\end{itemize}
In summary, we either have a uniform lower bound for $r_1$ (depending only on $n,p, \left\| \nabla a^{ij}\right\|_{L^p}$) or we obtain that the growth estimate \eqref{eq:optimain} holds for all $r\in (0,R_0)$. \\

Combining steps 1-3, we have thus obtained a radius $R_0 = R_0(n,p, \left\| \nabla a^{ij} \right\|_{L^p})$ and a constant $C=C(n,p)$ such that $\sup_{B_r}|w|\leq C r^{3/2}$ for any $r\in (0, R_0)$. This, together with Proposition \propalmost, completes the proof of Theorem \ref{thm:optimal_reg}.  
\end{proof}

\subsection{Improved lower bounds}
\label{sec:lower}
In this section we improve our lower bounds from Corollary \ref{cor:lowerbound} by making them uniform: While in general a uniform version of Corollary \ref{cor:lowerbound} does not hold for the full function $w$, after a suitable splitting, it is possible to show that the leading order term satisfies a uniform lower bound. More precisely, let  $a^{ij}:B_1^+ \rightarrow \R^{(n+1)\times (n+1)}_{sym}$ be a uniformly elliptic $W^{1,p}$, $p\in (n+1,\infty]$, metric, let $f\in L^p(B_1^+)$ and let $w$ be a solution of 
\begin{equation}
\label{eq:inhom}
\begin{split}
\p_{i} a^{ij} \p_j w & = f \mbox{ in } B_1^+,\\
\p_{n+1}w \leq 0, \ w \geq 0, \ w\p_{n+1}w&=0 \mbox{ in } B_1',
\end{split}
\end{equation}
satisfying the normalization conditions (A0)-(A4) from Section \ref{sec:prelim} and conditions (i), (ii) from Proposition \ref{prop:wasympt}.\\
As in Remark \ref{rmk:ref_ext}, we extend $w$, the metric $a^{ij}$ as well as $f$ from $B_1^+$ to $B_1 \setminus \Lambda_w$ and further to $\R^{n+1}$. We now split $w$ into two components $$w=u+\tilde{u},$$ where $\tilde{u}$ solves
\begin{align}
\label{eq:split1}
a^{ij}\p_{ij}\tilde{u}-\dist(x,\Gamma_w)^{-2}\tilde{u}=f - (\p_i a^{ij})\p_j w \text{ in } \R^{n+1}\setminus \Lambda_w, \quad \tilde{u}=0\text{ on }\Lambda_w,
\end{align}
and the function $u$ solves 
\begin{align}
\label{eq:split2}
a^{ij}\p_{ij}u=-\dist(x,\Gamma_w)^{-2}\tilde{u}\text{ in } \R^{n+1}\setminus \Lambda_w, \quad u=0\text{ on } \Lambda_w.
\end{align}
As explained in the passage following Remark \ref{rmk:ref_ext} in Section \ref{subsec:Lip} the intuition is that $\tilde{u}$ is a ``controlled error'' and that $u$ captures the essential behavior of $w$. Moreover, as we will see later, $u$ is $C^{2,1-\frac{n+1}{p}}(K)$ regular for any $K\Subset \R^{n+1}\setminus \Gamma_w$.

\begin{lem}[Positivity]
\label{lem:lower1'}
Let $f\in L^p(B_1^+)$, $a^{ij}\in W^{1,p}$ with $p\in (2(n+1),\infty]$, and $u, w$ be as above. Then we have that $u\in C^{2,1-\frac{n+1}{p}}(K)\cap C^{1,\frac{1}{2}}(B_{1/2})$ for any $K\Subset B_1\setminus \Gamma_w$. Moreover, for $e\in \mathcal{C}'_\eta(e_n)$, $\p_eu $ satisfies the lower bound
\begin{align*}
\p_{e}u (x) \geq c\dist(x,\Lambda_w)\dist(x,\Gamma_w)^{-\frac{1}{2}}, \quad c>0.
\end{align*}
\end{lem}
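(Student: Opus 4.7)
The plan is first to control $\tilde u$ via Proposition~\ref{prop:v1} and then bootstrap. Since $w \in C^{1,1/2}(B_{1/2}^+)$ by Theorem~\ref{thm:optimal_reg}(iii) and $|\nabla w| \lesssim \dist(\cdot,\Gamma_w)^{1/2}$ by Corollary~\ref{cor:optgrowth}, the inhomogeneity $f-(\partial_i a^{ij})\partial_j w$ in \eqref{eq:split1} satisfies the weighted integrability conditions of Proposition~\ref{prop:v1} with $\sigma = 1/2$ after rewriting \eqref{eq:split1} in divergence form (the additional drift $(\partial_i a^{ij})\partial_j\tilde u$ is absorbed using $a^{ij}\in W^{1,p}$ and the singular potential $\dist(\cdot,\Gamma_w)^{-2}\tilde u$). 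Remark~\ref{rmk:impv1} then yields
\begin{equation*}
|\tilde u(x)| \lesssim \dist(x,\Gamma_w)^{1/2}\,\dist(x,\Lambda_w)^{1-(n+1)/p}.
\end{equation*}
Because $p>2(n+1)$, the right-hand side of \eqref{eq:split2} is locally in $L^p(B_1\setminus\Lambda_w)$; invoking Calder\'on--Zygmund theory for non-divergence elliptic operators with coefficients in $C^{0,1-(n+1)/p}$ (which $a^{ij}$ enjoys by Morrey) gives $u \in W^{2,p}_{loc}(B_1\setminus\Lambda_w)$ and hence $u\in C^{1,1-(n+1)/p}$ locally. On $K\Subset B_1\setminus\Gamma_w$ the right-hand side of \eqref{eq:split2} is H\"older continuous, so standard Schauder estimates upgrade this to $u\in C^{2,1-(n+1)/p}(K)$. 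The global $C^{1,1/2}(B_{1/2})$ regularity then follows from $u=w-\tilde u$ combined with a matching $C^{1,1/2}$ bound on $\tilde u$ obtained from the pointwise estimate and rescaled interior elliptic estimates on a Whitney decomposition of $B_1\setminus\Gamma_w$.

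\textbf{Lower bound on $\partial_e u$.} The strategy is to rerun the asymptotic analysis of Section~\ref{subsubsec:local32} for $u$ rather than $w$. Since $u$ solves a pure non-divergence equation whose right-hand side $-\dist(\cdot,\Gamma_w)^{-2}\tilde u$ is of strictly higher order than $w_{3/2}$ when $p>2(n+1)$, a parallel application of the boundary Harnack comparison from Lemma~\ref{lem:boundHarn} against the barrier $h_0$ of Lemma~\ref{lem:lowerbarrier} yields, for each $x_0\in\Gamma_w\cap B_{1/2}'$, an expansion
\begin{equation*}
\partial_e u(x) = b_e(x_0)\,w_{1/2}\!\left(\tfrac{(x-x_0)\cdot\nu_{x_0}}{(\nu_{x_0}\cdot A(x_0)\nu_{x_0})^{1/2}},\tfrac{x_{n+1}}{(a^{n+1,n+1}(x_0))^{1/2}}\right) + r_e(x,x_0),
\end{equation*}
with $b_e(x_0) > 0$ uniform for $e\in\mathcal{C}'_\eta(e_n)$ by Corollary~\ref{cor:wasympt}, and with a remainder $r_e$ which is dominated by the leading term uniformly in $\mathcal{C}'_\eta(e_n)$. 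Since the model profile in the displayed expansion is comparable to $\dist(x,\Lambda_w)\,\dist(x,\Gamma_w)^{-1/2}$ throughout the cone, the claimed lower bound follows.

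\textbf{Main obstacle.} The principal difficulty is to make the remainder $r_e$ strictly smaller than $\dist(x,\Lambda_w)\,\dist(x,\Gamma_w)^{-1/2}$ \emph{uniformly} in the full cone $\mathcal{C}'_\eta(e_n)$, and not merely in the sub-cone $\{\dist(x,\Lambda_w)\gtrsim\dist(x,\Gamma_w)\}$ where the analysis of $w$ in Proposition~\ref{prop:wasympt} already suffices. Overcoming this requires exploiting both the extra $\dist(\cdot,\Gamma_w)^{1/2}$ decay inherited by $\tilde u$ from $\nabla w$ and the strict inequality $1-(n+1)/p>1/2$ coming from $p>2(n+1)$, so that the anisotropic error bounds for $\partial_e u$ are controlled by the correct $\dist(\cdot,\Lambda_w)/\dist(\cdot,\Gamma_w)^{1/2}$ profile everywhere on the ``free side'' of the coincidence set.
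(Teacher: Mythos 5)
Your treatment of the regularity of $u$ is broadly in line with the paper's, though the key estimate for $\tilde u$ you invoke, $|\tilde u|\lesssim \dist(\cdot,\Gamma_w)^{1/2}\dist(\cdot,\Lambda_w)^{1-(n+1)/p}$, is what Remark~\ref{rmk:impv1} gives directly with $\sigma=\tfrac12$, whereas the paper exploits the non-divergence structure of \eqref{eq:split1} (no $\partial_iF^i$ term on the right side) to get the stronger conclusion $\tilde u\in C^{1,1-\frac{n+1}{p}}(B_1)$ and, in particular, \emph{linear} vanishing of $\tilde u$ at $\Lambda_w$ away from $\Gamma_w$. This distinction is not cosmetic; it is what the second half of the argument lives on.

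For the lower bound, you correctly recover the bound on the sub-cone $\{\dist(\cdot,\Lambda_w)\gtrsim\ell_0\dist(\cdot,\Gamma_w)\}$ — there the boundary Harnack / Proposition~\ref{prop:nondeg} machinery applied to $\partial_e u$ works just as for $\partial_e w$. But the extension to the full free side is exactly what Lemma~\ref{lem:lower1'} asserts beyond Corollary~\ref{cor:lowerbound}, and you have not supplied it. The boundary Harnack expansion of Proposition~\ref{prop:asympt} comes with a remainder of size $|x-x_0|^{\frac12+\beta}$, which is \emph{not} dominated by the model profile $\dist(\cdot,\Lambda_w)\dist(\cdot,\Gamma_w)^{-\frac12}$ once $\dist(\cdot,\Lambda_w)\ll\dist(\cdot,\Gamma_w)$ (take $\dist(\cdot,\Gamma_w)\sim r$, $\dist(\cdot,\Lambda_w)\sim r^{N}$ for $N$ large); so ``the claimed lower bound follows'' is not justified. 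You flag this as the ``main obstacle'' but the proposed fix — invoking the extra $\dist(\cdot,\Gamma_w)^{1/2}$ decay of $\tilde u$ and $1-\frac{n+1}{p}>\tfrac12$ — does not close it, because those exponents control only the $\Gamma_w$-direction, not the $\Lambda_w$-direction where the failure occurs.

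The missing ingredient is the one the paper spells out in the final part of the proof and in Remark~\ref{rmk:errorLip}: write the equation for $\partial_eu$ in non-divergence form as $a^{ij}\partial_{ij}\partial_eu=H+\partial_iG^i$ with $G^i=-\partial_e\bigl(\dist(\cdot,\Gamma_w)^{-2}\tilde u\bigr)$, run the comparison argument against the composite barrier $h=c_sh_s^-+h_0\gtrsim\dist(\cdot,\Lambda_w)\dist(\cdot,\Gamma_w)^{-1/2}$, and — crucially — observe that $G^i$ is Lipschitz away from $\Gamma_w$ (because $\tilde u$ is $C^{2,1-\frac{n+1}{p}}$ there), so that the interior estimates near $\Lambda_w$ from Proposition~\ref{prop:v1}/Remark~\ref{rmk:impv1} give the splitting error $u_1$ a \emph{linear} decay $\lesssim c_\ast\dist(\cdot,\Lambda_w)\dist(\cdot,\Gamma_w)^{-\frac{n+1}{p}}$ towards $\Lambda_w$. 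Only then is the error absorbable into $h$; with the Hölder-only decay $\dist(\cdot,\Lambda_w)^{1-\frac{n+1}{q}}$ that the naive application of Remark~\ref{rmk:impv1} provides, the absorption fails. Since your proposal neither produces this Lipschitz estimate for $G^i$ nor supplies a substitute mechanism, the argument as written has a genuine gap at precisely the step the lemma is designed to handle.
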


\begin{rmk}
If $f=0$, the statement of Lemma \ref{lem:lower1'} can be improved to hold for metrics $a^{ij}\in W^{1,p}$ with $p\in(n+1,\infty]$.
\end{rmk}

\begin{proof}
Let $\tilde{u}$ be defined as in (\ref{eq:split1}). By the refined estimate from Remark \ref{rmk:impv1}, we infer that
\begin{align}
\label{eq:auxv}
|\tilde{u}(x)|\lesssim c_\ast  \dist(x,\Lambda_w)\dist(x,\Gamma_w)^{1-\frac{n+1}{p}}.
\end{align}
Moreover, combined with the $C^{1,1-\frac{n+1}{p}}$ regularity of $\tilde{u}$ away from $\Gamma_w$, we obtain the up to $\Gamma_w$ regularity of $\tilde{u}$, i.e. $\tilde{u}\in C^{1,1-\frac{n+1}{p}}(B_1)$ and $\|\tilde{u}\|_{C^{1,1-\frac{n+1}{p}}(B_1^+)}\leq C c_\ast$. Thus, applying the classical elliptic estimates to $u$ away from $\Gamma_w$, we deduce that $u\in C^{2,1-\frac{n+1}{p}}(K)$ for any $K\Subset B_1\setminus \Gamma_w$.  Recalling the decay estimate of $w$ (c.f. Theorem \ref{thm:optimal_reg}) and combining it with the estimate \eqref{eq:auxv} for $\tilde{u}$, yields that $u=w-\tilde{u}$ satisfies
\begin{align}\label{eq:upper_u}
|u(x)|\leq C_0 \dist(x,\Gamma_w)^{\frac{3}{2}}.
\end{align}
This together with the $W^{3,p}$ interior estimate gives that $u\in C^{1,\frac{1}{2}}(B_1)$.
Next we show the lower bound of $\p_e u$. This follows from an application of the comparison principle from Proposition \ref{prop:nondeg}.
First we note that at $B_1\cap \{|x_{n+1}|=\ell_0\}$, where $\ell_0=\frac{1}{\sqrt{4n}}>0$, $\p_e u$ is non-degenerate in the sense that 
\begin{align}\label{eq:lower_u}
\p_e u(x)\geq c_n \dist(x,\Gamma_w)^{\frac{1}{2}},\quad e\in \mathcal{C}'_\eta(e_n).
\end{align}
Indeed, for $x\in B_1\cap\{|x_{n+1}|=\ell_0\}$ we have $\dist(x,\Gamma_w)\sim \dist(x,\Lambda_w)$, where by \eqref{eq:auxv} the function $\p_e\tilde{u}$ satisfies
\begin{align}
\label{eq:lower_ua}
|\p_e\tilde{u}(x)|\leq c_\ast \dist(x,\Gamma_w)^{1-\frac{n+1}{p}}.
\end{align}
On the other side, the asymptotics from Proposition~\ref{prop:wasympt} yield that $\p_ew(x)\geq c \dist(x,\Gamma_w)^{1/2}$ for $x\in B_1\cap\{|x_{n+1}|=\ell_0\}$. As $p>2(n+1)$,  $\dist(x,\Gamma_w)^{1/2}$ dominates and thus $\p_e u$ inherits the lower bound  of $\p_e w$ on $x\in B_1\cap\{|x_{n+1}|=\ell_0\}$.\\
Next we consider the equation of $\p_e u$ (in non-divergence form):
\begin{align*}
a^{ij}\p_{ij}\p_eu &= -(\p_ea^{ij})\p_{ij}u-\p_e(\dist(x,\Gamma_w)^{-2}\tilde{u})\\
&=:H+\p_i G^i.
\end{align*}
The functions $H$, $G^i$  satisfy
\begin{align*}
&\dist(x,\Gamma_w)^{\frac{1}{2}}H\in L^p(B_1),\\
&\dist(x,\Gamma_w)^{\frac{n+1}{p}} G^i\in L^{\infty}(B_1).
\end{align*}
Here the first estimate follows from the $W^{1,p}$ regularity of $a^{ij}$ and the pointwise interior estimates for $\p_{ij}u$; the second estimate is a direct consequence of the previously derived bound (\ref{eq:auxv}) for $\tilde{u}$.
Furthermore, in $B_1$ we have $\p_e u=\p_ew-\p_e\tilde{u}\geq -c_\ast$. Thus, the assumptions of Proposition \ref{prop:nondeg} are satisfied (note that Proposition \ref{prop:nondeg} also holds for non-divergence form equation) and we obtain the desired lower bound for $\p_e u$ in the region $\{x\in B_{1}| \ \dist(x,\Gamma_w)\sim \dist(x,\Lambda_w) \}$. \\
In the end we remark that this lower bound holds in the whole ball. Indeed, this follows once more from a comparison argument: Let $h=c_s h_s+h_0$, where $h_s$ is the barrier function from Proposition \ref{prop:barrier} and $h_0$ is the barrier from Lemma \ref{lem:lowerbarrier}. Choosing $s=s(\alpha,n,p)$ appropriately yields that $h$ satisfies the conditions (i) and (ii) of Proposition \ref{prop:nondeg}. Moreover, by using an up to $\Lambda_w$ estimate for $q(x)$ as well as the fact that $h_0$ dominates $h_s$ (for an appropriately chosen constant $c_s$), we obtain
\begin{align}\label{eq:barrier}
h(x)\geq c\dist(x,\Lambda_w)\dist(x,\Gamma_w)^{-\frac{1}{2}}.
\end{align}
Relying on the test function 
\begin{align*}
u(x):=\p_e u(x)+|x'-(x_0)'|^2-2^{-8}h(x)-2nx_{n+1}^2,
\end{align*}
and introducing a similar splitting as in Proposition \ref{prop:nondeg} then yields
\begin{align*}
\p_e u(x)\geq ch(x)-c_\ast \dist(x,\Lambda_w)\dist(x,\Gamma_w)^{-\frac{n+1}{p}}.
\end{align*}
The second term on the RHS originates from the estimate of the auxiliary function $u_1$ (from a similar splitting as in the proof of Proposition \ref{prop:nondeg}), where we have exploited the Lipschitz regularity of $G^i$ away from $\Gamma_w$ (indeed, without this observation, the error estimate would not suffice to absorb the error contribution, c.f. Remark \ref{rmk:errorLip} below). 
Recalling the lower bound for $h$ (c.f. \eqref{eq:barrier}) implies the desired lower bound for $\p_e u$.
\end{proof}

\begin{rmk}
\label{rmk:errorLip} 
We note that without the Lipschitz regularity of $G^i$, we would only have had error estimates of the form
\begin{align*}
\p_eu(x)\geq ch (x)-c_\ast\dist(x,\Lambda_w)^{1-\frac{n+1}{q}}\dist(x,\Gamma_w)^{-\frac{n+1}{p} + \frac{n+1}{q}}
\end{align*}
for an arbitrary value $q\in(n+1,\infty)$. Clearly, this would not have sufficed for our absorption argument. Thus, the key improvement here consists in the observation that $G^i$ is actually a Lipschitz function away from $\Gamma_w$. As the estimates, which yield the improved bounds in Proposition \ref{prop:v1}, are interior estimates around $\Lambda_w$, this regularity away from $\Gamma_w$ suffices for our purposes. 
\end{rmk}

Assuming more regularity on the metric, the lower bound from Lemma \ref{lem:lower1'} can be further improved:

\begin{lem}[Positivity']
\label{lem:lower1}
Let $a^{ij}:B_1^+ \rightarrow \R^{(n+1)\times (n+1)}_{sym}$ be a tensor field that satisfies the normalization conditions (A1)-(A4) from Section \ref{sec:prelim}, condition (ii) from Proposition \ref{prop:wasympt} and in addition is $C^{1,\gamma}$ regular for some $\gamma \in (0,1)$. Let $w:B_1^+ \rightarrow \R$ be a solution of the thin obstacle problem with metric $a^{ij}$ and assume that it satisfies the normalizations (A0) from Section \ref{sec:prelim} and (i) from Proposition \ref{prop:wasympt}. Then there exists $\eta= \eta(n)>0$ such that for $e\in \mathcal{C}'_\eta(e_n)$
\begin{align}
\label{eq:lower1}
\p_ew(x)\geq c\dist(x,\Lambda_w)\dist(x,\Gamma_w)^{-\frac{1}{2}}.
\end{align}
\end{lem}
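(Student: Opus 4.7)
The plan is to recover the full positivity bound directly on $w$ by showing that, under the stronger hypothesis $a^{ij}\in C^{1,\gamma}$, the ``splitting error'' $\tilde u$ from Lemma~\ref{lem:lower1'} is of strictly higher order than the leading term $u$. I would therefore reuse the decomposition $w=u+\tilde u$ defined by \eqref{eq:split1}--\eqref{eq:split2} with $f=0$, apply Lemma~\ref{lem:lower1'} (whose hypotheses are satisfied since $C^{1,\gamma}\hookrightarrow W^{1,p}$ for every $p<\infty$) to get
\[
\partial_e u(x)\geq c\,\dist(x,\Lambda_w)\dist(x,\Gamma_w)^{-1/2},\qquad e\in\mathcal{C}'_\eta(e_n),
\]
and then reduce the problem to establishing the complementary estimate
\[
|\partial_e\tilde u(x)|\leq \tfrac{c}{2}\,\dist(x,\Lambda_w)\dist(x,\Gamma_w)^{-1/2}
\]
with a constant that can be made small by choosing $c_\ast,\epsilon_0$ small.

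The key step is to refine the pointwise control on $\tilde u$. Under the $C^{1,\gamma}$ hypothesis, $\partial_i a^{ij}$ is bounded and Hölder continuous, while Theorem~\ref{thm:optimal_reg} gives $|\partial_j w|\lesssim \dist(\cdot,\Gamma_w)^{1/2}$; hence the right-hand side $-(\partial_i a^{ij})\partial_j w$ of \eqref{eq:split1} is Hölder continuous away from $\Gamma_w$ with size of order $\dist(\cdot,\Gamma_w)^{1/2}$. Invoking Proposition~\ref{prop:v1} with $\sigma=1/2$ and any $p\in(n+1,\infty)$, and then using the refined boundary vanishing of Remark~\ref{rmk:impv1}, yields
\[
|\tilde u(x)|\leq Cc_\ast\,\dist(x,\Gamma_w)^{1/2}\dist(x,\Lambda_w)^{1-(n+1)/p}.
\]
To turn this into a gradient bound with the correct behaviour at $\Lambda_w$, I would combine this with boundary Schauder regularity for \eqref{eq:split1}, using the $C^{1,\alpha}$ regularity of $\Lambda_w$ from Proposition~\ref{prop:C1a} together with the $C^{0,\gamma}$ regularity of both the coefficients and the inhomogeneity away from $\Gamma_w$. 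Applied in tubular Whitney neighborhoods of $\Lambda_w$ (of scale $\dist(\cdot,\Gamma_w)$ after rescaling), this gives $\tilde u\in C^{1,\min(\gamma,\alpha)}$ up to $\Lambda_w$, and since $\tilde u$ vanishes on $\Lambda_w$ the tangential derivatives $\partial_e\tilde u$ vanish there as well. Combining the Schauder modulus with the bulk pointwise bound produces an estimate of the form
\[
|\partial_e\tilde u(x)|\leq Cc_\ast\,\dist(x,\Gamma_w)^{-1/2}\dist(x,\Lambda_w),
\]
and choosing $c_\ast$ small enough so that $Cc_\ast\leq c/2$ closes the argument.

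The main obstacle is precisely this last gradient estimate. A naive application of interior elliptic regularity to $\tilde u$ on Whitney cubes produces gradient bounds proportional to $\dist(x,\Gamma_w)^{-1/2}\dist(x,\Lambda_w)^{1-(n+1)/p}$, which for finite $p$ are \emph{worse} than what is needed near $\Lambda_w$. The only way to extract genuine linear vanishing at $\Lambda_w$ is to exploit both the $C^{1,\gamma}$ regularity of the metric and the $C^{1,\alpha}$ regularity of $\Lambda_w$ simultaneously, and to pass to a boundary Schauder estimate (rather than a purely interior one) that upgrades the Dirichlet vanishing of $\tilde u$ on $\Lambda_w$ into the correct rate for the tangential derivative. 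This regularity gain is the whole reason $C^{1,\gamma}$ (rather than just $W^{1,p}$) is imposed on $a^{ij}$ in the statement of Lemma~\ref{lem:lower1}.
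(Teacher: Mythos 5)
Your route -- split $w = u + \tilde u$ as in \eqref{eq:split1}--\eqref{eq:split2}, take the lower bound for $\partial_e u$ from Lemma~\ref{lem:lower1'}, and prove a complementary gradient bound on the error $\tilde u$ -- is genuinely different from the paper's. The paper instead runs the comparison argument of Proposition~\ref{prop:nondeg} directly on $\partial_e w$: it sets $\tilde v := \partial_e w + |x'-(x_0)'|^2 - 2^{-8}h - 2n x_{n+1}^2$ with the barrier $h$ from Proposition~\ref{prop:barrier}/Lemma~\ref{lem:lowerbarrier}, splits $\tilde v = u_1 + u_2$ via Proposition~\ref{prop:v1}, and shows $u_2 \geq 0$ by comparison. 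The $C^{1,\gamma}$ hypothesis on $a^{ij}$ enters only through the improved \emph{zeroth-order} estimate $|u_1(x)| \lesssim c_\ast \dist(x,\Lambda_w)$, which is then absorbed by the barrier's lower bound $h(x) \gtrsim \dist(x,\Lambda_w)\dist(x,\Gamma_w)^{-1/2}$. In particular, no gradient estimate on the splitting error is ever needed in the paper; that burden is entirely carried by the barrier's $\dist(\cdot,\Gamma_w)^{-1/2}$ growth.

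Your proposed gradient estimate $|\partial_e \tilde u(x)| \lesssim c_\ast \dist(x,\Lambda_w)\dist(x,\Gamma_w)^{-1/2}$ has two gaps as written. \emph{First}, the $C^{1,\min(\gamma,\alpha)}$ boundary Schauder you invoke, combined with $\partial_e \tilde u = 0$ on $\Lambda_w$, only yields $|\partial_e \tilde u(x)| \lesssim \dist(x,\Lambda_w)^{\min(\gamma,\alpha)}$; since $\min(\gamma,\alpha) < 1$ this is strictly weaker than the linear rate you need. To get linear vanishing you must control $D^2 \tilde u$ up to $\Lambda_w$ on Whitney tubes, i.e.\ you need $C^{2,\gamma}$ (or at least $C^{1,1}$) boundary Schauder. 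This is available: near interior points of $\Lambda_w$ the slit is a flat piece of $\{x_{n+1}=0\}$, the coefficients are $C^{1,\gamma}$, and the inhomogeneity $-(\partial_i a^{ij})\partial_j w + \dist(\cdot,\Gamma_w)^{-2}\tilde u$ is locally Hölder away from $\Gamma_w$. \emph{Second}, even with $C^{2,\gamma}$ Schauder, the bulk bound $|\tilde u| \lesssim c_\ast \dist(\cdot,\Gamma_w)^{1/2}\dist(\cdot,\Lambda_w)^{1-(n+1)/p}$ that you take from Remark~\ref{rmk:impv1} (with finite $p$) gives $\sup_{B_r(\hat y)}|\tilde u| \lesssim c_\ast r^{3/2-(n+1)/p}$ on a Whitney tube of scale $r = \dist(\cdot,\Gamma_w)$ around $\hat y \in \Lambda_w$, and the rescaled Schauder estimate then yields only $|\partial_e \tilde u| \lesssim c_\ast \dist(\cdot,\Lambda_w)\dist(\cdot,\Gamma_w)^{-1/2-(n+1)/p}$; the extra factor $\dist(\cdot,\Gamma_w)^{-(n+1)/p}$ blows up as you approach $\Gamma_w$ and cannot be absorbed by the lower bound for $\partial_e u$. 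The fix is to use Proposition~\ref{prop:v1} itself with $p=\infty$ (which is allowed by its statement and applicable here since $\dist(\cdot,\Gamma_w)^{-1/2}(\partial_i a^{ij})\partial_j w \in L^\infty$ under the $C^{1,\gamma}$ hypothesis), giving the clean bulk bound $|\tilde u| \lesssim c_\ast \dist(\cdot,\Gamma_w)^{3/2}$; with this and the $C^{2,\gamma}$ boundary Schauder step, the scaling argument does produce $|\partial_e \tilde u| \lesssim c_\ast \dist(\cdot,\Lambda_w)\dist(\cdot,\Gamma_w)^{-1/2}$ and your absorption closes.
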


\begin{proof}
We only sketch the proof here. For each $x_0\in B_{1/2}^+$ we consider
\begin{align*}
u(x):=\p_ew(x)+|x'-(x_0)'|^2-2^{-8}h(x)-2nx_{n+1}^2,
\end{align*}
where $h$ is as in the proof of Lemma \ref{lem:lower1'}.
Next we split $u=u_1+u_2$, where $u_1$ solves
\begin{align*}
&\p_i a^{ij}\p_j u_1-K\dist(x,\Gamma_w)^{-2}u_1=\p_iF^i+g,\\
&\text{ where } F^i=-\p_ea^{ij}\p_jw, \quad g=2(x_j-(x_0)_j)\p_ia^{ij}-4nx_{n+1}\p_ja^{n+1,j},
\end{align*}
with $K=K(n)$ sufficiently large. Hence, $u_2$ solves
\begin{align*}
&\p_ia^{ij}\p_j u_2=\tilde{g}-K\dist(x,\Gamma_w)^{-2}u_1,\\
&\text{ where } \tilde{g}=-2^{-8}\p_ia^{ij}\p_jh +2a^{ii}-4na^{n+1,n+1}.
\end{align*}
We apply Proposition \ref{prop:v1} to $u_1$. Since $a^{ij}\in C^{1,\gamma}$ and $w\in C^{1,1/2}$, we have $\dist(x,\Gamma_w)^{-1/2} F^i\in L^\infty$ and $g\in L^\infty$. By Remark~\ref{rmk:impv1}, for any $s\in (n+1,\infty)$ 
\begin{equation}
\label{eq:aux}
\begin{split}
|u_1(x)|&\leq C_0 \dist(x,\Lambda_w)\dist(x,\Gamma_w)^{-\frac{n+1}{s}+\frac{1}{2}}\cdot\\
&\cdot\left(\|F^i\dist(\cdot,\Gamma_w)^{-\frac{1}{2}}\|_{L^s(B_1^+)}+\|g\dist(\cdot,\Gamma_w)^{1-\frac{n+1}{s}-\frac{1}{2}}\|_{L^{s/2}(B_1^+)}\right).
\end{split}
\end{equation}
For $u_2$ we apply a comparison argument. By the argument from the proof of Proposition \ref{prop:nondeg} we obtain that $u_2\geq 0$. Thus $u=u_1+u_2\geq u_1$. Evaluating at $x_0$ yields that $\p_ew(x_0)\geq 2^{-8}h(x_0)-u_1(x_0)$. Since $x_0$ is arbitrary in $B_{1/2}^+$ and by using the bound from \eqref{eq:aux} for $u_1$, we infer that 
\begin{align}\label{eq:dew}
\p_ew(x)\geq 2^{-8} h (x)-u_1(x)\geq 2^{-8}h(x)-c_\ast \dist(x,\Lambda_w).
\end{align}
Here we have used that $s>2(n+1)$ can be chosen to be sufficiently large and $\|Da^{ij}\|_{L^\infty}\leq c_\ast$. Recalling the estimate for $h$ in \eqref{eq:barrier}, we obtain the desired estimate for $\p_ew$ in \eqref{eq:lower1}.
\end{proof}

\begin{rmk}
We emphasize that without an additional splitting step, we cannot hope for an analogous result for a $W^{1,p}$, $p\in(2(n+1),\infty]$, metric $a^{ij}$. This is due to the fact that by Remark~\ref{rmk:impv1} for $u_1$ we only have  
\begin{align*}
|u_1(x)|&\leq C_0 \dist(x,\Lambda_w)^{1-\frac{n+1}{p}}\dist(x,\Gamma_w)^{\frac{1}{2}}\cdot\\
&\cdot\left(\|F^i\dist(\cdot,\Gamma_w)^{-\frac{1}{2}}\|_{L^p(\R^{n+1})}+\|g\dist(\cdot,\Gamma_w)^{1-\frac{n+1}{p}-\frac{1}{2}}\|_{L^{p/2}(\R^{n+1})}\right)\\
&\lesssim c_\ast \dist(x,\Lambda_w)^{1-\frac{n+1}{p}}\dist(x,\Gamma_w)^{\frac{1}{2}}.
\end{align*}
The bound by $\dist(x,\Lambda_w)^{1-\frac{n+1}{p}}$ is optimal in general, due to the divergence right hand side $\p_iF^i$ with $F^i\in L^p$. Thus, the resulting bound cannot be absorbed into $h^-$ in the estimate \eqref{eq:dew}.
\end{rmk}

\section[Perturbations]{Perturbations}
\label{sec:pert}

In this section we consider variants of the variable coefficient thin obstacle problem. Examples are settings in which the obstacle (c.f. Section \ref{sec:nonfl}) or the underlying manifold (c.f. Section \ref{sec:nfboundaries}) is not flat. Moreover, it is also possible to deal with interior thin obstacles (c.f. Section \ref{sec:int_obst}) and inhomogeneities in the equations (c.f. Section \ref{sec:inhomo}). We show that under suitable conditions on the metric and obstacles, it is possible to recover the regularity results from the flat setting. Instead of repeating all the necessary arguments, we only point out the main difficulties and differences with respect to the flat problem which was discussed in \cite{KRS14} and Sections \ref{sec:boundary}-\ref{sec:optimal} of the present article.\\
While treating the cases of inhomogeneities, non-flat obstacles and boundaries and interior obstacles separately in order to stress the essential aspects of the respective situation, we emphasize that it is possible to combine these results into a setting involving several/all of these features, e.g. a non-flat obstacle and a non-flat hypersurface.

\subsection{Inhomogeneities}
\label{sec:inhomo}

Exploiting the scaling of the Carleman inequality, it is possible to deal with inhomogeneous thin obstacle problems along similar lines as in Sections 2-4 in \cite{KRS14} and Sections \ref{sec:boundary}-\ref{sec:optimal} in this paper. Here the main observation is that both for the Carleman estimate and the comparison arguments in Sections \ref{sec:boundary} and \ref{sec:optimal} the inhomogeneity is a ``lower order'' contribution. 

\begin{prop}[Inhomogeneous thin obstacle problem]
\label{prop:inhomo}
Let \\
$a^{ij}: B_1^+ \rightarrow \R^{(n+1)\times (n+1)}$ be a $W^{1,p}(B_1^+)$, $p\in(n+1,\infty]$, metric satisfying (A1), (A2), (A4). Suppose that $f\in L^q(B_1^+)$ for some $q\in (n+1,\infty]$. Assume that $w\in H^1(B_1^+)$ is an $L^2$ normalized solution to the thin obstacle problem 
\begin{equation}
\label{eq:varcoef_f}
\begin{split}
\p_i a^{ij} \p_j w = f \mbox{ in } B_{1}^+,\\
w \geq 0, \ -a^{n+1,j}\p_j w \geq 0, \ w (a^{n+1, j}\p_j w) = 0 \mbox{ in } B_1'.
\end{split}
\end{equation}

Then, the following statements hold:
\begin{itemize}
\item[(i)] The solution $w$ has the following growth estimate: There exists $C>0$ such that
\begin{align*}
|w(x)|\leq C\left\{
\begin{array}{ll}
\dist(x,\Gamma_w)^{2-\frac{n+1}{q}}(\ln (\dist(x,\Gamma_w)))^2 & \mbox{ if } n+1 < q< 2(n+1), \\
\dist(x, \Gamma_w)^{\frac{3}{2}}(\ln (\dist(x,\Gamma_w)))^2 & \mbox{ if } q \geq 2(n+1).
\end{array}
\right.
\end{align*}
The constant $C>0$ depends on $n,p, q, \|f\|_{L^q}, \|\nabla a^{ij}\|_{L^p}, \left\| w \right\|_{L^2(B_1^+)}$.
\item[(ii)] If additionally $a^{ij}\in W^{1,p}(B_1^+)$ and $f\in L^p(B_1^+)$ with $p\in (2(n+1),\infty]$, then $w$ has the optimal Hölder regularity: 
$$w \in C^{1,1/2}(B_{1/2}^+).$$
\item[(iii)] Under the same assumptions on $a^{ij}$ and $f$ as in (ii) and assuming that $0\in \Gamma_{3/2}(w)$, there exist a radius $\rho=\rho(w,f)>0$, a parameter $\alpha \in (0,1]$ and a $C^{1,\alpha}$ function $g$ such that (potentially after a rotation) 
$$\Gamma_{3/2}(w) \cap B_{\rho}' = \{x = (x'',x_n,0)| \ x_{n}= g(x'') \mbox{ for } x''\in B_{\rho}'' \}.$$
\end{itemize}
\end{prop}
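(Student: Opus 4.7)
The overall plan is to adapt the arguments from \cite{KRS14} and from Sections \ref{sec:boundary}--\ref{sec:optimal} by viewing $f$ as a controlled perturbation whose scaling is dictated by its integrability. For the Carleman estimate this works because $f$ only contributes a lower order term on the right-hand side; for the comparison arguments of Section \ref{sec:boundary}, the key observation is that the derivative $\p_ef$ of an $L^p$ function can be written as a divergence $\p_i(\delta_{ie}f)$ with $L^p$ coefficients, so that it fits into the framework of Proposition~\ref{prop:v1} and Proposition~\ref{prop:nondeg}.

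For part (i) I would apply the Carleman inequality of Lemma \lemnewCarl (respectively Corollary \corconsequenceCarl). The inhomogeneity introduces a term of the form $\|e^{\tau\tilde{\phi}(\ln|x|)}|x|^2|\ln|x||^{-1}f\|_{L^2}$ on the right-hand side, and by Hölder's inequality with exponent $q/(q-2)$ and $q>n+1$ this term is finite and bounded by a constant depending on $\|f\|_{L^q}$, with effective vanishing order $2-(n+1)/q$. Running the scheme of Proposition~\propalmost then produces a growth bound for $w$ whose order equals the minimum of $2-(n+1)/q$ and the natural order $3/2$ of the free boundary problem, with logarithmic losses inherited from the asymptotics of $\tilde{\phi}$. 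This yields the dichotomy stated in~(i).

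For part (iii), tangential derivatives $v=\p_e w$, $e\in S^{n-1}\times\{0\}$, satisfy on $B_1\setminus\Lambda_w$ (after the even reflection of Remark~\ref{rmk:ref_ext}) the divergence form equation $\p_ia^{ij}\p_j v=\p_iF^i$ with $F^i=-(\p_e a^{ij})\p_jw+\delta_{ie}f$. Since $p>2(n+1)$, the choice $\sigma=0$ satisfies the hypothesis $\sigma>(n+1)/p-1/2$ of Proposition~\ref{prop:nondeg}, and the splitting technique of Proposition~\ref{prop:v1} simultaneously absorbs both summands in $F^i$. This delivers the nondegeneracy of $\p_e w$ in a cone of tangential directions, hence Lipschitz regularity of $\Gamma_{3/2}(w)$ as in Proposition~\ref{prop:Lip}. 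The Carleson and boundary Harnack estimates of Section~\ref{subsec:C1a} then upgrade this to $C^{1,\alpha}$ regularity, proving~(iii).

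For part (ii), with (iii) at hand, the asymptotic analysis of Section~\ref{sec:optimal} applies verbatim: the boundary Harnack inequality applied to a tangential derivative of $w$ and the barrier $h_0$ from Lemma~\ref{lem:lowerbarrier} gives the leading-order expansion of $\nabla w$ at regular points, hence the local $3/2$-growth bound $\sup_{B_r(x_0)}|w|\lesssim r^{3/2}$ at each $x_0\in\Gamma_{3/2}(w)$. Combined with the a priori growth of step~1 and with Lemma~\lemalmosthom this globalizes to the uniform estimate \eqref{eq:optimain}; the $C^{1,1/2}$ regularity then follows by interior elliptic regularity with $L^p$ right-hand side (using $p>2(n+1)$ and the embedding $W^{2,p}\hookrightarrow C^{1,1/2}$ on the complement of the coincidence set), as in Theorem~\ref{thm:optimal_reg}. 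The main obstacle is the bookkeeping of the logarithmic and Hölder losses introduced by $f$, in particular in step~1 where one has to verify that the weighted $L^2$ norm of $|x|^2f$ against the Carleman weight is controllable by $\|f\|_{L^q}$; once this is settled, the remaining adaptations are a routine extension of the homogeneous case.
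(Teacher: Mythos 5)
Your proposal follows essentially the same route as the paper's proof: estimating the $f$-term in the Carleman inequality via Hölder to obtain the effective vanishing order $2-\tfrac{n+1}{q}$, packaging $f$ together with $-(\p_e a^{ij})\p_j w$ into a divergence-form right-hand side $\p_iF^i$ with $F^i\in L^p$ so that Propositions~\ref{prop:v1} and \ref{prop:nondeg} apply after a rescaling that makes $\|F^i\|_{L^p}$ small, and then invoking the Carleson/boundary Harnack machinery and Lemma~\lemalmosthom. The only quibbles are cosmetic: the Hölder conjugate should be $\tfrac{2q}{q-2}$ rather than $\tfrac{q}{q-2}$, and the weight on $f$ in the Carleman estimate is $|x|$, not $|x|^2|\ln|x||^{-1}$; neither affects the exponent count or the conclusion.
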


\begin{rmk}
\label{rmk:reg_est}
Since $w$ solves a linear elliptic equation away from the free boundary, we can obtain an interior regularity result for $w$ by  combining the growth estimate (i) with a local gradient estimate. The proof is standard (see e.g. Proposition~\propalmost) and yields
\begin{align*}
&|\nabla w(x) - \nabla w(y) |\\
&\leq  \left\{
\begin{array}{ll}
C(q)|x-y|^{1-\frac{n+1}{q}}\ln^2 (|x-y|) & \mbox{ if } n+1 < q< 2(n+1), \ q \leq p,\\
C(p,q)|x-y|^{1-\frac{n+1}{p}} & \mbox{ if } n+1 < p < q< 2(n+1),\\
C |x-y|^{\frac{1}{2}}\ln^2(|x-y|) & \mbox{ if }\min\{p, q\} \geq 2(n+1),
\end{array}
\right.
\end{align*}
for all $x,y \in B_{1/2}^+$.
We have 
\begin{align*}
C(q) \rightarrow + \infty \mbox{ as } q\searrow 0 \mbox{ and } C(p,q) \rightarrow + \infty \mbox{ as } q\searrow p.
\end{align*}
\end{rmk}

\begin{proof}
We only point out the main differences with respect to the previously presented arguments for $f=0$.  \\

\emph{Step 1: Modifications in the Carleman estimate and in the proof of Proposition \lemhomo}.
As $f\in L^q$ for some $q\in (n+1,\infty]$, Uraltseva's $C^{1,\alpha}$ regularity result remains valid \cite{U87}.
Furthermore, we remark that it is still possible to carry out Uraltseva's change of coordinates (c.f. Proposition \propchange and \cite{U85}), which transforms the variable coefficient problem into a new variable coefficient thin obstacle problem with an off-diagonal property as in (A1) on $B_1'$. Here, if $f$ is the original inhomogeneity, the inhomogeneity of the transformed equation is given by $\tilde{f}(y) = f(x)|\det(DT(x))|^{-1}\left. \right|_{x=T^{-1}(y)}$. Consequenty, $\tilde{f}$ inherits the $L^q$ (and $W^{1,p}$) regularity from $f$. For convenience of notation, in the sequel we suppress the tildas.\\
The preceding discussion now allows us to prove the Carleman estimate along the same lines as in Section \secCarl. The only necessary modification is to incorporate a further right hand side contribution:
\begin{equation}
\label{eq:vCarl_f}
\begin{split}
&\tau^{\frac{3}{2}} \left\|  e^{\tau \phi}|x|^{-1} (1+\ln(|x|)^2)^{-\frac{1}{2}}  w \right\|_{L^2(A_{\rho, r}^+)} +  \tau^{\frac{1}{2}}\left\| e^{\tau \phi} (1+\ln(|x|)^2)^{-\frac{1}{2}}  \nabla w \right\|_{L^2(A_{\rho,r}^+)}\\
&\leq C(n)c_0^{-1} \left(  
\tau^2 C(a^{ij})\left\| e^{\tau\phi}|x|^{\gamma-1} w \right\|_{L^2(A_{\rho,r}^+)}  + \left\|e^{\tau \phi} |x| f\right\|_{L^2(A_{\rho,r}^+)} \right),
\end{split}
\end{equation}
where $\gamma := 1-\frac{n+1}{p}$ and
\begin{align*}
C(a^{ij}) = \sup\limits_{A_{\rho,r}^+} \left| \frac{a^{ij}(x)-\delta^{ij}}{|x|^{\gamma}} \right| + \sup\limits_{\rho\leq \tilde{r}\leq r/2} \left\| |x|^{-\gamma} \nabla a^{ij} \right\|_{L^{n+1}(A_{\tilde{r},2\tilde{r}}^+)}.
\end{align*}
We estimate the additional term coming from the inhomogeneity for parameters $\tau \in [1, \frac{ \tau_0}{1+ c_0\pi/2}]$, where $\tau_0= \kappa_0 + \frac{n-1}{2}$ with $\kappa_0\geq 0 $:
\begin{equation}
\label{eq:bound_f}
\begin{split}
\left\| e^{\tau \phi}|x|f \right\|_{L^2(A_{\rho,r}^+)} & \lesssim \left\| f \right\|_{L^q(B_1^+)} \left\| e^{\tau \phi} |x| \right\|_{L^{\frac{2q}{q-2}}(A_{\rho,r}^+)} \\
& \lesssim \left\| f \right\|_{L^q(B_1^+)}\left\| |x|^{-\tau_0 +1} \right\|_{L^{\frac{2q}{q-2}}(A_{\rho,r}^+)}\\
&  \lesssim \left\| f \right\|_{L^q(B_1^+)}(r^{\delta}-\rho^\delta),\quad \delta= 2-\frac{n+1}{q}-\kappa_0.
\end{split}
\end{equation}
Hence, the contribution originating from the inhomogeneity $f$ in the Carleman inequality can be regarded as a ``subcritical'' contribution (in the sense that $\delta\geq0$) for the above range of $\tau$ if $\kappa_0\leq 2-\frac{n+1}{q}$. 
Analogous to the Carleman estimate, Corollary \corconsequenceCarl remains valid, if the contribution originating from $f$ is included on the right hand side:
\begin{equation}
\label{eq:consequence_Carl_f}
\begin{split}
&\tau^{\frac{3}{2}}(1+|\ln(r_2)|)^{-1}  e^{\tau \tilde{\phi}(\ln(r_2))} r_2^{-1}\left\| w \right\|_{L^2(A^+_{r_2,2r_2}(x_0))}\\
&+ \tau \max\{\ln(r_2/r_1)^{-1},\ln(r_3/r_2)^{-1}\} e^{\tau \tilde{\phi}(\ln(r_2))}r_2^{-1}  \left\| w \right\|_{L^2(A^+_{r_2,2r_2}(x_0))} \\
& \leq C( e^{\tau \tilde{\phi}(\ln(r_1))} r_1^{-1} \left\| w \right\|_{L^2(A^+_{r_1,2r_1}(x_0))} \\
& \quad + e^{\tau \tilde{\phi}(\ln(r_3))}r_3^{-1}\left\| w \right\|_{L^2(A^+_{r_3,2 r_{3}}(x_0))} + \left\| e^{\tau \tilde{\phi}(\ln(|x|))} |x| f \right\|_{L^2(A^+_{r_1,2 r_{3}}(x_0))}).
\end{split}
\end{equation}

\emph{Step 2: Modifications in Section \secreg.}
Estimate (\ref{eq:consequence_Carl_f}) combined with the boundedness of (\ref{eq:bound_f}), immediately allows us to infer the analogue of Proposition \propindep if $\kappa_x < 2-\frac{n+1}{q}$. Moreover, arguing by contradiction, we also obtain that if $\kappa_x\geq 2-\frac{n+1}{q}$, then not only
\begin{align*}
\limsup\limits_{r \rightarrow 0}\frac{\ln\left( \fint\limits_{A_{r,2r}^+} w^2 \right)^{1/2}}{\ln(r)} \geq 2 - \frac{n+1}{q},
\end{align*}
but also 
\begin{align*}
\liminf\limits_{r \rightarrow 0} \frac{\ln\left( \fint\limits_{A_{r,2r}^+} w^2 \right)^{1/2}}{\ln(r)} \geq 2- \frac{n+1}{q}.
\end{align*}
Similarly, the doubling property (c.f. Proposition \propdoubling), the blow-up result (c.f. Proposition \propblowup) and the lower semi-continuity statement (c.f. Proposition \propsemicont) remain valid at points of vanishing order less than $2-\frac{n+1}{q}$. Also the growth Lemma \lemgrowthimp remains valid for all $x_0 \in \Gamma_w$ with $\bar{\kappa}=2-\frac{n+1}{q}$:
\begin{align}
\label{eq:growth_f}
\sup\limits_{B_{r}^+(x_0)}|w| \leq C r^{\min\{\kappa_{x_0},2-\frac{n+1}{q}\}}|\ln(r)|^2,
\end{align}
where $C=C(n,p,\left\| \nabla a^{ij}\right\|_{L^p(B_1^+)}, \left\| f \right\|_{L^q(B_1^+)})$.\\
Apart from these results, we also obtain the existence of homogeneous blow-up limits at free boundary points with vanishing order less than $2-\frac{n+1}{q}$. In fact, this is again a consequence of the boundedness of the contributions originating from $f$ in the Carleman estimate. \\
Therefore, by Proposition \propblowup and Proposition \lemhomo, at each $x\in \Gamma_w$ with $\kappa_x<2-\frac{n+1}{q}$, there exists an $L^2$ normalized blow-up sequence centered at $x$, whose limit is a nontrivial homogeneous global solution (to the constant coefficient thin obstacle problem) with homogeneity equal to $\kappa_x$. Then by the classification of the homogeneous global solutions (Proposition \prophomotwo) on the one hand, we obtain that there is no free boundary point with $\kappa_x<2-\frac{n+1}{q}$ if $q\in (n+1,2(n+1))$. Hence, (\ref{eq:growth_f}) turns into:
\begin{align}
\label{eq:growth_f}
\sup\limits_{B_{r}^+(x_0)}|w| \leq C r^{2-\frac{n+1}{q}}|\ln(r)|^2,
\end{align}
for all $x_0\in \Gamma_w$.
On the other hand, $\kappa_x \geq\frac{3}{2}$ if $q\in [2(n+1),\infty]$. Combining this information with the uniform upper growth estimate (Lemma \lemgrowthimp) results in  
\begin{align}
\label{eq:modifL43}
\sup\limits_{B_r^+(x)}|w| \leq \begin{cases}
Cr^{2-\frac{n+1}{q}} (\ln r)^2 &\text{ if } q\in (n+1,2(n+1))\\
Cr^{\frac{3}{2}} (\ln r)^2 &\text{ if } q \in [2(n+1),\infty]
\end{cases},\quad r\in (0,1/2),
\end{align}
where the constant $C>0$ depends on $\left\| \nabla a^{ij} \right\|_{L^p},n,p,\left\| f\right\|_{L^{q}}$.\\

\emph{Step 3: Modifications in Sections \ref{sec:boundary} and \ref{sec:optimal}.} In order to deduce the regularity of the free boundary and to obtain the optimal regularity of $w$, we argue along the lines of Sections \ref{sec:boundary} and \ref{sec:optimal}. Here we interpret the inhomogeneity in the equation for the derivative as a divergence right hand side:
\begin{align*}
\p_i a^{ij} \p_i \p_e w & = \p_i F^i \mbox{ in } B_{1}\setminus \Lambda,\\
\p_e w & = 0 \mbox{ in } \Lambda,
\end{align*}
where for all $i\in\{1,...,n+1\}$
\begin{align}
\label{eq:inhom1}
F^i := - (\p_e a^{ij})\p_j w + e^i f \in L^p(B_1).
\end{align}
Thus, (potentially) after a rescaling which only depends on $\left\| \nabla a^{ij}\right\|_{L^p(B_1)}$ and $\left\| f \right\|_{L^p(B_1)}$, we may assume that the inhomogeneity is small. Therefore, all the results in Sections \ref{sec:boundary} and \ref{sec:optimal} are valid.
\end{proof}

\subsection{Non-flat obstacles}
\label{sec:nonfl}

In this section we present the main ideas of dealing with non-flat obstacles. Due to our almost scaling critical lower bound in Proposition \ref{prop:nondeg}, we are able to deal with the non-flat obstacle problem involving metrics $a^{ij}\in W^{1,p}(B_1)$ and obstacles $\varphi\in W^{2,p}(B_1)$ with $p\in (2(n+1),\infty]$. We however stress that the analogues of \cite{KRS14} are valid under the even weaker integrability assumption $p> n+1$.

\begin{prop}[Non-flat obstacles]
\label{prop:inhomo}
Let $a^{ij}: B_1^+ \rightarrow \R^{(n+1)\times (n+1)}_{sym}$ be a $W^{1,p}$ metric with $p\in (n+1,\infty]$ satisfying (A1), (A2), (A4). Suppose that $\varphi \in W^{2,p}(B_{1}')$. Let $w:B_1^+ \rightarrow \R$ be a solution of the thin obstacle problem
\begin{equation}
\label{eq:varcoef_f}
\begin{split}
\p_i a^{ij} \p_j w & = 0 \mbox{ in } B_{1}^+,\\
w  \geq \varphi, \ - a^{n+1,j} \p_j w \geq 0, \ (w-\varphi) ( a^{n+1,j} \p_j w) & = 0 \mbox{ on } B_{1}'.
\end{split}
\end{equation}
Then, the following statements hold:
\begin{itemize}
\item[(i)] The solution $w$ has the following growth estimate: 
There exists $C>0$ such that
\begin{align*}
|w(x)|\leq C\left\{
\begin{array}{ll}
\dist(x,\Gamma_w)^{2-\frac{n+1}{p}}(\ln (\dist(x,\Gamma_w)))^2 & \mbox{ if } n+1 < p< 2(n+1), \\
\dist(x, \Gamma_w)^{\frac{3}{2}}(\ln (\dist(x,\Gamma_w)))^2 & \mbox{ if } p \geq 2(n+1).
\end{array}
\right.
\end{align*}
The constant $C>0$ depends on $n,p, \left\| \varphi \right\|_{W^{2,p}(B_1^+)}, a^{ij}, \left\| w \right\|_{L^2(B_1^+)}$.
\item[(ii)] If additionally $a^{ij}\in W^{1,p}(B_1)$ and $\varphi\in W^{2, p}(B_1')$ with $p\in (2(n+1),\infty]$, then the solution $w$ has the optimal Hölder regularity: 
$$ w \in C^{1,1/2}(B_{1/2}^+).$$
\item[(iii)] Under the conditions in (ii) and assuming that $0\in \Gamma_{3/2}(w)$,  there exist a radius $\rho>0$, a parameter $\alpha \in (0,1]$ and a $C^{1,\alpha}$ function $g$ such that (potentially after a rotation) 
$$\Gamma_{3/2}(w) \cap B_{\rho}' = \{x = (x'',x_n,0)| \ x_{n}= g(x'') \mbox{ for } x''\in B_{\rho}'' \}.$$
\end{itemize}
\end{prop}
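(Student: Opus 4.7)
I would reduce the non-flat obstacle problem to the inhomogeneous flat-obstacle problem of Proposition~\ref{prop:inhomo} in Section~\ref{sec:inhomo} by subtracting an appropriate extension of $\varphi$. Define the constant-in-$x_{n+1}$ extension $\tilde{\varphi}:B_1^+\to \R$ by $\tilde{\varphi}(x',x_{n+1}):=\varphi(x')$. Then $\tilde{\varphi}\in W^{2,p}(B_1^+)$ with norm comparable to $\|\varphi\|_{W^{2,p}(B_1')}$, and $\p_{n+1}\tilde{\varphi}\equiv 0$. Setting $\tilde{w}:=w-\tilde{\varphi}$ and using the off-diagonal hypothesis (A1), namely $a^{i,n+1}(x',0)=0$ for $i\leq n$, I would observe that on $B_1'$ one has $a^{n+1,j}\p_j\tilde{\varphi}=a^{n+1,n+1}\p_{n+1}\tilde{\varphi}=0$. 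Consequently the Signorini conditions for $w$ translate verbatim into the flat (zero-obstacle) Signorini conditions for $\tilde{w}$, while in the interior $\p_i a^{ij}\p_j\tilde{w}=f$ with $f:=-\p_i(a^{ij}\p_j\tilde{\varphi})\in L^p(B_1^+)$, the integrability following from $\p a^{ij}\in L^p$, $\p\tilde{\varphi}\in L^\infty$ (via $W^{2,p}\hookrightarrow C^{1,1-\frac{n+1}{p}}$ for $p>n+1$), $a^{ij}\in L^\infty$ and $\p^2\tilde{\varphi}\in L^p$.

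\textbf{Transferring the conclusions.} I would then apply Proposition~\ref{prop:inhomo} of Section~\ref{sec:inhomo} to $\tilde{w}$. The free boundaries coincide, $\Gamma_w=\Gamma_{\tilde{w}}$ and $\Gamma_{3/2}(w)=\Gamma_{3/2}(\tilde{w})$, because $w$ and $\tilde{w}$ differ only by a function depending on $x'$, so part~(iii) is immediate from Proposition~\ref{prop:inhomo}(iii). For~(ii), the Sobolev embedding $W^{2,p}\hookrightarrow C^{1,\gamma}$ with $\gamma=1-\frac{n+1}{p}>1/2$ (under $p>2(n+1)$) yields $\tilde{\varphi}\in C^{1,\gamma}(B_{1/2}^+)$, so combining with $\tilde{w}\in C^{1,1/2}$ from Proposition~\ref{prop:inhomo}(ii) gives $w=\tilde{w}+\tilde{\varphi}\in C^{1,1/2}(B_{1/2}^+)$. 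The growth estimate~(i) for $\tilde{w}$ is exactly Proposition~\ref{prop:inhomo}(i); interpreting the statement~(i) for $w$ as a bound on $w-\tilde{\varphi}(x_0)-\nabla\tilde{\varphi}(x_0)\cdot(x-x_0)$ near a free boundary point $x_0$, the additional $C^{1,1-(n+1)/p}$ error coming from the Taylor remainder of $\tilde{\varphi}$ is of strictly higher order than the growth rate of $\tilde{w}$ and is thus absorbed.

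\textbf{Main difficulty.} The only substantive issue is the compatibility of the extension with the Signorini boundary condition; it is precisely hypothesis~(A1) which makes the naive constant-in-$x_{n+1}$ extension work. Without~(A1) one would need either a more elaborate extension designed to kill $a^{n+1,j}\p_j\tilde{\varphi}$ on $B_1'$, or one would have to work with an inhomogeneous Signorini condition (a sign condition on $-a^{n+1,j}\p_j\tilde{w}-g$ with $g\in W^{1-1/p,p}$) not directly covered by Proposition~\ref{prop:inhomo}. After securing this extension, the argument is a routine translation of the inhomogeneous flat-obstacle results.
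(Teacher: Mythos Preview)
Your approach is essentially the paper's own: subtract an extension of the obstacle to reduce to the inhomogeneous flat problem of Section~\ref{sec:inhomo}, verify that the inhomogeneity $f=-\p_i(a^{ij}\p_j\tilde\varphi)$ lies in $L^p$, and read off (i)--(iii). The paper is terser about the extension (it writes simply $v:=w-\varphi$) and about why the Signorini conditions persist; your explicit use of the constant-in-$x_{n+1}$ extension together with (A1) to kill $a^{n+1,j}\p_j\tilde\varphi$ on $B_1'$ is exactly the mechanism the paper relies on implicitly, and your remark that the growth bound (i) should be read modulo the affine part of $\tilde\varphi$ is a welcome clarification of a point the paper leaves unstated.
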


\begin{rmk}
We remark that as in Proposition \ref{prop:inhomo} the estimate in (i) immediately entails a corresponding regularity result (c.f. Remark \ref{rmk:reg_est}).
\end{rmk}

We prove this result by reducing to the setting of flat obstacles.

\begin{proof}
\emph{Step 1: Recovery of flat boundary conditions.}
We first carry out Uraltseva's change of coordinates \cite{U85}.
Then in order to recover flat boundary conditions, we introduce a function $v:B_{1}^+ \rightarrow \R$ with $v=w - \varphi$. This then leads to an inhomogeneous thin obstacle problem with a flat obstacle:
\begin{equation}
\label{eq:nfl}
\begin{split}
\p_i a^{ij} \p_j v & = f \mbox{ in } B_{1}^+,\\
 v \geq 0, \ -\p_{n+1} v \geq 0 , \ v \p_{n+1} v & = 0 \mbox{ on } B_{1}',
\end{split}
\end{equation}
with $f= - \p_i a^{ij} \p_j \varphi = - (\p_i a^{ij}) \p_j \varphi - a^{ij}\p_{ij}\varphi$.
In recovering the results from the flat obstacle setting, the main difficulties arise from the error contributions which result from the inhomogeneity. In order to derive (i), we hence argue that the assumptions of Section \ref{sec:inhomo} are satisfied. In order to obtain (ii) and (iii) we interpret the error as a divergence form right hand side and argue as in Sections \ref{sec:boundary}-\ref{sec:optimal}.\\

\emph{Step 2: Bounding the inhomogeneity.}
Due to our assumptions on $f$ and $a^{ij}$ and by Sobolev/ Morrey embedding, the metric and the inhomogeneity have the right integrability. Indeed, in the setting of (i) we have
\begin{itemize}
\item $\p_i a^{ij} \in L^p(B_1^+)$, $a^{ij}\in L^{\infty}(B_1^+)$,
\item $\p_{ij} \varphi \in L^p(B_1')$, $\p_j \varphi \in L^{\infty}(B_1')$.
\end{itemize}
Hence, $f\in L^{p}(B_1^+)$ which allows us to invoke the results from Section \ref{sec:inhomo}. This then yields the growth estimate stated in (i).\\

\emph{Step 3: Argument for (ii) and (iii).} We argue as in step 3 in Section \ref{sec:inhomo}. In order to obtain the results (ii) and (iii), we consider the equation for tangential derivatives of $v$ (after carrying out an odd reflection as described in (\ref{eq:extend_reflect})):
\begin{equation}
\label{eq:deriv_v}
\begin{split}
\p_i a^{ij} \p_{j e} v & = -\p_i ((\p_e a^{ij}) \p_j v) + \p_e f \mbox{ in } B_1 \setminus \Lambda, \\
\p_e v & = 0 \mbox{ in } \Lambda.
\end{split}
\end{equation}
We interpret the right hand side of (\ref{eq:deriv_v}) as a divergence form contribution with
\begin{equation}
\label{eq:div_form}
F^{i}:= -(\p_e a^{ij}) \p_j v +e^{i} f, \ i\in\{1,...,n+1\}. 
\end{equation}
The regularity of the metric and obstacle, $a^{ij} \in W^{1,p}(B_1)$, $\varphi  \in W^{2,p}(B'_1)$ with $p\in (2(n+1),\infty]$, entail that
\begin{align*}
F^i\in L^p(B_1).
\end{align*}
Moreover, by an appropriate scaling argument (which only depends on $\left\| \nabla a^{ij} \right\|_{L^p(B_1)}$ and $\left\| \varphi \right\|_{W^{2,p}(B_1')})$, we may assume that $\left\| F^i \right\|_{L^p(B_1)}$ is small. Hence, all the results in Sections \ref{sec:boundary}-\ref{sec:optimal} remain valid. This then yields the desired results of (ii) and (iii). 
\end{proof}

\subsection{Non-flat boundaries}
\label{sec:nfboundaries}
In this section we very briefly comment on the situation with non-flat boundaries. 
In this context we have the following result:

\begin{prop}[Non-flat boundaries]
Let $\Omega \subset \R^{n+1}$ be a bounded open subset, whose boundary contains the $W^{2,p}$ hypersurface $\mathcal{M}$.
Let $a^{ij}: \Omega \rightarrow \R^{(n+1)\times (n+1)}_{sym}$ be a uniformly elliptic, symmetric tensor field of class $W^{1,p}(\Omega)$, $p\in( n+1,\infty]$. Assume that $w: \Omega \cup \mathcal{M} \rightarrow \R $ is a solution of the thin obstacle problem with zero obstacle on the hypersurface $\mathcal{M}$:
\begin{align*}
\p_i a^{ij} \p_j w &= 0 \mbox{ in } \Omega,\\
w \geq 0, \ \nu_i a^{ij} \p_j w \geq 0, \ w(\nu_i a^{ij} \p_j w)&=0 \mbox{ on } \mathcal{M},
\end{align*}
where $\nu: \mathcal{M} \rightarrow \R^{n+1}$ denotes the outer unit normal field on $ \mathcal{M}$. Then, the following statements hold: 
\begin{itemize}
\item[(i)] (Optimal regularity) If $p\in (n+1,2(n+1)]$, then $w\in C^{1,1-\frac{n+1}{p}}_{loc}(\Omega\cup \mathcal{M})$; if $p\in (2(n+1),\infty]$, then $w \in C^{1,1/2}_{loc}(\Omega \cup \mathcal{M})$.
\item[(ii)] Assuming that $0\in \Gamma_{3/2}(w)$, there exist a radius $\rho>0$ and a parameter $\alpha \in (0,1-\frac{n+1}{p}]$ such that $\Gamma_{3/2}(w)\cap B_\rho$ is an $(n-1)$-dimensional $C^{1,\alpha}$ submanifold.
\end{itemize}
\end{prop}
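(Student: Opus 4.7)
The plan is to reduce to the flat-boundary setting of Sections~\ref{sec:boundary}-\ref{sec:optimal} via a localization and straightening of $\mathcal{M}$, and then transport the regularity conclusions back to $\mathcal{M}$ by composition. Working locally near a point $x_0\in\mathcal{M}$, we pick a $W^{2,p}$ diffeomorphism $\Phi: B_\rho^+\to U\cap \bar\Omega$ that flattens $\mathcal{M}\cap U$ onto $B_\rho'$ and is tangent to the identity at $0$. Morrey's embedding for $p>n+1$ gives $\Phi\in C^{1,1-(n+1)/p}_{loc}$. Setting $\tilde w(y):=w(\Phi(y))$, the equation transforms into divergence form
\[
\partial_k \tilde a^{kl}\partial_l\tilde w=0\text{ in }B_\rho^+,\quad \tilde a^{kl}(y)=|\det D\Phi(y)|\,\partial_i(\Phi^{-1})^k\partial_j(\Phi^{-1})^l\,a^{ij}(\Phi(y)).
\]
Since $D\Phi\in W^{1,p}\cap L^\infty$ and $a^{ij}\in W^{1,p}\cap L^\infty$, the product rule in $W^{1,p}$ (valid for $p>n+1$) yields $\tilde a^{kl}\in W^{1,p}(B_\rho^+)$; symmetry and uniform ellipticity are preserved, and the conormal Signorini conditions become $\tilde w\geq 0$, $-\tilde a^{n+1,j}\partial_j\tilde w\geq 0$, $\tilde w\tilde a^{n+1,j}\partial_j \tilde w=0$ on $B_\rho'$.

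Next I would compose with Uraltseva's tangential change of coordinates (c.f. Proposition~\propchange) to enforce assumption (A1). This further map is $W^{2,p}$ and flat on $\{y_{n+1}=0\}$, so its composition with $\Phi$ is again $W^{2,p}$ and of class $C^{1,1-(n+1)/p}$. After renormalizing by an affine change so that the transformed metric satisfies (A2) and (A4), we are precisely in the setting of Theorems~\ref{thm:C1a} and \ref{prop:full_opti}: for $p\in (n+1,2(n+1)]$ we obtain $\tilde w\in C^{1,1-(n+1)/p}$, while for $p\in(2(n+1),\infty]$ we obtain $\tilde w\in C^{1,1/2}$ together with the $C^{1,\alpha}$-regularity of $\Gamma_{3/2}(\tilde w)$ near $0$ for some $\alpha\in(0,1-(n+1)/p]$.

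To conclude, I would use standard composition rules for Hölder spaces. Since $\Phi\in C^{1,1-(n+1)/p}$, the function $w=\tilde w\circ\Phi^{-1}$ inherits $C^{1,\min\{1-(n+1)/p,\alpha(\tilde w)\}}$. In the regime $p\in(n+1,2(n+1)]$ both factors have exponent $1-(n+1)/p$ and we obtain $w\in C^{1,1-(n+1)/p}_{loc}$; in the regime $p\in(2(n+1),\infty]$ the solution is $C^{1,1/2}$ and $\Phi$ is strictly more regular, so the composition stays $C^{1,1/2}_{loc}$, proving (i). For (ii), the set $\Gamma_{3/2}(w)\cap U$ is the $\Phi$-image of the $C^{1,\alpha}$ graph $\{y_n=g(y'')\}\subset B_\rho'$, and since $\Phi|_{B_\rho'}$ is a $C^{1,1-(n+1)/p}$ diffeomorphism onto $\mathcal{M}\cap U$, the image is an $(n-1)$-dimensional $C^{1,\alpha}$ submanifold of $\mathcal{M}$ with $\alpha\leq 1-(n+1)/p$.

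The main obstacle I expect is the first step, namely verifying that the $W^{2,p}$ change of coordinates produces a metric $\tilde a^{kl}$ still in $W^{1,p}$ and an equation still in pure divergence form (i.e., without generating a rough right-hand side which would force us to invoke Proposition~\ref{prop:inhomo} instead of the flat result). The embedding $W^{1,p}\hookrightarrow L^\infty$ for $p>n+1$ is what makes the product $D\Phi\cdot a^{ij}\cdot D\Phi$ belong to $W^{1,p}$ without loss; should an inhomogeneity $f\in L^p$ appear (for instance from a non-affine boundary straightening that does not preserve divergence structure exactly), one instead combines the arguments of Sections~\ref{sec:inhomo}-\ref{sec:nonfl}, which accommodate this setting for $p>2(n+1)$ and yield the same conclusions.
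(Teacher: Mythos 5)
Your proposal is correct and follows the same strategy as the paper's proof: locally flatten the hypersurface $\mathcal{M}$ by a $W^{2,p}$ change of coordinates, verify that the transformed metric remains $W^{1,p}$ and the equation remains in divergence form (so the flat-boundary results apply after Uraltseva's normalization), and then transport the regularity back via composition with the $C^{1,1-(n+1)/p}$ diffeomorphism. The paper gives only a one-sentence sketch of this reduction; you have correctly supplied the missing details, including the key observation that pulling back a divergence-form equation by a $W^{2,p}$ map produces no inhomogeneity and that the exponent $\alpha\le 1-(n+1)/p$ is preserved under the transfer.
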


\begin{proof}
The statement can be immediately reduced to the setting of the usual thin obstacle problem by carrying out a change of coordinates (c.f. Proposition \propchange and \cite{U85}) that flattens the free boundary. As the free boundary is $W^{2,p}$ regular, this change of coordinates followed by an application of Uraltseva's change of coordinates from Proposition \propchange, then implies the integrability and differentiability properties (A1), (A2), (A3), (A4). This allows to carry out the analysis from \cite{KRS14} and Sections \ref{sec:boundary}-\ref{sec:optimal} of the present article.
\end{proof}

\subsection{Interior thin obstacles}
\label{sec:int_obst}

Finally, we comment on the necessary modification steps in obtaining analogous results for interior thin obstacles. In this direction we have:

\begin{prop}[Interior thin obstacles]
\label{prop:intobst}
Let $a^{ij}: B_1 \rightarrow \R^{(n+1)\times (n+1)}_{sym}$ be a uniformly elliptic, symmetric tensor field of class $W^{1,p}$ with $p\in (n+1,\infty]$, satisfying (A1), (A2), (A3) and (A4). Assume that $w: B_1 \rightarrow \R $ is a solution of the interior thin obstacle problem with zero obstacle on $B'_1$:
\begin{align*}
\p_i a^{ij} \p_j w &= 0 \mbox{ in } \inte{B_1^+}\cup \inte{B_1^-},\\
 w \geq 0, \ [\p_{n+1} w] \geq 0, \ w[ \p_{n+1} w] & = 0 \mbox{ on } B'_1. 
\end{align*}
Here $[ \p_{n+1} w]:=( \p_{n+1} w)^+  -(\p_{n+1} w)^-$ and $(\p_{n+1} w)^{\pm}$ denotes the one-sided trace of the fluxes on $B'_1$. More precisely, for $H^n$ a.e. $x\in B'_1$ we have
\begin{align*}
( \p_{n+1} w)^{\pm}(x):= \lim\limits_ { y \rightarrow x, \pm y_{n+1}>0}  \p_{n+1} w (y).
\end{align*}
Then, the following statements hold: 
\begin{itemize}
\item[(i)] (Optimal regularity) If $p\in (n+1,2(n+1)]$, then $w\in C^{1,1-\frac{n+1}{p}}_{loc}(B_1^+\cup B_1^-)$; if $p\in (2(n+1),\infty]$, then $w \in C^{1,1/2}_{loc}(B_1^+\cup B_1^- )$.
\item[(ii)] Assuming that $0\in \Gamma_{3/2}(w)$, there exist a radius $\rho>0$, a parameter $\alpha \in (0,1-\frac{n+1}{p}]$ and a $C^{1,\alpha}$ function $g$ such that (potentially after a rotation) 
\begin{align*}
\Gamma_{3/2}(w) \cap B_{\rho}' = \{x = (x'',x_n,0)| \ x_{n}= g(x'') \mbox{ for } x''\in B_{\rho}'' \}.
\end{align*}
\item[(iii)] There exist functions $a, \tilde{b}:\Gamma_{3/2}(w)\cap B_{\rho/2}\rightarrow \R$ with $a \in C^{0,\alpha}(\Gamma_{3/2}(w)\cap B_{\rho/2})$ and $\tilde{b}\in C^{0,\frac{1}{2}+\alpha}(\Gamma_{3/2}(w)\cap B_{\rho/2})$ for some $\alpha>0$ (where $\alpha$ can be chosen as in (ii)), such that: for each $x_0 \in \Gamma_{3/2}(w)\cap B_{\rho/4}$, 
\begin{itemize}
\item For each $e\in S^{n-1}\times\{0\}$ and $x\in B_{\rho/4}(x_0)$,
\begin{align*}
&\left| \p_{e} w(x)-\frac{3}{2}\frac{a(x_0)(e\cdot \nu_{x_0})}{(\nu_{x_0}\cdot A(x_0)\nu_{x_0})^{1/2}}w_{1/2}\left(\frac{(x- x_0) \cdot\nu_{ x_0}}{(\nu_{ x_0} \cdot A( x_0) \nu_{ x_0})^{1/2}},\frac{x_{n+1}}{(a^{n+1,n+1}( x_0))^{1/2}}\right)\right|\notag\\
&\quad \leq C(n,p)\max\{\epsilon_0,c_\ast\}  \dist(x,\Gamma_w)^{\frac{1}{2}+\widetilde{\gamma}}.
\end{align*}
\item For $x\in B_{\rho/4}(x_0)$,
\begin{align*}
&\left| \p_{n+1} w(x)- \tilde{b}(x_0)\right.\\
&\quad \left. -\frac{3}{2} \frac{a(x_0)}{(a^{n+1,n+1}(x_0))^{1/2}}\bar w_{1/2}\left(\frac{(x- x_0) \cdot\nu_{ x_0}}{(\nu_{ x_0} \cdot A( x_0) \nu_{ x_0})^{1/2}},\frac{x_{n+1}}{(a^{n+1,n+1}(x_0))^{1/2}}\right)\right|\\
&\quad \leq C(n,p)\max\{\epsilon_0,c_\ast\}  \dist(x,\Gamma_w)^{\frac{1}{2}+\widetilde{\gamma}}.
\end{align*}
\item For $x\in B_{\rho/4}(x_0)$ and $x-x_0\in \spa\{\nu_{x_0},e_{n+1}\}$,
\begin{align*}
&\quad \left|w(x)-\tilde{b}(x_0)x_{n+1}-a(x_0)w_{3/2}\left(\frac{(x- x_0) \cdot\nu_{ x_0}}{(\nu_{ x_0} \cdot A( x_0) \nu_{ x_0})^{1/2}},\frac{x_{n+1}}{(a^{n+1,n+1}( x_0))^{1/2}}\right)\right|\notag\\
&\quad \leq C(n,p)\max\{\epsilon_0,c_\ast\} \dist(x,\Gamma_w)^{\frac{3}{2}+\widetilde{\gamma}}.
\end{align*}
\end{itemize}
Here $w_{1/2}(x)= \Ree(x_n+ix_{n+1})^{1/2}$, $\bar{w}_{1/2}(x)=- \Imm(x_n + i x_{n+1})^{1/2}$, $w_{3/2}(x)=\Ree(x_n+ix_{n+1})^{3/2}$.
Moreover, $\tilde{\gamma}=\min\{\alpha, \frac{1}{2}-\frac{n+1}{p}\}$ and $A(x_0)=(a^{ij}(x_0))$.
\end{itemize}
\end{prop}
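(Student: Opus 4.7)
The strategy is to transfer the full machinery of \cite{KRS14} and Sections~\ref{sec:boundary}--\ref{sec:optimal} to the interior setting by first establishing the interior analogue of the Carleman estimate from Section~\secCarl of \cite{KRS14}. After normalizing via Uraltseva's change of coordinates to achieve (A1)--(A4), $w$ is continuous across $B_1'$ with jumps only in $\partial_{n+1}w$ along $\Lambda_w$. In the Carleman estimate, integration by parts is carried out separately in $B_r^+$ and $B_r^-$, producing boundary contributions on $B_r'$ that assemble into integrals of the form $\int_{B_r'} w\,[\partial_{n+1}w]\,(\text{positive weight})$. These either vanish by the complementary condition $w[\partial_{n+1}w]=0$ or possess the correct sign from $w\geq 0$ and $[\partial_{n+1}w]\geq 0$. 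Hence the Carleman estimate \eqvCarl and its quantitative consequences (Corollary~\corconsequenceCarl in particular) are recovered with only cosmetic modifications.

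Once the Carleman inequality is available, the frequency-based apparatus of Section~\secreg of \cite{KRS14} transfers without substantive change: the doubling property, the upper semicontinuity of the vanishing order, the existence of homogeneous $L^2$-normalized blow-ups along subsequences, and the classification of possible homogeneities all go through. The essential new feature is that $\partial_{n+1}w$ is continuous across $\Omega_w$ and can take a nonzero value there. Accordingly, for $x_0\in\Gamma_{3/2}(w)$ I define $\tilde b(x_0)$ as the $\Omega_w$-trace of $\partial_{n+1}w$ at $x_0$ (which exists by the regularity of $w$ on the $\Omega_w$ side). The auxiliary function $\hat w(x):=w(x)-\tilde b(x_0)x_{n+1}$ then satisfies, at $x_0$, boundary conditions of the usual Signorini type up to an error controlled by the H\"older modulus of $\tilde b$. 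In particular the $3/2$-homogeneous blow-up at regular points is $a(x_0)w_{3/2}$, as in Lemma~\ref{lem:3/2homo}.

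With this setup, $\Gamma_{3/2}(w)$ is Reifenberg flat by the compactness argument of Proposition~\ref{prop:Reifenberg}, and the arguments of Section~\ref{sec:boundary} apply to the tangential derivatives $\partial_ew$, $e\in S^{n-1}\times\{0\}$, which vanish on $\Lambda_w$ and are continuous across $\Omega_w$. After the extension/reflection of Remark~\ref{rmk:ref_ext}, $\partial_ew$ solves a divergence-form equation in a slit domain of exactly the type to which the splitting of Proposition~\ref{prop:v1}, the barriers of Proposition~\ref{prop:barrier} and Lemma~\ref{lem:lowerbarrier}, the non-degeneracy estimate of Proposition~\ref{prop:nondeg}, and the boundary Harnack inequality of Lemma~\ref{lem:boundHarn} all apply. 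Reproducing Propositions~\ref{prop:Lip}, \ref{prop:C1a}, \ref{prop:wasympt} and Corollary~\ref{cor:wasympt} \emph{verbatim} yields the $C^{1,\alpha}$-regularity of $\Gamma_{3/2}(w)$ together with the asymptotic expansions of $\partial_ew$, $\partial_{n+1}w-\tilde b(x_0)$ and $w-\tilde b(x_0)x_{n+1}$. The H\"older regularity of $a$ follows as in Corollary~\ref{cor:wasympt}, while the higher $C^{0,1/2+\alpha}$ regularity of $\tilde b$ is extracted from the next-order term in the boundary expansion of $\partial_{n+1}w$ on the $\Omega_w$-side. Finally, the optimal $C^{1,1/2}$ regularity in (i) follows from the resulting local optimal $3/2$-growth near regular points combined with Lemma~\lemalmosthom, exactly as in the proof of Theorem~\ref{thm:optimal_reg}.

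\textbf{Main obstacle.} The principal technical point is to track the linear correction $\tilde b(x_0)x_{n+1}$ precisely throughout the comparison and boundary-Harnack arguments: these must be applied to the \emph{subtracted} quantities $\partial_{n+1}w-\tilde b(x_0)$ and $w-\tilde b(x_0)x_{n+1}$, and the claimed $C^{0,1/2+\alpha}$ regularity of $\tilde b$ requires that the error in the asymptotic expansion of $\partial_{n+1}w$ decay at the rate $\dist(x,\Gamma_w)^{1/2+\alpha}$ rather than the weaker $\dist(x,\Gamma_w)^{1/2}$ that follows directly from $C^{1,1/2}$ regularity.
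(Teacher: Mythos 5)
Your proposal follows essentially the same route as the paper's proof: modify the Carleman estimate by carrying out the conjugation/integration by parts separately in $B_1^+$ and $B_1^-$ and summing the boundary contributions so that the complementary conditions of the interior problem make them vanish; subtract the linear term $\tilde b(x_0)x_{n+1}$ to define $v_{x_0}$ with $v_{x_0}(x_0)=|\nabla v_{x_0}(x_0)|=0$; and then transfer the Sections~\ref{sec:boundary}--\ref{sec:optimal} machinery to $v_{x_0}$ and its blow-ups, finally reading off the $C^{0,1/2+\alpha}$ regularity of $\tilde b$ from the asymptotics of $\p_{n+1}w$. Two small imprecisions relative to the paper: for the interior problem no reflection step is needed since $w$ and $\p_e w$ are already defined in the full ball (the paper notes this explicitly), and $v_{x_0}$ satisfies the complementary boundary conditions of the interior problem \emph{exactly} (not merely ``up to an error controlled by the H\"older modulus of $\tilde b$''), the only new feature being the $L^p$ interior inhomogeneity $-\tilde b(x_0)\p_i a^{i,n+1}$, which is handled by the inhomogeneous results of Section~\ref{sec:inhomo}.
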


\begin{rmk}
As the proof of Proposition \ref{prop:intobst} shows, the function $\tilde{b}$ can be identified explicitly as $\tilde{b}(x_0)= (\p_{n+1}w)^+(x_0)$, where $x_0\in \Gamma_{3/2}(w)\cap B_{\rho/2}$.
\end{rmk}

\begin{rmk}
Similarly as in Sections~\ref{sec:inhomo}-\ref{sec:nfboundaries}, our results generalize to non-flat $W^{2,p}$ obstacles $\varphi$, non-flat $W^{2,p}$ manifolds $\mathcal{M}$ and $L^p$ inhomoneneities $f$, where $p>2(n+1)$. 
\end{rmk}

\begin{proof}
As in the previous sections, we only comment on the main changes. 
In the first step we derive an analogue of the Carleman inequality \eqvCarl. Since an essential ingredient in the proof of the Carleman estimate was the vanishing of the boundary contributions due to the complementary boundary conditions, slight changes are necessary at this point. As $w$ satisfies an elliptic equation in both the upper and the lower half planes, we carry out the Carleman conjugation procedure in both half-balls separately. On each of these we obtain boundary contributions which do not necessarily vanish. However, adding the contributions originating from the lower and upper half-balls allows us to exploit the complementary boundary conditions for the interior thin obstacle problem. Hence, we obtain the vanishing of all involved boundary contributions. The Carleman estimate then reads:
\begin{equation}
\label{eq:consequence_Carl_i}
\begin{split}
&\tau^{\frac{3}{2}}(1+|\ln(r_2)|)^{-1}  e^{\tau \tilde{\phi}(\ln(r_2))} r_2^{-1}\left\| w \right\|_{L^2(A_{r_2,2r_2}(x_0))}\\
&+ \tau \max\{\ln(r_2/r_1)^{-1},\ln(r_3/r_2)^{-1}\} e^{\tau \tilde{\phi}(\ln(r_2))}r_2^{-1}  \left\| w \right\|_{L^2(A_{r_2,2r_2}(x_0))} \\
& \leq C( e^{\tau \tilde{\phi}(\ln(r_1))} r_1^{-1} \left\| w \right\|_{L^2(A_{r_1,2r_1}(x_0))} \\
& \quad + e^{\tau \tilde{\phi}(\ln(r_3))}r_3^{-1}\left\| w \right\|_{L^2(A_{r_3,2 r_{3}}(x_0))} + \left\| e^{\tau \tilde{\phi}(\ln(|x|))} |x| f \right\|_{L^2(A_{r_1,2 r_{3}}(x_0))}),
\end{split}
\end{equation}
where $A_{r_1,r_3}(x_0)$ denotes the full annulus around the point $x_0 \in B_{1}'$. \\
In deriving the analogues of the statements of Section \secreg, we modify the definition of the blow-up sequences slightly: Indeed, we note that if $w(x)$ is a solution of the interior thin obstacle problem, then $w\in C^{1,\alpha}(B^+_1\cup B^-_1)$ for some $\alpha>0$ (c.f. page 207 in \cite{U87}). Then for 
$$\tilde{b}(x_0):=\lim\limits_{y \rightarrow x_0, y_{n+1}> 0}\p_{n+1}w(y),$$
the function
\begin{align}
\label{eq:normal1}
v_{x_0}(x):= w(x) - \tilde{b}(x_0) x_{n+1},
\end{align}
solves
\begin{align*}
\p_i a^{ij} \p_j v_{x_0} &= f \mbox{ in } B_{1/2},\\
v_{x_0 } & \geq 0, \ [\p_{n+1}v_{x_0}] \geq 0, \ v_{x_0}[\p_{n+1} v_{x_0}]=0 \mbox{ on } B_{1/2}'.
\end{align*}
Here $f(x):= -(\p_i a^{ij})(x) \tilde{b}(x_0) \in L^p(B_{1})$ by the regularity and integrability assumptions on $a^{ij}$ and the $C^{1,\alpha}(B_{1/2}^+)$ regularity of $w$ (c.f. \cite{U87}).
Therefore, for any free boundary point $x_0 \in \Gamma_w \cap B_{1/2}$ the modified functions $v_{x_0}(x)$ now satisfy
\begin{align}
\label{eq:normalize}
v_{x_0}(x_0) = 0 = |\nabla v_{x_0}(x_0)|.
\end{align}
Here, by approaching $x=x_0$ from the interior of $\Omega_w$, we observed that $[\p_{n+1}w](x_0)=0$, which yields the second equality in (\ref{eq:normalize}).\\
Analogously, we consider the following blow-ups around the point $x_0$, which are based on $v_{x_0}$ instead of on $w$:
\begin{align*}
v_{x_0,r}(x):= \frac{v_{x_0}(x_0 + r x)}{ r^{- \frac{n+1}{2}} \| v\|_{L^2(B_r(x_0))}}.
\end{align*}
In the case of the interior thin obstacle problem this replaces the blow-up functions defined in Section \ref{sec:not}. Arguing on the level of $v_{x_0}$ and $v_{x_0,r}$ and using the observations from Section \ref{sec:inhomo}, all the results from Section \secreg follow. In particular (\ref{eq:normalize}) ensures that the discussion of the vanishing order in Proposition \prophomotwo remains valid.\\
Further relying on the functions $v_{x_0}$, we derive the analogues of the results from Sections \ref{sec:boundary}-\ref{sec:optimal} for $v_{x_0}$ by arguing along the same lines as for the boundary thin obstacle problem (in this context, we remark that since solutions are, by definition of the interior obstacle problem, already defined in the whole ball, it is not necessary to carry out reflection arguments). Hence, all the properties from these sections are valid on the level of $v_{x_0}$. These can then be directly transferred to the original function $w$. Due to the presence of the normalizing factor $\tilde{b}(x_0) x_{n+1}$ this changes the asymptotic expansion of $\p_{n+1}w$ with respect to the one derived in Remark \ref{rmk:asym} by the constant term $\tilde{b}(x_0)$. 
Using the $C^{0,\alpha}$ regularity of $a$ and the triangle inequality, we infer from the asymptotics of $\p_{n+1}w$ that $\tilde{b}\in C^{0,\frac{1}{2}+\alpha}(\Gamma_{3/2}(w)\cap B_{\rho/2})$. Indeed, for $x_0,x_1\in \Gamma_{3/2}(w)\cap B_{\rho/2}$, we fix a point 
$x\in B_\rho$ with $\dist(x,\Gamma_w)\sim |x_0-x_1|$. By the triangle inequality we have
\begin{align*}
|\tilde{b}(x_0)-\tilde{b}(x_1)|\leq |G_{x_0}(x)-G_{x_1}(x)|+C(n,p)\max\{\epsilon_0,c_\ast\}\dist(x,\Gamma_w)^{\frac{1}{2}+\widetilde{\gamma}},
\end{align*}
where for $\xi\in \Gamma_{3/2}(w)\cap B_{\rho/2}$,
$$G_\xi(x):=\frac{3}{2} \frac{a(\xi)}{(a^{n+1,n+1}(\xi))^{1/2}}\bar w_{1/2}\left(\frac{(x- \xi) \cdot\nu_{ \xi}}{(\nu_{ \xi} \cdot A( \xi) \nu_{ \xi})^{1/2}},\frac{x_{n+1}}{(a^{n+1,n+1}(\xi))^{1/2}}\right).$$
By using the $C^{0,\alpha}$ regularity of $a$, the ellipticity and regularity of $A=(a^{ij})$, the $C^{1,\alpha}$ regularity of $\Gamma_{3/2}(w)$, as well as the relation $\dist(x,\Gamma_w)\sim |x_0-x_1|$, we have
\begin{align*}
|G_{x_0}(x)-G_{x_1}(x)|\lesssim |x_0-x_1|^{\frac{1}{2}+\alpha}.
\end{align*}
This proves the $C^{0,\frac{1}{2}+\alpha}$ regularity of $\tilde{b}$. The remaining observations follow along similar lines as in Sections \ref{sec:boundary}-\ref{sec:optimal}.
\end{proof}

\bibliography{citations}
\bibliographystyle{alpha}
\end{document}